\newtheorem{theorem}{Theorem}[section]
\newtheorem{remark}[theorem]{Remark}
\newtheorem{proposition}[theorem]{Proposition}
\newtheorem{lemma}[theorem]{Lemma}
\newtheorem{corollary}[theorem]{Corollary}
\newtheorem{definition}[theorem]{Definition}
\theoremstyle{definition}
\newtheorem{example}{Example}
\def\ie{{\em i.e.,} }
\def\eg{{\em e.g.} }
\newfont\bbf{msbm10 at 12pt}
\def\eps{\varepsilon}
\def\phi{\varphi}
\def\R{{\mathbb R}}
\def\N{{\mathbb N}}
\def\Z{{\mathbb Z}}
\def\diam{\mbox{\rm diam}\,}
\def\lhe{\mbox{\rm lhe}}
\def\rhe{\mbox{\rm rhe}}
\def\Int{\mbox{\rm Int}\,}
\def\theta{\vartheta}
\def\ovl{\overleftarrow}
\def\ovr{\overrightarrow}
\def\Ls{\prec_L}
\def\Ll{\succ_L}
\def\eps{\varepsilon}
\def\black{\textcolor{black}}
\def\f{[} 
\def\ast{{^*\!\!A^*}}
\def\astr{{A^*}}
\def\astl{{^*\!\!A}}
\def\kone{{\kappa_1(q)}}
\def\ktwo{{\kappa_2(q)}}
\def\kthree{{\kappa_3(q)}}
\def\ki{{\kappa_i(q)}}
\def\BD{{\phi_{\mathcal{C}}}}
\def\Br{{\phi_{\mathcal{R}}}}
\begin{document}
	
	\title{Accessible points of planar embeddings of tent inverse limit spaces}
	
	\author{Ana Anu\v{s}i\'c, Jernej \v{C}in\v{c}}
	\address[A.\ Anu\v{s}i\'c]{Faculty of Electrical Engineering and Computing,
		University of Zagreb,
		Unska 3, 10000 Zagreb, Croatia}
	\email{ana.anusic@fer.hr}
	
	\address[J.\ \v{C}in\v{c}]{National Supercomputing Centre IT4Innovations, Division of the University of Ostrava, Institute for Research and Applications of Fuzzy Modeling, 30. dubna 22, 70103 Ostrava, Czech Republic} 
	\email{jernej.cinc@osu.cz}
	\thanks{AA was supported in part by Croatian Science Foundation under the project IP-2014-09-2285.
		J\v{C} was supported by the FWF stand-alone project P25975-N25 and by the Vienna Doctoral School of Mathematics. J\v{C} also acknowledges the support of University of Ostrava grant IRP201824 “Complex topological
		structures”.
		We gratefully acknowledge the support of the bilateral grant \emph{Strange Attractors and Inverse Limit Spaces},  \"Osterreichische
		Austauschdienst (OeAD) - Ministry of Science, Education and Sport of the Republic of Croatia (MZOS), project number HR 03/2014.}
	\date{\today}
	
	\subjclass[2010]{37B10, 37B45, 37E05, 54H20}
	\keywords{unimodal map, inverse limit space, planar embeddings}

	\begin{abstract}
		In this paper we study a class of embeddings of tent inverse limit spaces. We introduce techniques relying on the Milnor-Thurston kneading theory and use them to study sets of accessible points and prime ends of given embeddings. We completely characterize accessible points and prime ends of standard embeddings arising from the Barge-Martin construction of global attractors. In the other studied embeddings we find phenomena which do not occur in the standard embeddings. \black{Furthermore, for the class of studied non-standard embeddings we prove that shift homeomorphism can not be extended to a planar homeomorphism.} 
	\end{abstract}

	\maketitle
	\section{Introduction}
	The problem of classifying continua that can be embedded in the plane is of substantial interest in Continuum Theory, mainly because it is intrinsically related with the solution of the Fixed Point Property for planar non-separating continua. In the case when a continuum is chainable, \ie it admits an $\eps$-mapping on the interval $[0,1]$ for every $\eps>0$, it follows from an old result of Bing \cite{Bing} that the continuum can be embedded in the plane. Therefore, it is natural to ask how many possible non-equivalent embeddings \black{(see Definition~\ref{def:equivalent})} of a specific chainable continuum there exist and what these embeddings look like. The straightforward way to approach the description of embeddings is through their sets of accessible points or through their prime end structure.\\
	Inverse limit spaces on intervals are chainable. In \cite{ABCemb} Bruin and the authors showed that there exist uncountably many non-equivalent embeddings of tent map inverse limit spaces for all tent maps with slopes greater than $\sqrt{2}$, but they give no insight what the constructed embeddings look like. In this paper we study the class of planar embeddings from \cite{ABCemb} in detail, focusing primarily on accessible sets and the prime end structure in the finite critical orbit case. 
	
	The study of sets of accessible points of planar embeddings of Knaster continuum was given by Mayer in \cite{May}, and the characterization of possible sets of accessible points of embeddings of Knaster continua was given by D\c{e}bski \& Tymchatyn in \cite{DeTy}. The study of embeddings of unimodal inverse limit spaces appears in the literature in two forms; corresponding to attractors of orientation preserving (by Brucks \& Diamond in \cite{BrDi}) and orientation reversing (by Bruin in \cite{Br1}) planar homeomorphisms. We refer to those embeddings as \emph{standard embeddings}. Barge and Martin showed in \cite{BM} that every inverse limit space with a single interval bonding map can be realized as an attractor of an orientation preserving planar homeomorphism which acts on the attractor in the same way as the natural shift homeomorphism acts on the inverse limit. Using the construction from \cite{BM}, Boyland, de Carvalho and Hall recently gave in \cite{3G} the complete classification of the prime end structure and accessible sets of the Brucks-Diamond embedding of unimodal inverse limit spaces (satisfying certain regularity conditions valid for \eg tent map inverse limits). For \black{a class of} non-standard embeddings of tent inverse limit spaces constructed in \cite{ABCemb}, the natural shift homeomorphism cannot be extended to the plane as we show in Section~\ref{sec:ext}. \black{With that result we partially answer a question posed by Boyland, de Carvalho and Hall in \cite{3G} asking for which embeddings of tent inverse limit spaces the natural shift homeomorphism can be extended to a planar homeomorphism. Note that recently authors together with Bruin \cite{ABCplanar} constructed larger class of embeddings of tent inverse limit spaces, for which it is yet unknown whether they can be extended to a planar homeomorphism.}	
	\black{Because the embeddings from \cite{ABCemb} cannot be extended to a planar homomorphism,} we lack dynamical techniques as used in \cite{3G}. Therefore, for the construction and study of embeddings we chose a symbolic approach emerging from the Milnor-Thurston kneading theory in \cite{MiTh} which was already used in constructions of embeddings by Brucks \& Diamond \cite{BrDi}, Bruin \cite{Br1} and Bruin and the authors \cite{ABCemb}. It turns out that such a construction gives straightforward calculation techniques on the itineraries which we exploit throughout the paper, \black{even in the case when the dynamical techniques are present, \ie in the standard embeddings.}
	
	By $\N$ we denote the set of natural numbers and let $\N_0:=\{0\}\cup\N$.
	The \emph{Hilbert cube} is the space $\f 0, 1]^{-\N_0}$ equipped  with the product metric 
	$$
	d(x, y):=\sum_{i\leq 0}2^{i}|\pi_i(x)-\pi_i(y)|,
	$$
	where $\pi_i\colon\f 0, 1]^{-\N_0}\to\f 0, 1]$ denote the coordinate projections for $i\leq 0$.
	
	The \emph{tent map family} $T_s:[0,1] \rightarrow [0,1]$ is defined by $T_s(\black{t}):=\min\{ s\black{t},s(1-\black{t}) \}$ where $\black{t}\in [0,1]$ and $s\in (0,2]$. Let $c=\frac{1}{2}$ denote the {\em critical point} of the map $T_s$. In the rest of the paper we work with tent maps for slopes $s\in (\sqrt{2},2]$ and when there is no need to specify the slope we set for brevity $T:=T_s$. The \emph{inverse limit space with the bonding map $T$} is a subspace of the Hilbert cube defined by
	$$
	X:=\underleftarrow{\lim}(\f 0, 1], T)=\{x\in[0,1]^{-\N_0}: T(\pi_{i}(x))=\pi_{i+1}(x), \ i\leq 0 \}.
	$$
	The space $X$ is a {\em continuum}, \ie  compact and connected metric space. Define the \emph{shift homeomorphism} as 
	$\sigma\colon X\to X$, $\pi_i(\sigma(x)):=T(\pi_{i}(x))$ for every $i\leq 0$. 
	
	The space obtained by restricting the bonding map $T$ to its dynamical core is called the \emph{core} of $X$ and will be denoted by $X'$:
	$$X':=\underleftarrow{\lim}(\f T^2(c), T(c)], T|_{\f T^2(c), T(c)]}).$$
	A continuum is \emph{indecomposable} if it cannot be expressed as a union of two proper subcontinua.
	When $s\in(\sqrt{2}, 2]$, the core $X'$ is indecomposable  and by Bennett's theorem from \cite{Ben}, $X=X'\cup\mathcal{C}$, where $\mathcal{C}$ is a ray which contains the fixed point $(\ldots, 0, 0)$ and it compactifies on $X'$ (for details see \eg \cite{InMa}). The ray $\mathcal{C}$ shields off some points of the continuum $X$ and thus has an important effect on the set of accessible points in embeddings of $X$. However, the interesting phenomena regarding the structure of sets of accessible points occur in $X'$ and thus we will mostly ignore $\mathcal{C}$ in the remainder of the paper.  The structure of embedded $X$ (including $\mathcal{C}$) will be briefly discussed in Section~\ref{sec:intro}.

	The \emph{composant} $\mathcal{V}_x$ of a point $x\in K$ is the union of all proper subcontinua in $K$ that contain $x$.
	If a continuum is indecomposable it consists of uncountably many pairwise disjoint composants and every composant is dense in the continuum, see \cite{Na}. The arc-component $\mathcal{U}_x$ of a point $x\in K$ is the union of all arcs from $K$ that contain $x$.

	A point $a\in K \subset\R^2$ from a continuum $K$ is \emph{accessible (i.e., from the complement of $K$)} if there exists an arc $A\subset\R^2$ such that $A\cap K=\{a\}$.
	We say that an arc-component $\mathcal{U}_x$ is \emph{fully accessible} if every point from $\mathcal{U}_x$ is accessible. Mainly we will be interested in embeddings of inverse limits of indecomposable cores of tent maps with finite critical orbits. In these cases every arc-component of a point coincides with the composant of that point (see Proposition 3 from \cite{BB}). 
	
	We denote the class of embeddings of tent inverse limit spaces $X$ and their cores $X'$ constructed in \cite{ABCemb} by $\mathcal{E}$ and refer to them as $\mathcal{E}$-embeddings. In \cite{ABCemb}, every $\mathcal{E}$-embedding of $X$ is represented as a union of uncountably many horizontal segments (called basic arcs) which are aligned along vertically embedded Cantor set with prescribed identifications between some endpoints of basic arcs (see Section~\ref{sec:embed} of this paper and \cite{ABCemb} for details). An $\mathcal{E}$-embedding of $X$ is then uniquely determined by the left infinite itinerary $L=\ldots l_2l_1$, which is a symbolic description of the largest basic arcs among all basic arcs.
	
	\black{In Sections~\ref{sec:prelim},~\ref{sec:embed}, and \ref{sec:arc-comp} we recall basic notions from symbolic representation of unimodal inverse limits, recap the construction of embeddings of tent inverse limit spaces from \cite{ABCemb}, and give a symbolic characterization of arc-components in $X$, generalizing the result from the paper by Brucks \& Diamond \cite{BrDi}.} In Section~\ref{sec:accarcs}, we characterize the possible sets of accessible points in an arc-component of any indecomposable plane non-separating continuum $K$. In Section~\ref{sec:primeends} we briefly introduce Carath\'eodory's prime end theory and discuss the existence of fourth kind prime ends in special cases which occur for tent map inverse limits.  In Section~\ref{sec:intro}, we begin our study of embeddings $\mathcal{E}$. We introduce the notion of cylinders of basic arcs and techniques to explicitly calculate their extrema. We show that two $\mathcal{E}$-embeddings of the same space $X$ are equivalent when they are determined by eventually the same left infinite tail $L$. Given an $\mathcal{E}$-embedding of $X$, we prove that the arc-component of the top basic arc with symbolic description $L$ (throughout the paper this arc-component is denoted by $\mathcal{U}_L$) is fully accessible, if the top basic arc is not a spiral point (see Definition~\ref{def:spiral} and Figure~\ref{fig:spiral}). However, we also show that $\mathcal{U}_L$ is not necessarily the unique fully accessible arc-component. In the same section we briefly discuss $\mathcal{E}$-embeddings of decomposable continuum $X$ and characterize the set of accessible points up to two points on the corresponding circle of prime ends. From Section~\ref{sec:tops/bottoms} onwards we study $\mathcal{E}$-embeddings of indecomposable continuum $X'$. In Section~\ref{sec:tops/bottoms} we give sufficient conditions on itineraries of $L$ and kneading sequences $\nu$ associated with $X'$ so that the embeddings of $X'$ allow more than one fully accessible arc-component and give some interesting examples of such embeddings.
	
	We say that $x\in X$ is a \emph{folding point} if for every $\eps>0$ there exists a neighbourhood $U_{\eps}$ of $x$ which is not homeomorphic to the $C\times (0,1)$, where $C$ is the Cantor set.
	A point $x\in X$ is called an \emph{endpoint} if for every two subcontinua $X_1, X_2\subset X$ such that $x\in X_1\cap X_2$, either $X_1\subset X_2$ or $X_2\subset X_1$.
	Note that endpoints are also folding points.
	In Section~\ref{sec:FP} we characterize accessible folding points of $\mathcal{E}$-embeddings when the critical orbit of the tent map is finite. Surprisingly, no endpoints will be accessible in any $\mathcal{E}$-embedding of $X'$ with the exception of Brucks-Diamond embedding. Another surprising phenomenon is the occurrence of \emph{Type 3} folding points (see Definition~\ref{def:FPtypes} and Figure~\ref{fig:preper}) when the orbit of the third iterate of the critical point is periodic but the critical point itself is not periodic. Such a phenomenon does not occur in the standard embeddings of any tent map inverse limit space.
	
	In Section~\ref{sec:ext} we prove that for every embedding constructed in \cite{ABCemb} except for the \black{standard embeddings (ones} constructed by Brucks \& Diamond~\cite{BrDi} and Bruin~\cite{Br1} \black{respectively)}, the natural shift homeomorphism cannot be extended from the $\mathcal{E}$-embedding of $X'$ to the whole plane. Showing that, we \black{partially} answer the question posed by Boyland, de Carvalho and Hall in the paper \cite{3G}\black{, whether only for the standard embeddings of tent inverse limit spaces the shift homeomorphism can be extended to a planar homeomorphism}. In Section~\ref{sec:fullyacc} we study special examples of embeddings of $X'$. We explicitly show that every $X'$ can be embedded with at least two non-degenerate fully accessible arc-components. In a finite orbit case when we have exactly two fully accessible arc-component we show that there exists an embedding of $X'$ with exactly two simple dense canals. 
	
	We conclude the paper with the complete characterization of sets of accessible points (and thus also the prime end structure of the corresponding circle of prime ends) of the standard two embeddings: Bruin's embedding of $X'$ (Section~\ref{sec:Br}) and the Brucks-Diamond embedding of $X'$ (Section~\ref{sec:BD}) using symbolic dynamics.
	In Section~\ref{sec:Br} we show that for Bruin's embedding of $X'$ there is exactly one fully accessible non-degenerate arc-component and no other point from the embedding of $X'$ is accessible, if $X'$ is different from the Knaster continuum. We show that if $X'$ is not the Knaster continuum, then Bruin's embedding of $X'$ has exactly one simple dense canal.  \black{ If $X'$ is Knaster continuum then there is exactly one fully accessible non-degenerate arc-component and the endpoint of $\mathcal{C}$ is also accessible. Specially, there are no simple dense canals in this embedding of the Knaster continuum.}  
	In Section~\ref{sec:BD} we explicitly calculate the extrema of cylinders and neighbourhoods of folding points in the second standard embedding and  obtain equivalent results as obtained recently by Boyland, de Carvalho and Hall in \cite{3G}. Moreover, since the symbolic description makes it possible to distinguish endpoints within the set of folding points, our results extend the classification given in \cite{3G}.

	\section{Preliminaries on symbolic dynamics}\label{sec:prelim}
	In \cite{ABCemb}, uncountably many non-equivalent planar embeddings of indecomposable $X'$ were constructed with the use of symbolic dynamics by making any given $x\in X'$ accessible.
	We give a short overview of symbolic dynamics but we refer to \cite{ABCemb} and \cite{BrDi} for the more complete picture.
	
	The \emph{kneading sequence} of a map $T$ is a right-infinite sequence $\nu=c_1c_2\ldots\in\{0, 1\}^{\infty}$, where 
	$$c_i=\left\{
	\begin{array}{ll}
	0, &  T^i(c)\in[0, c],\\
	1, &  T^i(c)\in\f c, 1],
	\end{array}
	\right.
	$$
	for all $i\in\N$. If \black{$T^n(c)=c$} for some $n\in\N$, the critical point $c$ is periodic and the ambiguity in the definition of $\nu$ is resolved by defining $\nu$ to be the smaller of $(c_1\ldots c_{n-1}0)^{\infty}$ and $(c_1\ldots c_{n-1}1)^{\infty}$ in the \emph{parity-lexicographical ordering on $\{0, 1\}^{\infty}$} defined below.
	
	By $\#_1(a_1\ldots a_n)$ we denote the number of ones in a finite word $a_1\ldots a_n\in\{0, 1\}^n$; it can be either even or odd. 
	Choose $t=t_1 t_2\ldots \in\{0, 1\}^{\infty}$ and  $s=s_1 s_2\ldots \in\{0, 1\}^{\infty}$\black{, where we also permit $s$ and $t$ to be finite sequences of the same length. Specially, through the paper we will not compare finite words with infinite itineraries.} Set $0<1$.
	Take the smallest $k\in \N$ such that $s_k\neq t_k$\black{, if existent; otherwise  $s=t$.} Then the
	\emph{parity-lexicographical ordering} is defined as
	$$
	s\prec t \Leftrightarrow\left\{
	\begin{array}{ll}
	s_k<t_k \text{ and } \#_1(s_1\ldots s_{k-1}) \text{ is even, or }\\
	s_k>t_k \text{ and } \#_1(s_1\ldots s_{k-1}) \text{ is odd. }
	\end{array}
	\right.
	$$
	
	Fix the kneading sequence $\nu=c_1c_2\ldots$. The finite word $a_1\ldots a_n\in\{0, 1\}^n$ is called \emph{admissible} if \black{$c_2c_3\ldots c_{2+n-i}\preceq a_i\ldots a_n\preceq c_1c_2\ldots c_{1+n-i}$} for every $i\in\{1, \ldots, n\}$. Two-sided infinite sequence $\ldots s_{-2}s_{-1}.s_0s_1\ldots\in\{0, 1\}^{\Z}$ is called \emph{admissible} if every finite subword is admissible. Analogously we define an admissible left- or right-infinite sequence. Additionally, two-sided sequences $0^{\infty}s_ks_{k+1}\ldots$ will also be called admissible if $s_k=1$ and every finite subword of the right-infinite sequence $s_ks_{k+1}\ldots$ is admissible. Denote the set of all admissible two-sided infinite sequences by $\Sigma_{adm}$. 
	
	The set $\Sigma_{adm}\subset\Sigma=\{0, 1\}^{\Z}$ inherits the topology of $\Sigma$ given by the metric
	$$d((s_i)_{i\in\Z},  (t_i)_{i\in\Z}):=
	\sum_{i\in\Z}\frac{|s_i-t_i|}{2^{|i|}}.$$
	Define the \emph{shift homeomorphism on symbolic sequences} 
	$\sigma_{\Sigma}\colon\Sigma\to\Sigma$ as 
	$$
	\sigma_{\Sigma}(\ldots s_{-2}s_{-1}.s_0s_1\ldots):=\ldots s_{-1}s_0.s_1s_2\ldots
	$$
	
	The continuum $X$ is homeomorphic to the space $\Sigma_{adm}/\!\!\sim$ (see Proposition 2 in \cite{ABCemb}), where $\sim$ is the equivalence relation on $\Sigma_{adm}$ given by
	$$s\sim t \Leftrightarrow \left\{
	\begin{array}{ll}
	\textrm{ either } s_i=t_i \textrm{ for every } i\in \Z,\\
	\textrm{ or if there exists } k\in\Z \textrm{ such that } s_i=t_i \textrm{ for all } i\neq k \textrm{ but } s_k\neq t_k \\
	\textrm{ and } s_{k+1}s_{k+2}\ldots=t_{k+1}t_{k+2}\ldots=\nu.
	\end{array}
	\right.
	$$ 
	Sequences of the form $0^{\infty}s_ks_{k+1}\ldots$, treated differently in the definitions above, correspond to the points from \black{ray} $\mathcal{C}$. By removing these sequences from the definition of $\Sigma_{adm}$, we get a space homeomorphic to the core $X'$. Shifts $\sigma$ and $\sigma_{\Sigma}$ are conjugated (see Theorem 2.5 in \cite{BrDi}). Thus we will from here onwards abuse the notation and denote both $\sigma$ and $\sigma_{\Sigma}$ by $\sigma$. 
	
	\black{The homeomorphism between $X$ and $\Sigma_{adm}/\!\!\sim$ is given by the following. A point $x\in X$ is identified with the equivalence class  $\bar{x}=\ovl{x}.\ovr{x}=(x_i)_{i\in\Z}\in\Sigma_{adm}/\!\!\sim$}  according to the following rule:
	$$x_i=\left\{
	\begin{array}{ll}
	0, &  \pi_i(x)\in[0, c],\\
	1, &  \pi_i(x)\in\f c, 1],
	\end{array}
	\right.
	$$
	for $i\leq 0$ and
	$$x_i=\left\{
	\begin{array}{ll}
	0, &  T^i(\pi_0(x))\in[0, c],\\
	1, &  T^i(\pi_0(x))\in\f c, 1],
	\end{array}
	\right.
	$$
	for $i\in\N$. If the ambiguity in the definition of $x_i$ happens more than once, then $c$ is periodic and we study the itinerary of the modified kneading sequence instead. That way for every $x\in X$ there are at most two corresponding identified itineraries. \black{If $x\in X$ has two different backward itineraries $\ovl{x_1}$ and $\ovl{x_2}$, then we denote by $\ovl x:=\{\ovl{x_1},\ovl{x_2}\}$.}

	An \emph{arc} is a homeomorphic image of the interval $[0,1]\subset \R$.
	A key fact for constructing embeddings in \cite{ABCemb} is that $X\simeq\Sigma_{adm}/\!\!\sim$ can be represented as the union of 
	{\em basic arcs} defined below. \black{A basic arc will be an arc or a point in $X$ consisting of all points that have the same corresponding backward itinerary $\ovl s$, with the exception of two possible endpoints with possibly two backward itineraries, one of which is $\ovl s$.}
	
	From now on, when we speak about left infinite sequences we omit minuses in indices and write $\overleftarrow{s}=\ldots s_{2}s_{1}$ for the sake of brevity.
	
	\begin{definition}\label{def:basicarc}
		Let $\overleftarrow{s}=\ldots s_{2}s_{1}\in\{0, 1\}^{\infty}$
		be an admissible left-infinite sequence. The set 
		$$
		A(\overleftarrow{s}):=\{x\in X: \ovl{s}\black{\in}\ovl{x}\}\subset X.
		$$
		is called a {\em basic arc}. 
	\end{definition}
	\black{It is not difficult to show that $A(\ovl s)$ is indeed an arc in $X$ (can be degenerate). See \eg \cite[Lemma 1]{Br1}.}
	
	\begin{remark}
		\black{Later in the file,} when it will be clear from the context that we refer to the basic arc $A(\ovl{s})$ we will often abbreviate notation and write only $\ovl{s}$.
	\end{remark}
	
	To explain two quantities on basic arcs with which we will often work we need the following definition.
	\black{
		\begin{definition}\label{def:kappa}
			For $\nu=c_1c_2\ldots$ we define
			$$\kappa:=\min\{i-2: i\geq 3, c_i=1\}.$$
		\end{definition}
		\begin{remark}
			Definition~\ref{def:kappa} says that the beginning of the kneading sequence is  $\nu=10^{\kappa}1\ldots$. If $\kappa=1$, since we restrict to non-renormalizable tent maps, we can conclude even more, namely that $\nu=10(11)^n0\ldots$, for some $n\in\N$.
		\end{remark}
	}
	
	For every \black{admissible left-infinite sequence $\ovl s\neq \ovl 0$} we define two quantities as follows:
	\begin{eqnarray*}
		\tau_L(\overleftarrow{s})
		&:=&\sup\{n>1 : s_{n-1}\ldots s_{1}=c_1c_2\ldots c_{n-1}, \#_1(c_1\ldots c_{n-1})\textrm{ odd}\}, \\
		\tau_R(\overleftarrow{s})
		&:=&\sup\{n\geq 1 : s_{n-1}\ldots s_{1}=c_1c_2\ldots c_{n-1}, \#_1(c_1\ldots c_{n-1})\textrm{ even}\}.
	\end{eqnarray*}
	\black{In the definition of $\tau_R$ we allow $n=1$, so it follows immediately that supremum runs over a non-empty set. On the other hand, supremum is well defined  for $\tau_L$ as well. Namely, if $s_1=1$, then $s_1=c_1$ and $\#_1(c_1)$ is odd. In case $s_1=0$ we find the smallest $n>2$ so that $s_{n-1}=1$, which indeed exists since $\ovl s\neq \ovl 0$. If $n>\kappa+2$ the word $s_{n-2}\ldots s_1=0^{n-2}$ is not admissible. Thus $n\leq\kappa+2$ and $s_{n-1}\ldots s_1=c_1\ldots c_{n-1}=10\ldots 0$. }
	\black{Now we restate some lemmas from \cite{Br1} in our setting.}
	\begin{lemma}\label{lem:first}\cite[Lemma 2 \black{and Lemma 3}]{Br1}
		Let $\overleftarrow{s} \in \{ 0,1\}^{\infty}$ be an admissible left-infinite sequence
		such that \black{$\ovl s\neq \ovl 0$}. Then
		$$\max \pi_0(A(\ovl s))=\inf\{T^n(c): s_{n-1}\ldots s_1=c_1\ldots c_{n-1}, \#_1(c_1\ldots c_{n-1}) \black{\text{ even}\}},$$
		$$\min \pi_0(A(\ovl s))=\sup\{T^n(c): s_{n-1}\ldots s_1=c_1\ldots c_{n-1}, \#_1(c_1\ldots c_{n-1}) \black{\text{ odd}\}}.$$
		Specially, if $\tau_L(\overleftarrow{s}), \tau_R(\overleftarrow{s})<\infty$,
		then 
		$$
		\pi_0(A(\overleftarrow{s}))=[T^{\tau_L(\overleftarrow{s})}(c), T^{\tau_R(\overleftarrow{s})}(c)].
		$$
		If $\overleftarrow{t}\in\{0, 1\}^{\infty}$ is another
		admissible left-infinite sequence such that
		$s_i=t_i$ for all $i>0$ except 
		for $i=\tau_R(\overleftarrow{s})=\tau_R(\overleftarrow{t})$ (or $i=\tau_L(\overleftarrow{s})=\tau_L(\overleftarrow{t})$), 
		then $A(\overleftarrow{s})$ and $A(\overleftarrow{t})$ have a common endpoint projecting to the right (left) endpoint of $\pi_0(A(\ovl s))$.
	\end{lemma}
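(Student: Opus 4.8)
The plan is to read both extrema directly off the backward dynamics of $T$. Write $g_0:=(T|_{[0,c]})^{-1}$ and $g_1:=(T|_{[c,1]})^{-1}$ for the two inverse branches, so that $g_0$ is increasing, $g_1$ is decreasing, and both are defined on $[0,T(c)]=\mathrm{Im}\,T$. Since the coordinates of $X$ are indexed by $i\le 0$, a point $x\in A(\ovl{s})$ is completely determined by $t:=\pi_0(x)$ together with $\ovl{s}$: one has $\pi_{-1}(x)=g_{s_1}(t)$, $\pi_{-2}(x)=g_{s_2}(\pi_{-1}(x))$, and so on, the only freedom being whether these iterated preimages are all defined (each intermediate value must lie in $[0,T(c)]$). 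Setting $K_n:=\{t: g_{s_n}\circ\cdots\circ g_{s_1}(t)\text{ is defined}\}$, each $K_n$ is a closed subinterval of $[0,T(c)]$ obtained by pulling back half-line conditions through monotone maps, the $K_n$ are nested, and because $A(\ovl{s})$ is a compact arc we get $\pi_0(A(\ovl{s}))=\bigcap_{n\ge 0}K_n$, a closed interval. It thus suffices to locate its two endpoints.

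Next I would identify those endpoints. An endpoint of $\pi_0(A(\ovl{s}))$ arises exactly when the backward orbit of $t$ reaches the critical point, i.e.\ $\pi_{-n}(x)=c$ for some $n$; then $\pi_{-(n-1)}(x)=T(c),\ \pi_{-(n-2)}(x)=T^2(c),\ \ldots,\ \pi_0(x)=T^n(c)$, and reading off the symbols of $T(c),\ldots,T^{n-1}(c)$ forces $s_{n-1}\ldots s_1=c_1\ldots c_{n-1}$, which is precisely the index set in the statement. To decide whether such a value $T^n(c)$ bounds the arc from above or below I track orientation: moving $\pi_{-n}(x)$ off $c$ makes $\pi_{-(n-1)}(x)$ drop strictly below $T(c)$, and each further application of $T$ preserves or reverses the sign of the deviation according as $c_k=0$ or $c_k=1$. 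A one-line induction gives $\mathrm{sign}\big(\pi_0(x)-T^n(c)\big)=-(-1)^{\#_1(c_1\ldots c_{n-1})}$, so $T^n(c)$ is a local upper bound when $\#_1(c_1\ldots c_{n-1})$ is even and a local lower bound when it is odd (in the periodic case one argues with the modified kneading sequence). Taking the infimum over the even indices and the supremum over the odd indices gives the two displayed formulas, and when $\tau_L(\ovl{s}),\tau_R(\ovl{s})<\infty$ the endpoints of the nested intervals $K_n$ stabilise at the deepest admissible turning of each parity, recorded by $\tau_L$ and $\tau_R$, whence $\pi_0(A(\ovl{s}))=[T^{\tau_L(\ovl{s})}(c),T^{\tau_R(\ovl{s})}(c)]$.

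The main obstacle is the tightness half: showing that the bounds produced by the orientation count are actually attained by genuine points of $X$, and not merely valid bounds. Concretely, for each matching index $n$ one must produce a point of $A(\ovl{s})$ with $\pi_{-n}=c$, i.e.\ check that the backward orbit through $c$ extends to a bi-infinite admissible itinerary so that $K_n$ does not collapse prematurely; this is where admissibility of $\ovl{s}$ and the parity-lexicographic order enter, guaranteeing that the iterated preimages stay defined and on the prescribed branches. Compactness of $A(\ovl{s})$ then upgrades the infimum and supremum to a genuine minimum and maximum.

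Finally, for the common-endpoint claim I would use that at the configuration $\pi_{-\tau_R}(x)=c$ the symbol $s_{\tau_R}$ is genuinely ambiguous, since $c$ is the shared boundary of the two branches. Hence the point realizing the right endpoint $T^{\tau_R}(c)$ carries two backward itineraries differing exactly in coordinate $\tau_R$, namely $\ovl{s}$ and $\ovl{t}$; by the equivalence $\sim$ this single point lies in both $A(\ovl{s})$ and $A(\ovl{t})$ and is an endpoint of each, projecting to their common right endpoint. The case $i=\tau_L$ is identical, at the left endpoint with odd parity. It remains only to note that $\ovl{t}$ is again admissible with $\tau_R(\ovl{t})=\tau_R(\ovl{s})$, which is immediate since $x$ is a bona fide point of $X$ carrying both itineraries.
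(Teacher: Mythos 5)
The paper offers no proof of this lemma: it is quoted (in the paper's notation) from Bruin's Lemmas~2 and~3 of \cite{Br1}, so there is nothing in-paper to compare against. Your argument reconstructs essentially the proof from that source --- realizing $\pi_0(A(\ovl s))$ as the nested intersection of the closed intervals $K_n$ on which the composed inverse branches are defined, noting that each active cut occurs at a parameter whose backward orbit reaches $c$, hence at $T^n(c)$ with $s_{n-1}\ldots s_1=c_1\ldots c_{n-1}$, and using the parity of $\#_1(c_1\ldots c_{n-1})$ to decide whether the cut bounds from above or below --- and it is correct in outline, including the treatment of the common endpoint via the symbol ambiguity at $c$. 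Two places deserve tightening. First, the claim that for each matching index $n$ one must exhibit a point of $A(\ovl s)$ with $\pi_{-n}=c$ is both unnecessary and, in general, false: for a matching even index $n$ with $T^n(c)>T^{\tau_R(\ovl s)}(c)$ no such point exists. What is actually needed is that every matching even index yields a \emph{global} upper bound $T^n(c)$ on $\pi_0(A(\ovl s))$ (your sign computation, run on the whole lap of $T^{n-1}$ containing both $\pi_{-(n-1)}(A(\ovl s))$ and $T(c)$, rather than only ``locally'' at the fold), together with the observation that the right endpoints of the $K_n$ all belong to the index set, so that $\max\pi_0(A(\ovl s))=\inf_n\max K_n$ equals the displayed infimum; nonemptiness of $A(\ovl s)$ and compactness then give attainment. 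Second, identifying this infimum with $T^{\tau_R(\ovl s)}(c)$, i.e.\ with the \emph{largest} matching even index, requires checking that $n<m$ both matching with even parity forces $T^m(c)\le T^n(c)$; this follows in one line because $T(c)$ and $T^{m-n+1}(c)$ then lie in the same lap of the increasing map $T^{n-1}$, but it is not automatic from the word ``stabilise.''
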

	\black{
		\begin{example}
			Assume $\nu=10010\ldots$. Then $\ovl s=\ldots 001001$ is an admissible left-infinite sequence. Note that $\tau_L(\ovl s)=2$ and $\tau_R(\ovl s)=5$. So $\pi_0(A(\ovl s))=[T^2(c), T^5(c)]$, see Figure~\ref{fig:basic}. 
		\end{example}
		\begin{figure}[!ht]
			\centering
			\begin{tikzpicture}[scale=3]
			\draw (0,0.75)--(0.8,0.75);
			\draw[dashed, domain=90:270] plot ({0.125*cos(\x)}, {0.875+0.125*sin(\x)});
			\draw[dashed, domain=270:450] plot ({0.8+0.125*cos(\x)}, {0.625+0.125*sin(\x)});
			\node[circle,fill, inner sep=1] at (0,0.75){};
			\node[circle,fill, inner sep=1] at (0,1){};
			\node[circle,fill, inner sep=1] at (0.8,0.75){};
			\node[circle,fill, inner sep=1] at (0.8,0.5){};
			\node at (0,1.1) {\tiny $\ldots 001011.0010\ldots$};
			\node at (0,0.65) {\tiny $\ldots 001001.0010\ldots$};
			\node at (0.8,0.4) {\tiny $\ldots 011001.0\ldots$};
			\node at (0.8,0.85) {\tiny $\ldots 001001.0\ldots$};
			\end{tikzpicture}
			\black{\caption{An example of a basic arc. Here it consists of an open arc in $X$ consisting of all points with the same backward itinerary and two pairs of identified points. Identifications are represented by dashed semi-circles. Itineraries of joined points are sketched in the figure.}}
			\label{fig:basic}
		\end{figure}
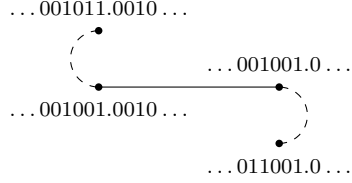
		\begin{remark}
			Note that the basic arc $\ovl s=\ovl 0$ needs to be taken special care of because $\tau_L(\ovl 0)$ is not well defined. However, it is not difficult to see that $\pi_0(A(\ovl 0))=[0,T(c)]$.
		\end{remark}}
	\black{We recall the symbolic characterization of endpoints in $X$. It states that endpoints of $X$ are endpoints of basic arcs which are contained in exactly one basic arc, see Figure~\ref{fig:endpts}. Note that the characterization distinguishes two cases, when $x\in X$ is such that $\pi_i(x)\neq c$ for every $i< 0$ and when there is $i<0$ such that $\pi_i(x)=c$. In the second case we use the fact that $x\in X$ is an endpoint of $X$ if and only if $\sigma^i(x)$ is an endpoint of $X$ for every $i\in \N$.}
	\black{
		\begin{proposition}\label{prop:endpt}\cite[Proposition 2]{Br1}
			Let $x\in X$ be such that $\pi_i(x)\neq c$ for all $i<0$. Then $x$ has a unique backward itinerary $\ovl x$. Point $x$ is an endpoint of $X$ if and only if $\tau_L(\ovl x)=\infty$ and $\pi_0(x)=\min\pi_0(A(\ovl x))$ (or $\tau_R(\ovl x)=\infty$ and $\pi_0(x)=\max\pi_0(A(\ovl x))$). If $x\in X$ is such that $\pi_i(x)=c$ for some $i<0$, there exists $j\in\N$ such that the backward itinerary of $\sigma^{j}(x)$ is unique and we apply the first claim to $\sigma^{j}(x)$ in this case.
		\end{proposition}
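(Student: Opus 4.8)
The plan is to first dispose of the unique‑itinerary assertion, then reduce the general situation to it via the shift, and finally prove the characterisation in two directions, with the sufficiency direction being the real work.

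First I would record the easy facts. When $\pi_i(x)\neq c$ for all $i<0$, each backward symbol is read off unambiguously from the side of $c$ on which $\pi_i(x)$ lies, so $\ovl x$ is unique; this is immediate from the definition of the itinerary. For the last sentence of the statement I would use that $\sigma$ is a homeomorphism of $X$, hence preserves both the endpoint property and the inclusion relation among subcontinua. If $\pi_i(x)=c$ for some $i<0$ while $c$ is not periodic, then $T^m(c)=c$ is impossible for $m\geq 1$, so the value $c$ occurs at a single negative coordinate, and a suitable power of $\sigma$ moves it out of the backward tail; if $c$ is periodic, the modified–kneading–sequence convention already makes $\ovl x$ unique. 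In either case it suffices to prove the equivalence assuming $\ovl x$ is the unique backward itinerary of $x$.

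For necessity, suppose $x$ is an endpoint of $X$. Since all points of $A(\ovl x)$ share the backward itinerary $\ovl x$, they are separated by $\pi_0$, so $\pi_0$ restricts to a homeomorphism of $A(\ovl x)$ onto the interval $\pi_0(A(\ovl x))$. If $\pi_0(x)$ were interior to this interval, then $x$ would be interior to the arc $A(\ovl x)$, and writing $A(\ovl x)=X_1\cup X_2$ as the two subarcs on either side of $x$ yields subcontinua with $x\in X_1\cap X_2$ and neither contained in the other, contradicting the endpoint property. Hence $\pi_0(x)\in\{\min\pi_0(A(\ovl x)),\max\pi_0(A(\ovl x))\}$; assume $\pi_0(x)=\max\pi_0(A(\ovl x))$, the other case being symmetric with $\tau_L$ in place of $\tau_R$. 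If $\tau_R(\ovl x)<\infty$, then by Lemma~\ref{lem:first} the maximum equals $T^{\tau_R(\ovl x)}(c)$, and flipping the symbol at position $\tau_R(\ovl x)$ produces an admissible sequence $\ovl t$ with $\tau_R(\ovl t)=\tau_R(\ovl x)$ whose basic arc meets $A(\ovl x)$ exactly at $x$; then $A(\ovl x)\cup A(\ovl t)$ is an arc having $x$ in its interior, and the two basic arcs again contradict the endpoint property. Therefore $\tau_R(\ovl x)=\infty$. The one routine point is the admissibility of $\ovl t$, which I would deduce from the admissibility of $\ovl x$ together with the combinatorics of the defining condition of $\tau_R$.

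The hard part is sufficiency. Assume $\tau_R(\ovl x)=\infty$ and $\pi_0(x)=\max\pi_0(A(\ovl x))$. Unwinding the definition, $\tau_R=\infty$ forces $s_j=c_j$ for all $j\geq 1$, so $\ovl x=\ldots c_3c_2c_1$ is the reverse kneading sequence, and by Lemma~\ref{lem:first} we have $\pi_0(x)=\inf\{T^n(c):\#_1(c_1\ldots c_{n-1})\text{ even}\}$, a value approached by right endpoints $T^{n_k}(c)$ of neighbouring basic arcs accumulating on $x$; geometrically the continuum folds infinitely at $x$, and no single arc emanates from $x$ on the side $\pi_0>\pi_0(x)$. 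I expect this to be the main obstacle, because I must exclude every pair of non‑nested subcontinua through $x$, and — crucially — level‑wise nesting of the projections $\pi_i(\cdot)$ does not by itself force nesting of subcontinua in an inverse limit, so a projection‑only argument will not suffice. The route I would take is the characterisation of endpoints in chainable continua by chain covers: $x$ is an endpoint precisely when it lies in an end link of arbitrarily fine chain covers of $X$. I would build such covers by pulling a fine partition of $[0,1]$ back through finitely many coordinates $\pi_0,\pi_{-1},\ldots,\pi_{-N}$, so that the links become cylinders of basic arcs, and then show that the accumulation of turning points forced by $\tau_R=\infty$ keeps the link containing $x$ at the end of the chain at every scale $N$. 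The same picture re‑proves necessity in this language: when $\tau_R(\ovl x)<\infty$, the merging sibling basic arc of the previous paragraph places $x$ into an interior, two‑sided link once the scale is fine enough.
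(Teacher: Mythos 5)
The paper offers no proof of this statement---it is imported verbatim from \cite[Proposition 2]{Br1}---so your argument has to stand entirely on its own, and its sufficiency half rests on a false premise. You claim that $\tau_R(\ovl x)=\infty$ ``forces $x_j=c_j$ for all $j$'', so that $\ovl x$ is the reversed kneading sequence. That is a misreading of the definition: the condition $x_{n-1}\ldots x_1=c_1c_2\ldots c_{n-1}$ pairs $x_j$ with $c_{n-j}$, not with $c_j$. Hence $\tau_R(\ovl x)=\infty$ says that arbitrarily long terminal blocks of $\ovl x$ are \emph{reversed} initial words of $\nu$, read off along a sequence of return times; different admissible choices of that sequence produce different $\ovl x$, and in general there are many of them (already for periodic $c$ of period $n$ there are $n$ endpoints with $n$ distinct backward itineraries, and for recurrent non-periodic $c$ the set of endpoints can be uncountable), whereas there is exactly one reversed kneading sequence. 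So the reduction of sufficiency to a single special itinerary is wrong, and everything built on it collapses. Moreover, even granting a corrected description of $\ovl x$, the sufficiency direction is only announced, not proved: the equivalence between the subcontinuum definition of endpoint used in the paper and the ``end link of arbitrarily fine chains'' characterisation is itself a nontrivial fact you neither prove nor cite, and the key combinatorial step---that the folds produced by $\tau_R(\ovl x)=\infty$ keep $x$ in an end link at every scale---is exactly the content that needs an argument and is left as a promise.

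Two further remarks. First, your reason for abandoning the projection route is not valid: if $H,K$ are subcontinua of the inverse limit and $\pi_{-n}(H)\subseteq\pi_{-n}(K)$ for every $n$, then $H\subseteq K$ (lift a point of $H$ coordinatewise into $K$ and use that $K$ is closed); the only genuine issue is to show that the levelwise inclusions all point the same way, and the infinitely many folding levels supplied by $\tau_R(\ovl x)=\infty$ are precisely what settles that---this is how the cited proof in \cite{Br1} goes. Second, the necessity direction and the shift reduction are acceptable in outline, but note that one must shift so as to push the coordinate with $\pi_i(x)=c$ \emph{out of the backward tail} (i.e.\ use $\sigma^{-j}$ in the paper's convention for $\sigma$), and that under the standing hypothesis $\pi_i(x)\neq c$ for all $i<0$ the case $\tau_R(\ovl x)<\infty$ with $\pi_0(x)=\max\pi_0(A(\ovl x))$ already forces $\pi_{-\tau_R(\ovl x)}(x)=c$ (compare the two points $T(\pi_{-\tau_R(\ovl x)}(x))$ and $T(c)$, which lie in the same lap of $T^{\tau_R(\ovl x)-1}$ and have the same image), so the sibling-arc construction there is treating a vacuous case and also quietly assumes the flipped sequence is admissible and its basic arc non-degenerate.
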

		\begin{figure}[!ht]
			\centering
			\begin{tikzpicture}[scale=3]
			\draw (0,1)--(1,1);
			\node[circle,fill, inner sep=1] at (0,1){};
			\node[circle,fill, inner sep=0] at (0,0.75){};
			\node at (-0.1,1) {\small $x$};
			\end{tikzpicture}
			\hspace{10pt}
			\begin{tikzpicture}[scale=3]
			\draw (0,1)--(1,1);
			\draw[dashed, domain=90:270] plot ({0.125*cos(\x)}, {0.875+0.125*sin(\x)});
			\node[circle,fill, inner sep=1] at (0,1){};
			\node[circle,fill, inner sep=1] at (0,0.75){};
			\node at (-0.2,0.85) {\small $x$};
			\end{tikzpicture}
			\hspace{10pt}
			\begin{tikzpicture}[scale=3]
			\node[circle,fill, inner sep=1] at (0,1){};
			\node[circle,fill, inner sep=0] at (0,0.75){};
			\node at (0.1,1) {\small $x$};
			\end{tikzpicture}
			\hspace{10pt}
			\begin{tikzpicture}[scale=3]
			\draw[dashed, domain=90:270] plot ({0.125*cos(\x)}, {0.875+0.125*sin(\x)});
			\node[circle,fill, inner sep=1] at (0,1){};
			\node[circle,fill, inner sep=1] at (0,0.75){};
			\node at (-0.2,0.85) {\small $x$};
			\end{tikzpicture}
			\black{\caption{Endpoints of $X$, all possibilities. Dashed semi-circles connect points  identified under $\sim$.}}
			\label{fig:endpts}
		\end{figure}
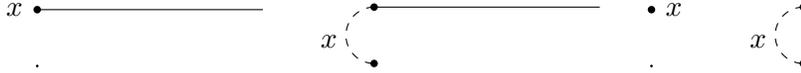
	}
	\black{
		\section{Construction of planar embeddings $\mathcal{E}$}\label{ABCembed}\label{sec:embed}
		In this section we recall the construction of planar embeddings of $X$ constructed in \cite{ABCemb}.}
	
	\black{
		Roughly speaking (we appoint the reader to Figure~\ref{fig:embed}), we first embed the Cantor set in the vertical segment and code points of the Cantor set as left-infinite sequences of 0s and 1s respecting some given order. For every admissible left-infinite sequence $\ovl s$ (corresponding to a point in an embedded Cantor set) we draw a horizontal arc whose projection on $x$-axis is exactly $\pi_0(A(\ovl s))$ and the projection on $y$-axis is exactly the point coded by $\ovl s$. Then we join certain pairs of identified endpoints of basic arcs with semi-circles. If the order on the Cantor set is properly defined, the semi-circles will not intersect, and the figure will be homeomorphic to $X$. The embeddings will be precisely defined in the rest of this section.}

	Fix an admissible left-infinite sequence $L=\ldots l_{2}l_{1}\in\{0, 1\}^{\infty}$. Basic arcs are embedded in the plane with respect to any chosen $L$ as a subset of $\f 0, 1]\times C$, where $C\subset \f 0, 1]$ denotes the Cantor set
	$$C:=[0,1]\setminus \bigcup^{\infty}_{m=1} \bigcup^{3^{m-1}-1}_{k=0}(\frac{3k+1}{3^{m}},\frac{3k+2}{3^m}).$$

	Recall from Lemma~\ref{lem:first} that we know how to compute $\pi_0(A(\ovl s))$ for every admissible $\ovl s$. Every basic arc $A(\overleftarrow{s})$ is embedded as the horizontal arc $\pi_0(A(\overleftarrow{s}))\times\{\psi_L(\overleftarrow{s})\}$, where 
	$$
	\psi_L(\overleftarrow{s}) := \sum_{i=1}^\infty (-1)^{\#_1(l_{i}\ldots l_{1})-\#_1(s_{i}\ldots s_{1})} 3^{-i} + \frac{1}{2} ,
	$$
	This implies a linear order on the basic arcs in which $A(L)$ is the largest. The precise definition is given by:
	
	\begin{definition} \black{Let $L=\ldots l_2l_1\in \{0,1\}^{\infty}$ be fixed.}
		Let $\overleftarrow{s}, \overleftarrow{t}\in\{0, 1\}^{\infty}$ and let $k\in \N$ be the smallest natural number such that $s_{k}\neq t_{k}$. Then 
		\begin{equation}\label{eq:L}
		\overleftarrow{s}\prec_L\overleftarrow{t} \Leftrightarrow 
		\begin{cases}
		t_{k}=l_{k} \text{ and } \#_1(s_{k-1}\ldots s_{1})-\#_1(l_{k-1}\ldots l_{1}) \text{ even, or }\\
		s_{k}=l_{k} \text{ and } \#_1(s_{k-1}\ldots s_{1})-\#_1(l_{k-1}\ldots l_{1}) \text{ odd.}
		\end{cases}
		\end{equation}
	\end{definition}
	
	\black{
		\begin{example}\label{ex:0}
			Assume that the ordering on left-infinite sequences $\{0,1\}^{\infty}$ (\ie on the Cantor set) is determined by $L=\ldots 010101$, see Figure~\ref{fig:coding}. Then $L$ is the largest sequence and $\ldots 010100$ is the smallest. However, if $X$ is a tent inverse limit with bonding map which is not $T_2$, then there will exists sequences which are not admissible. For example let \eg $\nu=(101)^{\infty}$. Then $\ldots 010100$ (which would be, if admissible, the itinerary of the smallest basic arc in the ordering along the vertically embedded Cantor set) is not admissible and finding the smallest admissible left-infinite sequence requires more work. In the example of $\nu=(101)^{\infty}$ the word is admissible if an only if it does not contain two consecutive zeros. The smallest admissible left-infinite sequence will be $S=\ldots 101010$, see Example~\ref{ex:1} for further explanation.
		\end{example}
	}
	\begin{figure}[!ht]
		\unitlength=1mm
		\centering
		\black{\begin{picture}(8,87)(35,-6)
			\put(20,0){\line(0,1){81}}
			\put(30,0){\line(0,1){27}}
			\put(30,54){\line(0,1){27}}
			\put(42,0){\line(0,1){9}}
			\put(42,18){\line(0,1){9}}
			\put(42,54){\line(0,1){9}}
			\put(42,72){\line(0,1){9}}
			\put(55,0){\line(0,1){3}}
			\put(55,6){\line(0,1){3}}
			\put(55,18){\line(0,1){3}}
			\put(55,24){\line(0,1){3}}
			\put(55,54){\line(0,1){3}}
			\put(55,60){\line(0,1){3}}
			\put(55,72){\line(0,1){3}}
			\put(55,78){\line(0,1){3}}	
			\put(31,13){\small $\ldots 0$}
			\put(31,66){\small $\ldots 1$}
			\put(43,3){\small $\ldots 00$}
			\put(43,21){\small $\ldots 10$}
			\put(43,57){\small $\ldots 11$}
			\put(43,75){\small $\ldots 01$}
			\put(56,0.5){\scriptsize $\ldots 100$}
			\put(56,6.5){\scriptsize $\ldots 000$}
			\put(56,18){\scriptsize $\ldots 010$}
			\put(56,24.5){\scriptsize $\ldots 110$}
			\put(56,54.5){\scriptsize $\ldots 111$}
			\put(56,60){\scriptsize $\ldots 011$}
			\put(56,72.5){\scriptsize $\ldots 001$}
			\put(56,78.5){\scriptsize $\ldots 101$}
			\end{picture}}
		\black{\caption{Coding the Cantor set with respect to $\protect L=\ldots 010101$}}
		\label{fig:coding}
	\end{figure}
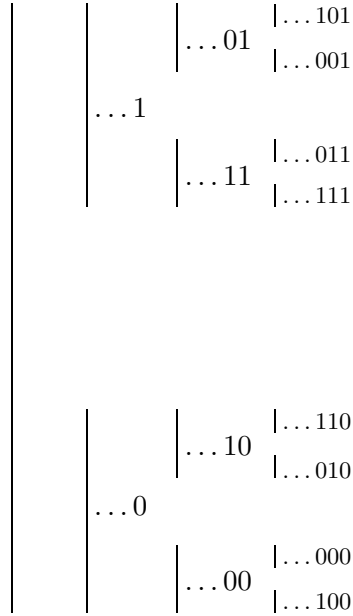
	
	\black{
		We have embedded basic arcs as horizontal segments ordered along the Cantor set. Now we show how to embed $X$ in the plane. We want to find points in embedded horizontal arcs which are identified under $\sim$, see Lemma~\ref{lem:first}.
		If $(s_i)_{i\in\Z}, (t_i)_{i\in\Z}\in\Sigma_{adm}$ are such that $s_i=t_i$ for all $i\in\Z$ except for exactly one $k<0$, and $s_{-k+1}s_{-k+2}\ldots=t_{-k+1}t_{-k+2}\ldots=\nu$, then $A(\ovl s)$ and $A(\ovl t)$ contain endpoints which are identified under $\sim$. If $\#_1(s_{-k+1}\ldots s_{-1})$ is odd (even), then $\tau_L(\ovl s)=\tau_L(\ovl t)$ ($\tau_R(\ovl s)=\tau_R(\ovl t)$) and thus $\pi_0(A(\ovl s))$ and $\pi_0(A(\ovl t))$ have a common endpoint on the left (right). We join embedded $A(\ovl s)$ and $A(\ovl t)$ by a semi-circle on the left (right). See Figure~\ref{fig:embed}.}
	\black{
		\begin{figure}[!ht]
			\begin{tikzpicture}[thick, scale=0.6]
			\draw (1,1)--(9,1);
			\draw (1,3)--(9,3);
			\draw[domain=270:450] plot ({9+cos(\x)},{2+sin(\x)});
			\draw (3,2)--(12,2);
			\end{tikzpicture}
			\hspace{0.5cm}
			\begin{tikzpicture}[thick, scale=0.6]
			\draw (1,1)--(9,1);
			\draw (1,3)--(9,3);
			\draw[domain=270:450] plot ({9+1*cos(\x)},{2+1*sin(\x)});
			\draw (1,2)--(9,2);
			\draw (1,4)--(9,4);
			\draw[domain=270:450] plot ({9+1*cos(\x)},{3+1*sin(\x)});
			\end{tikzpicture}
			\black{\caption{``Self-intersections'' in representations of $X$ that are excluded by Proposition~3 from \cite{ABCemb}.}}
			\label{case1}
		\end{figure}
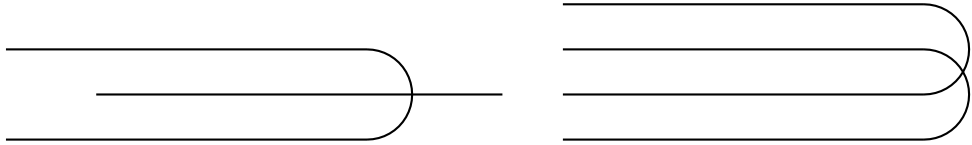
	}

	\black{
		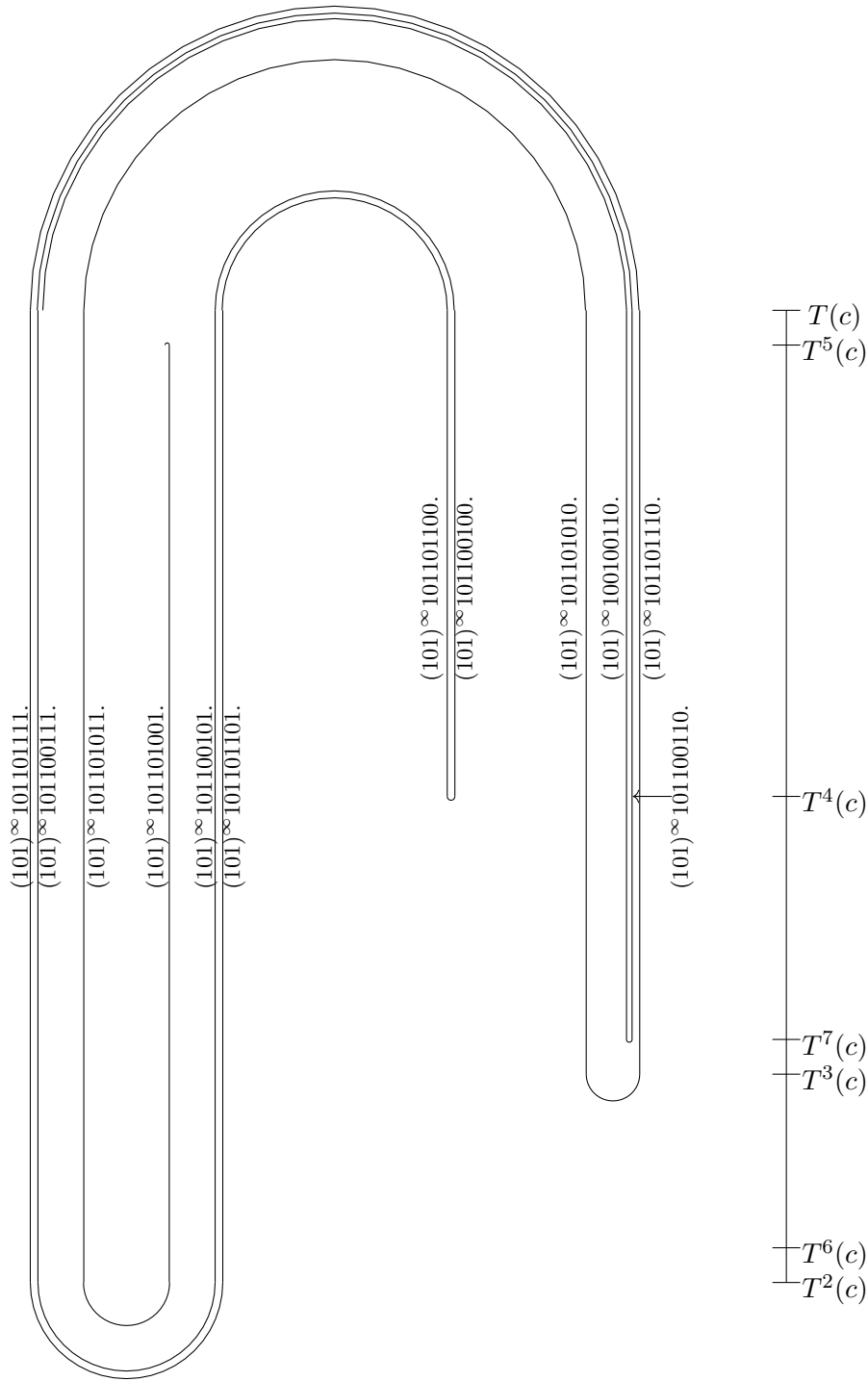
\begin{figure}[!ht]
			\centering
			\begin{tikzpicture}[scale=0.96,rotate=90]	
			\draw (0,-2)--(14,-2);
			\draw (0,-2.2)--(0,-1.8); \node at (-0.1,-2.7){\small $T^2(c)$};
			\draw (0.5,-2.2)--(0.5,-1.8); \node at (0.4,-2.7){\small $T^6(c)$};
			\draw (3,-2.2)--(3,-1.8); \node at (2.9,-2.7){\small $T^3(c)$};
			\draw (3.5,-2.2)--(3.5,-1.8); \node at (3.4,-2.7){\small $T^7(c)$};
			\draw (7,-2.2)--(7,-1.8); \node at (6.9,-2.7){\small $T^4(c)$};
			\draw (13.5,-2.2)--(13.5,-1.8); \node at (13.4,-2.7){\small $T^5(c)$};
			\draw (14,-2.2)--(14,-1.8); \node at (13.9,-2.7){\small $T(c)$};	
			\draw (3,0.11)--(14,0.11);
			\draw (3.5,0.22)--(14,0.22);
			\draw (3.5,0.3)--(14,0.3);
			\draw (3,0.88)--(14,0.88);
			\draw (7,2.77)--(14,2.77);
			\draw (7,2.88)--(14,2.88);
			\draw (0,6.11)--(14,6.11);
			\draw (0,6.22)--(14,6.22);
			\draw (0,6.88)--(13.5,6.88);
			\draw (0,8.11)--(14,8.11);
			\draw (0,8.77)--(14,8.77);
			\draw (0,8.88)--(14,8.88);	
			\draw[domain=270:450] plot ({14+4.38*cos(\x)}, {4.5+4.38*sin(\x)});
			\draw[domain=270:450] plot ({14+4.28*cos(\x)}, {4.5+4.28*sin(\x)});
			\draw[domain=270:450] plot ({14+4.2*cos(\x)}, {4.5+4.2*sin(\x)});
			\draw[domain=270:450] plot ({14+3.61*cos(\x)}, {4.5+3.61*sin(\x)});
			\draw[domain=270:450] plot ({14+1.72*cos(\x)}, {4.5+1.72*sin(\x)});
			\draw[domain=270:450] plot ({14+1.62*cos(\x)}, {4.5+1.62*sin(\x)});
			\draw[domain=270:450] plot ({13.5+0.03*cos(\x)}, {6.91+0.03*sin(\x)});	
			\draw[domain=90:270] plot ({0.62*cos(\x)}, {7.495+0.62*sin(\x)});
			\draw[domain=90:270] plot ({1.275*cos(\x)}, {7.495+1.275*sin(\x)});
			\draw[domain=90:270] plot ({1.385*cos(\x)}, {7.495+1.385*sin(\x)});
			\draw[domain=90:270] plot ({7+0.055*cos(\x)}, {2.825+0.055*sin(\x)});
			\draw[domain=90:270] plot ({3+0.385*cos(\x)}, {0.495+0.385*sin(\x)});
			\draw[domain=90:270] plot ({3.5+0.04*cos(\x)}, {0.26+0.04*sin(\x)});	
			\node[rotate=90] at (7,9){\scriptsize $(101)^{\infty}101101111.$};
			\node[rotate=90] at (7,8.6){\scriptsize $(101)^{\infty} 101100111.$};
			\node[rotate=90] at (7,7.9){\scriptsize $(101)^{\infty} 101101011.$};
			\node[rotate=90] at (7,7.05){\scriptsize $(101)^{\infty} 101101001.$};
			\node[rotate=90] at (7,6.35){\scriptsize $(101)^{\infty} 101100101.$};
			\node[rotate=90] at (7,5.95){\scriptsize $(101)^{\infty} 101101101.$};
			\node[rotate=90] at (10,3.1){\scriptsize $(101)^{\infty} 101101100.$};
			\node[rotate=90] at (10,2.6){\scriptsize $(101)^{\infty} 101100100.$};
			\node[rotate=90] at (10,1.1){\scriptsize $(101)^{\infty} 101101010.$};
			\node[rotate=90] at (10,-0.1){\scriptsize $(101)^{\infty} 101101110.$};
			\node[rotate=90] at (10,0.5){\scriptsize $(101)^{\infty} 100100110.$};
			\node[rotate=90] at (7,-0.5){\scriptsize $(101)^{\infty} 101100110.$};
			\draw[->] (7,-0.35)--(7,0.21);	
			\end{tikzpicture}
			\black{\caption{The planar representation of an arc in $\mathcal{U}_{(101)^{\infty}}\subset X$ with the corresponding kneading sequence $\protect\nu=100110010\ldots$. 
					The ordering on basic arcs is such that the basic arc coded by $\protect L=1^{\infty}.$ is the largest. Figure is taken from \cite{ABCemb}.}}
			\label{fig:embed}
		\end{figure}
	}

	\black{
		Proposition~3 from \cite{ABCemb} shows that representation of $X$ with basic arcs connected by semi-circles drawn as above will not intersect other horizontal arcs or semi-circles, see Figure~\ref{case1}. Moreover,  Lemma~5 and Lemma~6 in \cite{ABCemb} show that the space consisting of horizontal arcs and semi-circles is homeomorphic to $\Sigma_{adm}/\!\!\sim$ and thus gives an embedding $\phi_L$ of $X$ in the plane. 
}	

	Throughout the paper, $L$ will denote the left-infinite sequence of the largest basic arc which determines the planar embedding $\phi_L$ of $X$ by the rules in the equation (\ref{eq:L}).
	Let us fix the inverse limit space $X$.
	Denote by $\mathcal{E}$ the family of all embeddings of $X$ constructed in \cite{ABCemb}, \ie with respect to all admissible tails $L$ and refer to them as $\mathcal{E}$-embeddings. From now onwards we think of $X$ as a planar continuum obtained by an $\mathcal{E}$-embedding of $\Sigma_{adm}/\!\!\sim$ described above. 
	\black{\section{Arc-components}\label{sec:arc-comp}}
	
	We want to describe the sets of accessible points of embedded $X$, focusing primarily on the (fully) accessible arc-components. Since the approach in this study is mostly symbolic, we need to obtain a symbolic description of an arc-component in $X$. Recall that $\mathcal{U}_x$ denotes the arc-component of $x\in X$. 
	
	\begin{definition}\label{def:spiral}
		We say that a point $x\in X$ is a \emph{spiral point} if there exists a ray $R\subset X$ such that $x$ is an endpoint of $R$ and \black{an arc} $\f x, y]\subset R$ contains infinitely many basic arcs for every $x\neq y\in R$.
	\end{definition}
	
	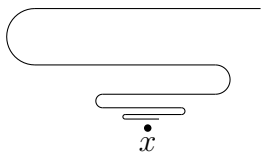
\begin{figure}[!ht]
		\centering
		\begin{tikzpicture}[scale=3]
		\draw (0,1)--(1,1);
		\draw (0,0.75)--(0.8,0.75);
		\draw (0.3,0.62)--(0.8,0.62);
		\draw (0.3,0.56)--(0.65,0.56);
		\draw (0.4,0.53)--(0.65,0.53);
		\draw (0.4,0.51)--(0.55,0.51);
		\draw[domain=90:270] plot ({0.125*cos(\x)}, {0.875+0.125*sin(\x)});
		\draw[domain=270:450] plot ({0.8+0.065*cos(\x)}, {0.685+0.065*sin(\x)});
		\draw[domain=90:270] plot ({0.3+0.03*cos(\x)}, {0.59+0.03*sin(\x)});
		\draw[domain=270:450] plot ({0.65+0.015*cos(\x)}, {0.545+0.015*sin(\x)});
		\draw[domain=90:270] plot ({0.4+0.01*cos(\x)}, {0.52+0.01*sin(\x)});
		\node[circle,fill, inner sep=1] at (0.5,0.47){};
		\node at (0.5,0.4) {$x$};
		\end{tikzpicture}
		\caption{Point $x\in X$ is a spiral point.}
		\label{fig:spiral}
	\end{figure}

	\begin{proposition}\label{prop:spiral}
		If $x\in X$ is a spiral point, then $A(\ovl{x})$ is degenerate and $x$ is an endpoint of $X$.
	\end{proposition}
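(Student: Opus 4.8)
The plan is to reduce to the case of a unique backward itinerary, then use the spiralling of $R$ to force $\tau_L(\ovl x)=\tau_R(\ovl x)=\infty$, and finally read off both conclusions from Lemma~\ref{lem:first} and Proposition~\ref{prop:endpt}.

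First I would reduce to a unique itinerary. A suitable power of $\sigma$ is a homeomorphism of $X$ which permutes the basic arcs and carries rays to rays, hence sends spiral points to spiral points; moreover $x$ is an endpoint of $X$ if and only if each $\sigma^i(x)$ is. Thus, choosing a power of $\sigma$ as in Proposition~\ref{prop:endpt} so that the image of $x$ has a unique backward itinerary and the images of the (at most two) basic arcs through $x$ lie in a single basic arc, it suffices to treat a spiral point $x$ with a unique backward itinerary $\ovl x$, lying in the single basic arc $A(\ovl x)$; degeneracy and endpointness then transfer back through $\sigma$.

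Next I would rule out that $x$ is interior to $A(\ovl x)$. Interior points of a basic arc carry a unique itinerary and are not identified with any other point, so a neighbourhood of $x$ in its arc-component $\mathcal{U}_x$ is contained in $A(\ovl x)$; then an initial subarc $[x,y]\subseteq R$ would lie in the single basic arc $A(\ovl x)$, contradicting that it meets infinitely many basic arcs (equivalently, $A(\ovl x)\cup R$ would be a triod, which is impossible since $X$ is chainable and hence atriodic). So $x$ is an endpoint of $A(\ovl x)$. Now let $A_n$ be the infinitely many basic arcs that $R$ meets while accumulating at $x$. Since $x$ has a unique itinerary, the itineraries of the $A_n$ converge to $\ovl x$, and the successive gluing points shared by $A_n$ and $A_{n+1}$ converge to $x$. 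The key observation is that for $R$ to wind into $x$ rather than run off, these gluings must alternate between left and right endpoints; by the endpoint-gluing description in Lemma~\ref{lem:first} this alternation forces the common prefixes with the kneading sequence to grow on both sides, that is $\tau_L(\ovl x)=\tau_R(\ovl x)=\infty$. Hence $\min\pi_0(A(\ovl x))=\max\pi_0(A(\ovl x))=\pi_0(x)$, so $A(\ovl x)$ is degenerate, and since $\tau_L(\ovl x)=\infty$ with $\pi_0(x)=\min\pi_0(A(\ovl x))$, Proposition~\ref{prop:endpt} gives that $x$ is an endpoint of $X$.

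The step I expect to be the main obstacle is the claim that the spiralling forces both $\tau_L(\ovl x)=\infty$ and $\tau_R(\ovl x)=\infty$ --- equivalently, that an endpoint of a non-degenerate basic arc can never be a spiral centre. Making this rigorous means combining the planarity of the $\mathcal{E}$-embedding (the absence of self-intersections built into the construction) with the precise left/right gluing rules of Lemma~\ref{lem:first}, so as to confirm that the gluing points of the winding arcs accumulate at $x$ from both projection sides and not merely one. The remaining steps are routine bookkeeping with itineraries.
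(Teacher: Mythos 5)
Your reduction to the case of a unique backward itinerary and your triod argument ruling out that $x$ lies in the interior of $A(\ovl{x})$ both match the paper. The gap is exactly where you predicted it: the claim that the spiralling forces $\tau_L(\ovl{x})=\tau_R(\ovl{x})=\infty$. You offer no argument beyond ``the gluings must alternate \dots\ this alternation forces the common prefixes to grow on both sides,'' and making that precise is genuinely problematic. The positions at which consecutive itineraries $\ovl{A_n}$ along $R$ differ do tend to infinity, but transferring the condition $(A_n)_{m-1}\ldots (A_n)_1=c_1\ldots c_{m-1}$ from $\ovl{A_n}$ to the limit $\ovl{x}$ requires that no \emph{later} gluing flip a coordinate below position $m$; one can only guarantee this along a subsequence of steps, and you do not control whether the gluings at those steps are left or right ones. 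So at best this yields that \emph{one} of the two suprema is infinite, not both. The claim is also stronger than what is needed: by Lemma~\ref{lem:first}, degeneracy of $A(\ovl{x})$ only requires $\sup\{T^n(c):\ \#_1 \text{ odd}\}=\inf\{T^n(c):\ \#_1\text{ even}\}$, which does not force both $\tau_L$ and $\tau_R$ to be infinite.

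The paper's proof sidesteps the spiral combinatorics entirely. If $x$ is (say) the right endpoint of a non-degenerate $A(\ovl{x})$ and has a unique backward itinerary, then $\tau_R(\ovl{x})=\infty$ at once: a finite $\tau_R$ would make the right endpoint an identified point (one has $\pi_{-\tau_R}(x)=c$) and hence give $x$ two backward itineraries. Proposition~\ref{prop:endpt} then makes $x$ an endpoint of $X$, and this is incompatible with the spiral, since $A(\ovl{x})$ and a short initial arc $[x,y]\subset R$ are two subcontinua through $x$ neither of which contains the other ($[x,y]$ meets infinitely many basic arcs while $A(\ovl{x})$ is a single one). Hence $A(\ovl{x})$ must be degenerate, and endpointness has already been established. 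Replacing your ``$\tau_L=\tau_R=\infty$'' step by this argument repairs the proof; the rest of your outline goes through. (A minor imprecision: $\sigma$ does not permute basic arcs --- it refines or coarsens the decomposition into basic arcs --- but your reduction survives because powers of $\sigma$ preserve rays, the property of containing infinitely many basic arcs, and endpoints of $X$.)
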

	\begin{proof}
		\black{Assume that $x$ has a unique left-infinite itinerary $\ovl x$, \ie $\pi_i(x)\neq c$ for every $i<0$.}
		Assume that $A(\ovl{x})$ is not degenerate. Note that $x$ is not in the interior of $A(\ovl{x})$ since then $R\cup A(\ovl{x})$ is a triod, \black{a contradiction with $X$ being chainable}. Without loss of generality assume that $x$ is the right endpoint of $A(\ovl{x})$. \black{Then $\tau_R(A(\ovl{x}))=\infty$, otherwise $x$ has two backward itineraries. Therefore, by Proposition~\ref{prop:endpt}, $x$ is an endpoint of $X$. Since $x\in A(\ovl x)\cup R$, we conclude that $A(\ovl x)$ is degenerate.}
		
		\black{Assume that $x$ does not have a unique left-infinite itinerary, \ie $x$ corresponds to two endpoints of basic arcs glued together under $\sim$. Denote two left-infinite itineraries of $x$ by $\ovl{x_1}$ and $\ovl{x_2}$. Then there is $k\in\N$ such that $\sigma^{-k}(A(\ovl{x_1})\cup A(\ovl{x_2}))$ is contained in a single basic arc $A$ and $\sigma^{-k}(x)\in A$ has a unique left-infinite tail. If $A$ is not degenerate, we get a contradiction as in the previous paragraph. Thus $A(\ovl{x_1})=A(\ovl{x_2})=A(\ovl x)$ is degenerate. Again Proposition~\ref{prop:endpt} implies that point $x$ is an endpoint of $X$.}
	\end{proof}
	\black{
		\begin{remark}
			Note that a planar representation of a degenerate basic arc can be represented either as a point or two points joined by a semi-circle, see the last two pictures in Figure~\ref{fig:endpts}. 
	\end{remark}}

	The following corollary follows directly from Proposition~\ref{prop:spiral} since a spiral point cannot be contained in the interior of an arc.
	
	\begin{corollary}
		Non-degenerate arc-components in $X$ are:
		\begin{itemize}
			\item lines (\ie continuous images of $\R$) with no spiral points,
			\item rays (continuous images of $\R^+$), where only the endpoint can be a spiral point,
			\item arcs, where only endpoints can be spiral points.
		\end{itemize}
	\end{corollary}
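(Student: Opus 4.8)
The plan is to combine the standard structure theory of arc-components in chainable continua with the identification of spiral points as endpoints of $X$ supplied by Proposition~\ref{prop:spiral}. Throughout, fix a non-degenerate arc-component $\mathcal{U}=\mathcal{U}_x$.

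First I would establish the trichotomy line / ray / arc. Since $X$ is chainable and chainability passes to subcontinua, every subcontinuum of $X$ is chainable, hence atriodic and free of simple closed curves (a simple closed curve is not chainable). Thus $\mathcal{U}$, being a maximal arcwise connected set, contains no triod and no simple closed curve, so it has no branch points. Fixing a non-degenerate subarc $\alpha_0\subset\mathcal{U}$, every point of $\mathcal{U}$ is joined to $\alpha_0$ by an arc, and atriodicity forces these arcs to assemble into a linearly ordered family; using separability of $X$ to extract a cofinal sequence, one writes $\mathcal{U}=\bigcup_n\alpha_n$ as an increasing union of arcs $\alpha_0\subset\alpha_1\subset\cdots$. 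Parametrizing the $\alpha_n$ consistently and passing to the limit yields a continuous injection of exactly one of $\R$, $\R^+$, or $[0,1]$ onto $\mathcal{U}$, according to whether neither, one, or both of the two directions of extension terminate at a point of $X$ admitting no further extension inside $\mathcal{U}$; the absence of simple closed curves guarantees that the limiting parametrization does not close up. This gives precisely the line, ray, and arc cases.

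Next I would locate the spiral points. By Proposition~\ref{prop:spiral}, any spiral point $y$ is an endpoint of $X$, and I claim such a point can never lie in the interior of an arc $\alpha\subset X$. Indeed, if $y$ were interior to $\alpha$, write $\alpha=A\cup B$ where $A,B$ are the two subarcs of $\alpha$ with $A\cap B=\{y\}$; then $A$ and $B$ are subcontinua of $X$ both containing $y$ with neither contained in the other, contradicting the defining property of an endpoint of $X$. Hence a spiral point is never an interior point of any arc, and in particular never an interior point of $\mathcal{U}$. Reading this off in each of the three cases finishes the proof: if $\mathcal{U}$ is a line then every point of $\mathcal{U}$ is interior to some subarc, so $\mathcal{U}$ has no spiral points; if $\mathcal{U}$ is a ray then only the endpoint fails to be interior to a subarc, so only it can be a spiral point; and if $\mathcal{U}$ is an arc the same argument leaves only its two endpoints as candidates.

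I expect the main obstacle to be the first step, namely the rigorous passage from ``increasing union of arcs with linear nerve'' to an honest parametrization by $\R$, $\R^+$, or $[0,1]$ (including the verification, via atriodicity and the absence of simple closed curves, that no branching or closing-up occurs in the limit). The placement of spiral points in the last paragraph is then entirely routine, resting only on Proposition~\ref{prop:spiral} together with the definition of an endpoint of $X$.
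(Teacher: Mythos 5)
Your proof is correct, and its essential content coincides with the paper's: the paper's entire argument is the observation that a spiral point is an endpoint of $X$ by Proposition~\ref{prop:spiral} and therefore cannot lie in the interior of any arc, which is exactly your final paragraph (including the same decomposition $\alpha=A\cup B$ with $A\cap B=\{y\}$ violating the definition of an endpoint). Where you diverge is that you also prove the line/ray/arc trichotomy from first principles (atriodicity, absence of simple closed curves, and an increasing union of arcs), whereas the paper takes this for granted. The step you flag as the main obstacle --- extracting a countable cofinal chain of arcs --- is genuinely the delicate point of the abstract argument, but in this paper it is moot: by Lemma~\ref{lem:first} and Remark~\ref{rem:symbolicAC}, an arc-component of $X$ is a countable collection of basic arcs sharing a common tail, glued end to end in a linear order (plus at most two spiral limit points), so the parametrization by $\R$, $\R^+$ or $[0,1]$ is immediate and no transfinite cofinality issue arises. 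Your route buys generality (it works for any atriodic, hereditarily unicoherent separable metric continuum), while the paper's implicit route is shorter because the symbolic decomposition into basic arcs has already done the structural work.
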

	
	\begin{remark}\label{rem:symbolicAC}
		Let \black{$\ovl y\neq \ovl w$ be admissible left-infinite sequences.} By Lemma~\ref{lem:first}, $A(\ovl{y})$ and $A(\ovl{w})$ are connected by finitely many basic arcs if and only if there exists $k\in \N$ such that $\ldots y_{k+1}y_{k}= \ldots w_{k+1}w_{k}$. We say that $\ovl y$ and $\ovl w$ have the same \emph{tail}. Thus every arc-component \black{in $X$} is determined by its tail with the exception of (one or two) spiral points with different tails. This generalizes the symbolic representation of arc-components for finite critical orbit $c$ given in \cite{BrDi} on arbitrary tent inverse limit space $X$.
	\end{remark}
	
	\section{General results about accessibility}\label{sec:accarcs}

	\begin{definition}
		We say that a continuum $K\subset\R^2$ \emph{does not separate the plane} if $\R^2\setminus K$ is connected.
	\end{definition}

	For $K\subset \R^2$ we denote by $\mathrm{Cl}(K)$ the closure of $K$ in $\R^2$.
	The following proposition is a special case of Theorem 3.1. in \cite{Bre}.
	
	\begin{proposition}\label{prop:access}
		Let $K\subset\R^2$ be a non-degenerate indecomposable continuum that does not separate the plane and let $Q=\f x, y]\subset K$ be an arc. If $x$ and $y$ are accessible, then $Q$ is fully accessible.
	\end{proposition}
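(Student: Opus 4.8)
The plan is to realise $Q$ as a free boundary arc of an open Jordan domain that lies entirely in the complement of $K$, and then to invoke the classical accessibility of boundary points of a Jordan domain. I would work on the sphere $S^2=\R^2\cup\{\infty\}$ and set $W:=S^2\setminus K$. Since $K$ does not separate the plane, $\R^2\setminus K$ is connected, hence so is $W$; being open and connected in $S^2$, $W$ is arcwise connected, which is all I need from the non-separation hypothesis.

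First I would convert the two accessibility hypotheses into a single crosscut. Because $x$ and $y$ are accessible there are arcs $A_x,A_y\subset\R^2$ with $A_x\cap K=\{x\}$ and $A_y\cap K=\{y\}$; their endpoints other than $x,y$ lie in $W$, and joining these two endpoints by an arc inside $W$ produces a finite union of arcs meeting $K$ only in $\{x,y\}$, out of which I extract an arc $\gamma$ from $x$ to $y$ with $\gamma\cap K=\{x,y\}$. As $Q\subset K$ and $x,y$ are the common endpoints of $\gamma$ and $Q$, the union $J:=\gamma\cup Q$ is a Jordan curve with $K\cap J=Q$. By the Jordan curve theorem $S^2\setminus J$ has two components $D_1,D_2$, each an open disk with $\partial D_1=\partial D_2=J$.

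The heart of the argument is to show that one of the two disks misses $K$ entirely. Put $K_i:=K\cap\overline{D_i}$, so that $K=K_1\cup K_2$ and $K_1\cap K_2=K\cap J=Q$. I claim each $K_i$ is a subcontinuum: the boundary of $K_i$ relative to $K$ is contained in $K_1\cap K_2=Q$, so by the boundary bumping theorem every component of $K_i$ meets $Q$; since $Q$ is connected and contained in $K_i$ it lies in a single component, which therefore absorbs all components and forces $K_i$ to be connected. Now indecomposability of $K$ enters decisively: $K=K_1\cup K_2$ exhibits $K$ as a union of two subcontinua, so one of them must equal $K$. Say $K_2=K$, i.e.\ $K\subseteq\overline{D_2}$; then $\overline{D_1}\cap K\subseteq\overline{D_1}\cap\overline{D_2}=J$, whence $\overline{D_1}\cap K=J\cap K=Q$. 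Thus $\Omega:=D_1$ is an open Jordan domain with $\Omega\cap K=\emptyset$ and $\overline{\Omega}\cap K=Q$.

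Finally, every point $z\in Q$ lies on the Jordan curve $\partial\Omega$. By the Schoenflies theorem $\overline{\Omega}$ is homeomorphic to a closed disk carrying $\partial\Omega$ to the unit circle, so the image of a radial segment gives an arc ending at $z$ whose interior lies in $\Omega\subset\R^2\setminus K$; since $\overline{\Omega}\cap K=Q$ this arc meets $K$ only at $z$, so $z$ is accessible. Hence every point of $Q$ is accessible and $Q$ is fully accessible. I expect the connectedness of the $K_i$ to be the main obstacle, because the intersection of a continuum with a closed disk is not connected in general; the point is to exploit that $K\cap J$ is the single connected arc $Q$ and combine this with the boundary bumping theorem, and this is precisely the step in which both the indecomposability of $K$ and the connectedness of $Q$ are indispensable.
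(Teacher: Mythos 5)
Your proof is correct and follows essentially the same route as the paper: close up $Q$ with a crosscut into a Jordan curve, split $K$ into the two subcontinua $K\cap\mathrm{Cl}(D_1)$ and $K\cap\mathrm{Cl}(D_2)$, and let indecomposability force one closed disk to meet $K$ only in $Q$, from which accessibility of every point of $Q$ follows. The only differences are presentational — you argue directly where the paper argues by contradiction, and you supply the details (boundary bumping for connectedness of the $K_i$, Schoenflies for the final accessing arcs) that the paper leaves implicit.
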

	\begin{proof}
		Assume by contradiction that arc $Q$ is not fully accessible. Because $x,y\in K$ are both accessible there exists a point $w\in \R^{2}\setminus K$ and arcs $Q_x:=[x,w], Q_y:=[y,w]\subset \R^{2}$ such that $(x,w],(y,w]\subset \R^{2}\setminus K$ \black{and $Q\cup Q_x\cup Q_y=:S$ is a simple closed curve} in $\R^2$, see Figure~\ref{fig:Jordan}. Thus $\R^2\setminus S=S_1\cup S_2$ where $S_1$ and $S_2$ are open sets in $\R^2$ such that $\partial S_1=\partial S_2=S$. Specifically $S_1$ contains no accumulation points of $S_2$ and vice versa. 
		Denote by
		$K_1:=K\cap\mathrm{Cl}(S_1),$ $K_2:=K\cap\mathrm{Cl}(S_2)$. Note that $K_1,K_2$ are subcontinua of $K$ and $K_1,K_2\neq \emptyset$. Because $Q$ is not fully accessible it follows that $K_1, K_2\neq K$. Furthermore $K_1\cup K_2=K$, which is a contradiction with $K$ being indecomposable.	
	\end{proof}

	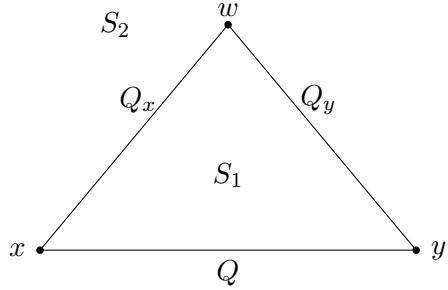
\begin{figure}[!ht]
		\centering
		\begin{tikzpicture}
		\node at (-0.3,1) {\small $x$};
		\node[circle,fill, inner sep=1] at (0,1){};
		\draw (0,1)--(5,1);
		\node at (5.3,1) {\small $y$};
		\node[circle,fill, inner sep=1] at (5,1){};
		\node at (2.5,0.7) {\small $Q$};
		\node at (2.5,4.2) {\small $w$};
		\node[circle,fill, inner sep=1] at (2.5,4){};
		\draw (0,1)--(2.5,4);
		\node at (1.3,3) {\small $Q_x$};
		\draw (5,1)--(2.5,4);
		\node at (3.7,3) {\small $Q_y$};
		\node at (2.5,2){\small $S_1$};
		\node at (1, 4){\small $S_2$};
		\end{tikzpicture}
		\caption{Simple closed curve from the proof of Theorem~\ref{prop:access}.}
		\label{fig:Jordan}
	\end{figure}

	\begin{corollary}\label{cor:class}
		Let $K$ be an indecomposable planar continuum that does not separate the plane and let $\mathcal{U}$ be an arc-component of $K$. There are four possibilities regarding the accessibility of $\mathcal{U}$:
		\begin{itemize}
			\item $\mathcal{U}$ is fully accessible.
			\item There exists an accessible point $u\in \mathcal{U}$ such that one component of $\mathcal{U}\setminus\{u\}$ is not accessible, and the other one is fully accessible.
			\item There exist two (not necessarily different) accessible points $u, v\in \mathcal{U}$ such that $\mathcal{U}\setminus[u, v]$ is not accessible and $[u, v]\subset \mathcal{U}$ is fully accessible. 
			\item $\mathcal{U}$ is not accessible.
		\end{itemize}
	\end{corollary}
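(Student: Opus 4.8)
The plan is to reduce the statement to an elementary analysis of the accessible set
$$\mathcal{A}:=\{x\in\mathcal{U}: x\text{ is accessible}\}$$
on a linear model of $\mathcal{U}$. If $\mathcal{A}=\emptyset$ we are in the last case, and if $\mathcal{U}$ is degenerate the claim is trivial, so assume $\mathcal{A}\neq\emptyset$ and $\mathcal{U}$ non-degenerate. A non-degenerate arc-component of the continua under consideration is a line, a ray, or an arc (the corollary above), so fix a continuous bijection $\gamma\colon J\to\mathcal{U}$ with $J\in\{\R,[0,\infty),[0,1]\}$; since $\gamma$ restricted to any $[a,b]\subset J$ is a continuous bijection of a compact space, it is a homeomorphism onto its image, and $\gamma([a,b])$ is exactly the arc in $\mathcal{U}$ joining $\gamma(a)$ and $\gamma(b)$. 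Put $A:=\gamma^{-1}(\mathcal{A})\subseteq J$.

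First I would show that $A$ is an interval. Take $a<b$ in $A$. Then $\gamma(a),\gamma(b)$ are accessible and $Q:=\gamma([a,b])\subset K$ is an arc, so Proposition~\ref{prop:access} gives that $Q$ is fully accessible; hence $[a,b]\subseteq A$. Thus $A$ is order-convex, i.e.\ a subinterval of $J$.

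Next, and this is the heart of the matter, I would show that $A$ is closed in $J$. If $\alpha\in J$ is a two-sided limit point of $A$, pick $a,b\in A$ with $a<\alpha<b$; convexity gives $\alpha\in[a,b]\subseteq A$, so $\alpha\in A$. The only remaining candidates for limit points outside $A$ are the finite endpoints $\inf A$ and $\sup A$ when they lie in $J$ and are approached from one side only; the entire content of closedness is concentrated here. To show such an endpoint $p=\gamma(\alpha)$ is accessible I would take accessible points $q_n=\gamma(t_n)\to p$ with $t_n$ monotone, fix access arcs $\Gamma_n$ in the (connected) complementary domain $\R^2\setminus K$ joining a common basepoint to $q_n$ with $\Gamma_n\cap K=\{q_n\}$, and extract a convergent subsequence. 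Using indecomposability — proper subcontinua of $K$ are nowhere dense and $\R^2\setminus K$ is dense — I would argue that the limiting arc can be trimmed to an arc meeting $K$ for the first time precisely at $p$, yielding accessibility of $p$. Controlling this Hausdorff limit so that the access arc lands at $p$, and not at some earlier point of $K$, is the main obstacle.

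Finally I would read off the four cases from the closed subinterval $A\subseteq J$. If $A=\emptyset$ we are in case 4 and if $A=J$ in case 1. Otherwise $A$ is a proper nonempty closed subinterval, so each finite endpoint of $A$ lies in $J$ and, by closedness, is accessible. If $\mathcal{U}\setminus\mathcal{A}$ is a single one-sided tail cut off by a two-sided point $u=\gamma(\alpha)$ (so that $\mathcal{U}\setminus\{u\}$ has two components, one equal to that non-accessible tail and the other fully accessible), we are in case 2. In every remaining configuration $A=\gamma([\alpha,\beta])=[u,v]$ with $u=\gamma(\alpha)$, $v=\gamma(\beta)$ accessible and $\mathcal{U}\setminus[u,v]$ not accessible — this absorbs all bounded $A$ as well as tails bounded by an endpoint of $\mathcal{U}$ — which is case 3, allowing $u=v$ when $A$ is a single point. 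Since these exhaust all closed subintervals of $J$ of each of the three types, the classification follows.
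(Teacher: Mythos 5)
Your reduction to the linear model and the convexity of the accessible set via Proposition~\ref{prop:access} match the paper's first step (the paper phrases it as: the set of accessible points in $\mathcal{U}$ is connected), and the case analysis at the end is fine. But there is a genuine gap exactly where you flag it: the closedness of $A$, i.e.\ the accessibility of the one-sided limit point $p=\gamma(\alpha)$, is asserted via an unexecuted compactness argument. Extracting a Hausdorff-convergent subsequence of the access arcs $\Gamma_n$ does not by itself produce an access arc for $p$: a Hausdorff limit of arcs need not be an arc, and even when it is, nothing in your sketch prevents it from meeting $K$ at points other than $p$ before reaching $p$ --- indecomposability and density of $\R^2\setminus K$ do not obviously let you ``trim'' the limit, since the trimmed object must still terminate at $p$ and not at some other point of $K$ it happens to touch. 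You correctly identify this as ``the main obstacle,'' but naming the obstacle does not overcome it; as written the key step of the corollary is missing.

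The paper closes this gap by avoiding limits of arcs altogether. It first applies a planar homeomorphism (via a straightening theorem from Kuratowski) so that the arc $[x_1,x]\subset\mathcal{U}$ becomes a straight segment; the access arcs $Q_i$ to the monotone sequence of accessible points $x_i\to x$ can then be chosen as disjoint straight segments from a common exterior point $w$. For each $i$ the simple closed curve $Q_1\cup Q_i\cup[x_1,x_i]$ bounds a region $S_i$, and the same indecomposability argument as in Proposition~\ref{prop:access} shows $K\cap S_i=\emptyset$ (otherwise $K$ would split into two proper subcontinua). Hence $\bigcup_{i}S_i$ is an arc-connected open subset of $\R^2\setminus K$ whose boundary contains $x$, and $x$ is accessed by a ray lying inside this set. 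If you want to salvage your compactness route you would need comparable control on the $\Gamma_n$ (e.g.\ confining them to a fixed simply connected piece of the complement accumulating only at $p$), which in effect reproduces the paper's construction.
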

	\begin{proof}
		By Proposition~\ref{prop:access}, the set of accessible points in $\mathcal{U}$ is connected. To see it is closed, take a \black{monotone sequence $(x_i)_{i\in\N}$ (note that we can parametrize every arc-component $\mathcal{U}$ and thus it makes sense to speak about monotone sequences)} of accessible points in $\mathcal{U}$ such that  $\lim_{i\to\infty}x_i=:x\in \mathcal{U}$. Let $w\in\R^2\setminus K$ and let $Q_i\subset \R^2$ be \black{disjoint} arcs with endpoints $x_i$ and $w$ and such that $Q_i\cap K=x_i$ for every $i\in\N$. \black{
			Using Remark (i) after the proof of \cite[Theorem 6, \S 61, IV]{Kuratowski} we can find a planar homeomorphism so that $[x_1,x]$ is a straight arc and then pick a point $w$ and arcs $Q_i$ so that they are straight arcs, and thus they limit on a straight arc as well.}
		Denote by $S_i$ the bounded open set in $\R^2$ with boundary $Q_1\cup Q_i\cup [x_1, x_i]$, where $[x_1, x_i]\subset \mathcal{U}$.  Note that $K\cap S_i=\emptyset$ for every $i\in\N$, since otherwise $K$ is decomposable by analogous arguments as in the proof of Proposition~\ref{prop:access}. Then also $K\cap (\cup_{i\in\N}S_i)=\emptyset$. Since $x$ is contained in the boundary of $\cup_{i\in\N}S_i$, which is arc-connected, we conclude that $x$ can be accessed with a ray from $\cup_{i\in\N}S_i\subset\R^2\setminus K$.
	\end{proof}
	
	\begin{remark}
		Note that it follows from the third item of Corollary~\ref{cor:class} that there can exist an endpoint $u=v\in \mathcal{U}$ which is accessible and every $x\in \mathcal{U}\setminus \{u\}$ is not accessible. For instance such embeddings for Knaster continuum are described in \cite{Sch} and the endpoint is the only accessible point in the arc-component $\mathcal{C}$. In the course of this paper we show that all cases from Corollary~\ref{cor:class} indeed occur in some embeddings of tent inverse limit spaces.
	\end{remark}
	
	\section{Basic notions from the prime end theory}\label{sec:primeends}
	
	In this section we briefly recall Carath\'eodory's prime end theory. Although the focus of this paper is not on the characterization of prime ends of studied embeddings of continua, we will include the study of prime ends of some interesting  examples throughout the paper. A general study of prime ends of standard planar embeddings appears at the end of the paper. 
	
	\begin{definition}
		Let $K\subset\R^2$ be a plane non-separating continuum. A \emph{crosscut} of $\R^2\setminus K$ is an arc $Q\subset\R^2$ which intersects $K$ only in its endpoints. Note that $K\cup Q$ separates the plane into two components, one bounded and the other unbounded. Denote the bounded component by $B_Q$. A sequence $\{Q_i\}_{i\in\N}$ of crosscuts is called a \emph{chain}, if the crosscuts are pairwise disjoint, $\diam Q_i\to 0$ as $i\to \infty$ and $B_{Q_{i+1}}\subset B_{Q_i}$ for every $i\in\N$. We say that two chains $\{Q_i\}_{i\in\N}$ and $\{R_i\}_{i\in\N}$ are \emph{equivalent} if for every $i\in\N$ there exists $j\in\N$ such that $B_{R_j}\subset B_{Q_i}$ and for every $j\in\N$ there exists $i'\in\N$ such that $B_{Q_{i'}}\subset B_{R_j}$. An equivalence class $[\{Q_i\}_{i\in\N}]$ is called a \emph{prime end}. A basis for the natural topology on the set of all prime ends consists of sets $\{[\{R_i\}_{i\in\N}]: B_{R_i}\subset B_Q \text { for all } i\}$ for all crosscuts $Q$. The set of prime ends equipped with the natural topology is a topological circle, called the \emph{circle of prime ends}, see \eg Section 2 in \cite{Bre}.
	\end{definition}
	
	\begin{definition}
		Let $P=[\{R_i\}_{i\in\N}]$ be a prime end. The \emph{principal set of $P$} is 
		$\Pi(P)=\{\lim Q_i: \{Q_i\}_{i\in\N}\in P\ \textrm{is convergent}\}$ and the \emph{impression of $P$} is $I(P)=\cap_i \mathrm{Cl}(B_{R_i})$. Note that both $\Pi(P)$ and $I(P)$ are subcontinua in $X'$ and $\Pi(P)\subseteq I(P)$. We say that $P$ is of the
		\begin{enumerate}
			\item \emph{first kind} if $\Pi(P)=I(P)$ is a point.
			\item \emph{second kind} if $\Pi(P)$ is a point and $I(P)$ is non-degenerate.
			\item \emph{third kind} if $\Pi(P)=I(P)$ is non-degenerate.
			\item \emph{fourth kind} if $\Pi(P)\subsetneq I(P)$ are non-degenerate.
		\end{enumerate}
	\end{definition}
	
	\begin{theorem}[Iliadis \cite{Ili}] Let $K$ be a plane non-separating indecomposable continuum. The circle of prime ends corresponding to $K$ can be decomposed into open intervals and their boundary points such that every open interval $J$ uniquely corresponds to a composant of $K$ which is accessible in more than one point and $I(e)\subsetneq K$ for every $e\in J$. For the boundary points $e$ it holds that $I(e)=K$.
	\end{theorem}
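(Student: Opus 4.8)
The plan is to study the single map $P\mapsto I(P)$ on the circle of prime ends $\mathcal{P}$ and to read the whole decomposition off its semicontinuity. Since $K$ does not separate the plane, $S^2\setminus K$ is simply connected, so a Riemann map $\Phi$ from the exterior of the closed unit disk onto $\R^2\setminus K$ identifies prime ends with points $e^{i\theta}$ of the unit circle and identifies $I(P)$ with the cluster set of $\Phi$ at $e^{i\theta}$. The first step is to record that cluster sets are upper semicontinuous in the base point, hence $P\mapsto I(P)$ is upper semicontinuous in the Hausdorff metric on subcontinua. Consequently $\mathcal{B}:=\{P:I(P)=K\}$ is closed and its complement $\mathcal{N}:=\{P:I(P)\subsetneq K\}$ is open, so $\mathcal{N}=\bigsqcup_\alpha J_\alpha$ is a disjoint union of open arcs. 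As each endpoint $e$ of an arc $J_\alpha$ lies in $\partial\mathcal{N}\subseteq\mathcal{B}$, we get $I(e)=K$ for every boundary point, which already yields the final assertion of the theorem.

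Next I would attach a composant to each arc. For $P\in\mathcal{N}$ the impression $I(P)$ is a proper subcontinuum of the indecomposable continuum $K$, hence lies in a unique composant $\mathcal{V}(P)$, and since $\Pi(P)\subseteq I(P)$ the principal set lies there too. This defines $\mathcal{V}\colon\mathcal{N}\to\{\textrm{composants}\}$, and the core of the argument is to show that $\mathcal{V}$ is constant on each arc $J_\alpha$. I would prove local constancy: upper semicontinuity confines $I(P')$, for $P'$ near $P$, to a small Hausdorff neighbourhood of the proper subcontinuum $I(P)$, which forces $I(P')$ into the same composant $\mathcal{V}(P)$; constancy on the connected arc $J_\alpha$ then follows. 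Write $\mathcal{V}_\alpha:=\mathcal{V}(J_\alpha)$.

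It then remains to verify the two accessibility statements. For ``more than one point'': each $J_\alpha$ is a nondegenerate open arc (a singleton is not open in the circle), so by Fatou's theorem $\Phi$ has radial limits at almost every $e^{i\theta}\in J_\alpha$, and each such limit is an accessible principal point lying in $\mathcal{V}_\alpha$ because $I(P)\subsetneq K$; were all these limits a single point, Privalov's uniqueness theorem would force $\Phi$ to be constant, so $\mathcal{V}_\alpha$ is accessible in at least two distinct points. For ``uniquely corresponds'': by Corollary~\ref{cor:class} the accessible points of $\mathcal{V}_\alpha$ form a connected (sub)arc, along which Proposition~\ref{prop:access} guarantees full accessibility; traversing this accessible arc moves the landing prime end monotonically along $\mathcal{P}$, giving an order-preserving correspondence with a sub-arc of $\mathcal{P}$, and I would argue this sub-arc can meet only one component $J_\alpha$ of $\mathcal{N}$, so distinct arcs carry distinct composants. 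This establishes the interval/composant bijection and completes the decomposition.

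The step I expect to be the main obstacle is precisely this interplay between the \emph{principal} data (accessibility) and the \emph{impression} data. The delicate point is that a prime end may be of the second kind, with $\Pi(P)$ a single accessible point yet $I(P)=K$, because indecomposability makes every composant dense and so other composants accumulate on the accessible arc from the complement side; hence one cannot naively read the intervals off accessibility alone, and the monotone order-correspondence above must be set up so that exactly the interior of each maximal accessible arc produces the first-kind prime ends filling $J_\alpha$ while its ends fall into $\mathcal{B}$. A secondary subtlety is that Proposition~\ref{prop:access} applies to arcs, so one must identify the relevant composants with arc-components (which holds in the setting of this paper) before invoking it.
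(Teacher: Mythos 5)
The paper does not prove this theorem: it is imported verbatim from Iliadis \cite{Ili} and used as a black box, so there is no in-paper argument to compare yours against; I can only assess your proposal on its own terms. Its first stage is sound: upper semicontinuity of $P\mapsto I(P)$ (read off from cluster sets of the Riemann map) does show that $\{P: I(P)=K\}$ is closed, that its complement $\mathcal{N}$ is a disjoint union of open arcs, and that every boundary point of such an arc has impression $K$. That correctly delivers the last sentence of the statement.

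The central step, however, has a genuine gap. You define $\mathcal{V}(P)$ as the composant containing the proper subcontinuum $I(P)$ and claim local constancy because upper semicontinuity confines $I(P')$, for $P'$ near $P$, to a small neighbourhood of $I(P)$. But containment in a small neighbourhood of a proper subcontinuum does not determine the composant: in an indecomposable continuum every composant is dense, so $N_{\eps}(I(P))$ meets every composant and contains proper subcontinua from composants other than $\mathcal{V}(P)$; knowing only that $I(P')$ is a proper subcontinuum lying in $N_{\eps}(I(P))$ therefore says nothing about which composant it belongs to. (Note also that upper semicontinuity gives only $I(P')\subseteq N_{\eps}(I(P))$, not Hausdorff proximity, since lower semicontinuity of impressions fails in general.) Consequently the assignment $J_\alpha\mapsto\mathcal{V}_\alpha$ is not yet well defined, and everything downstream — the Fatou/Privalov production of two accessible points \emph{in} $\mathcal{V}_\alpha$, and the injectivity of the correspondence — rests on it. A secondary problem: your uniqueness argument invokes Proposition~\ref{prop:access} and Corollary~\ref{cor:class}, which presuppose an arc of $K$ joining two accessible points; in the generality in which the theorem is stated, composants of an indecomposable plane non-separating continuum need not be arcwise connected, so these tools are unavailable (you flag this yourself but do not resolve it). What is missing is the genuinely prime-end-theoretic input of Iliadis's argument: for a closed subarc $A$ of a single component of $\mathcal{N}$ one must produce a single proper subcontinuum (a ``shadow'' of a crosscut subtending $A$) that contains $\bigcup_{Q\in A}I(Q)$; it is this that ties the whole interval to one composant, and it does not follow from semicontinuity alone.
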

	
	\begin{proposition}\label{prop:fourth}
		Let $K$ be a plane non-separating \black{indecomposable} continuum such that every proper subcontinuum of $K$ is an arc and such that every composant contains at most one folding point. Then $\Pi(P)$ is degenerate or equal to $K$ for every prime end $P$. Specially, there exist no prime ends of the fourth kind.
	\end{proposition}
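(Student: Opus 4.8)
The plan is to show that $\Pi(P)$ is either degenerate or equal to all of $K$; the statement about the fourth kind then follows at once. Indeed, if $\Pi(P)=K$ then $\Pi(P)\subseteq I(P)\subseteq K$ forces $\Pi(P)=I(P)=K$, so $P$ is of the third kind, while if $\Pi(P)$ is degenerate it is a single point and hence cannot satisfy the non-degeneracy required of a fourth kind prime end. So it suffices to prove the dichotomy for $\Pi(P)$.

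Since $\Pi(P)$ is a subcontinuum of $K$ and $K$ is indecomposable with the property that every proper subcontinuum is an arc, there are exactly three possibilities: $\Pi(P)=K$, $\Pi(P)$ is a single point, or $\Pi(P)$ is a non-degenerate arc $A$. The first two are precisely the desired conclusions, so the whole proof reduces to ruling out the third possibility. I would argue by contradiction, assuming $\Pi(P)=A$ with two distinct endpoints $a$ and $b$. First I localize $A$: being a non-degenerate proper subcontinuum, $A$ is connected and therefore contained in a single composant $\mathcal{V}$ of $K$, and since every proper subcontinuum of $K$ is an arc, $\mathcal{V}$ is a nested union of arcs and so is itself an arc, a ray, or a line, with $A$ a subarc of it. The heart of the matter is the following claim: \emph{each endpoint of $A$ is a folding point of $K$}. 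Granting the claim, $a$ and $b$ are two distinct folding points lying in the same composant $\mathcal{V}$, contradicting the hypothesis that every composant contains at most one folding point; this contradiction shows that $A$ cannot be non-degenerate and completes the proof.

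The main obstacle is establishing this claim, which is where the prime-end geometry must be used carefully. Fix an endpoint, say $a$. Because $\Pi(P)$ is the closed set of limits of convergent chains in $P$ and every point of $A=\Pi(P)$ is such a limit, the crosscuts of $P$ accumulate along the entire arc $A$ and in particular must ``turn around'' at the endpoint $a$. Suppose, for contradiction, that $a$ is \emph{not} a folding point. Then $a$ has a neighbourhood homeomorphic to $C\times(0,1)$ with $C$ a Cantor set, in which the local piece of $\mathcal{V}$ is one of the parallel arcs $\{c_0\}\times(0,1)$ and the complementary domain reaches $a$ only through the gaps of $C$ adjacent to $c_0$. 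In this product chart a family of crosscuts of $P$ sweeping $A$ toward $a$ runs alongside the parallel arc $\{c_0\}\times(0,1)$, which continues past $a$ since $a$ is interior to the chart; hence the same (equivalent) chain continues to accumulate on points of $\mathcal{V}$ beyond $a$, forcing $\Pi(P)$ to extend past $a$ and contradicting that $a$ is an endpoint of $A$. Thus the accumulation can terminate at $a$ while the continuum passes straight through only if $K$ fails to be a product near $a$, \ie only if $a$ is a folding point.

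Finally, the remaining situation, in which an endpoint of $A$ is also an endpoint of the arc-component $\mathcal{V}$ itself, is easier: such endpoints are endpoints of $K$ and hence folding points directly by definition. In either case both endpoints of $A$ are folding points, yielding the promised two distinct folding points in the single composant $\mathcal{V}$ and the desired contradiction. I expect the delicate point to be the product-chart argument of the previous paragraph, where one must verify rigorously that near a non-folding point the outer domain cannot reverse direction along the parallel arc; the rest of the argument is a clean reduction using indecomposability, the arc structure of proper subcontinua, and the at-most-one-folding-point hypothesis.
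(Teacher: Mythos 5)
Your proposal is correct and follows essentially the same route as the paper: reduce to the case $\Pi(P)=[a,b]$ a non-degenerate arc, show each endpoint is a folding point by exploiting the local $C\times(0,1)$ structure to slide an equivalent chain of crosscuts to a point of the composant outside $[a,b]$, and then contradict the one-folding-point-per-composant hypothesis. The only detail worth importing from the paper into your ``delicate'' product-chart step is the explicit dichotomy that makes the sliding legitimate: either some crosscut has both endpoints in the same component of $C\times(0,1)$ (whence $\Pi(P)$ is already degenerate), or eventually the crosscuts join distinct components and can therefore be translated along the $(0,1)$ direction to an equivalent chain converging to a point $z\notin[a,b]$.
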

	\begin{proof} Assume there exists a prime end $P$ such that $\Pi(P)$ is non-degenerate and not equal to $K$. Then $\Pi(P)=[a,b]$ is an arc in $K$. We claim that both $a$ and $b$ are folding points. Assume that there exists $\eps>0$ such that $B(a, \eps)\cap K=C\times (0,1)$, where $C$ is the Cantor set and $B(a,\eps)$ denotes the open planar ball of radius $\eps$ around the point $a$. Since $a\in\Pi(P)$, there exist a chain of crosscuts $\{Q_i\}_{i\in\N}\in P$ such that $Q_i\to a$ as $i\to\infty$. Note that $Q_i\in B(a, \eps)$ for large enough $i$, so the endpoints of $Q_i$ are contained in $C\times (0,1)$ and the interior of $Q_i$ does not intersect $K$. \black{If there exists $N\in \N$ such that the arc $Q_N$ has both endpoints in the same component of $C\times (0,1)$, then $\Pi(P)$ is degenerate, a contradiction. Thus the endpoints of $Q_i$ do not lie in the same component of $C\times (0,1)$, and since $\diam Q_i\to 0$, we can find a subsequence $\{Q_{i_k}\}$ such that all endpoints of $Q_{i_k}$ are contained in different component of $C\times (0,1)$, see Figure~\ref{fig:fourth}}. Therefore, it is possible to translate every \black{$Q_{i_k}$} along $(0,1)$ and find a point $z\not\in[a, b]$ in the arc-component of $[a,b]$ for which there exists a chain of crosscuts $\{R_k\}_{k\in\N}$ equivalent to \black{$\{Q_{i_k}\}_{k\in\N}$} such that $R_k\to z$ as $k\to\infty$, see Figure~\ref{fig:fourth}. This contradicts the assumption, \ie point $a$ is a folding point. The proof for the point $b$ is analogous. We conclude that there exists a composant with at least two folding points, which is a contradiction.
	\end{proof}
	
	\begin{figure}[!ht]
		\centering
		\begin{tikzpicture}[scale=1.5]
		\draw (5,1)--(9,1);
		\draw[thick] (6,1)--(8,1);
		\node[circle,fill, inner sep=1] at (5.3,1){};
		\node[circle,fill, inner sep=1] at (6,1){};
		\node[circle,fill, inner sep=1] at (8,1){};
		\draw (5,0.90)--(9,0.90);
		\draw (5,0.65)--(9,0.65);
		\draw (5,0.45)--(9,0.45);
		\draw (5,0)--(9,0);
		\draw (6,0.90)--(6,0.65);
		\draw (6,0.45)--(6,0);
		\draw[dashed] (5.3,0.90)--(5.3,0.65);
		\draw[dashed] (5.3,0.45)--(5.3,0);
		\node at (6,1.2) {\small $a$};
		\node at (8,1.2) {\small $b$};
		\node at (5.3,1.2) {\small $z$};
		\node at (6.3,0.77) {\small $Q_{i_k}$};
		\node at (6.4,0.18) {\small $Q_{i_{k+1}}$};
		\node at (5.6,0.77) {\small $R_k$};
		\node at (5.7,0.18) {\small $R_{k+1}$};
		\end{tikzpicture}
		\caption{Translating the chain of crosscuts along $(0,1)$ in Proposition~\ref{prop:fourth}.}
		\label{fig:fourth}
	\end{figure}
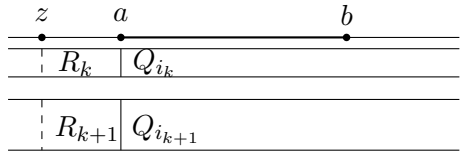
	
	\begin{definition}
		Let $K$ be a plane non-separating continuum. A prime end $P$ such that $\Pi(P)$ is non-degenerate but different than $K$ is called an \emph{infinite canal}. A third kind prime end $P$ such that $\Pi(P)=I(P)=K$ is called a \emph{simple dense canal}. 
	\end{definition}
	
	We obtain the following corollary, which we use later in the paper for discussing the prime end structure of  $\mathcal{E}$-embeddings of $X'$ when the critical orbit is finite.
	
	\begin{corollary}\label{cor:Iliadis}
		Let $K$ be an indecomposable plane non-separating continuum such that its every subcontinuum is an arc and every composant contains at most one folding point. Then the circle of prime ends corresponding to $K$ can be partitioned into open intervals and their endpoints. Open intervals correspond to accessible open \black{lines} in $K$. The endpoints of open intervals are the second or the third kind prime ends for which the impression is $K$. The second kind prime end corresponds to an accessible folding point in $K$ and the third kind prime end corresponds to a simple dense canal in $K$.
	\end{corollary}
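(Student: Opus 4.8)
The plan is to read the statement off from Iliadis's theorem \cite{Ili} combined with Proposition~\ref{prop:fourth}, which together pin down the only admissible behaviour of the principal and impression sets at each prime end. First I would apply Iliadis's theorem to partition the circle of prime ends into open intervals $J$ and their endpoints, where each $J$ corresponds to a single composant that is accessible in more than one point and has $I(e)\subsetneq K$ for every interior $e\in J$, while each endpoint $e$ satisfies $I(e)=K$. Since every proper subcontinuum of $K$ is an arc, composants coincide with arc-components, so the composant attached to a given $J$ is a line (or a ray or an arc), and by Corollary~\ref{cor:class} the set of accessible points inside it is connected.

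For an interior prime end $e\in J$ I would note that $\Pi(e)\subseteq I(e)\subsetneq K$ gives $\Pi(e)\neq K$, so Proposition~\ref{prop:fourth} forces $\Pi(e)$ to be degenerate. Hence every interior prime end has a single principal point, which is accessible, and as $e$ ranges over the open interval $J$ these principal points run bijectively and continuously through the accessible points of the associated composant. Being a connected non-degenerate set homeomorphic to the image of an open interval, this accessible set is an open line; this establishes the correspondence between open intervals and accessible open lines.

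Next I would treat an endpoint $e$, for which $I(e)=K$ is non-degenerate. A first kind prime end has $I(e)=\Pi(e)$ equal to a point, which is incompatible with $I(e)=K$, and Proposition~\ref{prop:fourth} rules out the fourth kind; thus $e$ is of the second or the third kind. If $e$ is of the third kind, then $\Pi(e)$ is non-degenerate, and since Proposition~\ref{prop:fourth} leaves $\Pi(e)=K$ as the only non-degenerate possibility, we obtain $\Pi(e)=I(e)=K$, which is by definition a simple dense canal. If $e$ is of the second kind, then $\Pi(e)=\{p\}$ is a single point; being a principal point, $p$ is accessible, so it remains only to prove that $p$ is a folding point.

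The hard part will be this last claim, and I would prove it by contradiction with a local argument in the spirit of Proposition~\ref{prop:fourth}. Suppose $p$ is not a folding point; then there is $\eps>0$ with $B(p,\eps)\cap K=C\times(0,1)$, and for large $i$ the crosscuts $Q_i\to p$ of a chain representing $e$ lie in this product chart, with endpoints on $K$ and interiors disjoint from $K$. I would split into two cases according to the endpoints of $Q_i$. If the two endpoints of some $Q_i$ lie on the same local arc of $C\times(0,1)$, then $Q_i$ together with the subarc between them bounds a disk that, by the product structure, meets no other local arc; the bounded domain $B_{Q_i}$ is then confined near $p$ and $I(e)\subseteq \mathrm{Cl}(B_{Q_i})\neq K$, contradicting $I(e)=K$. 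Otherwise the endpoints of $Q_i$ eventually lie on distinct local arcs, and the translation construction of Proposition~\ref{prop:fourth} (Figure~\ref{fig:fourth}) produces a chain $\{R_k\}$ equivalent to a subsequence of $\{Q_i\}$ converging to a point $z\neq p$; since equivalent chains represent the same prime end, both $p$ and $z$ lie in $\Pi(e)$, contradicting that $\Pi(e)$ is degenerate. Either case is impossible, so $p$ is a folding point, and the hypothesis that each composant contains at most one folding point guarantees the resulting picture is consistent. The delicate point to get right is the planar and product bookkeeping in these two cases, exactly as in Proposition~\ref{prop:fourth}.
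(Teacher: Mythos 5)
Your proof is correct and follows exactly the route the paper intends: the corollary is stated there without a written proof, as an immediate consequence of Iliadis's theorem together with Proposition~\ref{prop:fourth}, and your argument supplies precisely those details. In particular, your case analysis showing that the principal point of a second kind prime end must be a folding point (same-arc crosscut endpoints forcing $I(e)\subsetneq K$, distinct-arc endpoints allowing the translated chain of Proposition~\ref{prop:fourth} to produce a second principal point) is the right way to fill in the one step the paper leaves implicit.
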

	
	{\bf Question.} If $X'$ is the core of a tent map inverse limit, is there a planar embedding $\phi\colon X'\to\R^2$ such that $\phi(X')$ has fourth kind prime end?

\section{An Introduction to the study of accessible points of $\mathcal{E}$-embeddings}\label{sec:intro}

\black{In this section we reduce generality and focus again on our original study of inverse limit spaces of tent maps.}

By Corollary~\ref{cor:class}, if $x\in \mathcal{U}_x\subset X$ is accessible it does not a priori follow that every point from $\mathcal{U}_x$ is accessible, see \eg  Figure~\ref{fig:access2}. Recall that $X=\mathcal{C}\cup X'$. In this paper we study the sets of accessible points of embeddings of either $X$ or $X'$ and the two cases substantially differ as we shall see in this section. In the rest of the paper we are concerned only with embeddings of the cores $X'$.

	\begin{figure}[!ht]
		\centering
		\begin{tikzpicture}[scale=2.5]
		\draw (0,0.5)--(2,0.5);
		\draw (1,1)--(2,1);
		\draw (1,0.75)--(2,0.75);
		\draw (1,0.62)--(2,0.62);
		\draw (1,0.56)--(2,0.56);
		\draw (1,0.53)--(2,0.53);
		\draw (1,0.5)--(2,0.5);
		\draw (1,0.47)--(2,0.47);
		\draw (1,0.44)--(2,0.44);
		\draw (1,0.38)--(2,0.38);
		\draw (1,0.25)--(2,0.25);
		\draw (1,0)--(2,0);
		\draw (0.2,0.48)--(0.2,0.52);
		\node at (0.2,0.43) {\small $x$};
		\draw (1.7, 0.48)--(1.7,0.52);
		\draw[->] (2.3, 0.8)--(1.7,0.5);
		\node at (2.35,0.8){\small $y$};
		\end{tikzpicture}
		\caption{Point $x$ is accessible from the complement while point $y$ which has neighbourhood of Cantor set of arcs is not.}
		\label{fig:access2}
	\end{figure}
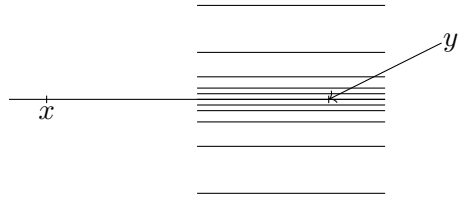

We will denote the smallest admissible left-infinite tail in $X'$ with respect to $\prec_L$ by $S$. The arc-component of points from $L$ ($S$) will be denoted from now onwards by $\mathcal{U}_L$ ($\mathcal{U}_S$). The following examples show that $\mathcal{U}_L$ and $\mathcal{U}_S$ do not necessarily coincide. Later in this section we will especially be concerned with the accessibility of $\mathcal{U}_L$ and $\mathcal{U}_S$.

\black{Recall that we consider $X$ as a continuum $\Sigma_{adm}/\!\!\sim$ embedded in the plane with respect to some admissible $L$. Recall that a basic arc consists of all points with the same backward itinerary and that each basic arc is embedded as a horizontal arc in the plane. We abuse the notation and identify basic arcs with their left-infinite sequences $\ovl s$.}

\begin{example}\label{ex:1} 
	Assume that the kneading sequence is given by $\nu=(101)^{\infty}$. Embed $X'$ in the plane according to the ordering in which $L=(01)^{\infty}$ is the largest. \black{As we commented in Example~\ref{ex:0}, the smallest sequence is then $S=(10)^{\infty}$. Note that the backward itinerary of $L$ and $S$ differ on infinitely many places. So, the results of Section~\ref{sec:arc-comp} imply that $S\not\subset\mathcal{U}_L$.}
\end{example}

\begin{example}\label{ex:2}
	 Take the kneading sequence $\nu=1001(101)^{\infty}$. Embed $X'$ in the plane according to the ordering in which $L=((001)(001101))^{\infty}$ is the largest. The smallest is then $S=((100)(101100))^{\infty}\not\subset\mathcal{U}_L$. Note that in comparison with the previous example this time $S\neq \sigma^k(L)$ for every $k\in\N$.
\end{example}
\begin{definition}
	Let $\nu$ be a kneading sequence. For any admissible finite word $a_n\ldots a_1\in\{0,1\}^n$ define the \emph{cylinder} $\f a_n\ldots a_1]$ as
	$$[a_n\ldots a_1]:=\{\overleftarrow{s}=\ldots s_{n+2}s_{n+1}a_n\ldots a_1: \overleftarrow{s} \text{ is an admissible left infinite sequence}\}.$$
\end{definition}

\begin{lemma}\label{lem:cylindersnonempty}
	If $a_n\ldots a_1$ is admissible, then $[a_n\ldots a_1]$ is not an empty set.
\end{lemma}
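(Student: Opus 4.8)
The plan is to exhibit a single admissible left-infinite sequence that ends in $a_n\ldots a_1$, which is all that nonemptiness requires. The natural candidate is obtained by prepending infinitely many $1$'s: set $\overleftarrow s:=\ldots 111\,a_n\ldots a_1$. Recall that a left-infinite sequence is admissible precisely when each of its finite subwords is admissible, and every finite subword of $\overleftarrow s$ is a subword of one of the finite words $1^k a_n\ldots a_1$ with $k\ge 0$. Hence it suffices to show that each $1^k a_n\ldots a_1$ is admissible, and for this I would isolate two facts.

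The first fact is \emph{subword closure}: if a word satisfies the two defining inequalities against $\nu$, then so does every initial segment of it, and consequently every subword of an admissible word is again admissible. This is a one-line parity-lexicographic check: truncating a word either leaves a two-sided comparison at its first point of disagreement with $c_1c_2\ldots$ (respectively $c_2c_3\ldots$) unchanged, or turns a strict inequality into an equality, so neither bound can be violated by passing to a prefix. The second, and decisive, fact is that \emph{prepending a single $1$ preserves admissibility}. Since we work with non-renormalizable slopes we have $\nu=10^{\kappa}1\ldots$, so $c_1=1$ and $c_2=0$; moreover prepending a $0$ preserves the parity-lexicographic order while prepending a $1$ reverses it. Because the only new suffix created when passing from $w$ to $1w$ is the whole word $1w$, admissibility of $1w$ reduces to the single bound $c_2c_3\ldots\preceq 1w\preceq c_1c_2\ldots$ of the appropriate length. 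The lower bound is immediate, since $1w$ begins with $1>0=c_2$ and the empty preceding block has even parity; and after cancelling the shared leading symbol $c_1=1$ and invoking order reversal, the upper bound $1w\preceq c_1c_2\ldots$ becomes exactly $w\succeq c_2c_3\ldots$, which is the lower-bound inequality that admissibility of $w$ already supplies.

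Granting these two facts, an induction on $k$ shows that $1^k a_n\ldots a_1$ is admissible for every $k\ge 0$, whence every finite subword of $\overleftarrow s$ is admissible and $\overleftarrow s\in[a_n\ldots a_1]$, so the cylinder is nonempty. The only genuine work is the parity bookkeeping in the prepend-$1$ step; the single case worth a separate glance is the Knaster endpoint $s=2$, where $\nu=10^{\infty}$, but there $c_2=0$ still holds and the same computation applies verbatim. Conceptually there is no obstruction to expect: $\overleftarrow s$ is simply the backward itinerary obtained by pulling $a_n\ldots a_1$ back along the fixed point whose itinerary is $1^{\infty}$.
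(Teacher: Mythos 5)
Your proof is correct and follows essentially the same route as the paper: both exhibit $1^{\infty}a_n\ldots a_1$ as an element of the cylinder, with the key step being that prepending a $1$ reverses the parity-lexicographic comparison so that the upper bound on $1a_n\ldots a_1$ reduces to the lower bound $c_2\ldots c_{n+1}\preceq a_n\ldots a_1$ already guaranteed by admissibility. You spell out the subword-closure and induction details that the paper compresses into ``analogously,'' but the argument is the same.
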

\begin{proof}
	 Say that $1a_n\ldots a_1$ is not admissible. In that case $1a_n\ldots a_1\succ c_1\ldots c_{n+1}$, so $a_n\ldots a_1\prec c_2\ldots c_{n+1}$, which is a contradiction with $a_n\ldots a_1$ being admissible. \black{Analogously}, the left infinite tail $1^{\infty}a_n\ldots a_1$ is admissible, which concludes this proof.
\end{proof}

\begin{definition}
	Assume $X$ is embedded in the plane with respect to $L=\ldots l_{2}l_{1}$
	and take an admissible finite word $a_n\ldots a_1$. The \emph{top} of the cylinder $[a_n\ldots a_1]$ is the left infinite sequence denoted by $L_{a_n\ldots a_1}\in[a_n\ldots a_1]$ such that $L_{a_n\ldots a_1}\succeq_L\overleftarrow{s}$, for all $\overleftarrow{s}\in[a_n\ldots a_1]$. Analogously we define the \emph{bottom} of the cylinder $[a_n\ldots a_1]$, denoted by $S_{a_n\ldots a_1}$, as the smallest left infinite sequence in $[a_n\ldots a_1]$ with respect to the order $\preceq_L$.
\end{definition}

\begin{remark}
	Note that each cylinder is a compact set (as a subset of the plane). Thus for admissible finite words $a_n\ldots a_1$ there always exist $L_{a_n\ldots a_1}$ and $S_{a_n\ldots a_1}$ (they can be equal).
\end{remark}

\begin{lemma}\label{lem:cyl}
	Assume $X$ is embedded in the plane with respect to $L$. For every admissible finite word $a_n\ldots a_1$ the arcs $A(L_{a_n\ldots a_1})$ and $A(S_{a_n\ldots a_1})$ are fully accessible. 
\end{lemma}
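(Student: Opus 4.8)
The plan is to show that the top $L_{a_n\ldots a_1}$ and bottom $S_{a_n\ldots a_1}$ of each cylinder are accessible precisely because, in the planar embedding $\phi_L$, they are extremal in the vertical ordering $\prec_L$ among all basic arcs sharing the suffix $a_n\ldots a_1$. Once I have accessibility of these two basic arcs I would invoke Proposition~\ref{prop:access} to upgrade from ``endpoints accessible'' to ``fully accessible'': since $A(L_{a_n\ldots a_1})$ is itself an arc (possibly degenerate) contained in the indecomposable non-separating continuum $X$, full accessibility of the arc follows once I exhibit accessibility of a dense enough set of its points, or more directly once I access the whole horizontal segment from above. So the crux is to access the \emph{horizontal level} $\psi_L(L_{a_n\ldots a_1})$ from the complement.

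First I would set up the geometry of the embedding recalled in Section~\ref{sec:embed}: every basic arc $A(\ovl s)$ sits at height $\psi_L(\ovl s)\in C$ with horizontal extent $\pi_0(A(\ovl s))$, and distinct basic arcs are joined only by semicircles over shared endpoints. The key structural fact I would extract is that the set of heights $\{\psi_L(\ovl s): \ovl s\in[a_n\ldots a_1]\}$ is a closed subset of the Cantor set $C$ (this is the content of the Remark preceding the lemma, compactness of the cylinder), so it attains a maximum, realized exactly by $L_{a_n\ldots a_1}$, and a minimum, realized by $S_{a_n\ldots a_1}$. I then need the separation statement: no basic arc strictly above height $\psi_L(L_{a_n\ldots a_1})$ shares the suffix $a_n\ldots a_1$. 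The point of fixing the last $n$ symbols is that it constrains the horizontal projection — by Lemma~\ref{lem:first} the values $\tau_L,\tau_R$, and hence the endpoints of $\pi_0(A(\ovl s))$, depend only on the suffix $s_{n}\ldots s_1$ together with the kneading data once $n$ is large relative to the relevant initial block of $\nu$ — so all arcs in the cylinder have comparable horizontal ranges and the top one genuinely bounds a region of the plane lying above it that meets $X$ in nothing of the cylinder.

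The heart of the argument, and the step I expect to be the main obstacle, is producing the access arc. I would argue that a point $p$ in the interior of the horizontal segment $A(L_{a_n\ldots a_1})$ can be reached by a vertical (or nearly vertical) arc descending from $y=+\infty$: because $\psi_L(L_{a_n\ldots a_1})$ is the supremum of the heights in the cylinder, and because the only way an arc of $X$ can appear at a height between $\psi_L(L_{a_n\ldots a_1})$ and a nearby larger value is via a basic arc \emph{not} in the cylinder (different suffix) or via a connecting semicircle, I must verify that such intervening arcs and semicircles do not block a vertical ray over $p$. This requires controlling the horizontal positions of those nearer-to-top arcs outside the cylinder and of the semicircles bounding them; here I would use the explicit formula for $\psi_L$ together with the ternary structure of $C$ to show that basic arcs whose height is close to but above the top of the cylinder differ from $L_{a_n\ldots a_1}$ in a coordinate $s_k$ with $k>n$, and by the extremum formulas of Lemma~\ref{lem:first} their horizontal projections retract away from $p$ (their relevant endpoint moves strictly to one side), leaving a vertical corridor over $p$ free of $X$. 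Running the symmetric argument downward for $S_{a_n\ldots a_1}$ gives accessibility of that bottom arc. With one interior point of each extremal arc accessible, and both endpoints handled by the same corridor argument applied at the endpoints (or by Proposition~\ref{prop:access} if the arc is non-degenerate and I can access both its ends), I conclude that $A(L_{a_n\ldots a_1})$ and $A(S_{a_n\ldots a_1})$ are fully accessible.
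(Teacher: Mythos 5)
Your plan is essentially the paper's proof: the paper accesses each point $x$ of $A(L_{a_n\ldots a_1})$ by the short vertical segment $Q=\{(\pi_0(x),\,p_x+t/(2\cdot 3^{n+1})):t\in[0,1]\}$, which misses $X$ precisely because all heights $\psi_L(\ovl s)$ with $\ovl s\in[a_n\ldots a_1]$ lie in one depth-$n$ ternary block of $C$ and everything else is separated by the adjacent gap --- exactly the ``vertical corridor'' you describe. Since every point of the extremal arc is accessed directly this way (and symmetrically from below for $S_{a_n\ldots a_1}$), the detour through Proposition~\ref{prop:access} and the discussion of horizontal projections are not needed.
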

\begin{proof}
		Take a point $x\in A(L_{a_n\ldots a_1})$ and denote by $p_x=\psi\black{_L}(L_{a_n\ldots a_1})$ \black{(for the definition of $\psi_L$ see the beginning of Section~\ref{ABCembed})} the point in the Cantor set $C$ corresponding to the $y$-coordinate of $x$.
		Then the arc 
		$$
		Q=\left\{\left (\pi_0(x), p_x+\frac{\black{t}}{2\cdot 3^{n+1}}\right ), \black{t}\in[0,1]\right\}
		$$
		has the property that $Q\cap X=\{x\}$, see Figure~\ref{fig:neigh}. When $x\in A(S_{a_n\ldots a_1})$, we can similarly construct the arc 
		$Q'$, \black{accessing $x$ from below}, such that $Q'\cap X=\{x\}$ and conclude that $x$ is accessible.
\end{proof}

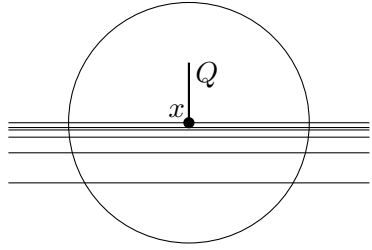
\begin{figure}[!ht]
	\centering
	\begin{tikzpicture}[scale=1.6]
	\draw[solid] (1,1) circle (1);
	\draw (-0.5,1)--(2.5,1);
	\draw (-0.5,0.5)--(2.5,0.5);
	\draw (-0.5,0.75)--(2.5,0.75);
	\draw (-0.5,0.88)--(2.5,0.88);
	\draw (-0.5,0.94)--(2.5,0.94);
	\draw (-0.5,0.96)--(2.5,0.96);
	\draw[thick] (1, 1)--(1,1.5);
	\node at (1.15,1.4){\small $Q$};
	\node[circle,fill, inner sep=1.5] at (1,1){};
	\node at (0.9,1.1){\small $x$};
	
	\end{tikzpicture}
	\caption{Point at the top of the cylinder $\f a_n\ldots a_1]$ is accessible by an arc $Q$.}
	\label{fig:neigh}
\end{figure}

From Lemma~\ref{lem:cyl} it follows specially that $A(L)$ and $A(S)$ in Example~\ref{ex:1} and Example~\ref{ex:2} are fully accessible as they are the largest and the smallest arcs respectively among all the arcs in embedding of $X'$ determined by $L$.\\
The following proposition is the first step in determining the set of accessible points of $\mathcal{E}$-embeddings.

\begin{proposition}\label{prop:fully}
	Take $L=\ldots l_{2} l_{1}$ and construct the embedding of $X$ with respect to $L$. Then every point in $X$ with the same symbolic tail as $L$ is accessible. If $A(L)$ is not a spiral point, then $\mathcal{U}_L$ is fully accessible.
\end{proposition}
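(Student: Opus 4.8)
The plan is to prove the two assertions in turn, using the cylinder machinery from Lemma~\ref{lem:cyl} together with the classification of non-degenerate arc-components from the Corollary following Proposition~\ref{prop:spiral}. The first assertion---that every point with the same symbolic tail as $L$ is accessible---I would reduce to the cylinder result. Fix a point $x\in X$ whose backward itinerary $\ovl x$ shares a tail with $L$, so there is some $k\in\N$ with $\ldots l_{k+1}l_k=\ldots x_{k+1}x_k$. The idea is that truncating $L$ at position $k$ determines an admissible finite word $a_k\ldots a_1:=l_k\ldots l_1$, and I claim that $L$ itself is the top $L_{a_k\ldots a_1}$ of the cylinder $[a_k\ldots a_1]$. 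This follows directly from the definition of $\prec_L$ in equation~(\ref{eq:L}): any sequence $\ovl s$ agreeing with $L$ on the last $k$ coordinates satisfies $\ovl s\preceq_L L$, since $L$ is the global maximum with respect to $\prec_L$ and in particular the maximum over any cylinder it belongs to. Thus $A(L)=A(L_{a_k\ldots a_1})$ is fully accessible by Lemma~\ref{lem:cyl}.

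The remaining point of the first assertion is that this accessibility propagates to every basic arc in $\mathcal{U}_L$ sharing the tail of $L$, not merely to $A(L)$. Here I would argue that a basic arc $A(\ovl s)$ with $\ldots s_{k+1}s_k=\ldots l_{k+1}l_k$ is the top of the cylinder $[s_k\ldots s_1]$ determined by its own truncation: for any $\ovl t$ agreeing with $\ovl s$ on the last $k$ coordinates, the comparison $\ovl t\prec_L\ovl s$ versus $\ovl s\prec_L\ovl t$ is governed by the first disagreement index, which lies strictly beyond position $k$, and the parity condition in~(\ref{eq:L}) reduces to the comparison of the infinite tails $\ldots l_{k+1}l_k$ with themselves; since $L$ is maximal, $\ovl s$ inherits maximality within its cylinder. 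Hence each such $A(\ovl s)$ is a cylinder-top and so fully accessible by Lemma~\ref{lem:cyl}. Since $\mathcal{U}_L$ consists (by Remark~\ref{rem:symbolicAC}) precisely of the basic arcs sharing the tail of $L$, every point of $\mathcal{U}_L$ with this tail is accessible.

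For the second assertion I would combine the first with the structure theorem for arc-components. By the Corollary following Proposition~\ref{prop:spiral}, the non-degenerate arc-component $\mathcal{U}_L$ is a line, a ray, or an arc, and its only possible non-tail points are spiral endpoints. If $A(L)$ is not a spiral point, then the hypothesis rules out the spiral-endpoint exceptions of Remark~\ref{rem:symbolicAC}, so every basic arc comprising $\mathcal{U}_L$ has the same tail as $L$; the first assertion then accesses all of them, giving full accessibility. One should check that the endpoints of $\mathcal{U}_L$, if any, are also covered: an endpoint of $\mathcal{U}_L$ that is not a spiral point lies in some non-degenerate basic arc sharing the tail of $L$ (or is an endpoint of $X$ reachable as a limit of accessible cylinder-tops), and Proposition~\ref{prop:access} guarantees that if both endpoints of any sub-arc $[u,v]\subset\mathcal{U}_L$ are accessible then $[u,v]$ is fully accessible.

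I expect the main obstacle to be the propagation argument in the second paragraph, namely verifying carefully that \emph{every} basic arc sharing $L$'s tail is genuinely the top of its own cylinder and hence accessed from \emph{above} by the explicit arc $Q$ of Lemma~\ref{lem:cyl}. The subtlety is that $Q$ accesses a cylinder-top from above with a vertical segment of height $1/(2\cdot 3^{n+1})$, and one must confirm that no other basic arc or connecting semi-circle of the embedding obstructs this segment for the relevant $n=k$; this is exactly where the non-intersection property of the embedding (Proposition~3 of \cite{ABCemb}, the absence of the self-intersections in Figure~\ref{case1}) is needed. The spiral-point hypothesis enters precisely to guarantee that the tail-sharing description of $\mathcal{U}_L$ from Remark~\ref{rem:symbolicAC} has no exceptional points, so that the cylinder-top argument applies uniformly across the whole arc-component rather than failing at an endpoint where the itinerary tail changes.
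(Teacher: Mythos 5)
Your overall strategy---reduce accessibility of tail-sharing points to the cylinder-extremum result of Lemma~\ref{lem:cyl}, then invoke Remark~\ref{rem:symbolicAC} and Corollary~\ref{cor:class} for the arc-component statement---is exactly the paper's. But your propagation step contains a concrete error: it is \emph{not} true that every basic arc $A(\ovl s)$ sharing the tail of $L$ is the \emph{top} of the cylinder $[s_k\ldots s_1]$. Work through the definition of $\prec_L$ in~(\ref{eq:L}): if $\ovl t$ lies in that cylinder and first differs from $\ovl s$ at an index $j>k$, then $s_j=l_j$ while $t_j\neq l_j$, so $\ovl t\prec_L\ovl s$ holds if and only if $\#_1(s_{j-1}\ldots s_1)-\#_1(l_{j-1}\ldots l_1)$ is even; since $\ovl s$ and $L$ agree on positions $k+1,\dots,j-1$, this difference equals $\#_1(s_k\ldots s_1)-\#_1(l_k\ldots l_1)$. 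Thus $\ovl s=L_{s_k\ldots s_1}$ when that quantity is even, but $\ovl s=S_{s_k\ldots s_1}$ when it is odd. Your assertion that the parity condition ``reduces to the comparison of the infinite tails with themselves'' discards precisely the finite-prefix contribution $\#_1(s_k\ldots s_1)-\#_1(l_k\ldots l_1)$ that decides which extremum one gets; concretely, an admissible $\ovl s$ agreeing with $L$ everywhere except at one coordinate is the \emph{bottom} of its cylinder. Consequently the ``main obstacle'' you flag at the end (that each such arc is accessed from above by the vertical segment $Q$) is misplaced: roughly half of these arcs are cylinder bottoms and are accessed from below by the arc $Q'$ of Lemma~\ref{lem:cyl}.

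The error is harmless to the final conclusion, because Lemma~\ref{lem:cyl} makes both the top and the bottom of a cylinder fully accessible; the paper's proof is your argument with this parity dichotomy made explicit (it records that $\ldots l_{n+2}l_{n+1}x_n\ldots x_1$ equals $L_{x_n\ldots x_1}$ or $S_{x_n\ldots x_1}$ according to whether $\#_1(x_n\ldots x_1)$ and $\#_1(l_n\ldots l_1)$ have the same parity). The second half of your proposal---using the classification of arc-components, the spiral-point hypothesis to exclude the exceptional points of Remark~\ref{rem:symbolicAC}, and the closedness/connectedness of the accessible set from Corollary~\ref{cor:class} to capture a possible limit endpoint---matches the paper's citation of Corollary~\ref{cor:class} and Remark~\ref{rem:symbolicAC} and is fine as stated.
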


\begin{proof}	
	Take a point $x\in X$, where $\ovl{x}=\ldots x_2x_1$ and there exists $n>0$ 
	such that $\ldots x_{n+2}x_{n+1}= \ldots l_{n+2}l_{n+1}$.
	If $\#_1(x_{n}\ldots x_{1})$ and $\#_1(l_{n}\ldots l_{1})$ have the same parity, then $\ldots l_{n+2}l_{n+1}x_{n}\ldots x_{1}=L_{x_{n}\ldots x_{1}}$ and it is equal to the $S_{x_{n}\ldots x_{1}}$ otherwise.
	Lemma~\ref{lem:cyl}, Corollary~\ref{cor:class} and Remark~\ref{rem:symbolicAC} conclude the proof.
\end{proof}

\begin{definition}\label{def:equivalent}
	Let $\phi, \psi\colon K\to \R^2$ be two embeddings of a continuum $K$ in the plane. We say that the embeddings are \emph{equivalent} if the homeomorphism $\psi\circ\phi^{-1}\colon\phi(K)\to\psi(K)$ can be extended to a homeomorphism of the plane.
\end{definition}

By $\phi_L$ we denote the $\mathcal{E}$-embedding of $X$ so that the arc $A(L)$ is the largest among all basic arcs.
In the following proposition we observe that given two left-infinite sequences $L^1, L^2$ with eventually the same tail, we get equivalent embeddings. 
\begin{proposition}
	Let $L^1=\ldots l^1_{2}l^1_{1}$ and $L^2=\ldots l^2_{2}l^2_{1}$ be such that there exists $n\in\N$ so that for every $k> n$ it holds that $l^1_{k}=l^2_{k}$. Then the embeddings $\phi_{L^1}$ and $\phi_{L^2}$ of $X$ are equivalent.
\end{proposition}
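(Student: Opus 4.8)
The plan is to show that changing the left-infinite sequence $L$ only in finitely many coordinates produces a planar continuum that differs from the original only by a finite rearrangement of the vertical ordering of basic arcs, and that this rearrangement is realized by an ambient planar homeomorphism. The key observation is that both embeddings $\phi_{L^1}$ and $\phi_{L^2}$ assign to each admissible left-infinite sequence $\ovl s$ the \emph{same} horizontal arc $\pi_0(A(\ovl s))\times\{\cdot\}$ in terms of its $x$-extent (since $\pi_0(A(\ovl s))$ depends only on $\ovl s$ and $\nu$, not on $L$, by Lemma~\ref{lem:first}); only the $y$-coordinate $\psi_{L^i}(\ovl s)$ and the left/right side on which identifying semi-circles are attached can differ. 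So the combinatorial skeleton is identical and I only need to track how the vertical order $\prec_{L^1}$ differs from $\prec_{L^2}$.

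First I would compare the two orderings directly from the defining equation~(\ref{eq:L}). Given two sequences $\ovl s,\ovl t$ differing first at coordinate $k$, whether $\ovl s\prec_{L^i}\ovl t$ is decided by which of $s_k,t_k$ equals $l^i_k$ together with the parity of $\#_1(s_{k-1}\ldots s_1)-\#_1(l^i_{k-1}\ldots l^i_1)$. Since $L^1$ and $L^2$ agree for all indices $>n$, the two orderings $\prec_{L^1}$ and $\prec_{L^2}$ can disagree only when the distinguishing index $k$ satisfies $k\le n$. Concretely I would argue that the ordering within each cylinder $[a_n\ldots a_1]$ of fixed length-$n$ suffix is preserved (up to a global flip depending on the parity difference $\#_1(l^1_n\ldots l^1_1)-\#_1(l^2_n\ldots l^2_1)$ accumulated in the first $n$ symbols), and that the only change is a permutation together with possible orientation reversals of these finitely many cylinder-blocks relative to one another. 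In other words, the passage from $\phi_{L^1}$ to $\phi_{L^2}$ amounts to reshuffling finitely many ``bands'' in the Cantor set $C$, each band being a cylinder of words of length $n$, each possibly reflected.

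Next I would upgrade this combinatorial statement to an actual planar homeomorphism. The cleanest approach is to realize the reshuffling concretely: each length-$n$ cylinder occupies one of the $2^n$ clopen pieces of the Cantor set $C$ (the pieces surviving to level $n$ in the construction of $C$), and the whole embedded continuum inside the horizontal strip over one such piece is a compact planar set sitting in a thin box $[0,1]\times I_j$ for a small interval $I_j$. I would construct a homeomorphism $h\colon\R^2\to\R^2$ that, strip by strip, sends the box used by $\phi_{L^1}$ for a given cylinder to the box used by $\phi_{L^2}$ for that same cylinder, applying a vertical reflection on those strips where the parity computation dictates an orientation reversal, and the identity in the $x$-direction. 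Because there are only finitely many strips (indexed by length-$n$ words) and they are pairwise disjoint with disjoint vertical ranges, these piecewise definitions patch together, via a standard interpolation in the gaps between strips, into a single homeomorphism of the plane. By construction $h\circ\phi_{L^1}=\phi_{L^2}$ on $X$, which is exactly the extension required by Definition~\ref{def:equivalent}.

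The main obstacle I anticipate is the careful bookkeeping of the semi-circle identifications: a reflection of a strip swaps ``top'' and ``bottom,'' hence potentially swaps left-attached and right-attached semi-circles, and I must verify that after the reshuffling the identified endpoints still match up and no two semi-circles are forced to cross. Here I would lean on the fact, already used in the construction (Proposition~3 of \cite{ABCemb}), that a pair of basic arcs identified by $\sim$ via a coordinate $k<0$ lies in a common cylinder for all sufficiently large suffix-length, so for $n$ chosen large enough every relevant identification is \emph{internal} to a single length-$n$ cylinder-strip; a reflection of that whole strip carries its internal semi-circles to valid semi-circles of the correct (possibly flipped) side without interacting with neighboring strips. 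Thus choosing $n$ at least as large as the index past which $L^1$ and $L^2$ agree, and large enough that the finitely many problematic identifications are absorbed, makes the strip-homeomorphism well defined and non-crossing, completing the argument.
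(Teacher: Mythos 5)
Your overall strategy is the same as the paper's: compare $\prec_{L^1}$ and $\prec_{L^2}$ via the parity bookkeeping in equation~(\ref{eq:L}), conclude that within each $n$-cylinder the vertical order is preserved or uniformly reversed according to the parity of $\#_1(l^1_n\ldots l^1_1)-\#_1(l^2_n\ldots l^2_1)$, and then realize the passage from $\phi_{L^1}$ to $\phi_{L^2}$ by a planar homeomorphism that permutes (and possibly flips) the finitely many cylinder-strips. That part is correct and is exactly the paper's argument, stated in slightly more detail.

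However, the step where you dispose of the semi-circle identifications contains a claim that is backwards. You assert that a pair of basic arcs identified under $\sim$ ``lies in a common cylinder for all sufficiently large suffix-length,'' so that for $n$ large every identification is internal to a single length-$n$ strip. The opposite is true: if $\ovl s$ and $\ovl t$ differ at exactly the coordinate $j$, they lie in the same $n$-cylinder precisely when $n<j$, and in \emph{different} $n$-cylinders as soon as $n\ge j$. Since for every $j$ there are infinitely many identified pairs differing at coordinate $j$, every choice of $n$ leaves infinitely many semi-circles joining distinct $n$-cylinder strips, and enlarging $n$ only creates more of them. So the cross-strip semi-circles cannot be absorbed by choosing $n$ large; they are exactly the configuration your strip-permuting homeomorphism must be shown to respect. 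The repair is not hard but uses a different observation: the target picture $\phi_{L^2}(X)$ is itself a legitimate embedding (Proposition~3 of \cite{ABCemb} guarantees that its semi-circles are pairwise non-crossing, hence form nested families in the left and right exterior half-planes), so after your strip permutation the required chords on each side are again properly nested and the homeomorphism extends over the two exterior regions. As written, though, your justification of the non-crossing of the rerouted semi-circles rests on a false premise, and this is the one point of the proof that genuinely needs an argument.
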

\begin{proof}
		If $\#_1(l^1_{n}\ldots l^1_{1})$ and $\#_1(l^2_{n}\ldots l^2_{1})$ are of the same (different) parity, then for every admissible $\ovl x=\ldots x_{2}x_{1}$ and $\ovl y=\ldots y_{2}y_{1}$ such that $x_{n}\ldots x_{1}=y_{n}\ldots y_{1}$ it follows that $\ovl x\prec_{L^1} \ovl y$ if and only if $\ovl x\prec_{L^2} \ovl y$ ($\ovl x\succ_{L^2} \ovl y$).\\
		We conclude that $\phi_{L^2}\circ \phi_{L^1}^{-1}\colon \phi_{L^1}(X)\to \phi_{L^2}(X)$ preserves (reverses) the order in every $n$-cylinder $\f a_n\ldots a_1]$. There exists a planar homeomorphism $h$ so that  $h|_{\phi_{L^1}(X)}=\phi_{L^2}(X)$ and $h$ permutes $n$-cylinders from the order determined by $L^1$ to the order determined by $L^2$, which concludes the proof.
\end{proof}

Now we briefly comment on $\mathcal{E}$-embeddings of $X$ (including the ray $\mathcal{C}$).
For the rest of the section assume that $X$ is not the Knaster continuum (since then $X=X'$, \ie $\mathcal{C}$ is contained in the core $X'$). Let $X$ be embedded in the plane with respect to $L=\ldots l_2l_1\neq 0^{\infty}l_{n}\ldots l_{1}$ for every $n\in\N$. The case when $\mathcal{E}$-embedding is equivalent to $L=0^{\infty}1$ (the Brucks-Diamond embedding from \cite{BrDi}) will be studied in Section~\ref{sec:BD}.

\begin{remark}
	When we study $X$ (\ie including the \black{ray} $\mathcal{C}$), there exist cylinders $[a_n\ldots a_1]$ where \black{$a_n\ldots a_1\prec_L c_2\ldots c_{1+n}$}, but there is $k\in\{1, \ldots, n-1\}$ such that $a_k\ldots a_1$ is admissible, $a_k=1$ and $a_n\ldots a_{k+1}=0^{n-k}$. In that case, $[a_n\ldots a_1]$ contains only one basic arc, that is $[a_n\ldots a_1]=\{ 0^{\infty}a_n\ldots a_1\}$ and $L_{a_n\ldots a_1}=S_{a_n\ldots a_1}=0^{\infty}a_n\ldots a_1.$ 
\end{remark}

\begin{remark}\label{rem:Cisolated}
	The \black{ray} $\mathcal{C}$ is isolated (when $X$ is not the Knaster continuum), and thus it is fully accessible in any $\mathcal{E}$-embedding of $X$. \black{In the circle of prime ends, $\mathcal{C}$ corresponds to an open interval with $\overline 0$ in the center.}
\end{remark}

\begin{proposition}\label{prop:Cacc}
	Take an admissible left-infinite sequence $\ovl{a}=\ldots a_2a_1$ such that $A(\ovl{a})\not\subset \mathcal{C}$ and $a_n\neq l_n$ for infinitely many $n\in\N$. Then there exist sequences $(\ovl{s_i})_{i\in\N}$ and $(\ovl{t_i})_{i\in\N}$ such that $A(\ovl{s_i}), A(\ovl{t_i})\subset\mathcal{C}$, $\ovl{s_i}, \ovl{t_i}\to\ovl a$ as $i\to\infty$ and $\ovl{s_i}\prec_L\ovl{a}\prec_L\ovl{t_i}$.
\end{proposition}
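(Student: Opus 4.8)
The plan is to realize both approximating families as zero-padded truncations of $\ovl a$ and to decide, by a parity count, on which side of $\ovl a$ each truncation falls. First I would isolate the following fact about the order $\prec_L$: if two admissible sequences $\ovl w$ and $\ovl a$ have smallest index of disagreement equal to $k$ (so $w_j=a_j$ for $j<k$ while $w_k\neq a_k$), then by the two cases in~\eqref{eq:L} their relation is decided entirely at position $k$ and is \emph{independent} of the entries of $\ovl w$ to the left of $k$. Concretely, set $\chi_k:=1$ if $a_k=l_k$ and $\chi_k:=0$ otherwise, and put
\[
q_k:=\#_1(a_{k-1}\ldots a_1)-\#_1(l_{k-1}\ldots l_1)+\chi_k \pmod 2 .
\]
Since exactly one of $a_k,w_k$ equals $l_k$, a direct reading of~\eqref{eq:L} (split into the subcases $a_k=l_k$ and $a_k\neq l_k$) shows that every such $\ovl w$ satisfies $\ovl w\prec_L\ovl a$ precisely when $q_k$ is odd, and $\ovl a\prec_L\ovl w$ precisely when $q_k$ is even.

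Next I would study how $q_k$ varies with $k$. Using $a_k-l_k\equiv 1-\chi_k \pmod 2$ one computes $q_{k+1}\equiv q_k+1+\chi_{k+1}\pmod 2$, so $q_{k+1}\equiv q_k$ when $a_{k+1}=l_{k+1}$ and $q_{k+1}\equiv q_k+1$ when $a_{k+1}\neq l_{k+1}$. In other words, the parity of $q_k$ flips exactly as the index is advanced across a position where $\ovl a$ and $L$ disagree. Because by hypothesis $a_n\neq l_n$ for infinitely many $n$, the value $q_k$ flips infinitely often, hence $q_k$ is odd for infinitely many $k$ and even for infinitely many $k$. This is the step where the hypothesis $a_n\neq l_n$ infinitely often is essential: if instead $\ovl a$ eventually agreed with $L$, the parity would stabilize and only one side would be reachable, in accordance with the fact that the top arc $A(L)$ has nothing above it.

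For each relevant $k$ I would then produce an explicit witness inside $\mathcal{C}$. Let $\ovl w_k$ be the left-infinite sequence that agrees with $\ovl a$ on positions $1,\ldots,k-1$, has $k$-th entry $1-a_k$, and equals $0$ on every position $>k$. Each $\ovl w_k$ has only finitely many ones, so $A(\ovl w_k)\subset\mathcal{C}$, it differs from $\ovl a$ first at $k$, and $\ovl w_k\to\ovl a$ as $k\to\infty$ since the two sequences share the initial block $a_{k-1}\ldots a_1$. Admissibility is where the hypothesis $A(\ovl a)\not\subset\mathcal{C}$ enters: it guarantees that $\ovl a$ is admissible in the ordinary sense, so every finite subword $a_{k-1}\ldots a_1$ is admissible. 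If $a_k=0$ then $\ovl w_k=0^\infty 1\,a_{k-1}\ldots a_1$ is admissible because prepending a $1$ to an admissible word is admissible (this is exactly the estimate used in the proof of Lemma~\ref{lem:cylindersnonempty}); if $a_k=1$ then $\ovl w_k=0^\infty a_{k-1}\ldots a_1$ is an eventually-zero extension of an admissible word, hence admissible in the extended sense defining points of $\mathcal{C}$.

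Finally I would assemble the two families: enumerating the (infinitely many) indices $k$ with $q_k$ odd yields a sequence $(\ovl{s_i})$ with $A(\ovl{s_i})\subset\mathcal{C}$ and $\ovl{s_i}\prec_L\ovl a$, while enumerating those with $q_k$ even yields $(\ovl{t_i})$ with $A(\ovl{t_i})\subset\mathcal{C}$ and $\ovl a\prec_L\ovl{t_i}$; both converge to $\ovl a$, which is exactly the assertion. The main obstacle is the parity bookkeeping of the first two paragraphs — verifying that the side on which a truncation lands is governed solely by the disagreement index $k$, and that this parity changes precisely at the disagreements between $\ovl a$ and $L$. Once this is in place the admissibility verification is routine, with the two hypotheses supplying respectively the ordinary admissibility of $\ovl a$ and the infinitude of sign changes.
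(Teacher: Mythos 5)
Your proof is correct and takes essentially the same route as the paper's: both construct the witnesses by keeping the prefix $a_{k-1}\ldots a_1$, flipping the $k$-th coordinate, and padding with $0^{\infty}$ (so the arcs lie in $\mathcal{C}$ and admissibility reduces to the prepending-a-$1$ estimate), and both decide the side of $\ovl a$ on which each witness falls via the parity of $\#_1(a_{k-1}\ldots a_1)-\#_1(l_{k-1}\ldots l_1)$, which changes exactly at the infinitely many indices where $\ovl a$ and $L$ disagree. The only cosmetic difference is that the paper flips only at the disagreement indices $N_j$, where the flipped symbol automatically equals $l_{N_j}$ and the side is read off from the parity of $j$, whereas you allow arbitrary truncation points and track the running invariant $q_k$.
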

\begin{proof}
	First note that the assumption $A(\ovl{a})\not\subset \mathcal{C}$ is indeed needed since by Remark~\ref{rem:Cisolated}, $\mathcal{C}$ is isolated and thus the statement of the proposition does not hold for basic arcs from $\mathcal{C}$; thus assume $A(\ovl{a})\not\subset \mathcal{C}$.\\
	 Let $(N_i)_{i\in\N}$ be the sequence of natural numbers such that $a_n\neq l_n$ for $n\in\{N_i: i\in\N\}$.
	 Denote by
	$$\hspace{0.6cm}\ovl{t_i}:=0^{\infty}a^*_{N_{2i-1}}a_{N_{2i-1}-1}\ldots a_1$$
	$$\ovl{s_i}:=0^{\infty}a^*_{N_{2i}}a_{N_{2i}-1}\ldots a_1$$
	for every $i\in\N$. By contradiction, if a sequence $\ovl{t_i}$ is not admissible it holds that $\ldots 1a_{N_{2i-1}-1}\ldots a_1\succ_L \nu$. Thus, $a_{N_{2i-1}-1}\ldots a_1\ldots \prec \ovr{c_2}$ which is a contradiction with $a_{N_{2i-1}-1}\ldots a_1$ being an admissible word. Thus $\ovl{t_i}$ is admissible sequence and proof goes analogously for $\ovl{s_i}$. Note that $A(\ovl{t_i}), A(\ovl{s_i})\subset\mathcal{C}$ for every $i\in\N$.

	 Since $\#_1(a_{N_{2i-1}-1}\ldots a_1)$  and $\#_1(l_{N_{2i-1}-1}\ldots l_1)$ are of the same parity (the sequences differ on even number of entries) and
	 $\#_1(a_{N_{2i}-1}\ldots a_1)$ and $\#_1(l_{N_{2i}-1}\ldots l_1)$ are of different parity (the sequences differ on odd number of entries), it holds that $\ovl{s_i}\prec_L\ovl{a}\prec_L\ovl{t_i}$  for every $i\in\N$. 
\end{proof}

Combining Proposition~\ref{prop:fully} with Proposition~\ref{prop:Cacc} we obtain that only basic arcs from $\mathcal{U}_L$ or $\mathcal{C}$ can be tops or bottoms of cylinders of $\mathcal{E}$-embeddings of $X$.  \black{Using Proposition~\ref{prop:fourth}}  we obtain the following corollary.

\begin{corollary}\label{cor:withC} 
	If $A(L)$ is not a spiral point \black{and $X$ is not Knaster, \ie $X\neq X'$,} \black{then the embedding} $\phi_L(X)$ has exactly two non-degenerate fully accessible arc-components, namely $\mathcal{U}_L$ and $\mathcal{C}$ (however in  the embedding by Brucks-Diamond it holds that $\mathcal{C}=\mathcal{U}_L$). If $A(L)$ is non-degenerate, there are two remaining points on the circle of prime ends and they correspond either to an infinite canal in $X$ or to an \black{accessible} folding point. If $A(L)$ is degenerate then there are no infinite canals in $X$.
\end{corollary}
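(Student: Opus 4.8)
The plan is to determine the fully accessible arc-components first, and then to read the structure of the circle of prime ends off from them. For the accessible arc-components, Proposition~\ref{prop:fully} already gives that $\mathcal{U}_L$ is fully accessible (since $A(L)$ is not a spiral point), and Remark~\ref{rem:Cisolated} gives that the isolated ray $\mathcal{C}$ is fully accessible; both are non-degenerate, and they coincide precisely in the Brucks--Diamond case $L=0^{\infty}1$. It then remains to exclude every other arc-component. I would take a basic arc $A(\ovl a)$ with $A(\ovl a)\not\subset\mathcal{C}$ whose tail differs from that of $L$, so that $a_n\neq l_n$ for infinitely many $n$. Proposition~\ref{prop:Cacc} supplies basic arcs $A(\ovl{s_i}),A(\ovl{t_i})\subset\mathcal{C}$ with $\ovl{s_i}\prec_L\ovl a\prec_L\ovl{t_i}$ converging to $A(\ovl a)$ from below and from above; for large $n$ these lie in the same cylinder as $\ovl a$, so $A(\ovl a)$ is neither a top nor a bottom of any cylinder, and the $\mathcal{C}$-arcs accumulating on it from both sides block access to its interior points. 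Hence $\mathcal{U}_{\ovl a}$ is not fully accessible, and $\mathcal{U}_L$ and $\mathcal{C}$ are the only non-degenerate fully accessible arc-components, which settles the first assertion.

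Next I would transfer this to the circle of prime ends $\mathcal{P}$ of $\phi_L(X)$. By Remark~\ref{rem:Cisolated} the isolated ray $\mathcal{C}$ corresponds to an open interval $J_{\mathcal{C}}\subset\mathcal{P}$ with $\overline{0}$ at its centre, and since $\mathcal{U}_L$ is non-degenerate and fully accessible it is accessible in more than one point, so by Iliadis' theorem it corresponds to an open interval $J_L\subset\mathcal{P}$. By the first paragraph these are the only composants accessible in more than one point, hence $J_{\mathcal{C}}$ and $J_L$ are the only open intervals in the Iliadis decomposition. Consequently $\mathcal{P}\setminus(J_{\mathcal{C}}\cup J_L)$ consists of boundary points; invoking the partition of Corollary~\ref{cor:Iliadis}, which states that every boundary point is an endpoint of an open interval, forces the two complementary closed arcs to be degenerate, so $\mathcal{P}=J_{\mathcal{C}}\sqcup\{e_1\}\sqcup J_L\sqcup\{e_2\}$ for exactly two points $e_1,e_2$.

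Finally I would classify $e_1$ and $e_2$. Each has impression equal to all of $X$, so it is a prime end of the second or third kind, and ruling out the fourth kind through Proposition~\ref{prop:fourth} leaves two options: if $\Pi(e_i)$ is degenerate then $e_i$ is a second kind prime end whose principal point is an accessible folding point, while if $\Pi(e_i)$ is non-degenerate then $e_i$ is an infinite canal. Since $e_1,e_2$ sit at the two junctions of $J_{\mathcal{C}}$ and $J_L$, their principal sets are governed by the top arc $A(L)$: when $A(L)$ is non-degenerate a principal set may be the arc $A(L)$, giving an infinite canal, whereas when $A(L)$ is degenerate the principal set there cannot spread to a non-degenerate arc, so $\Pi(e_i)$ is degenerate and no infinite canal occurs. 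The main obstacle is precisely this transfer of the indecomposable prime-end machinery (Iliadis' theorem, Proposition~\ref{prop:fourth}, Corollary~\ref{cor:Iliadis}) to the decomposable $X$: one must argue that deleting the interval $J_{\mathcal{C}}$ cut out by the isolated ray reduces the analysis to the indecomposable core $X'$, and then control the principal sets of $e_1,e_2$ finely enough to separate the degenerate from the non-degenerate $A(L)$ case.
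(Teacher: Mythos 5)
Your argument is correct and takes essentially the same route as the paper, which derives this corollary in one line by combining Proposition~\ref{prop:fully} and Proposition~\ref{prop:Cacc} (so that only basic arcs from $\mathcal{U}_L$ or $\mathcal{C}$ can be tops or bottoms of cylinders) with Proposition~\ref{prop:fourth} — exactly the skeleton you flesh out. The difficulty you flag at the end, namely that Iliadis' theorem and Proposition~\ref{prop:fourth} are stated for indecomposable continua while $X$ is decomposable, is equally present and left unaddressed in the paper's own justification, so it is not a gap specific to your proposal.
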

\black{
\begin{remark}\label{rem:new}
		It is easy to check that for Knaster continuum $X=X'$, only basic arcs from $\mathcal{U}_L$ can be extrema od cylinders in $\mathcal{E}$-embedding $\phi_L$, \ie $\mathcal{C}$ is not accessible, except for possibly its endpoint $\overline 0=(\ldots,0,0,0)$. Actually, in $\mathcal{E}$-embeddings the endpoint $\overline 0$ will always be accessible, see Remark~\ref{rem:endptKnaster}. So, there is a point in the circle of prime ends corresponding to an accessible $\overline 0$ and an interval corresponding to a fully-accessible line $\mathcal{U}_L$. Specially, there are no simple dense canals.
\end{remark}
We return to the embeddings of $X'$ and until the end of this section give grounds for further study.}
The following statements are going to be used often throughout the paper to determine \black{when} an arc-component is fully accessible.

\begin{definition}\label{def:neigh}
	Let $\ovl s=\ldots s_2s_1$ be an admissible left-infinite sequence. If $\tau_R(\ovl{s})<\infty$, the tail $\ovl {r(s)}=\ldots s_{\tau_R(\ovl s)+1}s^*_{\tau_R(\ovl s)}s_{\tau_R(\ovl s)-1}\ldots s_1$ is called \emph{the right neighbour of $\ovl s$} and if $\tau_L(\ovl{s})<\infty$, the tail $\ovl {l(s)}=\ldots s_{\tau_L(\ovl s)+1}s^*_{\tau_L(\ovl s)}s_{\tau_L(\ovl s)-1}\ldots s_1$ is called \emph{the left neighbour of $\ovl s$}. 
\end{definition}

\begin{proposition}\label{prop:wiggles}
	Embed $X'$ in the plane with respect to $L$. Assume $\ovl s$ is at the bottom (top) of some cylinder. If $\ovl{r(s)}$ is not at the top (bottom) of any cylinder, then $A(\ovl{r(s)})$ contains an accessible folding point, see Figure~\ref{fig:foldincyl1}. Analogous statement holds for $\ovl{l(s)}$.  
\end{proposition}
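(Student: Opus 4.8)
The plan is to combine the accessibility furnished by Lemma~\ref{lem:cyl} on the $\ovl s$-side with the ``crowding from above'' forced by the failure of $\ovl{r(s)}$ to be a top, and to place the advertised folding point exactly where the two phenomena meet (cf. Figure~\ref{fig:foldincyl1}). First I would record the input on the $\ovl s$-side. Since $\ovl s$ sits at the bottom of some cylinder, Lemma~\ref{lem:cyl} gives that $A(\ovl s)$ is fully accessible, with the accessing arcs $Q'$ approaching from below. Let $k:=\tau_R(\ovl s)$ and let $p$ be the common endpoint of $A(\ovl s)$ and $A(\ovl{r(s)})$ projecting to $T^{k}(c)=\max\pi_0(A(\ovl s))$; its existence, and the fact that the two arcs are glued there by a semicircle, are exactly Lemma~\ref{lem:first} together with Definition~\ref{def:neigh}. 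As $p\in A(\ovl s)$ it is accessible, and $p\in A(\ovl{r(s)})$ as well. Because $\#_1(c_1\ldots c_{k-1})$ is even, crossing the semicircle at $p$ reverses orientation, so the gap of the Cantor set lying below $A(\ovl s)$ is carried around $p$ to the side of $A(\ovl{r(s)})$ that the accessing rays will use; this is the asymmetry that makes ``bottom'' on the $\ovl s$-side pair with ``top'' on the $\ovl{r(s)}$-side.

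Next I would unwind the hypothesis. That $\ovl{r(s)}$ is not the top of any cylinder means that for every $m$ there is an admissible $\ovl u$ with $u_m\ldots u_1=r(s)_m\ldots r(s)_1$ and $\ovl{r(s)}\prec_L\ovl u$. Choosing a sequence $\ovl{u^{(m)}}$ whose first disagreement with $\ovl{r(s)}$ tends to infinity, the basic arcs $A(\ovl{u^{(m)}})$ lie above $A(\ovl{r(s)})$ and, by the formula for $\psi_L$, converge to it vertically; their turning semicircles accumulate onto a point $w\in A(\ovl{r(s)})$. A neighbourhood of $w$ therefore meets infinitely many of these folded-over arcs and cannot be homeomorphic to $C\times(0,1)$, so $w$ is a folding point lying in $A(\ovl{r(s)})$, and the side of $A(\ovl{r(s)})$ from which the $A(\ovl{u^{(m)}})$ accumulate is inaccessible. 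In particular $A(\ovl{r(s)})$ is not fully accessible.

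Finally I would access $w$. By Proposition~\ref{prop:access} and Corollary~\ref{cor:class} the set of accessible points of the arc-component $\mathcal{U}$ of $A(\ovl{r(s)})$ is closed and connected; it contains the accessible endpoint $p$ but, by the previous paragraph, not all of $A(\ovl{r(s)})$. Hence the accessible set has a boundary point, which, being a limit of accessible points, is itself accessible by closedness. To see that this boundary point is $w$ I would invoke the following principle: a point carrying a $C\times(0,1)$ chart cannot be a boundary point of the accessible set, since in such a chart the access arc can be slid within the product and accessibility is constant along the through-arc; thus the accessible set can only terminate at a point failing the product structure, i.e. at a folding point, and the only candidate between $p$ and the inaccessible side is $w$. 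As endpoints are themselves folding points, the argument also covers the case in which the fold occurs at an endpoint of $A(\ovl{r(s)})$. The statement for $\ovl{l(s)}$, and the ``top/bottom'' version in parentheses, then follow verbatim after interchanging left with right and $\tau_R$ with $\tau_L$.

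I expect the main obstacle to be this last identification: proving that the open channel inherited from $\ovl s$ being a bottom actually reaches $w$ and is not sealed off by the very arcs $A(\ovl{u^{(m)}})$ that create the fold. This is precisely where the even parity of $\#_1(c_1\ldots c_{k-1})$ and the resulting \emph{one-sided} accumulation must be used quantitatively, to guarantee that the $A(\ovl{u^{(m)}})$ block only the upper side of $A(\ovl{r(s)})$ while the side below the channel stays exposed up to $w$; establishing that this exposed side persists all the way to $w$ (rather than being pinched off earlier) is the delicate geometric point that the rest of the argument reduces to.
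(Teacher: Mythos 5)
There is a genuine gap at the step where you produce the folding point $w$. You derive $w$ solely from the hypothesis that $\ovl{r(s)}$ is not the top of any cylinder: you take admissible sequences $\ovl{u^{(m)}}\succ_L\ovl{r(s)}$ converging to $\ovl{r(s)}$ and assert that ``their turning semicircles accumulate onto a point $w\in A(\ovl{r(s)})$'', which then fails to have a $C\times(0,1)$ neighbourhood. But one-sided accumulation of basic arcs does not by itself create a folding point: when the critical orbit is finite there are only finitely many folding points, while uncountably many basic arcs are neither tops nor bottoms of any cylinder, so they have arcs accumulating on them from both sides and yet contain no folding point. What is missing is any control over \emph{where} the arcs $A(\ovl{u^{(m)}})$ turn around, i.e.\ over the position of their right neighbours $A(\ovl{r(u^{(m)})})$ relative to $A(\ovl{r(s)})$; if those partners also lie above $A(\ovl{r(s)})$, the approximating arcs may simply pass over it and nothing folds onto $A(\ovl{r(s)})$ at all. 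This is precisely where the hypothesis that $\ovl s$ is the bottom of a cylinder must be used, whereas in your argument it only serves to make the gluing point $p$ accessible.

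The paper's proof supplies exactly this missing control via a symbolic dichotomy on the approximating sequences $\ovl{x_i}\succ_L\ovl{r(s)}$: either infinitely many of them have $\tau_R(\ovl{x_i})=\infty$, in which case their terminal points accumulate on a folding point of $A(\ovl{r(s)})$; or their right neighbours $\ovl{r(x_i)}$ exist, and they cannot lie below $\ovl s$ for infinitely many $i$ (that would contradict $\ovl s$ being the bottom of a cylinder), so for all but finitely many $i$ they satisfy $\ovl{r(x_i)}\prec_L\ovl{r(s)}$. Only then do the pairs $A(\ovl{x_i})\cup A(\ovl{r(x_i)})$ genuinely wrap around $A(\ovl{r(s)})$, with turning points converging into it, which is what forces the folding point. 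Your closing paragraph locates the delicacy in whether the access channel reaches $w$; in fact the prior and more basic problem is that without the above dichotomy $w$ need not exist. Your step 3 (closedness and connectedness of the accessible set via Proposition~\ref{prop:access} and Corollary~\ref{cor:class}, plus the sliding principle in a $C\times(0,1)$ chart) is a reasonable way to upgrade ``folding point'' to ``accessible folding point'', and is more explicit than what the paper writes, but it cannot rescue the argument as it stands.
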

\begin{proof}
	If $\ovl{r(s)}$ is not the top (bottom) of any cylinder, then there exist left-infinite admissible sequences $\ovl{x_i}\succ_L\ovl{r(s)}$ ($\ovl{x_i}\prec_L\ovl{r(s)}$) such that $\ovl{x_i}\to\ovl{r(s)}$ as $i\to\infty$. If $\tau_R(\ovl{x_i})=\infty$ for infinitely many $i\in\N$, we have found a folding point in $A(\ovl{r(s)})$. So assume without loss of generality that $\tau_R(\ovl{x_i})<\infty$ for all $i\in\N$. If $\ovl{s}\succ_L \ovl{r(x_i)}$ ($\ovl{s}\prec_L \ovl{r(x_i)}$) for infinitely many $i\in\N$ we get a contradiction with $\ovl{s}$ being the top (bottom) of some cylinder. But then $\ovl{r(x_i)}\prec_L\ovl{r(s)}$ ($\ovl{r(x_i)}\succ_L\ovl{r(s)}$) for all but finitely many $i\in\N$ which gives a folding point in $A(\ovl{r(s)})$ again.
\end{proof}

\begin{figure}[!ht]
	\centering
	\begin{tikzpicture}[scale=1.5]
	
	\draw (0,1.6)--(2,1.6);
	\draw (0,1.5)--(2,1.5);
	\draw (0,1.38)--(2,1.38);
	\draw (0,1.3)--(2,1.3);
	\draw (0,1.16)--(2,1.16);
	\draw (0,1.1)--(2,1.1);
	\draw[domain=270:450] plot ({2+0.05*cos(\x)}, {1.55+0.05*sin(\x)});
	\draw[domain=270:450] plot ({2+0.04*cos(\x)}, {1.34+0.04*sin(\x)});
	\draw[domain=270:450] plot ({2+0.03*cos(\x)}, {1.13+0.03*sin(\x)});
	
	\draw[thick] (0, 1)--(3,1);
	\draw[thick] (0, 0)--(3,0);
	\draw[domain=270:450] plot ({3+0.5*cos(\x)}, {0.5+0.5*sin(\x)});
	\node[circle,fill, inner sep=1] at (2,1){};
	\node at (2.15,1.1){\small $p$};
	\node at (2.6,1.2){\small $\overleftarrow{r(s)}$};
	\node at (1.5,-0.2){\small $\overleftarrow{s}$};
	
	\draw[thin] (0,0.09)--(3,0.09);
	\draw[thin] (0,0.05)--(3,0.05);
	\draw[thin] (0,0.03)--(3,0.03);
	\draw[thin] (0,0.98)--(3,0.97);
	\draw[thin] (0,0.96)--(3,0.95);
	\draw[thin] (0,0.92)--(3,0.91);
	\draw[domain=270:450] plot ({3+0.47*cos(\x)}, {0.5+0.47*sin(\x)});
	\draw[domain=270:450] plot ({3+0.45*cos(\x)}, {0.5+0.45*sin(\x)});
	\draw[domain=270:450] plot ({3+0.41*cos(\x)}, {0.5+0.41*sin(\x)});
	\end{tikzpicture}
	\caption{Setup of Proposition~\ref{prop:wiggles}, where $p$ is a folding point.}
	\label{fig:foldincyl1}
\end{figure}
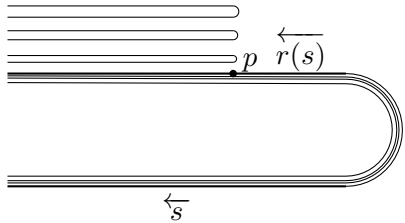

The following corollary follows directly from Proposition~\ref{prop:wiggles}.

\begin{corollary}\label{cor:fully}
	Let $\mathcal{U}\subset X'$ be an arc-component which contains no folding points and let $X'$ be $\mathcal{E}$-embedded. If there exists a basic arc from $\mathcal{U}$ that is fully accessible, then $\mathcal{U}$ is fully accessible. 
\end{corollary}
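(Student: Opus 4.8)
The plan is to reduce the statement to a claim about individual basic arcs and then propagate full accessibility along $\mathcal{U}$ using Proposition~\ref{prop:wiggles}. Since $\mathcal{U}$ contains no folding points, every basic arc $A(\ovl s)\subseteq\mathcal{U}$ is non-degenerate: a degenerate basic arc has $\tau_L(\ovl s)=\infty$ or $\tau_R(\ovl s)=\infty$ and hence is an endpoint of $X$ by Proposition~\ref{prop:endpt}, which is a folding point. Consequently $\tau_L(\ovl s),\tau_R(\ovl s)<\infty$ for every $\ovl s\in\mathcal{U}$, the neighbours $\ovl{r(s)}$ and $\ovl{l(s)}$ of Definition~\ref{def:neigh} are well defined, and by Lemma~\ref{lem:first} and Remark~\ref{rem:symbolicAC} the basic arcs of $\mathcal{U}$ form a chain in which consecutive arcs are joined at the endpoints they share. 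Since the basic arcs together with these shared endpoints exhaust $\mathcal{U}$, it suffices to show that every basic arc of $\mathcal{U}$ is fully accessible.

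The key step I would isolate is the characterization: for $\ovl s\in\mathcal{U}$, the basic arc $A(\ovl s)$ is fully accessible if and only if $\ovl s$ is the top or the bottom of some cylinder. The ``if'' direction is exactly Lemma~\ref{lem:cyl}. For the ``only if'' direction I would argue by contradiction: if $\ovl s$ is neither the top nor the bottom of any cylinder, then in every cylinder $[s_n\ldots s_1]$ there are basic arcs lying strictly above and strictly below $A(\ovl s)$ in the $\prec_L$-order, and as $n\to\infty$ these accumulate onto $A(\ovl s)$ from both sides. Using $\tau_L(\ovl s),\tau_R(\ovl s)<\infty$ together with the $\inf/\sup$ formulas of Lemma~\ref{lem:first}, I would show that the $\pi_0$-projections of these approaching arcs eventually cover any fixed point in the interior of $\pi_0(A(\ovl s))$; such an interior point is then squeezed between material of $X'$ arbitrarily close above and below at the same $x$-coordinate and so cannot be accessed, contradicting full accessibility.

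Granting the characterization, the propagation is immediate. By hypothesis some $A(\ovl{s_0})\subseteq\mathcal{U}$ is fully accessible, so $\ovl{s_0}$ is the top or the bottom of a cylinder. Its neighbour $\ovl{r(s_0)}$ lies in $\mathcal{U}$, hence $A(\ovl{r(s_0)})$ contains no folding point; Proposition~\ref{prop:wiggles} then forces $\ovl{r(s_0)}$ to be, respectively, the bottom or the top of some cylinder, so by Lemma~\ref{lem:cyl} it is again fully accessible. The same argument applies to $\ovl{l(s_0)}$, and iterating in both directions along the chain shows that tops and bottoms alternate and that every basic arc of $\mathcal{U}$ is an extremum of a cylinder, hence fully accessible. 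Together with the shared endpoints this yields that $\mathcal{U}$ is fully accessible.

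The main obstacle I expect is the ``only if'' half of the characterization, specifically the projection-covering claim: controlling $\pi_0(A(\cdot))$ for the basic arcs approaching $A(\ovl s)$ from above and below, since extra agreements with the kneading sequence at positions beyond $n$ can shrink these projections, making the covering delicate near the endpoints of $A(\ovl s)$. An alternative framing I would try in parallel, in order to keep closer to the ``follows directly'' spirit, is to argue by contradiction directly on $\mathcal{U}$: by Corollary~\ref{cor:class} the set of accessible points of $\mathcal{U}$ is closed and connected, and if it were proper it would have a frontier point $q$; the mechanism of Proposition~\ref{prop:wiggles}, applied to the fully accessible basic arc abutting $q$ (which one checks is an extremum of a cylinder, using Proposition~\ref{prop:access}), should then produce a folding point of $\mathcal{U}$ at $q$, contradicting the hypothesis.
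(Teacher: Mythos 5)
Your proposal is correct and is essentially the paper's argument: the paper proves this corollary with the single remark that it ``follows directly from Proposition~\ref{prop:wiggles}'', i.e.\ exactly your propagation step --- since $\mathcal{U}$ has no folding points, the contrapositive of Proposition~\ref{prop:wiggles} forces each successive left/right neighbour to be an extremum of a cylinder, Lemma~\ref{lem:cyl} makes each such basic arc fully accessible, and the neighbour chain exhausts $\mathcal{U}$. The one point you flag as delicate --- that a fully accessible basic arc must itself be an extremum of a cylinder, needed to start the induction --- is precisely what the paper leaves implicit; in every application the hypothesis is supplied in that stronger form via Lemma~\ref{lem:cyl}, so your extra characterization (and the minor detour through non-degeneracy, where what you actually need and correctly obtain is just $\tau_L,\tau_R<\infty$ from Proposition~\ref{prop:endpt}) is additional care rather than a departure from the paper's route.
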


\begin{remark}
	When we embed only the core $X'$, there can exist accessible points in $X'\setminus\mathcal{U}_L$, see \eg Example~\ref{ex:1} and Example~\ref{ex:2}. In these two examples $\mathcal{U}_S\neq \mathcal{U}_L$ and points from $A(S)$ are accessible. In some cases $\mathcal{U}_S$ is fully accessible (see Lemma~\ref{lem:horror} from Section~\ref{sec:fullyacc}), but that is not always the case. In Section~\ref{subsec:accFP} we explicitly construct examples in which the arc-component $\mathcal{U}_S$ is only partially accessible.
\end{remark}

From Lemma~\ref{lem:cyl} it follows that the points at the top or bottom of cylinders are accessible. If a point which is not at the top or bottom of any cylinder has a neighbourhood homeomorphic to the Cantor set of arcs, we can conclude that is not accessible. However, the accessibility of folding points needs to be studied separately, since it is not straightforward to determine if they are accessible or not in a given embedding, see for example Figure~\ref{fig:folding}. Thus we need to do a detailed study on conditions for a folding point to be accessible.
For instance, in special embeddings of the Knaster continuum in \cite{Sch} the endpoint is always accessible. 

\begin{remark}\label{rem:fp}
	  When the orbit of $c$ is finite, with (pre)period $n\in\N$, there exist exactly $n$ folding points (see \cite{Br2}). They are contained in different arc-components which are permuted by the shift homeomorphism. If the orbit of $c$ is periodic, the folding points are endpoints (see \cite{BaMa}). \black{If $c$ is strictly preperiodic, the folding points are not endpoints.}
\end{remark}

\begin{figure}[!ht]
	\centering
	\begin{tikzpicture}[scale=1.2]
	\draw[dashed] (2,1) circle (1);
	\draw (0,1.6)--(2,1.6);
	\draw (0,1.5)--(2,1.5);
	\draw (0,1.38)--(2,1.38);
	\draw (0,1.3)--(2,1.3);
	\draw (0,1.16)--(2,1.16);
	\draw (0,1.1)--(2,1.1);
	\draw (2,0.4)--(4,0.4);
	\draw (2,0.5)--(4,0.5);
	\draw (2,0.62)--(4,0.62);
	\draw (2,0.7)--(4,0.7);
	\draw (2,0.84)--(4,0.84);
	\draw (2,0.9)--(4,0.9);
	\draw[domain=270:450] plot ({2+0.05*cos(\x)}, {1.55+0.05*sin(\x)});
	\draw[domain=270:450] plot ({2+0.04*cos(\x)}, {1.34+0.04*sin(\x)});
	\draw[domain=270:450] plot ({2+0.03*cos(\x)}, {1.13+0.03*sin(\x)});
	\draw[domain=90:270] plot ({2+0.05*cos(\x)}, {0.45+0.05*sin(\x)});
	\draw[domain=90:270] plot ({2+0.04*cos(\x)}, {0.66+0.04*sin(\x)});
	\draw[domain=90:270] plot ({2+0.03*cos(\x)}, {0.87+0.03*sin(\x)});
	\draw[thick] (0, 1)--(4,1);
	\draw (0, 1.07)--(4,1.07);
	\draw (0, 1.24)--(4,1.24);
	\draw (0, 1.44)--(4,1.44);
	\node[circle,fill, inner sep=1] at (2,1){};
	\node at (2,-0.3) {($a$)};
	
	\draw[dashed] (7,1) circle (1);
	\draw (5,1.6)--(7,1.6);
	\draw (5,1.5)--(7,1.5);
	\draw (5,1.38)--(7,1.38);
	\draw (5,1.3)--(7,1.3);
	\draw (5,1.16)--(7,1.16);
	\draw (5,1.1)--(7,1.1);
	\draw (7,0.4)--(9,0.4);
	\draw (7,0.5)--(9,0.5);
	\draw (7,0.62)--(9,0.62);
	\draw (7,0.7)--(9,0.7);
	\draw (7,0.84)--(9,0.84);
	\draw (7,0.9)--(9,0.9);
	\draw[domain=270:450] plot ({7+0.05*cos(\x)}, {1.55+0.05*sin(\x)});
	\draw[domain=270:450] plot ({7+0.04*cos(\x)}, {1.34+0.04*sin(\x)});
	\draw[domain=270:450] plot ({7+0.03*cos(\x)}, {1.13+0.03*sin(\x)});
	\draw[domain=90:270] plot ({7+0.05*cos(\x)}, {0.45+0.05*sin(\x)});
	\draw[domain=90:270] plot ({7+0.04*cos(\x)}, {0.66+0.04*sin(\x)});
	\draw[domain=90:270] plot ({7+0.03*cos(\x)}, {0.87+0.03*sin(\x)});
	\draw[thick] (5, 1)--(9,1);
	\node[circle,fill, inner sep=1] at (7,1){};
	\draw (5,1.445)--(9,1.445);
	\draw (5,1.23)--(9,1.23);
	\draw (5,1.05)--(9,1.05);
	\draw (5,0.95)--(9,0.95);
	\draw (5,0.77)--(9,0.77);
	\draw (5,0.56)--(9,0.56);
	\node at (7,-0.3) {($b$)};
	
		\draw[dashed] (4.5,-1) circle (1);
		\draw (2.5,-0.795)--(4.5,-0.795);
		\draw (2.5,-0.735)--(4.5,-0.735);
		\draw (2.5,-0.59)--(4.5,-0.59);
		\draw (2.5,-0.52)--(4.5,-0.52);
		\draw (2.5,-0.35)--(4.5,-0.35);
		\draw (2.5,-0.25)--(4.5,-0.25);
		\draw (4.5,-0.38)--(6.5,-0.38);
		\draw (4.5,-0.48)--(6.5,-0.48);
		\draw (4.5,-0.62)--(6.5,-0.62);
		\draw (4.5,-0.7)--(6.5,-0.7);
		\draw (4.5,-0.84)--(6.5,-0.84);
		\draw (4.5,-0.9)--(6.5,-0.9);
		\draw[domain=270:450] plot ({4.5+0.05*cos(\x)}, {-0.3+0.05*sin(\x)});
		\draw[domain=270:450] plot ({4.5+0.035*cos(\x)}, {-0.555+0.035*sin(\x)});
		\draw[domain=270:450] plot ({4.5+0.03*cos(\x)}, {-0.765+0.03*sin(\x)});
		\draw[domain=90:270] plot ({4.5+0.05*cos(\x)}, {-0.43+0.05*sin(\x)});
		\draw[domain=90:270] plot ({4.5+0.04*cos(\x)}, {-0.66+0.04*sin(\x)});
		\draw[domain=90:270] plot ({4.5+0.03*cos(\x)}, {-0.87+0.03*sin(\x)});
		\draw[thick] (2.5, -1)--(6.5,-1);
		\node[circle,fill, inner sep=1] at (4.5,-1){};
		\draw (2.5,-1.5)--(6.5,-1.5);
		\draw (2.5,-1.33)--(6.5,-1.33);
		\draw (2.5,-1.05)--(6.5,-1.05);
		\draw (2.5,-1.1)--(6.5,-1.1);
		\draw (2.5,-1.2)--(6.5,-1.2);
		\node at (4.5,-2.3) {($c$)};
	
	\end{tikzpicture}
	\caption{Neighbourhoods of folding points. In Case (a) and (c) folding point is accessible, while in Case (b) it is not.}
	\label{fig:folding}
\end{figure}
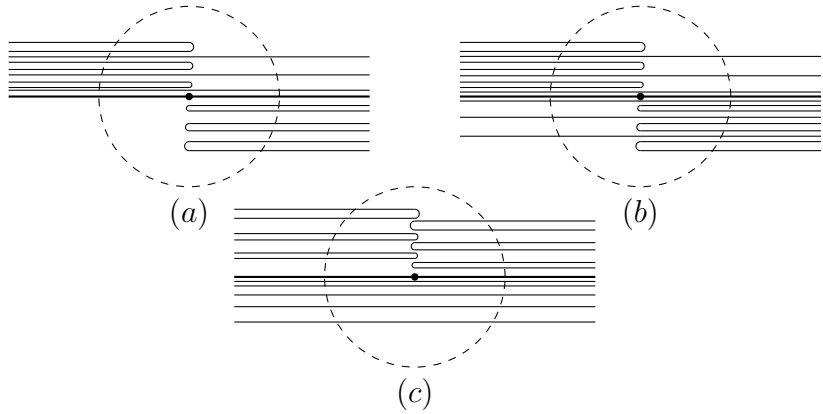

\section{Tops/bottoms of finite cylinders}\label{sec:tops/bottoms}

In this section we study the symbolics of tops/bottoms of cylinders depending on an $\mathcal{E}$-embedding of $X'$ and we restrict to cases where $L\neq 0^{\infty}l_{n}\ldots l_{1}$ for all $n\in \N$.

For $t\in\{0, 1\}$, we denote by $t^*=1-t$. For $A=a_1\ldots a_n\in\{0,1\}^{n}$ denote by $\astl=a^*_1a_2\ldots a_n$, $\astr=a_1\ldots a_{n-1}a^*_n$ and $\ast=a^*_1a_2\ldots a_{n-1}a^*_n$.

\begin{definition}
	Let $\nu$ be a kneading sequence. We say that a finite word $a_1\ldots a_n\in\{0, 1\}^n$ is \emph{irreducibly non-admissible} if it is not admissible and $a_2\ldots a_n$ is admissible.
\end{definition}
\begin{definition}\label{def:dependence}
	Fix a kneading sequence $\nu$. We say that a finite cylinder $B=[b_n\ldots b_1]$ of length $n\in\N$ \emph{alters} $L=\ldots l_{2}l_{1}$, if there exist words $(A_i)_{i\in\N}$ such that $\ldots A_3A_2A_1=\ldots l_{n+2}l_{n+1}$ and the words
	$A_1B$ and $\astr_{i} \ast_{i-1}\ldots \ast_2 \astl_1B$ are irreducibly non-admissible for every $i\geq 2$. 
\end{definition}

\begin{proposition}
	If a finite cylinder $B$ alters the admissible sequence $L$ then $L_B$ or $S_B$ has different tail than $L$.
\end{proposition}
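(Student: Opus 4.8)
The plan is to reconstruct the relevant extremum symbol by symbol and to show that the altering hypothesis forces it off the tail of $L$ at infinitely many positions. Write $B=b_n\ldots b_1$ and set $\eps:=\#_1(b_n\ldots b_1)-\#_1(l_n\ldots l_1)\bmod 2$. First I would record a parity computation from (\ref{eq:L}): for the candidate $T:=\ldots l_{n+2}l_{n+1}b_n\ldots b_1$ and any admissible $\ovl s\in[b_n\ldots b_1]$ with $\ovl s\neq T$, the first index $k$ where they differ satisfies $k>n$, $s_k\neq l_k$, and $\#_1(s_{k-1}\ldots s_1)-\#_1(l_{k-1}\ldots l_1)\equiv\eps\pmod 2$ independently of $k$ and of $\ovl s$. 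Hence, were $T$ admissible, it would be $L_B$ when $\eps$ is even and $S_B$ when $\eps$ is odd. Since $B$ alters $L$, the word $A_1B$ is in particular non-admissible, so $T$ contains a non-admissible subword, $T$ is not admissible, and the corresponding extremum genuinely differs from $T$. I will treat the case $\eps$ even, so the extremum in question is $L_B$; the case $\eps$ odd is identical after exchanging tops and bottoms.

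Next I would build $L_B$ greedily: starting from $b_n\ldots b_1$ and adjoining positions $n+1,n+2,\ldots$ one at a time, at each new position pick the $\prec_L$-largest symbol for which the resulting finite word is admissible. By Lemma~\ref{lem:cylindersnonempty} an admissible word always extends to an admissible left-infinite sequence, so this greedy sequence is well defined, and because $\prec_L$ is decided at the first differing index it is exactly $L_B$. Tracking the running parity $p_k\equiv\#\{j<k:s_j\neq l_j\}\pmod 2$, formula (\ref{eq:L}) shows the greedy prefers $l_k$ when $p_k$ is even and $l_k^*$ when $p_k$ is odd; thus each deviation from $L$ toggles the preference while copying $L$ leaves the parity unchanged. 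Starting from $p_{n+1}=\eps=0$, the greedy copies $L$ until admissibility first fails.

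The core is an induction over the blocks. Let $m_0:=n$ and let $m_i$ be the highest index of $A_i$, so $A_i=l_{m_i}\ldots l_{m_{i-1}+1}$ and $m_1<m_2<\cdots\to\infty$. I claim that once the greedy has placed position $m_i$, the prefix $s_{m_i}\ldots s_1$ equals $\ast_i\ast_{i-1}\cdots\ast_2\astl_1B$ (read $\astl_1B$ when $i=1$); in particular $s_{m_i}=l_{m_i}^*$. Indeed, by the inductive hypothesis the greedy reaches position $m_i$ having copied $L$ across the interior of $A_i$, with, for $i\ge 2$, a single extra flip at $m_{i-1}+1$ inherited from the previous boundary; so its flipped positions are exactly $\{m_j,m_j+1:j<i\}$, and following $L$ at $m_i$ would produce $\astr_i\ast_{i-1}\cdots\astl_1B$ (for $i=1$ it is $A_1B$). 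By the defining property of altering this word is irreducibly non-admissible: its non-admissibility forbids the symbol $l_{m_i}$ and forces the greedy to flip to $l_{m_i}^*$, while the fact that deleting its leading letter restores admissibility, together with the suffix-closedness of admissibility, certifies that $\ast_i\ast_{i-1}\cdots\astl_1B$ is admissible, so the greedy legitimately proceeds. The parity now toggles, the greedy flips once more at $m_i+1$, and then copies $L$ through the interior of $A_{i+1}$, re-establishing the hypothesis at $m_{i+1}$.

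Consequently $(L_B)_{m_i}=l_{m_i}^*\neq l_{m_i}$ for every $i\in\N$, and since $m_i\to\infty$ the sequence $L_B$ disagrees with $L$ at infinitely many positions; by Remark~\ref{rem:symbolicAC} this means $L_B$ does not have the same tail as $L$, which is the assertion (with $S_B$ in place of $L_B$ when $\eps$ is odd). The step I expect to be the real obstacle is the second half of the induction: converting \emph{irreducibly non-admissible} into the statement that flipping the single leading symbol lands back inside the admissible set rather than overshooting it. This must be checked directly against the parity-lexicographic admissibility inequalities defining admissibility of finite words: the fact that deleting the leading letter yields an admissible word pins down which suffix inequality is violated, and one must argue that altering only that leading symbol repairs exactly that inequality without breaking any other.
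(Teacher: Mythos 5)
Your proposal is correct and lands on exactly the formula the paper's proof asserts, namely $L_B$ (resp.\ $S_B$) $=\ldots\ast_{i} \ast_{i-1}\ldots \ast_2 \astl_1B$; the paper states this without the greedy construction and block-by-block induction you supply, so the route is essentially the same, just fully fleshed out. The one step you flag as the real obstacle does close, and more easily than you anticipate: by the computation in Lemma~\ref{lem:cylindersnonempty}, prepending $1$ to an admissible word always yields an admissible word, so an irreducibly non-admissible word necessarily begins with $0$, and flipping that leading $0$ to $1$ restores admissibility outright (no other suffix inequality is affected since all proper suffixes are unchanged and already admissible).
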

\begin{proof}
	Assume $B$ alters $L$ with words $A_i$ as in the definition. If $\#_1(B)-\#_1(l_{n}\ldots l_{1})$ is even, then $L_B=\ldots\ast_{i} \ast_{i-1}\ldots \ast_2 \astl_1B$. The sequence $L_{B}$ differs from $L$ on infinitely many places. If $\#_1(B)-\#_1(l_{n}\ldots l_{1})$ is odd, then $S_B=\ldots\ast_{i} \ast_{i-1}\ldots \ast_2 \astl_1B$.
\end{proof}

The following example shows that there exist $\mathcal{E}$-embeddings of $X'$ such that none of the extrema of certain cylinders are contained in $\mathcal{U}_L$.

\begin{example}\label{ex:3}
 Let $\nu=(100111011)^{\infty}$ and $L=(001)^{\infty}11$. Note that $S_{10}=(100)^{\infty}(101)10\\
 \subset\mathcal{U}_{L_{10}}$ and $L_{10}=(100)^{\infty}10\subset\mathcal{U}_{L_{10}}$. Therefore, $L_{10}, S_{10}\not\subset\mathcal{U}_L$. 
\end{example} 

In Example~\ref{ex:3} both extrema belong to the same arc-component. This is not necessarily always the case, see \eg Example~\ref{ex:4} below.

\begin{proposition}
	If a finite cylinder $B$ is such that $L_B\not\subset\mathcal{U}_L$ or $S_B\not\subset\mathcal{U}_L$, then there exists a finite cylinder $B'$ such that $B'$ alters $L$.
\end{proposition}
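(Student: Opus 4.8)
The plan is to read the words $A_i$ of Definition~\ref{def:dependence} directly off the positions at which the extremum disagrees with $L$; I argue the case $L_B\not\subset\mathcal{U}_L$, the case $S_B\not\subset\mathcal{U}_L$ being symmetric under interchanging the two parities. Write $L_B=\ldots\lambda_2\lambda_1$. First I would record the local rule building $L_B$ from right to left: having fixed $\lambda_m\ldots\lambda_1$, one prepends the $\prec_L$-larger admissible symbol, which by (\ref{eq:L}) is $l_{m+1}$ when the running parity $\#_1(\lambda_m\ldots\lambda_1)-\#_1(l_m\ldots l_1)$ is even and $l_{m+1}^*$ when it is odd, provided that symbol keeps the word admissible. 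Using the fact from the proof of Lemma~\ref{lem:cylindersnonempty} that prepending a $1$ never destroys admissibility, this yields the dynamics: in an even-parity phase $L_B$ copies $L$ and retains even parity until a \emph{forced deviation}, a position with $l_{m+1}=0$ to which a $0$ cannot be admissibly prepended and which therefore compels a $1$, after which the parity turns odd; in an odd-parity phase $L_B$ switches to $l_{m+1}^*$ as soon as that is admissible, returning the parity to even. Hence $L_B$ has the same tail as $L$ exactly when it eventually just copies $L$, so $L_B\not\subset\mathcal{U}_L$ is equivalent to $L_B$ disagreeing with $L$ at infinitely many positions.

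The decisive step is the lemma: \emph{once the already-built suffix $u:=\lambda_{d-1}\ldots\lambda_1$ is long enough, a forced deviation at $d$ is immediately succeeded by a deviation at $d+1$.} I would prove it by contradiction. A forced deviation means $l_d=0$ and $0u$ inadmissible; since the upper admissibility bound holds automatically this says $0u\prec c_2c_3\ldots$, i.e. $u\prec c_3c_4\ldots$. A failure to deviate at $d+1$ (where the parity is odd) requires $l_{d+1}=1$ and $01u$ inadmissible, i.e. again only the lower bound can fail, giving $1u\prec c_3c_4\ldots$. If $\kappa\ge2$ then $c_3=0$, so $1u$ begins with a symbol larger than $c_3$ and $1u\succ c_3c_4\ldots$ outright, a contradiction (here no length restriction is needed). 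If $\kappa=1$ then $\nu=10(11)^m0\ldots$, so $c_3c_4\ldots=1^{2m}0\ldots$ and $c_4c_5\ldots=1^{2m-1}0\ldots$; cancelling the leading $1$ (which reverses the order) turns $1u\prec c_3c_4\ldots$ into $u\succ c_4c_5\ldots$, so the two inadmissibilities squeeze $u$ strictly between $c_4c_5\ldots$ and $c_3c_4\ldots$. This interval is empty once $|u|\ge 2m$, because comparing $1^{2m}0\ldots$ with $1^{2m-1}0\ldots$ at position $2m$ (a $1$ meeting a $0$ behind an odd number of $1$s) gives $c_3c_4\ldots\prec c_4c_5\ldots$; contradiction. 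Thus the disagreements of $L_B$ with $L$ occur in adjacent pairs separated by stretches on which $L_B$ copies $L$ — precisely the rigid shape expressible through $\astl$, $\ast$, $\astr$.

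Finally I would assemble the witness. Beyond position $2m$ the lemma applies to every forced deviation (for $\kappa\ge2$, everywhere), and since $L_B\not\subset\mathcal{U}_L$ there are infinitely many of them; fix a forced deviation $f\ge 2m+1$ and set $M:=f+1$, which by the lemma is an odd-parity deviation, so the parity at $M$ is even and a copy-$L$ run begins at $M+1$. Put $B':=\lambda_M\ldots\lambda_1$, an admissible word of even parity with $L_{B'}=L_B$ (because $B$ is a suffix of $B'$ and $L_B\in[B']$). Let $f_1<f_2<\ldots$ denote the forced deviations exceeding $M$, and set $A_1:=l_{f_1}\ldots l_{M+1}$ and $A_i:=l_{f_i}\ldots l_{f_{i-1}+1}$ for $i\ge2$, so $\ldots A_3A_2A_1=\ldots l_{M+2}l_{M+1}$. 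The dynamics then give $L_B=\ldots\ast_3\ast_2\astl_1 B'$, since the left end $f_i$ of each $A_i$ is a forced deviation and the right end $f_{i-1}+1$ of each $A_i$ with $i\ge2$ is an odd-parity deviation. The forced deviation at $f_1$ says precisely that $A_1B'$ is irreducibly non-admissible, and the forced deviation at $f_i$ — where the left symbol $l_{f_i}$ cannot be admissibly prepended to the admissible word $\astr_i\ast_{i-1}\ldots\ast_2\astl_1 B'$ stripped of that symbol — says precisely that $\astr_i\ast_{i-1}\ldots\ast_2\astl_1 B'$ is irreducibly non-admissible for every $i\ge2$. Hence $B'$ alters $L$. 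The main obstacle is the lemma of the second paragraph, and within it the case $\kappa=1$: it is exactly this parity computation, the emptiness of the forced interval, that guarantees the disagreements appear in the adjacent-endpoint pattern demanded by Definition~\ref{def:dependence} rather than in longer runs that $\astl$, $\ast$, $\astr$ cannot encode.
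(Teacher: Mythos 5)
Your proof is correct and follows the same route as the paper's: construct the extremum of the cylinder greedily from right to left and read the blocks $A_i$ of Definition~\ref{def:dependence} off the positions where the construction is forced to disagree with $L$. The difference is one of completeness rather than of strategy. The paper disposes of the aligned-parity case with the single sentence ``then obviously $B'=B$ alters $L$'', which silently uses the fact that the disagreements of the extremum with $L$ occur in adjacent pairs --- a forced deviation at $d$ immediately followed by a second deviation at $d+1$ --- since that is the only pattern the starred words of Definition~\ref{def:dependence} can encode. You isolate exactly this fact as a lemma and prove it; the only delicate point is the case $\kappa=1$, where the two inadmissibility conditions squeeze the suffix $u$ strictly between $c_4c_5\ldots$ and $c_3c_4\ldots$ and the parity comparison at position $2m$ shows this order interval is empty, and that computation is right. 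Two small remarks: because of possible spiral points, $L_B\not\subset\mathcal{U}_L$ yields only the implication that $L_B$ and $L$ have different tails (not an equivalence), but that is the only direction you use; and the $S_B$ case is indeed obtained by interchanging the roles of the two parities, though it is worth saying explicitly that when the parity of $B$ is even the initial deviation of $S_B$ must be absorbed into the enlarged cylinder, exactly as your choice $B'=\lambda_M\ldots\lambda_1$ already does in the case you wrote out (this absorption is what the paper's prepending of $l^*_{n+1}$, respectively $l^*_{n+i}\ldots l_{n+1}$, accomplishes).
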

\begin{proof}
	Assume $\#_1(B)-\#_1(l_{n}\ldots l_{1})$ is even and $L_B\not\subset\mathcal{U}_L$. Then obviously $B'=B$ alters $L$. Similarly, if $\#_1(B)-\#_1(l_{n}\ldots l_{1})$ is odd and $S_B\not\subset\mathcal{U}_L$. So assume $\#_1(B)-\#_1(l_{n}\ldots l_{1})$ is even and $S_B\not\subset\mathcal{U}_L$. Then $l^*_{n+1}B$ alters $L$, if $l^*_{n+1}B$ is admissible. If $l^*_{n+1}B$ is not admissible, there exists $i\in \N$ such that $l^{*}_{n+i}\ldots l_{n}B$ is admissible, since otherwise $S_B=L_B$, which is a contradiction. The word $l^{*}_{n+i}\ldots l_{n}B$ alters $L$. Analogously if  $\#_1(B)-\#_1(l_{n}\ldots l_{1})$ is odd and $L_B\not\subset\mathcal{U}_L$.
\end{proof}

\begin{example}\label{ex:4}
Let $\nu=1001(101)^{\infty}$ and $L=((001)(001101))^{\infty}$. Then $S=S_{0}=((100)(101100))^{\infty}\not\subset\mathcal{U}_L$. So $B=0$ alters $L$ and words $A_i$ are divided by brackets.
\end{example}

Next we show there exist $\mathcal{E}$-embeddings with more than two accessible arc-components.

\begin{proposition}\label{prop:third}
	Assume that $\nu$ starts with some finite words $\nu=1B\ldots=1ABA\ldots$, where $B^*$ and $ABA^*$ are irreducibly non-admissible. The embedding of $X'$ with respect to $L=(BA)^{\infty}$ contains at least three tails which are extrema of cylinders.
\end{proposition}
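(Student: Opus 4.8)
The plan is to exhibit three pairwise distinct tails, each realized as the top or the bottom of a suitable cylinder; by Lemma~\ref{lem:cyl} each such arc-component is then fully accessible, and by Remark~\ref{rem:symbolicAC} distinct tails correspond to distinct arc-components, so it suffices to produce three tails. Write $\mathbf{A},\mathbf{B}$ for the two fixed words, so that $L=(\mathbf{B}\mathbf{A})^{\infty}$ and $\sigma(\nu)$ begins with both $\mathbf{B}$ and $\mathbf{A}\mathbf{B}\mathbf{A}$. The first tail is $L$ itself: since $L$ is the $\prec_L$-largest admissible sequence, it equals the top $L_w$ of every cylinder $[w]$ whose defining word $w$ is a suffix of $L$ (e.g.\ $w=l_1$), so $L$ is an extremum of a cylinder.

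For the remaining two tails I would feed the two hypotheses into Definition~\ref{def:dependence} together with the proposition stated immediately before Example~\ref{ex:3}, which says that a cylinder altering $L$ forces $L_B$ or $S_B$ to have a tail different from $L$. The key structural feature is that $L=(\mathbf{B}\mathbf{A})^{\infty}$ is periodic, so in Definition~\ref{def:dependence} I can take all blocks $A_i$ equal to the period block and exploit self-similarity: when all $A_i=W$, the composite $\astr_i\ast_{i-1}\cdots\ast_2\astl_1$ is just $W^{i}$ with the two symbols adjacent to each internal block boundary flipped (the last symbol of one block and the first symbol of the next) and the two extreme symbols left unchanged. Reading a fixed-size window around any such boundary therefore reproduces, up to a shift, one and the same word, so the infinitely many irreducible non-admissibility requirements $\astr_i\ast_{i-1}\cdots\ast_2\astl_1 B$ ($i\ge2$) collapse to a single condition.

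Choosing the cylinder word and the period decomposition adapted to $\mathbf{B}^{*}$ makes that single condition equal to the irreducible non-admissibility of $\mathbf{B}^{*}$, which is assumed; the altering proposition then yields a tail $T_1\neq L$. Choosing them adapted to the $\mathbf{A}\mathbf{B}\mathbf{A}$-phase of the period turns the single condition into the irreducible non-admissibility of $\mathbf{A}\mathbf{B}\mathbf{A}^{*}$, again assumed, and produces a tail $T_2\neq L$. Finally I would check $T_1\neq T_2$: the alteration coming from $\mathbf{B}^{*}$ flips symbols at the $\mathbf{B}$-junctions of the period, whereas the one coming from $\mathbf{A}\mathbf{B}\mathbf{A}^{*}$ flips symbols at the $\mathbf{A}$-positions, so the two altered left-infinite sequences disagree in infinitely many places and hence have different tails. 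Together with $L$ this gives three distinct extremal tails.

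The main obstacle is the bookkeeping in the middle two paragraphs: verifying that, for the periodic $L$, the whole family of words $\astr_i\ast_{i-1}\cdots\ast_2\astl_1 B$ really does reduce to the two stated hypotheses (so that the chosen cylinders satisfy Definition~\ref{def:dependence}), and confirming that the two alterations land in genuinely different arc-components rather than in a common one or in $\mathcal{U}_L$. Once these combinatorial verifications are in place, everything else—accessibility of the extrema and the identification of arc-components with tails—is furnished directly by Lemma~\ref{lem:cyl} and Remark~\ref{rem:symbolicAC}.
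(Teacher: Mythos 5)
Your overall plan (exhibit $L$ together with two altered extrema whose tails are pairwise distinct) is the right one, and the first tail $L$ is handled correctly. The gap lies in the mechanism you propose for the other two tails: you want one altered tail per hypothesis, obtained from an alteration in which all blocks $A_i$ of Definition~\ref{def:dependence} equal a single period word $W$ of $L$, so that the infinitely many conditions collapse to ``$B^*$ is irreducibly non-admissible'' for one cylinder and to ``$ABA^*$ is irreducibly non-admissible'' for another. That is not how the extrema look here, and the single-block reduction cannot be instantiated. The block decomposition is not free: it is dictated by the greedy computation of an extremum (agree with $L$ until a flip is forced by non-admissibility, flip once more to restore the parity, repeat), and for $L=(BA)^{\infty}$ under the two hypotheses the forced blocks alternate between two words of \emph{different} lengths --- a block of length $2|A|+|B|$, where the obstruction is $ABA^*$, and the block $B$, where the obstruction is $B^*$. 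Hence each non-$L$ extremum uses \emph{both} hypotheses in alternation and has eventual period $2(|A|+|B|)$, twice the period of $L$: the global minimum is $S=({^*\!\!B^*}{^*\!\!A}BA^*)^{\infty}$. In particular, a single pair of adjacent flips at a $W$-block boundary can never manufacture the word $B^*$, which differs from $B$ in exactly one (the last) letter, so ``adapting the decomposition to $B^*$'' has no valid realization. Your description of $T_1$ as having flips only at $B$-junctions and $T_2$ only at $A$-positions, and the distinctness argument built on it, therefore do not describe extrema of cylinders at all. (A further, secondary, problem: even where all blocks were equal, irreducible non-admissibility of $A_i^*\cdots{^*\!\!A_1}B$ is not a local condition --- non-admissibility does propagate from a fixed window, but admissibility of the word minus its first letter concerns the entire, growing word and does not follow from periodicity alone.)

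The second non-$L$ tail comes from a different source than ``the other hypothesis'': it is a phase shift of the same doubled-period pattern. For any admissible $D$ with $|D|=|A|$ and $\#_1(D)-\#_1(A)$ even, the bottom of the cylinder $[D]$ is $S_D=({^*\!\!A}BA^*{^*\!\!B^*})^{\infty}D$; since this periodic tail is offset from that of $S$ by $|A|$ (equivalently, its period word is a rotation of that of $S$ by $|B|$), the two disagree at infinitely many positions --- for instance at every position congruent to $|A|+1$ modulo $2(|A|+|B|)$, where $S$ carries the last letter of $B$ and $S_D$ carries its flip. Together with $L$ this yields the three distinct tails; the phase-shift observation is the missing idea in your argument.
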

\begin{proof}
	Note that $S=({^*\!\!B^*}{^*\!\!A}BA^*)^{\infty}$. Take any admissible word $D$ such that $|D|=|A|$ and such that $\#_1(D)-\#_1(A)$ is even. Then $S_{D}=({^*\!\!A}B{A^*}{^*\!\!B}^*)^{\infty}D$  and therefore we found three different tails which are extrema of cylinders.
\end{proof}

The following example shows that it is indeed possible to satisfy the conditions of Proposition~\ref{prop:third}.

\begin{example}\label{ex:5}
	  Take $\nu=1001100100111\ldots$, $B=001$, $A=0011$ and $L=(BA)^{\infty}$ which is easily checked to be admissible. For $D$ take \eg $D=1111$. Note that $S=({^*\!\!B^*}{^*\!\!A}BA^*)^{\infty}$ and $S_{D}=({^*\!\!A}B{A^*}{^*\!\!B}^*)^{\infty}D$ and thus we obtain
three accessible basic arcs with different tails. If we take \eg $\nu=(10011001001111)^{\infty}$, since by Remark~\ref{rem:fp} the only folding points are endpoints and there are no spiral points in $X'$, it follows by \black{Proposition~\ref{prop:wiggles}} that there are three fully accessible non-degenerate dense arc-components. Moreover, none of those arc-components contains an endpoint so they are all lines. We will return to this particular example in Section~\ref{sec:FP}, Example~\ref{ex:9}.
\end{example}

\section{Accessible folding points}\label{sec:FP}
In this section we study accessibility of folding points which are not at the top or the bottom of any cylinder. 

\subsection{Accessible endpoints}

Let us fix $X'$ and the $\mathcal{E}$-embedding depending on $L$.
Recall that we denote by $\mathcal{U}_{L}$ the arc-component of $x\in A(L)\subset X'$. 
By Proposition~\ref{prop:fully}, every point with the same symbolic tail as $L$ is accessible. 

The following remark is a direct consequence of Proposition~\ref{prop:endpt}. 

\begin{remark}
	If $e\in X'$ is an endpoint \black{with a single itinerary $\ovl e$}, then there exists a strictly increasing sequence $(n_i)_{i\in \N}$ such that 
	$\bar{e}=\ldots e_{n_{i}+1}c_1\ldots c_{n_i}.c_{n_{i}+1}\ldots=\ldots e_{n_{i}+1}\nu$ for every $i\in \N$. \black{If an endpoint $e\in X$ has two itineraries, then one of them will have the properties above. That itinerary of $e$ will always be denoted by $\bar e$. So specially by $\ovl e$ we always mean the left-itinerary of $\bar e$. Note that, since the two itineraries are identified by $\sim$ when constructing embeddings of $X$ this will not affect accessibility of $e$ and thus we may work with either of them.}
\end{remark}

In this section we work with the concept of an endpoint being capped which is defined below.  See Figure~\ref{fig:cap}.

\begin{definition}\label{def:cap} 
	Let $e\in X'$ be an endpoint with $\tau_{L}(\overleftarrow{e})=\infty$ ($\tau_{R}(\overleftarrow{e})=\infty$).
	We say that a point $e$ is \emph{capped from the left (right)}, if there exist sequences of admissible itineraries $(\overleftarrow{y}^{i})_{i\in \N},
	(\overleftarrow{w}^{i})_{i\in \N}
	\subset \{0,1\}^{\infty}$ such that 
	$\overleftarrow{y}^{i},\overleftarrow{w}^{i}\rightarrow \overleftarrow{e}$ as $i\rightarrow \infty$,
	$\overleftarrow{y}^{i}\Ls \overleftarrow{e}\Ls \overleftarrow{w}^{i}$ for every $i\in \N$ and  
	arcs $A(\overleftarrow{y}^{i})$ and $A(\overleftarrow{w}^{i})$ are joined on the left (right).
\end{definition}

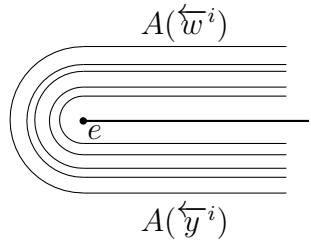
\begin{figure}[!ht]
	\centering
	\begin{tikzpicture}[scale=1.5, rotate=180]
	\draw (0.2,0.34)--(2,0.34);
	\draw (0.2,0.5)--(2,0.5);
	\draw (0.2,0.56)--(2,0.56);
	\draw (0.2,0.7)--(2,0.7);
	\draw (0.2,0.78)--(2,0.78);
	\draw (0.2,1.64)--(2,1.64);
	\draw (0.2,1.5)--(2,1.5);
	\draw (0.2,1.42)--(2,1.42);
	\draw (0.2,1.3)--(2,1.3);
	\draw (0.2,1.2)--(2,1.2);
	\draw[domain=90:-90] plot ({2+0.649*cos(\x)}, {0.99+0.649*sin(\x)});
	\draw[domain=90:-90] plot ({2+0.43*cos(\x)}, {0.99+0.43*sin(\x)});
	\draw[domain=90:-90] plot ({2+0.5*cos(\x)}, {1+0.5*sin(\x)});
	\draw[domain=90:-90] plot ({2+0.21*cos(\x)}, {0.99+0.21*sin(\x)});
	\draw[domain=90:-90] plot ({2+0.3*cos(\x)}, {1+0.3*sin(\x)});
	\draw[thick] (0, 1)--(2,1);
	\node[circle,fill, inner sep=1] at (2,1){};
	\node at (1.9,1.1) {$e$};
	\node at (1.1,1.9) {$A(\overleftarrow{y}^{i})$};
	\node at (1.1,0.15) {$A(\overleftarrow{w}^{i})$};
	\end{tikzpicture}
	\caption{Endpoint $e$ is capped from the left.} 
	\label{fig:cap}
\end{figure}

\begin{remark}\label{rem:notcapped}
If $e\in X'$ is a right (left) endpoint which is not capped from the right (left), then $e$ is accessible by a horizontal \black{segment} in the plane.
	Note that if $\ovl{e}$  lies on an extremum of a cylinder (which holds if \eg $e$ has the same symbolic tail as $L$), then $e$ is not capped. 
\end{remark}

\begin{remark}\label{rem:endptKnaster}
	Let $\nu=10^{\infty}$, \ie $X=X'$ is a Knaster continuum and let $L$ be arbitrary. Note that any two points $x,y\in X'$ that are $\eps>0$ close to the point $\bar{0}$ and are identified have the form $x_{k}x_{k-1}\ldots x_{1}=y_{k}y_{k-1}\ldots y_{1}=10^{k-1}$ for some $k\in\N$. It follows that either $\overleftarrow{x}, \overleftarrow{y}\Ls\overleftarrow{0}$ or $\overleftarrow{x}, \overleftarrow{y}\Ll\overleftarrow{0}$, depending on the parity of $\#_1(l_{k-1}\ldots l_1)$. Therefore, the endpoint $\bar{0}\in X'$ is not capped and thus always accessible in $\mathcal{E}$-embeddings of the Knaster continuum, see Figure~\ref{fig:endpoint_zero}. 
\end{remark}

From now on we assume in this subsection that $X'$ is not the Knaster continuum and thus $\nu\neq 10^{\infty}$.

\begin{figure}[!ht]
	\centering
	\begin{tikzpicture}[scale=1.5]
	\draw (0,1.6)--(2,1.6);
	\draw (0,1.5)--(2,1.5);
	\draw (0,1.38)--(2,1.38);
	\draw (0,1.3)--(2,1.3);
	\draw (0,1.16)--(2,1.16);
	\draw (0,1.1)--(2,1.1);
	\draw (0,0.4)--(2,0.4);
	\draw (0,0.5)--(2,0.5);
	\draw (0,0.62)--(2,0.62);
	\draw (0,0.7)--(2,0.7);
	\draw (0,0.84)--(2,0.84);
	\draw (0,0.9)--(2,0.9);
	\draw[domain=90:270] plot ({0.05*cos(\x)}, {1.55+0.05*sin(\x)});
	\draw[domain=90:270] plot ({0.04*cos(\x)}, {1.34+0.04*sin(\x)});
	\draw[domain=90:270] plot ({0.03*cos(\x)}, {1.13+0.03*sin(\x)});
	\draw[domain=90:270] plot ({0.05*cos(\x)}, {0.45+0.05*sin(\x)});
	\draw[domain=90:270] plot ({0.04*cos(\x)}, {0.66+0.04*sin(\x)});
	\draw[domain=90:270] plot ({0.03*cos(\x)}, {0.87+0.03*sin(\x)});
	\draw[thick] (0, 1)--(2,1);
	\node[circle,fill, inner sep=1] at (0,1){};
	\node at (-0.2,1) {$\bar{0}$};
	
	\end{tikzpicture}
	\caption{Neighbourhood of the end-point $\bar{0}$ of the Knaster continuum ($\nu=10^{\infty}$) in an $\mathcal{E}$-embedding.}
	\label{fig:endpoint_zero}
\end{figure}
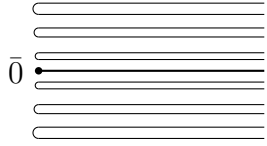

It is well known (see \eg \cite{BaMa}) that $X'$ contains endpoints if and only if the critical point $c$ of map $T$ is recurrent (\ie $T^n(c)$ get arbitrary close to $c$ as $n\to \infty$).

\begin{definition}\label{def:complete}
Fix a kneading sequence $\nu$ and let $e\in X'$ be an endpoint and thus $\tau_L(\overleftarrow{e})=\infty$ ($\tau_R(\overleftarrow{e})=\infty$).  
A sequence $(m_i)_{i\in\N}\subset \N$ is called the \emph{complete sequence for $e$}, if for every $n\in \N$ such that $e_{n}\ldots e_{1}=c_1c_2\ldots c_{n}$ and $\#_1(c_1c_2\ldots c_{n})$ is odd (even) there exist $i\in \N$ such that $m_i=n$.
\end{definition}

From $\tau_{L}(\ovl{e})=\infty$ (or $\tau_{R}(\ovl{e})=\infty$) it follows that the sequence $(m_i)_{i\in\N}$ indeed exists.\\
The main result in this subsection is that every endpoint of $X'$ (where $X'$ is not the Knaster continuum) which is not contained in $\mathcal{U}_L$ is capped in an $\mathcal{E}$-embedding of $X'$ which is not equivalent to Brucks-Diamond embedding from \cite{BrDi}.
In the proof of Theorem~\ref{thm:capped} we construct an increasing subsequence $(n_i)_{i\in\N}\subset (m_i)_{i\in \N}$ and basic arcs  $A(\ovl{x}^{O(i)}),A(\ovl{x}^{I(i)})\subset \mathcal{R}\subset  X'$ such that
\begin{equation}\label{eq:semicirc}
\ovl{x}^{O(i)}=1^{\infty}a^i_k\ldots a^i_1 0c_1 c_2\ldots c_{n_i} \hspace{0,5cm}
\ovl{x}^{I(i)}=1^{\infty}a^i_k\ldots a^i_1 1c_1 c_2\ldots c_{n_i}.
\end{equation}
and $\overleftarrow{x}^{O(i)}\Ls\overleftarrow{e}\Ls\overleftarrow{x}^{I(i)}$ or $\overleftarrow{x}^{I(i)}\Ls\overleftarrow{e}\Ls\overleftarrow{x}^{O(i)}$ for some admissible word $a^i_k\ldots a^i_1\in\{0,1\}^{k}$. Note that the arcs  $A(\ovl{x}^{O(i)})$ and $A(\ovl{x}^{I(i)})$ are joined by left (right) semi-circle. Here $\mathcal{R}$ denotes the arc-component of the point $\bar{1}$ which is a dense line in $X'$ independently on the choice of $\nu$ (see Proposition 1 in \cite{BB}).

\begin{remark}\label{rem:per1}
	Let $e\in X'$ be an endpoint and thus $\tau_L(\overleftarrow{e})=\infty$ ($\tau_R(\overleftarrow{e})=\infty$). Then $\#_{1}(c_1\ldots c_{m_{i}})$ is odd (even) and $\#_{1}(c_{1}\ldots c_{m_{i+1}-m_{i}})$ is even (even) for every $i\in \N$. \black{This follows from the fact that subtracting both odd from odd number and even from even number results in an even number.}
\end{remark}

\begin{remark}
	Fix the kneading sequence $\nu$. Assume that $a_{n-1}\ldots a_1\in\{0,1\}^{n-1}$ is admissible but $a_n\ldots a_1\in\{0,1\}^{n}$ is not. Then  $a_n\ldots a_1\prec c_2\ldots c_{n+1}$.
\end{remark}

\begin{lemma}\label{lem:odd}
	Let $\nu$ be an admissible kneading sequence. A word $c_2\ldots c^*_n$ is not admissible if and only if either $\#_1(c_2\ldots c_n)$ is odd or there exists $k\in\{3, \ldots, n\}$ such that $c_k\ldots c_n=c_2\ldots c_{n-k+2}$ and $\#_1(c_k\ldots c_n)$ is odd.
\end{lemma}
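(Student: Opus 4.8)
The plan is to test admissibility of $W:=c_2\ldots c_{n-1}c_n^*$ one suffix at a time and to track the effect of the single flipped bit. Write also $V:=c_2\ldots c_{n-1}c_n$, so that $V$ and $W$ agree in all but the last letter and both have length $n-1$. Since $V$ is a subword of the admissible kneading sequence $\nu$, each suffix $V_i:=c_{i+1}\ldots c_n$ (for $1\le i\le n-1$) lies between the bounds, namely $c_2\ldots c_{n+1-i}\preceq V_i\preceq c_1\ldots c_{n-i}$; recall here $c_2=0$ and $c_1=1$, so in particular $c_1\ldots c_\ell\neq c_2\ldots c_{\ell+1}$ for every $\ell\ge 1$. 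By definition $W$ is non-admissible exactly when some suffix $W_i:=c_{i+1}\ldots c_{n-1}c_n^*$ violates the lower bound ($W_i\prec c_2\ldots c_{n+1-i}$) or the upper bound ($W_i\succ c_1\ldots c_{n-i}$).

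First I would carry out an elementary flip analysis. Because $W_i$ and $V_i$ differ only in the last letter and $V_i$ already satisfies both bounds, a new violation at index $i$ can arise only when $V_i$ \emph{equals} the bound it crosses: a lower violation forces $V_i=c_2\ldots c_{n+1-i}$ and an upper violation forces $V_i=c_1\ldots c_{n-i}$ (if the first disagreement with a bound were interior, flipping the last bit would not change the comparison, and if it were already at the last bit the flip would only produce equality). In each equality case the parity-lexicographic rule reduces the question to a parity condition on the prefix $c_{i+1}\ldots c_{n-1}$, which I would record as follows: a lower violation occurs iff $\#_1(c_{i+1}\ldots c_n)$ is odd, and an upper violation occurs iff $\#_1(c_1\ldots c_{n-i})$ is even.

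The lower violations then match the statement directly. For $i=1$ the equality $V_1=c_2\ldots c_n$ is automatic, and the condition $\#_1(c_2\ldots c_n)$ odd is exactly alternative (A). For $i\ge 2$, setting $k=i+1\in\{3,\ldots,n\}$, the equality $V_i=c_2\ldots c_{n+1-i}$ becomes the overlap $c_k\ldots c_n=c_2\ldots c_{n-k+2}$ and the parity condition becomes $\#_1(c_k\ldots c_n)$ odd, which is alternative (B). Thus a lower violation occurs if and only if (A) or (B) holds, which gives the ``if'' direction and half of ``only if''.

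The main obstacle is to show that an upper violation never produces anything new, i.e.\ that it already forces (A) or (B); this is what makes the characterization, phrased only through prefixes of $\sigma(\nu)$, complete. The key observation is that an upper violation rests on a border $c_{i+1}\ldots c_n=c_1\ldots c_{n-i}$ of $c_1\ldots c_n$, and since $c_2=0$ this forces $c_{i+1}=c_1=1$. Stripping this leading $1$ from both sides yields the shorter overlap $c_{i+2}\ldots c_n=c_2\ldots c_{n-i}$, which is precisely the overlap in (B) for $k=i+2$. It then remains to match parities: from $c_{i+1}=1$ one gets $\#_1(c_{i+2}\ldots c_n)=\#_1(c_1\ldots c_{n-i})-1$, so $\#_1(c_{i+2}\ldots c_n)$ is odd exactly when $\#_1(c_1\ldots c_{n-i})$ is even, that is, exactly when the upper violation occurs. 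Hence every upper violation at index $i$ is accompanied by alternative (B) at $k=i+2$ (the boundary indices $i\in\{1,n-1\}$ carry no upper violation, again because $c_2=0$). Combining the three paragraphs, $W$ is non-admissible iff some suffix violates a bound iff (A) or (B) holds. I expect the parity bookkeeping in the flip analysis and the $\pm 1$ shift of $\#_1$ in the border step to be the only places that demand care.
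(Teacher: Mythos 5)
Your proof is correct and follows essentially the same route as the paper's: a suffix-by-suffix analysis showing that the single flipped bit can only create a violation where the unflipped suffix equals an admissibility bound, which reduces everything to a parity condition. Your write-up is in fact more complete than the paper's own argument, which treats only the ``only if'' direction and disposes of possible upper-bound violations implicitly by choosing the largest violating suffix, whereas you verify the converse explicitly and show that every upper-bound violation at index $i$ is already subsumed by alternative (B) at $k=i+2$.
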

\begin{proof}
	Assume that $c_2\ldots c^{*}_n$ is not admissible, so there exists $i\in\{2, \ldots, n\}$ such that $c_i\ldots c^{*}_n$ is not admissible. Take the largest such index $i$ and note that $c_i\ldots c_n=c_2\ldots c_{n-i+2}$ and $c_2\ldots c_{n-i+2}^*\prec c_2\ldots c_{n-i+2}$. Let us assume by contradiction that $\#_1(c_2\ldots c_{n-i+2})$ is even. If $c_{n-i+2}=0$ $(c_{n-i+2}=1)$ it follows that $\#_1(c_2\ldots c_{n-i+1})$ is even (odd) and in both cases $c_2\ldots c^{*}_{n-i+2}\succ c_2\ldots c_{n-i+2}$ and thus $c_2\ldots c^{*}_{n-i+2}$ is admissible, a contradiction.
\end{proof}

\begin{lemma}\label{lem:per2}
	Let $\nu$ be an admissible kneading sequence and let $(m_i)_{i\in\N}$ be the complete sequence for an endpoint $e\in X'$. Then for every natural number $k\geq 3$ and $j\in\{0,\ldots, m_i\}$, the word $c_{k}\ldots c^{*}_{m_{i+1}-{m_i}}c_1c_2\ldots c_j$ is admissible for every $i\in\N$. Specifically, if $j=0$, we set $c_1\ldots c_j=\emptyset$.
\end{lemma}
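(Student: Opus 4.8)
The plan is to deduce the statement from the admissibility of a single right-infinite sequence. Writing $N:=m_{i+1}-m_i$, set
$$z:=c_{k}\ldots c_{N-1}c_{N}^{*}\,c_1c_2c_3\ldots=c_{k}\ldots c_{N}^{*}\,\nu .$$
Every word $c_{k}\ldots c_{N}^{*}c_1\ldots c_j$ in the statement is a prefix of $z$, so once $z$ is admissible all of them are admissible simultaneously, for every $j$ at once (the bound $j\le m_i$ plays no role for admissibility itself). Thus it suffices to prove that $z$ is admissible, i.e. that $\sigma(\nu)\preceq\sigma^{m}(z)\preceq\nu$ for every $m\ge 0$. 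For $m\ge N-k+1$ the shift $\sigma^{m}(z)$ equals $\sigma^{m-(N-k+1)}(\nu)$, so these inequalities are exactly the admissibility of $\nu$. Hence only the finitely many shifts $\sigma^{m}(z)=c_{p}\ldots c_{N}^{*}\nu$ with $p:=k+m\in\{k,\ldots,N\}$ remain to be checked; the degenerate case $k>N$ is vacuous.

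For a fixed such $p$ I would compare $\sigma^{p-1}(\nu)=c_pc_{p+1}\ldots$ with $\nu$ (for the upper bound) and with $\sigma(\nu)$ (for the lower bound), tracking the flipped coordinate $c_N^{*}$, which occupies position $N-p+1$ of the word. The parity input is Remark~\ref{rem:per1}: $\#_1(c_1\ldots c_N)$ is even, and since $c_1=1$ this makes $\#_1(c_2\ldots c_N)$ odd. If the first disagreement between $\sigma^{p-1}(\nu)$ and $\nu$ (respectively $\sigma(\nu)$) occurs strictly before position $N-p+1$, then $\sigma^{m}(z)$ agrees with $\sigma^{p-1}(\nu)$ up to and including that coordinate, the flip is irrelevant, and the required inequality is inherited verbatim from the admissibility of $\nu$.

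The remaining, genuinely delicate, situation is the border case, in which $\sigma^{p-1}(\nu)$ agrees with $\nu$ (respectively $\sigma(\nu)$) all the way up to the flipped coordinate; that is, $c_p\ldots c_N=c_1\ldots c_{N-p+1}$ for the upper bound and $c_p\ldots c_N=c_2\ldots c_{N-p+2}$ for the lower bound. Then the comparison is decided exactly at the flipped symbol, and a short computation shows the correct inequality holds if and only if $\#_1(c_p\ldots c_N)$ has the appropriate parity (odd for the upper bound, even for the lower bound). I would secure these parities from the evenness of $\#_1(c_1\ldots c_N)$ together with the period/border structure that the border identities impose on $\nu$, organised through Lemma~\ref{lem:odd}, whose two alternatives describe precisely the borders of $c_2\ldots c_N$ and their $\#_1$-parities. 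I expect this to be the main obstacle, since it requires converting the border identities into parity statements and excluding the ``wrong-parity'' borders. It is also where $k\ge 3$ is indispensable: for $p=2$ the offending shift would be $c_2\ldots c_N^{*}\nu$, and because $\#_1(c_2\ldots c_N)$ is odd this shift fails the lower bound --- equivalently, $c_2\ldots c_N^{*}$ is itself non-admissible by Lemma~\ref{lem:odd} --- so exactly the hypothesis $k\ge 3$ discards this shift and leaves only the admissible ones, completing the verification that $z$, and hence every $w$, is admissible.
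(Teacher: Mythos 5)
Your framework is sound: the finite words of the statement are indeed prefixes of $z=c_k\ldots c_N^*\nu$ (with $N=m_{i+1}-m_i$), only the shifts $\sigma^m(z)$ with $m\le N-k$ need checking, and the parity conditions you extract in the border cases ($\#_1(c_p\ldots c_N)$ odd when $c_p\ldots c_N=c_1\ldots c_{N-p+1}$, even when $c_p\ldots c_N=c_2\ldots c_{N-p+2}$) are exactly the right ones. Your observation about $k\ge 3$ is also correct. But there are two gaps. The smaller one: your case split omits the situation where the first disagreement between $\sigma^{p-1}(\nu)$ and $\nu$ (resp.\ $\sigma(\nu)$) falls \emph{exactly} on the flipped coordinate, so that the flip \emph{creates} agreement and the comparison spills over into the tail of $z$. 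This case is salvageable --- the admissibility of $\nu$ at position $N-p+1$ forces $\#_1(c_1\ldots c_{N-p+1})$ odd (resp.\ $\#_1(c_2\ldots c_{N-p+2})$ even) automatically, after which $\nu\succeq\sigma^{N-p+1}(\nu)$ finishes it --- but it is a genuine third case, not covered by ``disagreement strictly before the flip'' or ``agreement through the flip.''

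The serious gap is that the step you yourself flag as ``the main obstacle'' is the entire content of the lemma, and your plan for it cannot work as stated. You propose to derive the needed border parities from the evenness of $\#_1(c_1\ldots c_N)$ together with Lemma~\ref{lem:odd}. That is not enough: evenness of $\#_1(c_1\ldots c_N)$ only tells you that $\#_1(c_2\ldots c_N)$ is odd (which kills $p=2$, as you note); it says nothing about whether an \emph{interior} occurrence $c_p\ldots c_N=c_2\ldots c_{N-p+2}$ with odd $\#_1$, $p\ge 3$, can exist --- and if one did, $c_p\ldots c_N^*$ would already be non-admissible with $j=0$. Ruling this out requires the full hypothesis that $(m_i)$ is the \emph{complete} sequence for an endpoint, hence that $c_{N+1}\ldots c_{N+m_i}=c_1\ldots c_{m_i}$ and that no intermediate index between $m_i$ and $m_{i+1}$ yields an odd (resp.\ even) coincidence with $\nu$. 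The paper's proof uses precisely this: in its empty-word case it extends the offending match maximally into $c_{N+1}c_{N+2}\ldots$ (the maximal $l$ ``exists since $(m_i)$ is complete'') and derives a parity contradiction from admissibility of $\nu$ at the point where the match breaks; the cases $j=1$ and $j\ge 2$ handle the remaining borders. Your outline never invokes completeness beyond Remark~\ref{rem:per1} and never looks at $\nu$ beyond position $N$, so the decisive verification is missing rather than merely deferred.
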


\begin{proof}
	Assume by contradiction that there exists $k\geq 3$ and $j\in\N_0$ such that the word $c_{k}\ldots c^{*}_{m_{i+1}-{m_i}}c_1c_2\ldots c_j$ is not admissible and assume that $k$ is the largest and $j$ is the smallest such index. By the choice of $k$ and $j$ every proper subword of $c_{k}\ldots c^{*}_{m_{i+1}-{m_i}}c_1c_2\ldots c_j$ is admissible. Thus $c_{k}\ldots
	c^{*}_{m_{i+1}-{m_i}}c_1c_2\ldots c_j=c_2c_3\ldots
	 c_{m_{i+1}-m_i-k}\\
	 c_{m_{i+1}-m_i-k+1}\ldots c^{*}_{m_{i+1}-{m_i}-k+j+1}$ and $\#_1(c_2c_3\ldots c^{*}_{m_{i+1}-{m_i}-k+j+1})$ is even by Lemma~\ref{lem:odd}.  Furthermore, Lemma~\ref{lem:odd} implies that $\#_1(c_{k}\ldots
	c^*_{m_{i+1}-{m_i}}=c_2\ldots c_{m_{i+1}-m_{i}-k-1})$ is even. 
	
	If $j=1$, then both $\#_1(c_{k}\ldots c^*_{m_{i+1}-{m_i}})$ and $\#_1(c_k\ldots c^*_{m_{i+1}-{m_i}}c_1)$ are even, which is impossible.
	
	If $j\geq 2$, it follows by Lemma~\ref{lem:odd} that $\#_1(c_2\ldots c_{j})$ is odd. Thus $c_2\ldots c^{*}_{j}=c_{m_{i+1}-m_i-k+1}\\
	\ldots c_{m_{i+1}-{m_i}-k+j+1}$ is not admissible, which is a contradiction, since $c_2c_3\ldots c^{*}_j$ is a subword of $\nu$.
	 
	Let $c_1\ldots c_j$ be an empty word. Then $c_{k}\ldots
	c_{m_{i+1}-{m_i}}=c_2c_3\ldots c_{m_{i+1}-m_i-k}c_{m_{i+1}-m_i-k+1}$ and $\#_1(c_2c_3\ldots c_{m_{i+1}-m_i-k}c_{m_{i+1}-m_i-k+1})$ is odd. Let $l$ be the maximal natural number such that $c_{m_{i+1}-m_i+1}\ldots
	 c_{m_{i+1}-m_i+l}=c_{m_{i+1}-m_i-k+2}\ldots c_{m_{i+1}-m_i-k+l+1}$, \ie 
	$$c_{k}\ldots c_{m_{i+1}-m_i+l}=c_2c_3\ldots c_{m_{i+1}-m_i-k+l+1}$$
	and $c_{m_{i+1}-m_i+l+1}\neq c_{m_{i+1}-m_i-k+l+2}$. Such $l$ indeed exists since $(m_i)$ is complete.
	  Note that $c_{m_{i+1}-m_i-k+2}\ldots c_{m_{i+1}-m_i-k+l+1}=c_1\ldots c_{l}$ and $\#_1(c_1\ldots c_{l+1})$ is odd by Lemma~\ref{lem:odd}. Thus $\#_1(c_1\ldots c_{l}c^{*}_{l+1})$ is even and we conclude that $\#_1(c_2\ldots c_{m_{i+1}-m_i-k+l+2})$ is odd. Since $c_2\ldots c^{*}_{m_{i+1}-m_i-k+l+2}=c_{k}\ldots c_{m_{i+1}-m_i+l+1}$ is admissible, we get a contradiction.
\end{proof}

The main idea of the proof of the following theorem is illustrated in Example~\ref{ex:6}.

\begin{theorem}\label{thm:capped}
	Let $e\in X'$ be an endpoint such that $\tau_R(\ovl{e})=\infty$ ($\tau_L(\ovl{e})=\infty$) and let $L=\ldots l_2l_1\neq 0^{\infty}l_{n}\ldots l_{1}$ be admissible and $\nu\neq 10^{\infty}$. If $L$ and $\ovl e$ have different tails, then $e$ is capped from the right (left).
\end{theorem}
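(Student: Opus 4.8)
The plan is to produce the cap explicitly, as the two families of basic arcs recorded in \eqref{eq:semicirc} and illustrated in Example~\ref{ex:6}. Assume $\tau_R(\ovl e)=\infty$; the case $\tau_L(\ovl e)=\infty$ is symmetric, interchanging ``left'' and ``right'' throughout. First I would invoke the complete sequence $(m_i)_{i\in\N}$ for $e$ from Definition~\ref{def:complete}: since $\tau_R(\ovl e)=\infty$, there are infinitely many $n$ with $e_n\ldots e_1=c_1\ldots c_n$ and $\#_1(c_1\ldots c_n)$ even, and these are precisely the $m_i$. Along a subsequence $n_i:=m_{j_i}$ (to be fixed) I will build admissible left-infinite sequences
\[\ovl{x}^{O(i)}=1^{\infty}a^i_k\ldots a^i_1\,0\,c_1\ldots c_{n_i}, \qquad \ovl{x}^{I(i)}=1^{\infty}a^i_k\ldots a^i_1\,1\,c_1\ldots c_{n_i},\]
lying in the arc-component $\mathcal{R}$ of $\bar 1$, which will serve as $\ovl{y}^i$ and $\ovl{w}^i$ in Definition~\ref{def:cap}. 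By construction these two sequences agree everywhere except at position $n_i+1$.

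Next I would verify that $A(\ovl{x}^{O(i)})$ and $A(\ovl{x}^{I(i)})$ are joined on the right. Their common right suffix is $c_1\ldots c_{n_i}$ with $\#_1(c_1\ldots c_{n_i})$ even, so Lemma~\ref{lem:first} gives $\tau_R\ge n_i+1$ for both. The delicate point is to choose the finite word $a^i_k\ldots a^i_1$ so that the suffix-match with $\nu$ does \emph{not} extend past position $n_i+1$, forcing $\tau_R(\ovl{x}^{O(i)})=\tau_R(\ovl{x}^{I(i)})=n_i+1$. The two sequences then differ exactly at their common value of $\tau_R$, and Lemma~\ref{lem:first} yields a common endpoint projecting to the right endpoint of $\pi_0(A(\ovl{x}^{O(i)}))$, \ie a right semicircle. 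The same choice of $a^i_k\ldots a^i_1$ must make both full itineraries admissible. Here I would lean on the block structure between consecutive elements of the complete sequence: Remark~\ref{rem:per1} controls the parity of $\#_1(c_1\ldots c_{m_{i+1}-m_i})$, and Lemma~\ref{lem:per2} guarantees admissibility of the words $c_k\ldots c^*_{m_{i+1}-m_i}c_1\ldots c_j$ out of which the $a^i$ are assembled (with a leading $1^\infty$ that places the arcs in $\mathcal{R}$).

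Convergence $\ovl{x}^{O(i)},\ovl{x}^{I(i)}\to\ovl e$ is immediate, since both agree with $\ovl e$ on the rightmost $n_i$ coordinates (all equal to $c_1\ldots c_{n_i}$) and $n_i\to\infty$. It then remains to establish the sandwich $\ovl{x}^{O(i)}\Ls\ovl e\Ls\ovl{x}^{I(i)}$ (or the reverse). Since $\ovl e=\ldots e_{n_i+1}c_1\ldots c_{n_i}$, it agrees with exactly one of the two cap sequences at position $n_i+1$ and first differs from the other there; a parity computation via \eqref{eq:L}, using that $\#_1(c_1\ldots c_{n_i})$ is even, places $\ovl e$ strictly between the two. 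This is exactly the step that requires the hypothesis that $L$ and $\ovl e$ have different tails: if they shared a tail, then $\ovl e$ would sit at an extremum of all sufficiently deep cylinders and, by Remark~\ref{rem:notcapped}, could not be capped. The different-tail assumption supplies infinitely many coordinates at which $\ovl e$ deviates from $L$, giving room to fit caps strictly on both sides and to select the subsequence $n_i$. With convergence, the sandwich, and the right-semicircle property in hand, Definition~\ref{def:cap} shows that $e$ is capped from the right.

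The main obstacle is the second step: exhibiting a single finite word $a^i_k\ldots a^i_1$ that simultaneously (i) makes both itineraries admissible, (ii) pins $\tau_R$ to the exact value $n_i+1$ so that the caps close on the right rather than deeper in, and (iii) is compatible with the strict $\prec_L$-sandwich. All three are combinatorial constraints on how the kneading blocks $c_1\ldots c_m$ and their complements may be concatenated, and it is the completeness of $(m_i)$ together with Lemma~\ref{lem:odd} and Lemma~\ref{lem:per2} that makes such a word available; the remaining parity bookkeeping for \eqref{eq:L} is then routine once the word is fixed.
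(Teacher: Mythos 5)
Your plan reproduces the outer architecture of the paper's argument (the cap sequences of the form $1^{\infty}a^i_k\ldots a^i_1\,0\,c_1\ldots c_{n_i}$ and $1^{\infty}a^i_k\ldots a^i_1\,1\,c_1\ldots c_{n_i}$, the right semicircle coming from the even parity of $\#_1(c_1\ldots c_{n_i})$, convergence from $n_i\to\infty$), but the two steps you defer are precisely where the proof lives, and your sketch of the first of them is incorrect. The evenness of $\#_1(c_1\ldots c_{n_i})$ gives only the right-join; it does not ``place $\ovl e$ strictly between the two'' caps. Only one of the two cap sequences first differs from $\ovl e$ at position $n_i+1$; the other agrees with $\ovl e$ there and is compared with $\ovl e$ at some deeper coordinate, and on which side of $\ovl e$ it lands depends entirely on how $a^i_k\ldots a^i_1$ relates to $L$ at that coordinate. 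A bad choice of $a^i$ puts both caps on the same side. The paper's key device, which is absent from your proposal, is to use the different-tails hypothesis \emph{constructively}: let $M_1(i)>m_i+1$ be the smallest index with $l_{M_1(i)}=e^*_{M_1(i)}$, take $a^i$ to be a copy of $e_{M_1(i)-1}\ldots e_{m_i+2}$ with the single flip $e^*_{M_1(i)}$ on top, and then run the four-case parity check (cases a)--d) in the paper) to verify $\ovl{x}^{O(i)}\Ll\ovl e\Ll\ovl{x}^{I(i)}$ or its reverse. You instead use the hypothesis only to say there is ``room,'' which does not determine the word.

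The second deferred step is also substantial rather than routine. The flipped word $e^{*}_{M_1(i)}e_{M_1(i)-1}\ldots e_{m_i+2}e^{(*)}_{m_i+1}e_{m_i}\ldots e_1$ can fail to be admissible, and Lemma~\ref{lem:per2} does not cover it (that lemma treats words of the form $c_k\ldots c^{*}_{m_{i+1}-m_i}c_1\ldots c_j$, i.e.\ flips at the position governed by the complete sequence, not at the position $M_1(i)$ dictated by $L$). Roughly half of the paper's proof is the fallback cascade for this failure: move the flip to $M_1(i)+1$, then to subsequent indices $M_2(i),M_3(i),\ldots$ where $L$ and $\ovl e$ disagree, show $N$ must be even in the terminal bad case, and finally place the flip at $m_{i+1}-1$ using Lemma~\ref{lem:per2} and $c_2=0$. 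Without specifying the word $a^i$ and handling these admissibility failures, the proposal is a statement of intent with the correct target identified but the central mechanism missing.
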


\begin{proof}
 Let $(m_i)_{i\in\N}\subset \N$ be the complete sequence for an endpoint $e$ where $\tau_R(\ovl{e})=\infty$. The proof works analogously if $\tau_L(\ovl{e})=\infty$.
	We will find infinitely many $i\in\N$ such that   $\overleftarrow{x}^{O(i)}\Ll\overleftarrow{e}
	\Ll\overleftarrow{x}^{I(i)}$ (or with reversed inequalities) and arcs $\ovl{x}^{O(i)}$ and  $\ovl{x}^{I(i)}$ are joined by a semi-circle on the right.
	
	Fix some $i\in\N$ and let $M_1(i)>m_i+1$ be the smallest natural number such that $l_{M_1(i)}=e^{*}_{M_1(i)}$. Note that such $M_1(i)$ exists, otherwise $\ovl e$ and $L$ have the same tail.
	
	Assume that $M_1(i)=m_{i+1}$. Note that then $e_{M_1(i)-1}\ldots e_{m_i+1}=c_2\ldots c_k=0^{\kappa}1c_{\kappa+2}\ldots c_k$ ($c_{\kappa+2}\ldots c_k$ can be empty) and $e_{M_1(i)}=1$. Then $l_{M_1(i)}\ldots l_{m_i+1}=0^{\kappa+1}1c_{\kappa+3}\ldots c_k$, which is not admissible.
	
	Assume that $M_1(i)=m_{i+1}+1$. By the paragraph above $M_1(i+1)\neq m_{i+2}$. If $M_1(i+1)=m_{i+2}+1$, then $l_{m_{i+2}}\ldots l_{m_{i+1}+2}l_{m_{i+1}+1}=c_1\ldots c_{m_{i+2}-m_{i+1}+1}c_{m_{i+2}-m_{i+1}}^*$ which is not admissible since $c_1\ldots c_{m_{i+2}-m_{i+1}}$ is even by Remark~\ref{rem:per1}.
	So either $M_1(i)\in\{m_{i}+2, \ldots m_{i+1}-1\}$ or there is $k\in\N$ such that $M_1(i+k)=M_1(i)$. Note that there is infinitely many $i\in\N$ such that $M_1(i)\in\{m_{i}+2, \ldots m_{i+1}-1\}$ and from now on we work with such $i\in\N$.

	If both of the following sequences are admissible, we set:
	$$
	\overleftarrow{x}^{O(i)}=1^{\infty}e^{*}_{M_1(i)}e_{M_1(i)-1}\ldots e_{m_i+2}0e_{m_i}\ldots e_{1},
	$$
	$$
	\overleftarrow{x}^{I(i)}=1^{\infty}e^{*}_{M_1(i)}e_{M_1(i)-1}\ldots e_{m_i+2}1e_{m_i}\ldots e_{1}.
	$$
	\textbf{a)} Assume that $\#_1(e_{m_i}\ldots e_{1})$ and $\#_1(l_{m_i}\ldots l_{1})$ have the same parity and $e_{m_i+1}\\=l_{m_i+1}=0$.\newline
	Then it follows that $\overleftarrow{e} \Ll \overleftarrow{x}^{I(i)}$. Because $l_{M_1(i)-1}\ldots l_{m_{i}+2}=e_{M_1(i)-1}\ldots e_{m_{i}+2}$ the parities of $\#_1(e_{M_1(i)-1}\ldots e_{1})$ and $\#_1(l_{M_1(i)-1}\ldots l_{1})$ are the same and because $l_{M_1(i)}=e^{*}_{M_1(i)}$ it follows that $\overleftarrow{x}^{O(i)}\Ll \overleftarrow{e}$. 
	
	\textbf{b)} Assume that $\#_1(e_{m_i}\ldots e_{1})$ and $\#_1(l_{m_i}\ldots l_{1})$ have the same parity and $e_{m_i+1}=l_{m_i+1}=1$.\newline
	Then it follows that $\overleftarrow{e} \Ll \overleftarrow{x}^{O(i)}$. Because $l_{M_1(i)-1}\ldots l_{m_{i}+2}=e_{M_1(i)-1}\ldots e_{m_{i}+2}$ the parities of $\#_1(e_{M_1(i)-1}\ldots e_{1})$ and $\#_1(l_{M_1(i)-1}\ldots l_{1})$ are the same and because $l_{M_1(i)}=e^{*}_{M_1(i)}$ it follows that $\overleftarrow{x}^{I(i)}\Ll \overleftarrow{e}$.
	
	\textbf{c)} Assume that $\#_1(e_{m_i}\ldots e_{1})$ and $\#_1(l_{m_i}\ldots l_{1})$ have the same parity and $e_{m_i+1}=1\neq 0=l_{m_i+1}$.\\
	Then $\ovl x^{O(i)}\Ll\ovl e$. Since the parities of $\#_1(e_{M_1(i)-1}\ldots e_{1})$ and $\#_1(l_{M_1(i)-1}\ldots l_{1})$ are different and $l_{M_1(i)}=e^*_{M_1(i)}$, it follows that $\ovl e\Ll\ovl x^{I(i)}$. 
	
	\textbf{d)} Assume that $\#_1(e_{m_i}\ldots e_{1})$ and $\#_1(l_{m_i}\ldots l_{1})$ have the same parity and $e_{m_i+1}=0\neq 1=l_{m_i+1}$.\\
	Then $\ovl x^{I(i)}\Ll\ovl e$. Since the parities of $\#_1(e_{M_1(i)-1}\ldots e_{1})$ and $\#_1(l_{M_1(i)-1}\ldots l_{1})$ are different and $l_{M_1(i)}=e^*_{M_1(i)}$, it follows that $\ovl e\Ll\ovl x^{O(i)}$. 
	
	Note that if $\#_1(e_{m_i}\ldots e_{1})$ and $\#_1(l_{m_i}\ldots l_{1})$ are of different parities, then all the inequalities in cases a), b), c) and d) are reversed and we use analogous arguments to conclude that either  $\overleftarrow{x}^{O(i)}\Ls \overleftarrow{e} \Ls \overleftarrow{x}^{I(i)}$ or $\overleftarrow{x}^{I(i)}\Ls \overleftarrow{e} \Ls \overleftarrow{x}^{O(i)}$.

	Now assume that one of $e^{*}_{M_1(i)}e_{M_1(i)-1}\ldots e_{m_i+2}e^{(*)}_{m_i+1}\ldots e_1$ is not admissible (where $s^{(*)}$ means $s^{*}$ or $s$). Then we set $x^{O(i)}_{M_1(i)}=x^{I(i)}_{M_1(i)}=e_{M_1(i)}$. If $e_{M_1(i)+1}=l_{M_1(i)+1}$, then we set $x^{O(i)}_{M_1(i)+1}=x^{I(i)}_{M_1(i)+1}=e^{*}_{M_1(i)+1}$ and we argue that $e^{*}_{M_1(i)+1}e_{M_1(i)}\ldots e_{m_i+2}e^{(*)}_{m_i+1}\ldots e_1=e^{*}_{M_1(i)+1}
	 10^{\kappa-1}1\ldots e_1$ are admissible words.  Indeed, word  $e_{M_1(i)}\ldots e_{m_i+2}
	 e^{(*)}_{m_i+1}\ldots e_1$ is admissible  by  Lemma~\ref{lem:per2}. If $e_{M_1(i)+1}^{*}10^{\kappa-1}1\ldots$ were not admissible, then $T^3(c)>T^4(c)$ which is a contradiction with $T$ being non-renormalizable.
	 So the following sequences are admissible:
	 	$$
	 	\overleftarrow{x}^{O(i)}=1^{\infty}e^{*}_{M_1(i)+1}e_{M_1(i)}\ldots e_{m_i+2}0e_{m_i}\ldots e_{1},
	 	$$
	 	$$\hspace{0.1cm}
	 	\overleftarrow{x}^{I(i)}=1^{\infty}e^{*}_{M_1(i)+1}e_{M_1(i)}\ldots e_{m_i+2}1e_{m_i}\ldots e_{1},
	 	$$
	and $\overleftarrow{x}^{O(i)}\Ll\ovl e\Ll\overleftarrow{x}^{I(i)}$ or $\overleftarrow{x}^{I(i)}\Ll\ovl e\Ll\overleftarrow{x}^{O(i)}$.
	
	Assume that $e^{*}_{M_1(i)+1}=l_{M_1(i)+1}$. Set $x^{O(i)}_{M_1(i)+1}=x^{I(i)}_{M_1(i)+1}=e_{M_1(i)+1}$. Then the words $e_{M_1(i)+1}e_{M_1(i)}\ldots e^{(*)}_{m_i+1}e_{m_i}\ldots e_1$ are admissible by  Lemma~\ref{lem:per2}, if $M_1(i)+1\neq m_{i+1}-1$.\\
	Now say that $M_1(i)=m_{i+1}-2$. 
	By the assumption in the beginning of this paragraph, at least one of the words $e^{*}_{m_{i+1}-2}e_{m_{i+1}-3}
	\ldots e_{m_i+1}e^{(*)}_{m_i+1}\ldots e_1$ is not admissible. \\
	\textbf{a)} Say $\nu=10^{\kappa}1\ldots$, where $\kappa>1$. By  Lemma~\ref{lem:per2}, $e^{*}_{m_{i+1}-2}e_{m_{i+1}-3}\ldots=c^{*}_3c_4c_5\ldots=10^{\kappa-2}1\ldots$ is always admissible, a contradiction.\\
	\textbf{b)} Say that $\nu=10(11)^{n}0\ldots$. Then $e^{*}_{m_{i+1}-2}e_{m_{i+1}-3}\ldots=0(11)^{n-1}10
	\ldots$ is again always admissible, because $\#_1(0(11)^{n-1}1)$ is odd, a contradiction.\\
	
	Thus caps have been constructed except in the following case:\\
	(one of) $e^{*}_{M_1(i)}e_{M_1(i)-1}\ldots e_{m_i+2}e^{(*)}_{m_i+1}\ldots e_1$ is not admissible and  $e^{*}_{M_1(i)+1}=l_{M_1(i)+1}$.
	
	For $j>1$ denote by $M_j(i)$ the smallest $k\in\N$ such that $k>M_{j-1}(i)$ and $e^*_{k}=l_{k}$. By the previous paragraph, it follows that $M_2(i)< m_{i+1}-1$. Take the largest $N\in\N$ such that $M_N(i)<m_{i+1}-1$.  
	Note that for odd $j\in\{1, \ldots N\}$ and
		$$
		\overleftarrow{x}^{O(i)}=1^{\infty}e^{*}_{M_j(i)}e_{M_j(i)-1}\ldots e_{m_i+2}0e_{m_i}\ldots e_{1},
		$$
		$$\hspace{0.1cm}
		\overleftarrow{x}^{I(i)}=1^{\infty}e^{*}_{M_j(i)}e_{M_j(i)-1}\ldots e_{m_i+2}1e_{m_i}\ldots e_{1},
		$$
	if follows that $\overleftarrow{x}^{O(i)}\Ll\ovl e\Ll\overleftarrow{x}^{I(i)}$ or $\overleftarrow{x}^{I(i)}\Ll\ovl e\Ll\overleftarrow{x}^{O(i)}$. The conclusion follows from the fact that $\#_1(l_{M_j(i)-1}\ldots l_{m_i+2})$ and $\#_1(e_{M_j(i)-1}\ldots e_{m_i+2})$ are of the same parity since $j$ is odd.
	
	Assume that for every odd $j\in\{1, \ldots, N\}$ we have that $1^{\infty}e^{*}_{M_j(i)}
	e_{M_j(i)-1}\ldots e_{m_i+2}e_{m_i+1}^{(*)}\\
	e_{m_i}\ldots e_{1}$ are not admissible. If $M_{j+1}(i)>M_j(i)+1$, we set:
	$$
	\overleftarrow{x}^{O(i)}=1^{\infty}e^{*}_{M_j(i)+1}e_{M_j(i)}\ldots e_{m_i+2}0e_{m_i}\ldots e_{1},
	$$
	$$\hspace{0.1cm}
	\overleftarrow{x}^{I(i)}=1^{\infty}e^{*}_{M_j(i)+1}e_{M_j(i)}\ldots e_{m_i+2}1e_{m_i}\ldots e_{1},
	$$
	and argue that both are admissible as in preceding paragraphs. Calculations as above give $\overleftarrow{x}^{O(i)}\Ll\ovl e\Ll\overleftarrow{x}^{I(i)}$ or $\overleftarrow{x}^{I(i)}\Ll\ovl e\Ll\overleftarrow{x}^{O(i)}$. 

	The situation left to consider is when  $1^{\infty}e^{*}_{M_j(i)}e_{M_j(i)-1}\ldots e_{m_i+2}e_{m_i+1}^{(*)}e_{m_i}\ldots e_{1}$ are not admissible and $M_{j+1}(i)=M_j(i)+1$ for every odd $j\in\{1, \ldots, N\}$. Note that $N$ must be even. Otherwise $1^{\infty}e^{*}_{m_{i+1}-2}e_{m_{i+1}-3}\ldots e_{m_i+2}e_{m_i+1}^{(*)}e_{m_i}\ldots e_{1}$ are not admissible and we have already argued that this is not possible.
	
	Thus we conclude that $L$ is of the form:
	$$
	\ldots  e_{M_N(i)+1}e^{*}_{M_N(i)}e^{*}_{M_N(i)-1}e_{M_N(i)-2}\ldots e_{M_1(i)+2}e^{*}_{M_1(i)+1}e^{*}_{M_1(i)}e_{M_1(i)-1}\ldots e_{m_i+2}l_{m_{i}+1}\ldots l_{1}.
	$$

	Note that $\#_1(e^{*}_{M_N(i)}e^{*}_{M_N(i)-1}e_{M_N(i)-2}\ldots e_{M_1(i)+2}
	e^{*}_{M_1(i)+1}e^{*}_{M_1(i)}e_{M_1(i)-1}\ldots e_{m_{i}+2})$ is of the same parity as $\#_1(e_{M_N(i)}e_{M_N(i)-1}e_{M_N(i)-2}\ldots e_{M_1(i)+2}
	e_{M_1(i)+1}e_{M_1(i)}e_{M_1(i)-1}\ldots e_{m_{i}+2})$, because changes in $L$ compared with $\overleftarrow{e}$ always appear in pairs (as two consecutive letters). We set
	$$
	\overleftarrow{x}^{O(i)}=1^{\infty}e^{*}_{m_{i+1}-1}e_{m_{i+1}-2}\ldots e_{m_i+2}0e_{m_i}\ldots e_{1},
	$$
	$$\hspace{0.1cm}
	\overleftarrow{x}^{I(i)}=1^{\infty}e^{*}_{m_{i+1}-1}e_{m_{i+1}-2}\ldots e_{m_i+2}1e_{m_i}\ldots e_{1},
	$$
	and note that

	$\overleftarrow{x}^{O(i)}\Ll\overleftarrow{e}
	\Ll\overleftarrow{x}^{I(i)}$ (or with reversed inequalities). Also note that $\overleftarrow{x}^{I(i)}$ and $\overleftarrow{x}^{O(i)}$ set in such a way are always admissible by  Lemma~\ref{lem:per2} and since $e_{m_{i+1}-1}=c_2=0$.
	
	We have constructed the sequence corresponding to basic arc with the following properties: $\overleftarrow{x}^{O(i)}\Ls\overleftarrow{e}\Ls\overleftarrow{x}^{I(i)}$ or $\overleftarrow{x}^{I(i)}\Ls\overleftarrow{e}\Ls\overleftarrow{x}^{O(i)}$, $\overleftarrow{x}^{O(i)}$ and $\overleftarrow{x}^{I(i)}$ are joined on the right and $\overleftarrow{x}^{O(i)}, \overleftarrow{x}^{I(i)}\to \ovl e$ as $i\to\infty$. Since that can be done for infinitely many $i\in\N$, this concludes the proof.
\end{proof}

\begin{example}\label{ex:6}
Let $X'$ be the inverse limit space with the corresponding kneading sequence $\nu=(100111101011010111)^{\infty}$. Let us study the cappedness of the endpoint $e\in X'$ with the itinerary $\bar{e}=
(100111101011010
111)^{\infty}.
(100111101011010111)^{\infty}$ in an embedding determined by $L=(010111110011100
111)^{\infty}$. It follows that $\overleftarrow{x}^{O(i)}\Ls \overleftarrow{e}$, because $\#_1(100111101011010111)$ and $\#_1(0101111
10011100111)$ are both even. Note that  $M_1(i):=m_{i}+5$ is the smallest index strictly greater than $m_{i}+1$ such that $e^{*}_{M_1(i)}=l_{M_1(i)}$. We obtain the following situation:

\begin{center}
$$
\hspace{0.2cm}\ldots (1\mathbf{0}01111\mathbf{01}011\mathbf{01}0111)(100111101011010111)^i=\overleftarrow{e}
$$
$$
\ldots (0\mathbf{1}01111\mathbf{10}011\mathbf{10}0111)(010111110011100111)^i=L
$$
$$
\hspace{0.8cm}1^{\infty}(1\mathbf{1}01111\mathbf{01}011\mathbf{01}0110)(100111101011010111)^i=\overleftarrow{x}^{O(i)}
$$
$$  \hspace{0.8cm}1^{\infty}(1\mathbf{1}01111\mathbf{01}011\mathbf{01}0111)(100111101011010111)^i=\overleftarrow{x}^{I(i)}
$$ 
\end{center}

where we denoted with bold the letters of $\overleftarrow{e}$ and $L$ which differ for indices larger than $m_{i}$.
Note that $M_{3}(i)=m_{i}+10$ but the word $00110=e^{*}_{m_{i}+10}e_{m_{i}+9}\ldots e_{m_{i}+6}$ is not admissible and thus we need to set $x^{O(i)}_{M_{3}(i)}=x^{I(i)}_{M_{3}(i)}=e_{M_{3}(i)}=1$. Note that $M_{5}(i)=m_{i}+17=m_{i+1}-1$. Thus we set $x^{O(i)}_{M_{5}(i)}=x^{I(i)}_{M_{5}(i)}=e^{*}_{M_{5}(i)}$. Because $\#_{1}(e_{m_i+16}\ldots e_{1})$ and $\#_1(l_{m_i+16}\ldots l_1)$ are of the same parity we obtain that $\overleftarrow{e}\Ls\overleftarrow{x}^{I(i)}$.  Lemma~\ref{lem:per2} again ensures that every subword of $\ovl{x}^{O(i)}$ is admissible. Therefore points $x^{O(i)}, x^{I(i)}\in X'$ cap the point $e$ from the right. 
\end{example}

If an endpoint $e$ is capped, we still cannot conclude that it is not accessible, see \eg Figure~\ref{fig:endpoint_ngbhd}. However, if we know that the length of basic arcs arbitrary close to $\ovl e$ has a lower bound, the conclusion follows. Thus we introduce the notion of long-branchness in the following definition.

\begin{definition}
	Let $T\colon [0,1]\to [0,1]$ be a continuous map. The \emph{lap} of $T$ is a maximal interval of monotonicity of $T$ and a \emph{branch} of $T$ is an image of a lap. We say that $T$ is \emph{long-branched}, if there exists $\delta>0$ such that the length of all branches of $T^n$ is larger than $\delta$ for all $n\in\N$.
\end{definition}

\begin{remark}
	Note that if the critical point of $T$ is periodic, then $T$ is long-branched.
\end{remark}

\begin{corollary}\label{cor:endpoints_tau}
Assume $T\neq T_2$ is long-branched and let $e\in X'$ be an endpoint of $X'$. Assume $X'$ is embedded in the plane with respect to $L$ where $A(L)\not\subset\mathcal{C}$. If $\ovl e$ and $L$ have different tails, then $e$ is not accessible. 
\end{corollary}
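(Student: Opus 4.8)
The plan is to upgrade the symbolic conclusion of Theorem~\ref{thm:capped} into a geometric separation argument, using long-branchedness to guarantee that the capping arcs have a definite horizontal size. First I would check that Theorem~\ref{thm:capped} applies. Since $e$ is an endpoint, Proposition~\ref{prop:endpt} gives $\tau_L(\ovl{e})=\infty$ or $\tau_R(\ovl{e})=\infty$; the hypothesis $A(L)\not\subset\mathcal{C}$ is exactly the condition $L\neq 0^{\infty}l_n\ldots l_1$ for all $n\in\N$; and $T\neq T_2$ forces $\nu\neq 10^{\infty}$. As $\ovl{e}$ and $L$ have different tails, Theorem~\ref{thm:capped} yields that $e$ is capped, say from the right (the left case is symmetric). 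Thus there are admissible sequences with $\ovl{y}^i\prec_L\ovl{e}\prec_L\ovl{w}^i$, $\ovl{y}^i,\ovl{w}^i\to\ovl{e}$, such that $A(\ovl{y}^i)$ and $A(\ovl{w}^i)$ are joined on the right by a semicircle $C_i$.

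The next step is to extract a uniform lower bound from long-branchedness. For any admissible $\ovl{s}$, the set $\pi_0(A(\ovl{s}))$ is the nested decreasing intersection $\bigcap_{n\ge 1}T^n(D_n)$, where $D_n$ is the lap of $T^n$ carrying the forward itinerary $s_n\ldots s_1$; each $T^n(D_n)$ is then a branch of $T^n$, so by long-branchedness it has length $>\delta$, and a nested decreasing family of intervals of length $>\delta$ has intersection of length $\ge\delta$. Hence every basic arc, in particular each $A(\ovl{y}^i)$ and $A(\ovl{w}^i)$, projects onto an interval of length at least $\delta$. Moreover, since $\tau_R(\ovl{e})=\infty$ gives $\pi_0(e)=\max\pi_0(A(\ovl{e}))=\inf_n T^n(c)$ while $\max\pi_0(A(\ovl{y}^i))=T^{\tau_R(\ovl{y}^i)}(c)\ge\inf_n T^n(c)$ by Lemma~\ref{lem:first}, the common right endpoint of $A(\ovl{y}^i)$ and $A(\ovl{w}^i)$ lies at $x$-coordinate $\ge\pi_0(e)$, so each cap $C_i$ bulges to the right of $e$.

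Finally I would run a shrinking-neighbourhood argument. Fix a large $i$ and let $H_i$ be the closed topological rectangle whose right side is the cap $C_i$, whose top and bottom are the portions of $A(\ovl{w}^i)$ and $A(\ovl{y}^i)$ running leftward to the vertical line $x=\pi_0(e)-\delta/2$, and whose left side is the vertical segment $V_i$ joining their left ends. The length bound guarantees these arcs really reach that line, so $\partial H_i$ is a simple closed curve; since $\psi_L(\ovl{y}^i)<\psi_L(\ovl{e})<\psi_L(\ovl{w}^i)$ and $e$ sits to the left of the bulge of $C_i$, we get $e\in\Int H_i$. If $e$ were accessible by an arc $\gamma$ from a far point $w\notin X$, then $\gamma$ must cross $\partial H_i$; it cannot meet the three sides contained in $X$, so it meets $V_i$ in a point $q_i\notin X$. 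As $i\to\infty$ the segments $V_i$ shrink to $p=(\pi_0(e)-\delta/2,\psi_L(\ovl{e}))$, which lies on $A(\ovl{e})\subset X$. Thus $q_i\to p$, and since $\gamma$ is compact we obtain $p\in\gamma\cap X=\{e\}$, contradicting $p\neq e$.

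The main obstacle I anticipate is the passage from long-branchedness to a genuine barrier: one must confirm both the uniform lower bound on $\mathrm{length}(\pi_0(A(\ovl{s})))$ through the branch description, and, more delicately, that the caps $C_i$ sit at $x$-coordinate $\ge\pi_0(e)$ so that $e$ truly lies in $\Int H_i$; once these are in place the Jordan-curve separation is routine.
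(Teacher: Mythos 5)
Your proposal is correct and follows exactly the paper's route: invoke Theorem~\ref{thm:capped} to cap $e$, use long-branchedness to get a uniform lower bound $\delta$ on the lengths of the capping basic arcs, and conclude inaccessibility. The paper states this in three sentences and leaves the Jordan-curve separation (and the check that the caps sit at $x$-coordinate $\geq\pi_0(e)$, which for the caps built in Theorem~\ref{thm:capped} holds because their common right endpoint is $T^{m_i+1}(c)$ with $m_i+1$ in the set whose infimum defines $\pi_0(e)$) implicit; you have simply supplied those details.
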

\begin{proof}
	\black{By Theorem~\ref{thm:capped},  $e\in X'$ is capped. Since $T$ is long-branched it holds that there exists $\delta>0$ such that lengths of $A(\ovl y^i)$ and $A(\ovl w^i)$ (see Definition~\ref{def:cap}) are greater than $\delta$. It follows that $e$ is not accessible.}
\end{proof}

\begin{figure}[!ht]
	\centering
	\begin{tikzpicture}[scale=1.5]
	\draw (0,0.34)--(2,0.34);
	\draw (0,0.5)--(2,0.5);
	\draw (0,0.56)--(2,0.56);
	\draw (0,0.7)--(2,0.7);
	\draw (0,0.78)--(2,0.78);
	\draw (0,0.9)--(2,0.9);
	\draw (1,1.64)--(2,1.64);
	\draw (1,1.5)--(2,1.5);
	\draw (1.6,1.42)--(2,1.42);
	\draw (1.6,1.3)--(2,1.3);
	\draw (1.9,1.2)--(2,1.2);
	\draw (1.9,1.1)--(2,1.1);
	\draw[domain=90:270] plot ({1+0.07*cos(\x)}, {1.57+0.07*sin(\x)});
	\draw[domain=90:270] plot ({1.6+0.06*cos(\x)}, {1.36+0.06*sin(\x)});
	\draw[domain=90:270] plot ({1.9+0.05*cos(\x)}, {1.15+0.05*sin(\x)});
	\draw[domain=90:-90] plot ({2+0.649*cos(\x)}, {0.99+0.649*sin(\x)});
	\draw[domain=90:-90] plot ({2+0.43*cos(\x)}, {0.99+0.43*sin(\x)});
	\draw[domain=90:-90] plot ({2+0.5*cos(\x)}, {1+0.5*sin(\x)});
	\draw[domain=90:-90] plot ({2+0.21*cos(\x)}, {0.99+0.21*sin(\x)});
	\draw[domain=90:-90] plot ({2+0.3*cos(\x)}, {1+0.3*sin(\x)});
	\draw[domain=90:-90] plot ({2+0.1*cos(\x)}, {1+0.1*sin(\x)});
	\draw[thick] (0, 1)--(2,1);
	\node[circle,fill, inner sep=1] at (2,1){};
	\node at (1.72,1.1) {$e$};
	
	\end{tikzpicture}
	\caption{Neighbourhood of an endpoint $e$. Note that $e$ is capped but also accessible.} 
	\label{fig:endpoint_ngbhd}
\end{figure}
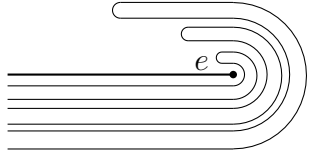

We merge the knowledge from this and the preceding section and give some interesting examples of embeddings of some $X'$.

\begin{example}\label{ex:7}
	 Let $\nu=(101)^{\infty}$ and let $L=(01^k)^{\infty}$ for any $k\geq 2$. Take an admissible $B=a_n\ldots a_1\in\{0,1\}^n$ for some $n\in\N$. If $l_{n+1}B$ is not admissible, then $\ldots l_{n+3}l^{*}_{n+2}l^*_{n+1}B$ is admissible by the choice of $k$ and since every non-admissible word for $\nu=(101)^{\infty}$ contains $00$. Tail $L$ is thus not altered by $B$ for every finite admissible word $B$ (recall Definition~\ref{def:dependence}). Therefore, it follows that $L_{a_n\ldots a_1}=S_{a_n\ldots a_1}\subset\mathcal{U}_L$. \black{Since $B$ is an arbitrary finite admissible word}, we conclude that $\mathcal{U}_L$ is fully accessible and it is the only non-degenerate accessible set. By Corollary~\ref{cor:endpoints_tau}, endpoints of $X'$ are not accessible. The remaining point on the circle of prime ends corresponds to the simple dense canal.
\end{example}

\begin{example}\label{ex:8} 
	Let $\nu=(101)^{\infty}$ and let $L=(01)^{\infty}$. Note that $S=(10)^{\infty}\not\subset\mathcal{U}_L$ and $S=S_{0}$. Thus, $B=0$ alters $L$ (recall Definition~\ref{def:dependence}; here $A_1=0$, $A_i=01$ for all $i\geq 2$). Since $\nu$ is periodic, it follows from Corollary~\ref{cor:fully} that both $\mathcal{U}_L$ and $\mathcal{U}_S$ are fully accessible. As in the example above we can show that no other point from $X'$ is accessible. We conclude that there are two simple dense canals with shores $\mathcal{U}_L$ and $\mathcal{U}_S$.
\end{example}

\begin{example}\label{ex:9}
 Take $\nu=(10011001001111)^{\infty}$, $B=001$, $A=0011$, $C=1111$ and $L=(BA)^{\infty}$ as in Example~\ref{ex:5}. Recall that at least three arc-components (which are dense lines) are fully accessible. Further calculations show that no other tail can be the top or the bottom of a cylinder. By Corollary~\ref{cor:endpoints_tau} endpoints of $X'$ are not accessible. Therefore, the remaining three points on the circle of prime ends correspond to three simple dense canals with shores from pairwise different fully accessible arc-components which are lines. In comparison, 
 the Brucks-Diamond embedding of $X'$ contains 7 fully accessible arc-components which are shores of 7 simple dense canals (see Section~\ref{sec:BD} in this paper or \cite{3G}).
\end{example}

\subsection{Accessible folding points when $\nu$ is preperiodic}\label{subsec:accFP}	
\black{
First we state a characterization of folding points which we will use implicitly in this section very often.
Let $\omega(c)$ denote the set of all accumulation points of the forward orbit of the critical point $c$ by the map $T$.
\begin{proposition}\label{prop:foldpts}\cite[Theorem 2.2]{Raines}
	A point $x\in X$ is a folding point if and only if  $\pi_n(x)\in \omega(c)$ for every $n\in\N$.
\end{proposition}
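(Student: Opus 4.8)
The plan is to derive both implications from the local structure of $X$ near $x$, exploiting the standard fact that a point fails to be a folding point exactly when it has a neighbourhood homeomorphic to the ``Cantor set of arcs'' $C\times(0,1)$, together with the observation that such a product chart exists precisely when no arcs \emph{fold} (turn around) at arbitrarily deep coordinates arbitrarily close to $x$. I would first isolate two tools. (i) The set $\omega(c)$ is closed and strongly invariant, $T(\omega(c))=\omega(c)$; consequently, if $\pi_{-N}(x)\notin\omega(c)$ for one $N$, then $\pi_{-m}(x)\notin\omega(c)$ for every $m\ge N$ (were a deeper coordinate in $\omega(c)$, its $T$-image $\pi_{-N}(x)$ would lie in $T(\omega(c))=\omega(c)$). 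Thus the hypothesis ``$\pi_i(x)\in\omega(c)$ for all $i\le 0$'' fails iff some coordinate, and hence an entire tail of coordinates, avoids $\omega(c)$. (ii) By Lemma~\ref{lem:first} the endpoints of every basic arc project to critical orbit points $\{T^k(c)\}$, and an arc folds at coordinate $-\ell$ exactly through a point $z$ with $\pi_{-\ell}(z)=c$.

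For the direction ``all coordinates in $\omega(c)$ $\Rightarrow$ folding point'' I would fix $m$ large and use $\pi_{-m}(x)\in\omega(c)$ to pick iterates $T^{k}(c)\to\pi_{-m}(x)$ with $k\to\infty$. For each such $k$ there is a point $z^{(k)}\in X$ carrying $c$ in coordinate $-(m+k)$ and hence $\pi_{-m}(z^{(k)})=T^{k}(c)$; since $\pi_{-j}(z^{(k)})=T^{m-j}(T^{k}(c))\to T^{m-j}(\pi_{-m}(x))=\pi_{-j}(x)$ for each $0\le j\le m$ by continuity, with $m$ large the point $z^{(k)}$ is as close to $x$ in the product metric as desired. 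Thus arbitrarily close to $x$ sit arcs folding at the arbitrarily deep coordinate $-(m+k)$. Since a product chart $C\times(0,1)$ admits no such folds (every arc runs straight through), no neighbourhood of $x$ can be homeomorphic to $C\times(0,1)$, so $x$ is a folding point.

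For the converse, assuming $\pi_{-N}(x)\notin\omega(c)$ (hence $\pi_{-m}(x)\notin\omega(c)$ for all $m\ge N$), I would choose $\rho>0$ and $K$ with $T^{k}(c)\notin(\pi_{-N}(x)-\rho,\pi_{-N}(x)+\rho)$ for all $k\ge K$. Any fold near $x$ at coordinate $-(N+j)$ is witnessed by a point $z$ near $x$ with $\pi_{-(N+j)}(z)=c$, whence $\pi_{-N}(z)=T^{j}(c)$; for $z$ to lie near $x$ one needs $T^{j}(c)$ within $\rho$ of $\pi_{-N}(x)$, forcing $j<K$. Hence all folds in a sufficiently small neighbourhood of $x$ occur at bounded depth $\le N+K$, so beyond that coordinate the neighbourhood is unfolded. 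The remaining step is the local-structure lemma: an inverse-limit neighbourhood with a Cantor set of arc-components and no folds in the controlled deep coordinates is homeomorphic to $C\times(0,1)$, giving a product chart and showing $x$ is not a folding point.

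I expect the main obstacle to be exactly this local-structure lemma — converting ``no deep folds near $x$'' into an explicit homeomorphism onto $C\times(0,1)$ — with the subtlety that a ball of radius $\varepsilon$ in the product metric pins down only the first $\sim\log_2(1/\varepsilon)$ coordinates and leaves the deeper ones free, so the chart must be built coordinate by coordinate and its convergence verified. A secondary point requiring care is the periodic/preperiodic dichotomy of Remark~\ref{rem:fp}: when $c$ is strictly preperiodic the folding point's own coordinates never equal $c$, so the accumulating folds are carried by the nearby points $z^{(k)}$ rather than by $x$ itself, which is precisely what the construction in the second paragraph arranges.
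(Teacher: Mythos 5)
First, a remark on the comparison you were asked for: the paper does not prove this proposition at all --- it is quoted from Raines \cite[Theorem 2.2]{Raines} --- so there is no in-paper argument to measure yours against; you are in effect reproving a cited theorem. On the merits, your outline identifies the right mechanism (a fold at coordinate depth $\ell$ is carried by a point $z$ with $\pi_{-\ell}(z)=c$, and membership of the coordinates of $x$ in $\omega(c)$ governs whether such folds of unbounded depth occur arbitrarily close to $x$), your tool (i) about invariance of $\omega(c)$ is correct, and the construction of the approximating points $z^{(k)}$ in the forward direction is sound. But both implications have genuine gaps. In the forward direction the punchline ``a product chart $C\times(0,1)$ admits no such folds (every arc runs straight through)'' is false as stated: a neighbourhood homeomorphic to $C\times(0,1)$ can perfectly well contain points $z$ with $\pi_{-\ell}(z)=c$ --- for instance, any non-folding point at which two basic arcs are joined has a product neighbourhood containing that fold --- because the homeomorphism onto $C\times(0,1)$ need not respect the projections $\pi_{-n}$, so ``running straight through'' is not a topological property of the chart. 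What is needed is an argument that folds of \emph{unbounded} depth accumulating at $x$ are incompatible with a product structure. The standard route is quantitative: the two strands of a depth-$\ell$ fold are paired by equality of their first $\ell$ coordinates and hence lie within $2^{-\ell+1}$ of one another, while the unique arc in $X$ joining a paired pair passes through the tip of the fold and has diameter bounded below (here one must control the lengths of the basic arcs involved, which is where $\omega(c)$ genuinely enters); on the other hand, uniform continuity of the chart homeomorphism on a compact sub-chart forces arcs joining sufficiently close points of the same fibre to have small diameter. None of this appears in your argument.

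In the converse direction your reduction to ``all folds near $x$ occur at bounded depth'' is essentially right, but the ``local-structure lemma'' converting this into a homeomorphism onto $C\times(0,1)$ --- which you yourself flag as the main obstacle --- is the entire content of that implication. Concretely, one realises a neighbourhood of $x$ as $\pi_{-n}^{-1}(J)$ for a closed interval $J$ around $\pi_{-n}(x)$ meeting the critical orbit only finitely often, and shows that for all $m$ every component of $T^{-m}(J)$ is mapped homeomorphically by $T$, after first disposing of the finitely many shallow folds; this includes the delicate case where $\pi_{-N}(x)$ itself lies on the critical orbit, so that the offending points $T^{j}(c)$ cannot be excluded by shrinking $J$ and one must pass to deeper coordinates and restart. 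As it stands the proposal is a plausible plan with the two decisive steps missing, not a proof.
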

}
In this subsection we assume that $\nu=c_1\ldots c_k(c_{k+1}\ldots c_{k+n})^{\infty}$ and that $c_k\neq c_{k+n}$, since otherwise also $\nu=c_1\ldots c_{k-1}
(c_k\ldots c_{k+n-1})^{\infty}$. By Remark~\ref{rem:fp} the space $X'$ contains $n$ folding points which are not endpoints with symbolic descriptions: 
$$\sigma^i((c_{k+1}\ldots c_{k+n})^{\infty}.(c_{k+1}\ldots c_{k+n})^{\infty})$$
for $i\in\{1, \ldots n\}$. In this subsection we study the accessibility of folding points that are not contained in extrema of cylinders in $\mathcal{E}$-embeddings of $X'$  when $\nu$ is preperiodic.

Let $Q\subset \R^2$ be an arc. From now onwards let $\mathrm{Int}(Q)$ denote the points from $Q$, which are not endpoints of $Q$.

\begin{remark}\label{rem:preper1FP}
	Let $\nu=c_1\ldots c_k(c_{k+1}\ldots c_{k+n})^{\infty}$ and let $p\in X'$ be a folding point. Then an arc-component of $p$ \black{in $X'$} can contain at most one folding point. Also, since $c_k\neq c_{n+k}$ it holds that $p\in \mathrm{Int}(A(\ovl p))$. \black{In specific, every folding point $p\in X'$ has a unique two sided infinite itinerary of zeros and ones assigned to it.}
\end{remark}

The following lemma restricts the search for accessible folding points which are not tops/bottoms of cylinders to the case where $\nu=10(c_3\ldots c_{n+2})^{\infty}$, \ie $k=2$.

\begin{proposition}\label{prop:restriction}
	Assume $c$ is preperiodic and such that $T^3(c)$ is not periodic. Embed $X'$ in the plane with respect to $L\neq 0^{\infty}l_{n}\ldots l_1$. A folding point $p\in X'$ is accessible if and only if the basic arc $A(\overleftarrow{p})$ is top or bottom of a finite cylinder.
\end{proposition}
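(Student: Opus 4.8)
The forward implication is immediate: if $A(\ovl p)$ is the top or the bottom of a finite cylinder $[a_n\ldots a_1]$, then $\ovl p=L_{a_n\ldots a_1}$ or $\ovl p=S_{a_n\ldots a_1}$, and Lemma~\ref{lem:cyl} gives that $A(\ovl p)$, and in particular $p$, is accessible. The whole content is therefore the reverse implication, which I would prove by contraposition. So assume $A(\ovl p)$ is neither the top nor the bottom of any finite cylinder, and first fix the local picture at $p$. Since $c$ is preperiodic, $\omega(c)$ is the finite periodic cycle, so by Proposition~\ref{prop:foldpts} every coordinate of $p$ lies in this cycle and $\ovl p$ is periodic. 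By Remark~\ref{rem:preper1FP} the point $p$ lies in the interior of the non-degenerate arc $A(\ovl p)$, \ie $\pi_0(p)=T^m(c)$ for some $m$ with $T^{\tau_L(\ovl p)}(c)<\pi_0(p)<T^{\tau_R(\ovl p)}(c)$. Thus along the line $y=\psi_L(\ovl p)$ the point $p$ is flanked on both sides by $A(\ovl p)\subset X'$, and any access arc to $p$ must approach it through $\{y>\psi_L(\ovl p)\}$ or through $\{y<\psi_L(\ovl p)\}$.

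The plan is then to trap $p$ from both sides. Because $A(\ovl p)$ is not the top of $[p_n\ldots p_1]$, the extremum $L_{p_n\ldots p_1}\succ_L\ovl p$ lies strictly above $\ovl p$, and since it shares the suffix $p_n\ldots p_1$ we have $L_{p_n\ldots p_1}\to\ovl p$, so $\psi_L(L_{p_n\ldots p_1})$ decreases to $\psi_L(\ovl p)$; symmetrically $S_{p_n\ldots p_1}\prec_L\ovl p$ approaches $\ovl p$ from below. The key step, isolated as a claim, is that for all large $n$ these extremal arcs \emph{cover} $\pi_0(p)$, \ie $\pi_0(p)\in\Int\pi_0(A(L_{p_n\ldots p_1}))$ and $\pi_0(p)\in\Int\pi_0(A(S_{p_n\ldots p_1}))$. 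Granting this, every neighbourhood of $p$ contains basic arcs crossing the vertical line $x=\pi_0(p)$ at heights arbitrarily close to $\psi_L(\ovl p)$ from above and from below, and near $\pi_0(p)$ each such arc is a through-going horizontal segment (its turning points lie away from $\pi_0(p)$). An access arc leaving $p$ into $\{y>\psi_L(\ovl p)\}$ (resp. below) is then confined between $A(\ovl p)$ and one of these covering arcs, a region shrinking to $\{p\}$ as $n\to\infty$; hence no access arc exists. This is precisely the inaccessible configuration of Figure~\ref{fig:folding}(b).

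The main obstacle is the covering claim, and it is exactly here that the hypothesis that $T^3(c)$ is not periodic is used. The dangerous alternative is that an arc approaching $\ovl p$ from above (or below) has its turning point $T^{\tau_R}(c)$ or $T^{\tau_L}(c)$ located \emph{exactly at} $\pi_0(p)$: then, rather than covering $\pi_0(p)$, the arc folds away from it and opens a channel — this is the accessible configuration of Figure~\ref{fig:folding}(a),(c), and the origin of Type~3 folding points. To rule it out I would argue combinatorially with Lemma~\ref{lem:first}. An extremal arc sharing the suffix $p_n\ldots p_1$ with the periodic $\ovl p$ has $\tau_L\ge\tau_L(\ovl p)$ and $\tau_R\ge\tau_R(\ovl p)$, with equality — and hence genuine straddling of $\pi_0(p)$ — unless the defining matching $q_{N-1}\ldots q_1=c_1\ldots c_{N-1}$ is realized at some index $N>n$ inside the part where the arc's itinerary first differs from $\ovl p$. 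Such a deeper match is what produces a turning point landing at or left of $\pi_0(p)$, and a fold sitting \emph{exactly} at $\pi_0(p)$ forces the coincidence $T^{N}(c)=T^{m}(c)=\pi_0(p)$ together with the extra prefix agreement. Unravelling this equality against the admissibility inequalities and the periodicity of $\ovl p$, I expect to be led to the conclusion that the cycle through $T^m(c)$ is already entered at the third iterate, \ie that $T^3(c)$ is periodic, contradicting the hypothesis. Translating ``a fold lands exactly at $\pi_0(p)$'' into ``$T^3(c)$ is periodic'' is the technically delicate heart of the argument; once it is in place, the folds cannot sit at $\pi_0(p)$, the extremal arcs straddle it on both sides, and the trapping argument of the previous paragraph yields that $p$ is inaccessible.
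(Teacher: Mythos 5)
Your overall skeleton --- contraposition, then trapping $p$ between basic arcs that cover $\pi_0(p)$ and accumulate on $A(\ovl p)$ from above and from below --- is the same as the paper's, and the forward direction via Lemma~\ref{lem:cyl} is fine. The gap is in your ``covering claim'' and, more seriously, in the mechanism you propose for proving it. You want to rule out arcs near $A(\ovl p)$ whose turning point sits exactly at $\pi_0(p)$ by showing that such a fold would force $T^3(c)$ to be periodic. But these arcs cannot be ruled out: writing $\nu=c_1\ldots c_k(c_{k+1}\ldots c_{k+n})^{\infty}$ and $\ovl p=(c_{k+1}\ldots c_{k+n})^{\infty}c_{k+1}\ldots c_{k+i}$, every admissible sequence of the form $\ldots c_1\ldots c_k(c_{k+1}\ldots c_{k+n})^{M}c_{k+1}\ldots c_{k+i}$ agrees with $\ovl p$ on an arbitrarily long suffix and has a deep match with $c_1\ldots c_{N-1}$ at $N=k+Mn+i+1$, hence folds exactly at $T^{k+i+1}(c)=\pi_0(p)$. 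Such ``short'' arcs exist for every strictly preperiodic $\nu$, whatever $k$ is, and nothing prevents them from being the extrema $L_{p_j\ldots p_1}$, $S_{p_j\ldots p_1}$ of your cylinders. So the covering claim, stated for the cylinder extrema and proved by excluding folds at $\pi_0(p)$, cannot be carried out as you describe.

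What the hypothesis that $T^3(c)$ is not periodic (equivalently $k>2$) actually buys is a link between the side of $\ovl p$ on which the short arcs lie and the side on which certain long arcs lie. The sequences $1^{\infty}c_k(c_{k+1}\ldots c_{k+n})^{M}c_{k+1}\ldots c_{k+i}$ are admissible and \emph{long} precisely because $k>2$ (for $k=2$ one would read $1c_2c_3\ldots=c_1c_2c_3\ldots$, a deep match, so the witness would itself fold at $\pi_0(p)$; this failure is exactly the source of the Type 2 and Type 3 phenomena treated later), and they first differ from $\ovl p$ at the same depth $Mn+i+1$ and with the same symbol $c_k\neq c_{k+n}$ as every short arc of level $M$. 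Hence each short arc lies on the same side of $\ovl p$ as some long arc. Consequently, if all but finitely many long arcs lay on one side of $\ovl p$, then \emph{all} basic arcs in a sufficiently deep cylinder would lie on that side, making $A(\ovl p)$ an extremum --- contradicting your standing assumption. This dichotomy, not the non-existence of folds at $\pi_0(p)$, is what produces long arcs accumulating on $A(\ovl p)$ from both sides; once you have it (and you should trap with these long arcs directly, rather than with the cylinder extrema, which need not be long), your final confinement argument goes through.
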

\begin{proof}
	Note that $\nu=c_1\ldots c_k(c_{k+1}\ldots c_{k+n})^{\infty}$ where $k>2$.
	Take a folding point $p\in X'$ with the symbolic description
	$$\bar p=(c_{k+1}\ldots c_{k+n})^{\infty}c_{k+1}\ldots c_{k+i}.c_{k+i+1}\ldots c_{k+n}(c_{k+1}\ldots c_{k+n})^{\infty}$$
	and assume it is not on the top or bottom of any cylinder in $X'$.  Denote $\pi_0(A(\ovl p))=:[T^l(c), T^r(c)]$. By Remark~\ref{rem:preper1FP} it holds that $\pi_{0}(p)\in (T^l(c), T^r(c))$.\\
	\black{For $M\geq 0$} denote by $p^{M}\in X'$ \black{any} point with the symbolic description
	$$\bar p^{M}:=\black{\ldots} c_1\ldots c_k(c_{k+1}\ldots c_{k+n})^Mc_{k+1}\ldots c_{k+i}.c_{k+i+1}\ldots c_{k+n}(c_{k+1}\ldots c_{k+n})^{\infty}$$
	Note that points $p^{M}$ converge to $p$ as $M\to\infty$ and the corresponding basic arcs $A(\overleftarrow{p}^{M})$ project to $[T^l(c), T^{k+i+1}(c)]$ (we refer to them as \emph{left}) or $[T^{k+i+1}(c), T^r(c)]$ (referred to as \emph{right}) depending on the parity of $M$.  We will find long basic arcs (\ie arcs projecting with $\pi_0$ also to $[T^l(c), T^r(c)]$) converging to $A(\ovl p)$ from both sides. Since $c$ is preperiodic there exists a neighbourhood $U$ of $A(\ovl p)$ which contains only basic arcs which project to $[T^l(c), T^r(c)]$, $[T^l(c), T^{k+i+1}(c)]$ or $[T^{k+i+1}(c), T^r(c)]$ (\ie only long or left/right arcs).
	
	Assume that all but finitely many long arcs in $U$ are greater than $A(\ovl p)$. Since $k>2$, note that for every $M>0$ basic arcs $1^{\infty}c_{k}(c_{k+1}
	\ldots c_{k+n})^{M}c_{k+1}\ldots c_{k+i}$ are long.
	Since $c_k\neq c_{k+n}$ it holds that both $1^{\infty}c_{k}(c_{k+1}
	\ldots c_{k+n})^{M}c_{k+1}\ldots c_{k+i}\Ll\ovl p$ and $\ovl p^M\Ll \ovl p$.
	 Thus, it follows that $A(\ovl p)$ is at the bottom of some cylinder, a contradiction. The proof goes analogously if all but finitely many long arcs are smaller than $A(\ovl p)$.
\end{proof}

Therefore, by Proposition~\ref{prop:restriction}, if we want to find accessible folding points which are not at the top/bottom of any cylinder it is enough to study cases $\nu=10(c_3\ldots c_{n+2})^{\infty}$ where $c_{n+2}=1$.

\begin{remark}\label{rem:types}
	Assume $c$ is preperiodic and $p$ is an accessible folding point of an embedding of $X'$. By Corollary~\ref{cor:class} and since every arc-component contains at most one folding point, only the following three cases can occur:
	\begin{itemize}
		\item[(1)] $\ovl p$ is the top or the bottom of some cylinder; then $\mathcal{U}_p$ is fully accessible.
		\item[(2)] $\ovl p$ is not the top or the bottom of any cylinder, but $\ovl{r(p)}$ or $\ovl{l(p)}$ is; then one component of $\mathcal{U}_p\setminus \{p\}$ is fully accessible, and the other component of $\mathcal{U}_p\setminus \{p\}$ is not accessible. See Figure~\ref{fig:foldincyl1}.
		\item[(3)] $\ovl p$, $\ovl{r(p)}$ and $\ovl{l(p)}$ are not extrema of any cylinder; then $c$ is order reversing and $p$ is the only accessible point of $\mathcal{U}_p$. See Figure~\ref{fig:folding}(c).
	\end{itemize} 
\end{remark}

\begin{definition}\label{def:FPtypes}
	We say that an accessible folding point $p$ is accessible of \emph{Type i} if it satisfies the condition $i$ from Remark~\ref{rem:types} for $i\in\{1, 2, 3\}$.
\end{definition}

As it turns out, all types of accessible folding points can occur in $\mathcal{E}$-embeddings. In the following subsections we describe how they can be constructed in preperiodic orbit case (when $T^3(c)$ is periodic) and give examples of such constructions. We will see that the standard Brucks-Diamond embedding does not allow Type 3 folding points for any $X'$ (see Section~\ref{sec:BD}).

\subsubsection{Type 2}
 First we give examples of $X'$ which cannot be $\mathcal{E}$-embedded with Type 2 folding points.
Then we show in general how to construct a Type 2 accessible folding point and give an example of such construction in both the order preserving and the order reversing case.

\begin{lemma}\label{lem:nottype2}
	Let $\nu=10(c_3\ldots c_{n+2})^{\infty}$ and assume that $c^*_ic_{i+1}\ldots c_{n+2}(c_3\ldots c_{n+2})^{M}$ is admissible for all $i\in\{3, \ldots, n+1\}$ and for all but finitely many $M\in\N$.
	Then no folding point is Type 2 in any $\mathcal{E}$-embedding of $X'$ which is non-equivalent to the Brucks-Diamond ($L=0^{\infty}1$) embedding.
\end{lemma}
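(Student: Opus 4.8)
The plan is to rule out the Type~2 configuration of Definition~\ref{def:FPtypes} by showing that, under the stated hypothesis, for every folding point $p$ the two neighbouring sequences $\ovl{r(p)}$ and $\ovl{l(p)}$ fail to be the top or the bottom of any cylinder; since by Remark~\ref{rem:types} a folding point is of Type~2 exactly when $\ovl p$ is not an extremum but one of $\ovl{r(p)},\ovl{l(p)}$ is, this immediately excludes Type~2 (leaving only Type~1, Type~3, or inaccessibility). First I would record the ambient structure. As $\nu=10(c_3\ldots c_{n+2})^\infty$ is strictly preperiodic, $c$ is not recurrent, so $X'$ has no endpoints and, by Remark~\ref{rem:preper1FP}, each folding point $p$ sits in the interior of its basic arc and has a unique, eventually periodic backward itinerary $\ovl p$; in particular $\tau_L(\ovl p),\tau_R(\ovl p)<\infty$ and $\pi_0(A(\ovl p))=[T^{\tau_L(\ovl p)}(c),T^{\tau_R(\ovl p)}(c)]$ is non-degenerate by Lemma~\ref{lem:first}. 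The neighbours $\ovl{r(p)},\ovl{l(p)}$ differ from $\ovl p$ in a single bounded coordinate ($\tau_R(\ovl p)$, resp.\ $\tau_L(\ovl p)$).

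Next I would convert the admissibility hypothesis into the existence of long basic arcs accumulating on $A(\ovl{r(p)})$ and $A(\ovl{l(p)})$. For each $i\in\{3,\dots,n+1\}$ and all large $M$ the hypothesis guarantees that $c^*_ic_{i+1}\ldots c_{n+2}(c_3\ldots c_{n+2})^M$ is admissible; prepending $1^\infty$ yields an admissible left-infinite sequence that agrees with $\ovl{r(p)}$ (resp.\ $\ovl{l(p)}$) on an arbitrarily long suffix and differs from it by a single flip placed at a position of the form $k=Mn+j$, deep in the periodic tail. Because this flip lies far beyond the bounded indices $\tau_L(\ovl p),\tau_R(\ovl p)$, Lemma~\ref{lem:first} leaves $\tau_L,\tau_R$ unchanged, so each such sequence is a \emph{long} arc: it projects onto the full interval $\pi_0(A(\ovl{r(p)}))$ and converges to it as $M\to\infty$. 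The same construction applies verbatim to $A(\ovl p)$ itself, which is why under the hypothesis these folding points will in fact be of Type~3 or inaccessible rather than Type~1.

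The core of the argument is then an ordering computation based on rule~(\ref{eq:L}). Comparing a constructed sequence $\ovl z$ with $\ovl{r(p)}$, the two first differ at the deep position $k=Mn+j$, where $z_k=(r(p))_k^*$; whether $\ovl z\prec_L\ovl{r(p)}$ or $\ovl z\succ_L\ovl{r(p)}$ is then dictated by $l_k$ together with the parity of $\#_1(z_{k-1}\ldots z_1)-\#_1(l_{k-1}\ldots l_1)$. By varying the flipped coordinate $j$ within a period block and the parity of $M$, I would exhibit admissible long sequences lying strictly above and strictly below $\ovl{r(p)}$ and converging to it, proving $\ovl{r(p)}$ is neither a top nor a bottom of any cylinder; the identical computation settles $\ovl{l(p)}$. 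The exclusion of the Brucks--Diamond tail $0^\infty1$ is used precisely here: that tail has only finitely many $1$'s and all deep coordinates $l_k$ vanish, so the relevant parity cannot be made to take both values, whereas every admissible $L$ with $L\neq 0^\infty l_m\ldots l_1$ carries infinitely many $1$'s and supplies the variation needed to realise both strict inequalities.

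The hard part will be exactly this last parity bookkeeping. One must verify, for an \emph{arbitrary} such $L$, that among the single-symbol flips rendered admissible by the hypothesis there are always choices producing sequences on both $\prec_L$-sides of each neighbour, tracking how $\#_1(l_{k-1}\ldots l_1)$ evolves along the arithmetic progression $k=Mn+j$; this is where the assumption that $L$ is not eventually $0^\infty$ must be quantified carefully. A secondary technical point is to confirm that passing to the neighbour does not spoil the construction, including the borderline case in which the flip at $\tau_R(\ovl p)$ would extend the $\nu$-match from the right (so that $r$ fails to be involutive); this degenerate case should be excluded using non-renormalisability, i.e.\ $\nu=10^\kappa1\ldots$. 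The geometric payoff is transparent — long arcs approach $A(\ovl p)$ and its neighbours from both above and below, which is incompatible with the one-sided accumulation of short arcs characterising Type~2 in Figure~\ref{fig:foldincyl1} — but the symbolic verification of ``both sides'' uniformly in $L$ is the delicate step.
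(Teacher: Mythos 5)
Your reduction is not valid, and the construction that is supposed to implement it does not do what you claim. First, the reduction: you propose to exclude Type~2 by proving the stronger statement that $\ovl{r(p)}$ and $\ovl{l(p)}$ are \emph{never} extrema of cylinders in any non-Brucks--Diamond $\mathcal{E}$-embedding. That stronger statement is false: taking $L=\ovl{r(p)}$ (an admissible left-infinite sequence, since $\tau_R(\ovl p)<\infty$ in the preperiodic case) makes $A(\ovl{r(p)})$ the largest basic arc and hence the top of every cylinder containing it. In that embedding $p$ is not Type~2 only because $\ovl p$, having the same tail as $L$, is \emph{also} an extremum (Proposition~\ref{prop:fully}), so $p$ is Type~1. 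What the lemma actually requires is ruling out the \emph{conjunction} ``$\ovl p$ is not an extremum and one of $\ovl{r(p)},\ovl{l(p)}$ is,'' and these two conditions must be analysed together; neither can be excluded separately. Second, the sequences you build, $1^{\infty}c^*_ic_{i+1}\ldots c_{n+2}(c_3\ldots c_{n+2})^{M}$, agree with $\ovl p$ on their last $Mn+n+2-i$ coordinates and first differ from it at position $Mn+n+3-i$; since $\ovl{r(p)}$ differs from $\ovl p$ at the \emph{fixed} position $\tau_R(\ovl p)$, which for large $M$ lies inside that agreeing suffix, these sequences converge to $A(\ovl p)$ and \emph{not} to $A(\ovl{r(p)})$ or $A(\ovl{l(p)})$. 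They therefore say nothing about whether the neighbours are extrema. Finally, the ``both sides by varying $j$ and the parity of $M$'' step is not available: for a fixed $L$ the $\prec_L$-position of each such sequence relative to $\ovl p$ is determined, not chosen, so you cannot force sequences onto both sides of anything.

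The paper's proof goes the other way around: it assumes $p$ is Type~2, writes down the full system of $\prec_L$-inequalities that the long arcs $\ldots c^*_ic_{i+1}\ldots c_{n+2}(c_3\ldots c_{n+2})^{M+k}$ and the short arcs $\ldots 0(c_3\ldots c_{n+2})^{M+k}$ versus $\ldots 1(c_3\ldots c_{n+2})^{M+k}$ must satisfy for that configuration, and observes that these inequalities determine the block $l_{(M+N)n+2}\ldots l_{Mn+2}=c^*_{n+1}c^*_{n+2}(c_3\ldots c_{n+2})^{N-1}c_3\ldots c_nc^*_{n+1}$ uniquely. The admissibility hypothesis of the lemma is what guarantees all those long arcs exist and hence that all the constraints are in force. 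Lemma~\ref{lem:odd} then shows this forced block is not admissible, so no admissible $L$ realises the configuration. If you want to salvage your write-up, you should abandon the ``neighbours are never extrema'' intermediate claim and instead carry out this reconstruction of $L$ from the Type~2 ordering constraints.
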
 
\begin{proof}
	Take a folding point $p\in X'$ with symbolic description $\bar{p}=(c_3\ldots c_{n+2})^{\infty}.(c_3\ldots\\ c_{n+2})^{\infty}$. We will try to reconstruct $L$ which embeds $p$ as Type 2 and see that this is not possible. 
	
	Assume first that $\#_1(c_3\ldots c_{n+2})$ is odd and for some natural number $M$ we have (the following, possibly with reversed inequalities, needs to be satisfied in order for $p$ to be a Type 2 folding point, see Figure~\ref{fig:type2rev}):
	$$\ldots 0(c_3\ldots c_{n+2})^{M}\succ_L \ldots 1(c_3\ldots c_{n+2})^{M}$$
	$$\ldots c^*_ic_{i+1}\ldots c_{n+2}(c_3\ldots c_{n+2})^{M}\prec_L \ldots c_ic_{i+1}\ldots c_{n+2}(c_3\ldots c_{n+2})^{M}$$
	$$\ldots 0(c_3\ldots c_{n+2})^{M+k}\prec_L \ldots 1(c_3\ldots c_{n+2})^{M+k}$$
	$$\ldots c^*_ic_{i+1}\ldots c_{n+2}(c_3\ldots c_{n+2})^{M+k}\prec_L \ldots c_ic_{i+1}\ldots c_{n+2}(c_3\ldots c_{n+2})^{M+k}$$
	$$\ldots 0(c_3\ldots c_{n+2})^{M+N}\succ_L \ldots 1(c_3\ldots c_{n+2})^{M+N}$$
	$$\ldots c^{*}_{n+1}c_{n+2}(c_3\ldots c_{n+2})^{M+N}\prec_L \ldots c_{n+1}c_{n+2}(c_3\ldots c_{n+2})^{M+N}$$
	for all $i\in\{3, \ldots, n+1\}$ and all $k\in\{1, \ldots, N-1\}$, where natural number $N>1$ is even.\\
	If $\#_1((c_3\ldots c_{n+2})^{M})$ is of the same parity as $\#_1(l_{Mn}\ldots l_1)$, then $l_{Mn+1}=0$, and if $\#_1((c_3\ldots c_{n+2})^{M})$ is of different parity as $\#_1(l_{Mn}\ldots l_1)$, then $l_{Mn+1}=1$. In any case, $\#_1(c_{n+2}(c_3\ldots c_{n+2})^{M})$ is of different parity as $\#_1(l_{Mn+1}\ldots l_1)$ so $l_{Mn+2}=c^*_{n+1}$. So $\#_1(c_{n+1}c_{n+2}(c_3\ldots c_{n+2})^{M})$ is of the same parity as $\#_1(l_{Mn+2}l_{Mn+1}\ldots l_1)$ and thus $l_{Mn+3}=c_n$. Continuing further, we get 
	$$l_{(M+N)n+2}\ldots l_{Mn+2}=c^*_{n+1}c^*_{n+2}(c_3\ldots c_{n+2})^{N-1}c_3\ldots c_nc^*_{n+1}.$$
	Since $L\subset X'$, it follows that $c^*_{n+1}=1$, $\#_1(c_3\ldots c_n)$ is even and the word on the right side of the last equation above is equal to $10(c_3\ldots c_{n+2})^{N-1}c_3\ldots c_nc^*_{n+1}.$ Note that $\#_1(10(c_3\ldots c_{n+2})^{N-1}c_3\ldots c_nc_{n+1})$ is even and thus $10(c_3\ldots c_{n+2})^{N-1}c_3\ldots c_nc^*_{n+1}$ is not admissible by Lemma~\ref{lem:odd}, a contradiction.
	
	Assume that $\#_1(c_3\ldots c_{n+2})$ is even. Note that in this case $N$ is not necessarily even, but now the conclusion $c_{n+1}=0$ implies that $\#_1(c_3\ldots c_n)$ is odd. We continue with arguments as in the paragraphs above. Since $\#_1(c_3\ldots c_{n+2})$ is even the word $\#_1(10(c_3\ldots c_{n+2})^{N-1}c_3\ldots c_n
	c_{n+1})$ is even and thus $10(c_3\ldots c_{n+2})^{N-1}c_3\ldots c_nc^*_{n+1}$ is by Lemma~\ref{lem:odd} again not admissible, a contradiction. 
	
	Note that the proof works analogously for other folding points from the space $X'$. 
\end{proof}

Next we give examples of preperiodic $\nu$ where no folding point can be $\mathcal{E}$-embedded as Type 2, except possibly using the Brucks-Diamond embedding, see Section~\ref{sec:BD}, specially the rational endpoint case.

\begin{example}\label{ex:10}
The assumptions from Lemma~\ref{lem:nottype2} hold for \eg $\nu=10(0^{\alpha}1^{\beta})$ for all $\alpha, \beta\in\N$.
\end{example}


\begin{lemma}[Order preserving case]\label{lem:T2op}
	Let $\nu=10(c_3\ldots
	c_{n+2})^{\infty}$, $c_{n+2}=1$, and $\#_1(c_3\ldots \\
	c_{n+2})$ even. Let $\bar{p}=(c_3\ldots c_{n+2})^{\infty}c_3\ldots c_i.c_{i+1}\ldots c_{n+2}(c_3\ldots c_{n+2})^{\infty}$ be a symbolic description of a folding point $p\in X'$. Then $p$ is a Type 2 folding point if and only if there exists a natural number $M$ such that
	$$\ldots c_j^*c_{j+1}\ldots c_{n+2}(c_3\ldots c_{n+2})^{M+N}c_3\ldots c_i\succ_L \ldots c_jc_{j+1}\ldots c_{n+2}(c_3\ldots c_{n+2})^{M+N}c_3\ldots c_i,$$
	for all $N\in\N$ and all $j\in\{3, \ldots, 1+n\}$ for which $c_j^*c_{j+1}\ldots c_{n+2}(c_3\ldots c_{n+2})^{M+N}c_3\ldots c_i$ is admissible, and
	$$\ldots 0(c_3\ldots c_{n+2})^{M+N'}c_3\ldots c_i\prec_L \ldots1(c_3\ldots c_{n+2})^{M+N'}c_3\ldots c_i,$$
	for infinitely many $N'\in\N$, or \black{the whole statement} with reversed inequalities.
\end{lemma}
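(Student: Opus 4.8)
The plan is to translate the purely topological notion of a \emph{Type 2} folding point from Remark~\ref{rem:types} into the combinatorics of the order $\prec_L$, using the approximating arcs produced in the proof of Proposition~\ref{prop:restriction}. Recall that by Remark~\ref{rem:preper1FP} the point $p$ sits in the interior of the non-degenerate arc $A(\ovl p)$, which by Lemma~\ref{lem:first} projects onto $[T^{\tau_L(\ovl p)}(c),T^{\tau_R(\ovl p)}(c)]$ with $\pi_0(p)$ strictly inside. The arcs accumulating on $A(\ovl p)$ are the half-arcs $A(\ovl p^{\,M})$, where $\ovl p^{\,M}=\ldots c_1c_2(c_3\ldots c_{n+2})^{M}c_3\ldots c_i$ and $c_1c_2=10$, each covering only the left or only the right part of $\pi_0(A(\ovl p))$ according to the parity of $M$ and folding back at $\pi_0(p)$, together with the neighbours obtained by flipping a single symbol. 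The first step is to record the reformulation of ``extremum'': $\ovl p$ is the top (bottom) of some cylinder if and only if no admissible sequence $\succ_L\ovl p$ ($\prec_L\ovl p$) shares an arbitrarily long right tail with $\ovl p$; equivalently, $A(\ovl p)$ fails to be an extremum exactly when arcs accumulate on it from both sides in the $\prec_L$ order.

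With this dictionary in place I would identify the sequences appearing in the statement. The pair $\ldots 0(c_3\ldots c_{n+2})^{M+N'}c_3\ldots c_i$ and $\ldots 1(c_3\ldots c_{n+2})^{M+N'}c_3\ldots c_i$ is exactly a half-arc $A(\ovl p^{\,M+N'})$ and its fold-partner, joined by a semicircle over $\pi_0(p)=T^{\tau}(c)$; so condition (B) fixes, for infinitely many $N'$, on which vertical side of $A(\ovl p)$ the corresponding half-arcs lie, and hence whether the fold at $p$ is left exposed, i.e. accessible, from above or below. The sequences $\ldots c_j^*c_{j+1}\ldots c_{n+2}(c_3\ldots c_{n+2})^{M+N}c_3\ldots c_i$ are the neighbours of $\ovl p$ obtained by flipping a symbol deep inside the periodic tail; since $\#_1(c_3\ldots c_{n+2})$ is even, adjoining one more period block preserves all parities, so condition (A) places this whole family strictly on one fixed side ($\succ_L\ovl p$) for \emph{all} $N$. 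Thus (A) exhibits arcs above $\ovl p$ with arbitrarily long common tail, forcing $A(\ovl p)$ to not be the top of any cylinder, while simultaneously leaving the opposite side free for the fold-partners of (B); together these are what realise the Type 2 configuration of Proposition~\ref{prop:wiggles}, with the fold-partner being the extremal neighbour $\ovl{r(p)}$ or $\ovl{l(p)}$.

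For the forward implication I would assume $p$ is a Type 2 folding point and read off the two families of inequalities from the accessible component of $\mathcal{U}_p\setminus\{p\}$ and from the extremal neighbour: the accessible side yields (B) for infinitely many $N'$, and the fact that $\ovl p$ itself is no extremum while its neighbour is yields (A) for all $N$ and all admissible $j$. For the converse I would run this backwards: (A) prevents $A(\ovl p)$ from being the top, the fold-partners controlled by (B) prevent it from being the bottom and produce the one-sided accumulation that leaves a planar channel down to $\pi_0(p)$, so that $p$ is accessible; combining this with Corollary~\ref{cor:class} and the trichotomy of Remark~\ref{rem:types} pins down Type 2 (rather than Type 1 or Type 3). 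The ``reversed inequalities'' alternative is precisely the mirror case in which the exposed, accessible side of the fold is the other one.

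The main obstacle I anticipate is the admissibility bookkeeping rather than the geometry. One must check that each neighbour sequence above is genuinely admissible, so that the corresponding basic arc actually exists and the semicircle is actually drawn; that the single flip occurs at $\tau_L$ or $\tau_R$, so that the partner arc folds \emph{precisely} at $\pi_0(p)$ and not elsewhere; and that the parity count matches the side predicted by $\prec_L$. This is exactly where Lemma~\ref{lem:odd} and Lemma~\ref{lem:per2} enter, controlling when the words $c_j^*c_{j+1}\ldots$ and $c_2^*(c_3\ldots c_{n+2})^{M}\ldots$ are admissible, and where the standing hypotheses $c_{n+2}=1$ and $\#_1(c_3\ldots c_{n+2})$ even are used to keep the parity of successive period blocks constant, so that the ``for all $N$'' in (A) and the ``infinitely many $N'$'' in (B) are mutually consistent and non-vacuous.
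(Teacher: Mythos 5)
Your overall strategy --- classify the basic arcs in a small cylinder around $A(\ovl p)$ and read the Type~2 configuration off the two families of inequalities --- is the same as the paper's, but your geometric dictionary for the two families is wrong in ways that would derail the argument. First, the pair $\ldots 0(c_3\ldots c_{n+2})^{M+N'}c_3\ldots c_i$ and $\ldots 1(c_3\ldots c_{n+2})^{M+N'}c_3\ldots c_i$ is \emph{not} a half-arc together with its fold-partner joined by a semicircle over $\pi_0(p)$: since $c_{n+2}=1$, the second sequence describes the deeper cylinder containing $\ovl p$ itself, while the first collects the short (``left''/``right'') arcs branching off $\ovl p$ at that depth; and the two are not identified there, because the word $(c_3\ldots c_{n+2})^{M+N'}c_3\ldots c_i$ sitting to the right of the flipped symbol begins with $c_3$, not with $c_1c_2=10$, so it is not an initial segment of $\nu$ and Lemma~\ref{lem:first} gives no common endpoint. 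The semicircle attaching a short arc to its partner over $\pi_0(p)$ comes from flipping the symbol \emph{to the left of} the block $c_1c_2(c_3\ldots c_{n+2})^{M}$, a different position entirely. Condition (B) is purely a statement about which side of $A(\ovl p)$, in the order $\prec_L$, the short arcs lie on.

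Second, you assert that the short arcs cover the left or the right half of $\pi_0(A(\ovl p))$ ``according to the parity of $M$''. That is the order-reversing behaviour. In the present order-preserving case ($\#_1(c_3\ldots c_{n+2})$ even), appending one more period block does not change the parity of the matched prefix, so \emph{all} the short arcs fold at $\pi_0(p)$ onto the \emph{same} half (left or right according to the parity of $\#_1(c_3\ldots c_i)$). This is exactly what makes (A) together with (B) equivalent to Type~2: all long arcs are forced to one side of $A(\ovl p)$, infinitely many short arcs --- all covering the same half --- sit on the other side, and since these two families exhaust the basic arcs in a sufficiently small neighbourhood of $A(\ovl p)$ (a completeness fact you only gesture at, but which is what the ``only if'' direction rests on), exactly one component of $A(\ovl p)\setminus\{p\}$, and hence of $\mathcal{U}_p\setminus\{p\}$, is left exposed while the other is blocked. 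Without correcting these two points your argument does not distinguish this lemma from the order-reversing case (Lemma~\ref{lem:T2or}) and does not actually establish either implication.
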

\black{
\begin{proof}
Assume that $\#_1(c_3\ldots c_i)$ is odd (even). Note that $$\ldots c_j^*c_{j+1}\ldots c_{n+2}(c_3\ldots c_{n+2})^{M+N}c_3\ldots c_i$$ are exactly itineraries of all ``long" basic arcs in a sufficiently small cylinder around $A(\ovl p)$ and $$\ldots 0(c_3\ldots c_{n+2})^{M+N'}c_3\ldots c_i$$ are itineraries of all ``left" (``right") basic arcs in the same cylinder, see Figure~\ref{fig:foldincyl1}. Here we use terms ``long" and ``left" (``right") as in the proof of Lemma~\ref{lem:nottype2}, \ie if we denote $\pi_0(A(\ovl p))=[T^l(c), T^r(c)]$, then long basic arcs are the ones which project to $[T^l(c), T^r(c)]$ and left (right) basic arcs are the ones which project to $[T^l(c), \pi_0(p)]$ ($[\pi_0(p), T^r(c)]$). Note that there exist a neighbourhood of $A(\ovl p)$ which contains only long and left (right) basic arcs. The condition in the statement forces that all long basic arcs are above $A(\ovl p)$ and infinitely many left (right) are below $A(\ovl p)$. The statement also holds true if all long basic arcs were forced to be under $A(\ovl p)$ and infinitely many left (right) are above $A(\ovl p)$ (which refers to ``the whole statement with reversed inequalities'' in the statement of this lemma).
\end{proof}
}
We give an example that satisfies the assumptions of Lemma~\ref{lem:T2op}.

\begin{example}[Type 2, order preserving case]\label{ex:11}
	Take $\nu=10(01101001)^{\infty}$, $L=(10100101\\
	11001001)^{\infty}$ and $$\bar{p}=(01101001)^{\infty}01.101001(01101001)^{\infty}.$$
Then $\ovl{r(p)}$ is the smallest left-infinite tail so it is the smallest in the cylinder $[0]$. As the calculations below show, all long basic arcs in small neighbourhood of $A(\ovl{p})$ are below $A(\ovl{p})$ and left arcs are both above and below $A(\ovl{p})$, depending on the parity of period which corresponds with $\ovl{p}$ in the left infinite description of basic arcs, see Figure~\ref{fig:type2}.
$$\ldots 0(01101001)^{2N}01\succ_L \ovl p,$$
$$\hspace{-0.4cm}\ldots 0(01101001)^{2N+1}01\prec_L \ovl p,$$
$$\hspace{-0.1cm}\ldots 11(01101001)^{N}01\prec_L\ovl p,$$
$$\hspace{-0.3cm}\ldots 101(01101001)^{N}01\prec_L\ovl p,$$
$$\hspace{-0.7cm}\ldots 11001(01101001)^{N}01\prec_L\ovl p,$$
$$\hspace{-0.9cm}\ldots 001001(01101001)^{N}01\prec_L\ovl p,$$
$$\hspace{-1.1cm}\ldots 0101001(01101001)^{N}01\prec_L\ovl p,$$
$$\hspace{-1.3cm}\ldots 11101001(01101001)^{N}01\prec_L\ovl p,$$
for all $N\in\N$. Further calculations show that only tails of $L$ and $S$ can appear as the extrema of cylinders. By Proposition~\ref{prop:fully},  the arc-component $\mathcal{U}_L$ is fully accessible and since $\mathcal{U}_L$ contains no folding points, it corresponds to an open interval on the circle of prime ends. The accessible part of  $\mathcal{U}_S$ corresponds to a half-open interval on the circle of prime ends, where the endpoint of the half-open interval corresponds to the accessible folding point $p$. By further calculations we obtain that other folding points are not accessible, so the remaining point on the circle of prime ends corresponds to a simple dense canal with shores being $\mathcal{U}_L$ and $\mathcal{U}_S$.
\end{example}

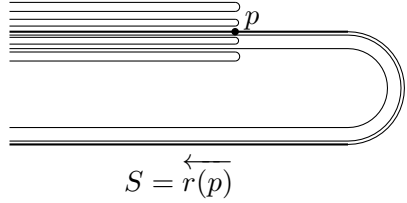
\begin{figure}[!ht]
	\centering
	\begin{tikzpicture}[scale=1.5]
	
	\draw (0,1.26)--(2,1.26);
	\draw (0,1.18)--(2,1.18);
	\draw (0,1.11)--(2,1.11);
	\draw (0,1.05)--(2,1.05);
	\draw[domain=270:450] plot ({2+0.04*cos(\x)}, {1.22+0.04*sin(\x)});
	\draw[domain=270:450] plot ({2+0.03*cos(\x)}, {1.08+0.03*sin(\x)});
	
	\draw (0,0.89)--(2,0.89);
	\draw (0,0.95)--(2,0.95);
	\draw[domain=270:450] plot ({2+0.03*cos(\x)}, {0.92+0.03*sin(\x)});
	\draw (0,0.74)--(2,0.74);
	\draw (0,0.82)--(2,0.82);
	\draw[domain=270:450] plot ({2+0.04*cos(\x)}, {0.78+0.04*sin(\x)});
	
	\draw[thick] (0, 1)--(3,1);
	\draw[thick] (0, 0)--(3,0);
	\draw[domain=270:450] plot ({3+0.5*cos(\x)}, {0.5+0.5*sin(\x)});
	\node[circle,fill, inner sep=1] at (2,1){};
	\node at (2.15,1.1){\small $p$};
	\node at (1.5,-0.3){\small $S=\ovl{r(p)}$};
	
	\draw[thin] (0,0.15)--(3,0.15);
	\draw[thin] (0,0.03)--(3,0.03);
	\draw[thin] (0,0.97)--(3,0.97);
	\draw[thin] (0,0.85)--(3,0.85);
	\draw[domain=270:450] plot ({3+0.47*cos(\x)}, {0.5+0.47*sin(\x)});
	\draw[domain=270:450] plot ({3+0.35*cos(\x)}, {0.5+0.35*sin(\x)});
	\end{tikzpicture}
	\caption{Type 2 folding point from Example~\ref{ex:11}.}
	\label{fig:type2}
\end{figure}


\begin{lemma}[Order reversing case]\label{lem:T2or}
	Let $\nu=10(c_3\ldots
	c_{n+2})^{\infty}$, $c_{n+2}=1$, and $\#_1(c_3\ldots \\c_{n+2})$ odd. Let $\bar{p}=(c_3\ldots c_{n+2})^{\infty}c_3\ldots c_i.c_{i+1}\ldots c_{n+2}(c_3\ldots c_{n+2})^{\infty}$ be a symbolic description of a folding point $p\in X'$. Then $p$ is a Type 2 folding point if and only if there exists a natural number $M$ such that
	$$\ldots c_j^*c_{j+1}\ldots c_{n+2}(c_3\ldots c_{n+2})^{M+N}c_3\ldots c_i\prec_L \ldots c_jc_{j+1}\ldots c_{n+2}(c_3\ldots c_{n+2})^{M+N}c_3\ldots c_i,$$
	for all $N\in\N$ and all $j\in\{3, \ldots, 1+n\}$ for which $c_j^*c_{j+1}\ldots c_{n+2}(c_3\ldots c_{n+2})^{M+N}c_3\ldots c_i$ is admissible, and
	$$\ldots 0(c_3\ldots c_{n+2})^{M+2N'}c_3\ldots c_i\prec_L \ldots 1(c_3\ldots c_{n+2})^{M+N'}c_3\ldots c_i,$$ and
	$$\ldots 0(c_3\ldots c_{n+2})^{M+2N''+1}c_3\ldots c_i\succ_L \ldots 1(c_3\ldots c_{n+2})^{M+N''}c_3\ldots c_i,$$
	for infinitely many $N'\in\N$ and all but finitely many $N''\in\N$\black{, or either only last two equalities are reversed, all the inequalities are reversed or only first one is reversed.}
\end{lemma}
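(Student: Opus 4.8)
The plan is to mirror the proof of Lemma~\ref{lem:T2op}, the only essential new ingredient being the parity flip caused by $\#_1(c_3\ldots c_{n+2})$ being odd. First I would fix $\pi_0(A(\ovl p))=[T^l(c),T^r(c)]$ and describe symbolically all basic arcs meeting a sufficiently small cylinder around $A(\ovl p)$. Since $c$ is preperiodic, such a neighbourhood contains only \emph{long} arcs (those projecting onto the whole $[T^l(c),T^r(c)]$), whose itineraries are precisely $\ldots c_j^*c_{j+1}\ldots c_{n+2}(c_3\ldots c_{n+2})^{M+N}c_3\ldots c_i$ for admissible $j$ and $N$, together with the \emph{left} (respectively \emph{right}) arcs $\ldots 0(c_3\ldots c_{n+2})^{M+N'}c_3\ldots c_i$ and $\ldots 1(c_3\ldots c_{n+2})^{M+N'}c_3\ldots c_i$, which project onto $[T^l(c),\pi_0(p)]$ (respectively $[\pi_0(p),T^r(c)]$). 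Admissibility of these words for all large exponents follows from Lemma~\ref{lem:per2} and non-renormalizability, exactly as in Lemma~\ref{lem:T2op}.

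Next I would read off from the parity-lexicographical order~(\ref{eq:L}) on which side of $A(\ovl p)$ each of these arcs lies. Here the odd parity enters decisively: whereas in Lemma~\ref{lem:T2op} appending one more period block $c_3\ldots c_{n+2}$ leaves the relevant prefix parity unchanged, now it reverses it. Consequently the side of $A(\ovl p)$ occupied by a left/right arc $\ldots t(c_3\ldots c_{n+2})^{M+N'}c_3\ldots c_i$ (with $t\in\{0,1\}$) alternates with the parity of $N'$, which is exactly why the statement splits the left/right condition into the two families $M+2N'$ and $M+2N''+1$ carrying opposite inequalities. By contrast, for a long arc the differing symbol $c_j^*$ sits \emph{inside} the period pattern, and a short computation with~(\ref{eq:L}) shows its side relative to $A(\ovl p)$ is governed by the first displayed inequality uniformly in $N$.

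With the vertical order of all nearby arcs determined, I would invoke Corollary~\ref{cor:class}, Remark~\ref{rem:types} and Proposition~\ref{prop:wiggles}. By Remark~\ref{rem:preper1FP} the folding point $p$ lies in $\mathrm{Int}(A(\ovl p))$, so it is Type~2 precisely when $A(\ovl p)$ is neither the top nor the bottom of any cylinder while one of its neighbours $A(\ovl{r(p)})$, $A(\ovl{l(p)})$ is a cylinder extremum. The first displayed inequality places every long arc on one fixed side of $A(\ovl p)$, so $A(\ovl p)$ is extremal among long arcs and the adjacent long neighbour can serve as a cylinder extremum whose joining semicircle opens an accessing channel to $p$ (cf.\ Figure~\ref{fig:foldincyl1}); the two remaining inequalities place infinitely many left (right) arcs strictly above and strictly below $A(\ovl p)$, the ones above forbidding $A(\ovl p)$ itself from being a cylinder extremum and rendering one component of $\mathcal U_p\setminus\{p\}$ inaccessible. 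Translating ``all long arcs on one side, with left/right arcs accumulating onto $A(\ovl p)$ from both sides according to the parity of $N'$'' into~(\ref{eq:L}) yields exactly the displayed inequalities; running the same argument with the roles of top/bottom and of left/right interchanged produces the four symmetric variants listed at the end of the statement.

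The step I expect to be the main obstacle is the parity bookkeeping of the second paragraph: one must check, uniformly in the exponents, that the odd period count produces precisely the claimed alternation for the left/right arcs while keeping all long arcs on a single side, and simultaneously confirm (again via Lemma~\ref{lem:per2} and non-renormalizability) that the itineraries written down are genuinely admissible, so that the listed arcs are indeed the \emph{only} ones meeting a neighbourhood of $A(\ovl p)$ and the geometric reading of the inequalities is complete.
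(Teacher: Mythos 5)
Your overall strategy is the same as the paper's: catalogue every basic arc in a small cylinder around $A(\ovl p)$ as ``long'' or ``short'', read off from $\prec_L$ on which side of $A(\ovl p)$ each lies, and conclude via Remark~\ref{rem:types} and Proposition~\ref{prop:wiggles}. However, the step you yourself flag as the main obstacle is exactly where your argument goes wrong, and the error is not just bookkeeping. You classify $\ldots 0(c_3\ldots c_{n+2})^{M+N'}c_3\ldots c_i$ as the left arcs and $\ldots 1(c_3\ldots c_{n+2})^{M+N'}c_3\ldots c_i$ as the right arcs. Since $c_{n+2}=1$, the cylinder $[1(c_3\ldots c_{n+2})^{M+N'}c_3\ldots c_i]$ contains $\ovl p$ itself as well as long and short arcs branching off further back; it is not a family of arcs projecting onto $[\pi_0(p),T^r(c)]$. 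The only short arcs near $A(\ovl p)$ are those carrying the flipped symbol $0=c_{n+2}^*$, and the point that distinguishes the order-reversing case is that, because $\#_1(c_3\ldots c_{n+2})$ is odd, appending one more period block flips the parity of the matching prefix in Lemma~\ref{lem:first}; hence the short arc $\ldots 0(c_3\ldots c_{n+2})^{k}c_3\ldots c_i$ is a \emph{left} arc or a \emph{right} arc according to the parity of $k$. It is this alternation of \emph{type} with the exponent, not an alternation of the side of $A(\ovl p)$ on which arcs of a fixed type lie, that the split into the families $M+2N'$ and $M+2N''+1$ encodes.

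Consequently your geometric picture --- all short arcs of one fixed type accumulating on $A(\ovl p)$ from both sides according to the parity of $N'$ --- is the order-preserving configuration of Lemma~\ref{lem:T2op} and Figure~\ref{fig:type2}, which does not occur here. The correct configuration (Figure~\ref{fig:type2rev}) is: all but finitely many long arcs and all left arcs on one side of $A(\ovl p)$, and infinitely many right arcs on the other side; this is what makes $A(\ovl p)$ a non-extremum while one of $\ovl{l(p)}$, $\ovl{r(p)}$ is an extremum, i.e., Type~2. With your identification the ``only if'' direction in particular cannot be carried out, since you are not characterizing the correct set of arc positions. The enumeration of the four symmetric variants at the end is likewise tied to which of left/right/long families change sides, so it also needs the corrected identification.
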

\black{
	\begin{proof}
		Assume that $\#_1(c_3\ldots c_i)$ is odd and $M$ is even. As in the proof of Proposition~\ref{prop:restriction} we just need to note that: 
		$$\ldots c_j^*c_{j+1}\ldots c_{n+2}(c_3\ldots c_{n+2})^{M+N}c_3\ldots c_i$$
		are itineraries of all long basic arcs,
		$$\ldots 0(c_3\ldots c_{n+2})^{M+2N'}c_3\ldots c_i$$
		are itineraries of all left basic arcs, and
		$$\ldots 0(c_3\ldots c_{n+2})^{M+2N''+1}c_3\ldots c_i$$
		are itineraries of all right basic arcs (recall the notation long/left/right from the proof of Proposition~\ref{prop:restriction}), and those are all basic arcs in a sufficiently small neighbourhood of $A(\ovl p)$. The conditions from the statement thus force all but finitely many long and left basic arcs below $A(\ovl p)$ and infinitely many right basic arcs above $A(\ovl p)$. See Figure~\ref{fig:type2rev}. Note that different parities of $\#_1(c_3\ldots c_i)$ and $M$ change which itineraries are right/left basic arcs and that the statement holds true if the equalities are reversed as described at the end of the statement of the lemma.
	\end{proof}
}
We give an example that satisfies the assumptions of Lemma~\ref{lem:T2or}.

\begin{example}[Type 2, order reversing case]\label{ex:12}
	 Take $\nu=10(011101001)^{\infty}$, $L=(011101\\
	 001011110010)^{\infty}$ and $\ovl p=(011101001)^{\infty}$. What follows is an easy computation:
$$\ldots 0(011101001)^{2M+1}\prec_L\ovl p,$$
$$\hspace{0.4cm}\ldots 0(011101001)^{2M}\succ_L \ovl p,$$
$$\hspace{0.3cm}\ldots 11(011101001)^M\prec_L \ovl p,$$
$$\hspace{0.05cm}\ldots 101(011101001)^M\prec_L \ovl p,$$
$$\hspace{-0.4cm}\ldots 11001(011101001)^M\prec_L \ovl p,$$
$$\hspace{-0.6cm}\ldots 001001(011101001)^M\prec_L\ovl p,$$
$$\hspace{-0.8cm}\ldots 0101001(011101001)^M\prec_L\ovl p,$$
$$\hspace{-1cm}\ldots 01101001(011101001)^M\prec_L\ovl p,$$
$$\hspace{-1.2cm}\ldots 111101001(011101001)^M\prec_L\ovl p,$$
for every $M\in\N$.
So $p$ is accessible folding point of Type 2. Note that $\ovl {l(p)}=(010010111)^{\infty}01011=S_{1011}$, see Figure~\ref{fig:type2rev}. By further symbolic calculations we again conclude that there is one simple dense canal for this embedding of $X'$.
\end{example}

\begin{figure}[!ht]
	\centering
	\begin{tikzpicture}[scale=1.5]
	\begin{scope}[yscale=1,xscale=-1]
	
	\draw (0,1.26)--(2,1.26);
	\draw (0,1.18)--(2,1.18);
	\draw (0,1.11)--(2,1.11);
	\draw (0,1.05)--(2,1.05);
	\draw[domain=270:450] plot ({2+0.04*cos(\x)}, {1.22+0.04*sin(\x)});
	\draw[domain=270:450] plot ({2+0.03*cos(\x)}, {1.08+0.03*sin(\x)});
	
	\draw (2,0.89)--(3,0.89);
	\draw (2,0.95)--(3,0.95);
	\draw[domain=90:270] plot ({2+0.03*cos(\x)}, {0.92+0.03*sin(\x)});
	\draw (2,0.74)--(3,0.74);
	\draw (2,0.82)--(3,0.82);
	\draw[domain=90:270] plot ({2+0.04*cos(\x)}, {0.78+0.04*sin(\x)});
	
	\draw[thick] (0, 1)--(3,1);
	\draw[thick] (0, 0)--(3,0);
	\draw[domain=270:450] plot ({3+0.5*cos(\x)}, {0.5+0.5*sin(\x)});
	\node[circle,fill, inner sep=1] at (2,1){};
	\node at (2.15,1.1){\small $p$};
	\node at (1.5,-0.3){\small $\ovl{l(p)}=S_{1011}$};
	
	\draw[thin] (0,0.26)--(3,0.26);
	\draw[thin] (0,0.15)--(3,0.15);
	\draw[thin] (0,0.03)--(3,0.03);
	\draw[thin] (0,0.97)--(3,0.97);
	\draw[thin] (0,0.85)--(3,0.85);
	\draw[thin] (0,0.18)--(3,0.18);
	\draw[thin] (0,0.11)--(3,0.11);
	\draw[thin] (0,0.05)--(3,0.05);
	\draw[domain=270:450] plot ({3+0.47*cos(\x)}, {0.5+0.47*sin(\x)});
	\draw[domain=270:450] plot ({3+0.35*cos(\x)}, {0.5+0.35*sin(\x)});
	\draw[domain=270:450] plot ({3+0.32*cos(\x)}, {0.5+0.32*sin(\x)});
	\draw[domain=270:450] plot ({3+0.24*cos(\x)}, {0.5+0.24*sin(\x)});
	\draw[domain=270:450] plot ({3+0.39*cos(\x)}, {0.5+0.39*sin(\x)});
	\draw[domain=270:450] plot ({3+0.45*cos(\x)}, {0.5+0.45*sin(\x)});
	\end{scope}
	\end{tikzpicture}
	\caption{Type 2 folding point from Example~\ref{ex:12}.}
	\label{fig:type2rev}
\end{figure}
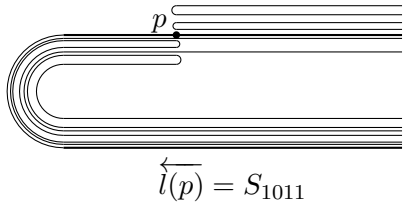

\subsubsection{Type 3}

From now onwards we study folding points of Type 3, see Figure~\ref{fig:preper}.

\begin{remark}\label{rem:T3noteven}
	Let $\nu=10(c_3\ldots c_{n+2})^{\infty}$ be such that $\#_1(c_3\ldots c_{n+2})$ is even and $c_{n+2}=1$. Then $X'$ does not contain folding points of Type 3. 
\end{remark}

The following lemma gives necessary and sufficient symbolic conditions for a folding point to be $\mathcal{E}$-embedded as Type 3.

\begin{lemma}[Type 3]\label{lem:type3sy}
	Let $\nu=10(c_3\ldots c_{n+2})^{\infty}$, $c_{n+2}=1$, and $\#_1(c_3\ldots c_{n+2})$ odd. Let $\bar{p}=(c_3\ldots c_{n+2})^{\infty}c_3\ldots c_i.c_{i+1}\ldots c_{n+2}(c_3\ldots c_{n+2})^{\infty}$ be the symbolic description of a  folding point $p\in X'$. Then $p$ is a Type 3 folding point if and only if there exists $M>0$ such that
	$$\ldots c_j^*c_{j+1}\ldots c_{n+2}(c_3\ldots c_{n+2})^{M+N}c_3\ldots c_i\prec_L \ovl p,$$
	for all $N\in\N$ and all $j\in\{3, \ldots, n+1\}$ for which $c_j^*c_{j+1}\ldots c_{n+2}(c_3\ldots c_{n+2})^{M+N}c_3\ldots c_i$ is admissible, and
	$$\ldots 0(c_3\ldots c_{n+2})^{M+N'}c_3\ldots c_i\succ_L \ovl p,$$
	for infinitely many \black{of both even and odd} $N'\in\N$, or with reversed inequalities. See Figure~\ref{fig:preper}.
\end{lemma}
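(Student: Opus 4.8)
The plan is to mirror the strategy of Lemma~\ref{lem:T2op} and Lemma~\ref{lem:T2or}, reducing the statement to a purely symbolic description of the basic arcs lying in a small neighbourhood of $A(\ovl p)$ and then reading off the accessibility type from their vertical ordering. First I would fix the projection $\pi_0(A(\ovl p))=[T^l(c),T^r(c)]$ and recall from Remark~\ref{rem:preper1FP} that $\pi_0(p)$ lies in the interior, so that $A(\ovl p)$ is a genuine (nondegenerate) fold. As in the proof of Proposition~\ref{prop:restriction}, preperiodicity of $\nu$ yields a neighbourhood $U$ of $A(\ovl p)$ whose basic arcs are exactly of three kinds: the \emph{long} arcs, projecting onto all of $[T^l(c),T^r(c)]$, and the \emph{left} and \emph{right} half-arcs, projecting onto $[T^l(c),\pi_0(p)]$ and $[\pi_0(p),T^r(c)]$ respectively. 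The key bookkeeping step is to identify their itineraries: flipping an interior symbol $c_j$ ($3\le j\le n+1$) of a period block leaves $\tau_L$ and $\tau_R$ unchanged and produces precisely the long arcs $\ldots c_j^*c_{j+1}\ldots c_{n+2}(c_3\ldots c_{n+2})^{M+N}c_3\ldots c_i$, whereas flipping the block-boundary symbol $c_{n+2}=1$ to $0$ alters one of $\tau_L,\tau_R$ and produces the half-arcs $\ldots 0(c_3\ldots c_{n+2})^{M+N'}c_3\ldots c_i$. Admissibility of all these words I would check exactly as in the preceding lemmas, i.e. via Lemma~\ref{lem:per2} and Lemma~\ref{lem:odd}.

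The decisive point, and the reason oddness of $\#_1(c_3\ldots c_{n+2})$ is assumed, is that the parity of the number of blocks $N'$ controls which half-arc one obtains: since each block carries an odd number of $1$'s, $\#_1((c_3\ldots c_{n+2})^{N'})$ has the parity of $N'$, so even $N'$ yields a left arc and odd $N'$ a right arc (up to the fixed parities of $M$ and of $\#_1(c_3\ldots c_i)$, which I would normalise at the start and track as in Lemma~\ref{lem:T2or}). This is exactly what fails when $\#_1(c_3\ldots c_{n+2})$ is even, recovering Remark~\ref{rem:T3noteven}. With this dictionary in place, the two displayed conditions translate verbatim into the geometric configuration of Figure~\ref{fig:preper}: the first inequality says that all long arcs of $U$ lie on one side of $A(\ovl p)$ (below, say), while the second says that infinitely many left arcs \emph{and} infinitely many right arcs lie on the opposite side (above), accumulating onto $A(\ovl p)$ from that side.

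For the backward implication I would then argue that this configuration makes $p$ accessible but leaves no other point of $\mathcal{U}_p$ accessible: the long arcs below seal off $A(\ovl p)$ from below, while the nested semicircles joining the left and right half-arcs at the fold $\pi_0(p)$ leave a single shrinking channel from above down to $p$; every $q\in A(\ovl p)$ with $q\neq p$ is separated from the complement by arbitrarily short half-arcs on one of its sides together with the long arcs below, hence is inaccessible. Since $p$ is then an accessible folding point whose arc-component contains no other accessible point, Remark~\ref{rem:types} leaves only case (3), i.e. $p$ is Type 3. For the forward implication I would reverse this reasoning: if $p$ is Type 3 then by Remark~\ref{rem:types} $p$ is the unique accessible point of $\mathcal{U}_p$, which forces all long arcs of $U$ to one side (otherwise a Cantor set of long arcs surrounds $A(\ovl p)$ on both sides and nothing on it is accessible), and forces both left and right half-arcs to the opposite side (otherwise, as in the Type~2 analysis, a whole half-line of $\mathcal{U}_p$ would be accessible); translating back gives the two inequalities, and the mirror-image orientation accounts for the ``reversed inequalities'' alternative.

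I expect the main obstacle to be the two ``exhaustion'' claims that underlie everything else: that $U$ really contains only the listed itineraries (which rests on preperiodicity and on the admissibility computations of Lemma~\ref{lem:per2}), and that the informal ``funnel'' picture of Figure~\ref{fig:preper} can be turned into a rigorous accessibility argument --- constructing an explicit arc that descends through the infinitely many nested semicircles to reach $p$ while meeting $X'$ only in $p$, and simultaneously certifying that every other point of $A(\ovl p)$ is capped. The parity bookkeeping linking even/odd $N'$ to left/right arcs, and the normalisation of the sign of the inequalities, are routine but must be carried out carefully to match the statement's ``both even and odd $N'$'' and ``reversed inequalities'' clauses.
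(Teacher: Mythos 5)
Your proposal is correct and follows essentially the same route as the paper: the paper's proof likewise identifies the words $\ldots c_j^*c_{j+1}\ldots c_{n+2}(c_3\ldots c_{n+2})^{M+N}c_3\ldots c_i$ as the long basic arcs and $\ldots 0(c_3\ldots c_{n+2})^{M+N'}c_3\ldots c_i$ as the left/right half-arcs (with the parity of $N'$ deciding which), and reads the two inequalities as forcing all long arcs to one side of $A(\ovl p)$ and infinitely many left and right arcs to the other, exactly the configuration of Figure~\ref{fig:preper}. The paper's own proof is in fact only a three-sentence sketch resting on the neighbourhood analysis of Proposition~\ref{prop:restriction} and the Type~2 lemmas, so your more explicit treatment of admissibility, of the exhaustion of the neighbourhood, and of both directions of the equivalence only fills in details the paper leaves implicit.
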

\black{\begin{proof}
		Note that $$\ldots c_j^*c_{j+1}\ldots c_{n+2}(c_3\ldots c_{n+2})^{M+N}c_3\ldots c_i$$ are long basic arcs and $$\ldots 0(c_3\ldots c_{n+2})^{M+N'}c_3\ldots c_i$$ are right or left, depending on the parity of $M$ and $N'$. In any case, the conditions force all long basic arcs below $A(\ovl p)$ and infinitely many right and left basic arcs above $A(\ovl p)$. 
	\end{proof}
}

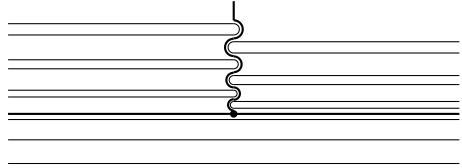
\begin{figure}[!ht]
	\centering
	\begin{tikzpicture}[scale=1.5]
	\draw (5,1.8)--(7,1.8);
	\draw (5,1.7)--(7,1.7);
	\draw (5,1.48)--(7,1.48);
	\draw (5,1.4)--(7,1.4);
	\draw (5,1.21)--(7,1.21);
	\draw (5,1.15)--(7,1.15);
	\draw (7,1.64)--(9,1.64);
	\draw (7,1.54)--(9,1.54);
	\draw (7,1.34)--(9,1.34);
	\draw (7,1.26)--(9,1.26);
	\draw (7,1.11)--(9,1.11);
	\draw (7,1.05)--(9,1.05);
	\draw[domain=270:450] plot ({7+0.05*cos(\x)}, {1.75+0.05*sin(\x)});
	\draw[thick][domain=270:450] plot ({7+0.08*cos(\x)}, {1.75+0.08*sin(\x)});
	\draw[domain=270:450] plot ({7+0.04*cos(\x)}, {1.44+0.04*sin(\x)});
	\draw[thick][domain=270:450] plot ({7+0.07*cos(\x)}, {1.44+0.07*sin(\x)});
	\draw[domain=270:450] plot ({7+0.03*cos(\x)}, {1.18+0.03*sin(\x)});
	\draw[thick][domain=270:450] plot ({7+0.055*cos(\x)}, {1.18+0.055*sin(\x)});
	\draw[domain=90:270] plot ({7+0.05*cos(\x)}, {1.59+0.05*sin(\x)});
	\draw[thick][domain=90:270] plot ({7+0.077*cos(\x)}, {1.59+0.077*sin(\x)});
	\draw[domain=90:270] plot ({7+0.04*cos(\x)}, {1.30+0.04*sin(\x)});
	\draw[thick][domain=90:270] plot ({7+0.065*cos(\x)}, {1.30+0.065*sin(\x)});
	\draw[domain=90:270] plot ({7+0.03*cos(\x)}, {1.08+0.03*sin(\x)});
	\draw[thick][domain=90:270] plot ({7+0.05*cos(\x)}, {1.08+0.05*sin(\x)});
	\draw[thick] (5, 1)--(9,1);
	\node[circle,fill, inner sep=1] at (7,1){};
	\draw (5,0.95)--(9,0.95);
	\draw (5,0.77)--(9,0.77);
	\draw (5,0.56)--(9,0.56);
	\put(295,35){\small $p$};
	\draw[thick] (7,1.83)--(7,2);
	\put(300,80){\small $R$};
	\end{tikzpicture}
	\caption{Type 3 folding point. Folding point $p$ is accessible from the complement by an arc $R\cup\{p\}\subset \R^{2}$, where $R$ is a ray.}
	\label{fig:preper}
\end{figure}

The following lemma gives conditions on preperiodic, order reversing $\nu$ such that no folding point can be $\mathcal{E}$-embedded  as Type 3 folding point (except possibly with the Brucks-Diamond embedding studied in detail in Section~\ref{sec:BD}).

\begin{lemma}\label{lem:notType3}
	Let $\nu=10(c_3\ldots c_{n+2})^{\infty}$ be such that $\#_1(c_3\ldots c_{n+2})$ is odd, $c_{n+2}=1$ and let $c_j^*c_{j+1}\ldots c_{n+2}(c_3\ldots c_{n+2})^Mc_3
	\ldots c_i$
	be admissible for every $j\in\{3, \ldots 1+n\}$ and all $M\in\N$.
	If $c_{n+1}=1$ then there exists no $L$ such that folding point $p\in X'$ is of Type 3.
\end{lemma}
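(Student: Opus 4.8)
The plan is to argue by contradiction in the same spirit as the proof of Lemma~\ref{lem:nottype2}: assuming that some admissible $L$ realizes the folding point $p$ as Type~3, I reconstruct a long finite block of $L$ from the ordering constraints and then detect a non-admissible subword inside it, contradicting $L\subset X'$.

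First I would fix a folding point $p$ with $\bar p=(c_3\ldots c_{n+2})^{\infty}c_3\ldots c_i.c_{i+1}\ldots c_{n+2}(c_3\ldots c_{n+2})^{\infty}$ and invoke the symbolic criterion of Lemma~\ref{lem:type3sy}. After possibly passing to the mirror case (the ``reversed inequalities'' alternative, which is handled symmetrically), I may assume there is $M>0$ such that every admissible ``long'' arc $\ldots c_j^{*}c_{j+1}\ldots c_{n+2}(c_3\ldots c_{n+2})^{M+N}c_3\ldots c_i$ satisfies $(\cdot)\prec_L\overleftarrow{p}$ for all $N$ and all $j\in\{3,\ldots,n+1\}$, while the ``short'' left/right arcs $\ldots 0(c_3\ldots c_{n+2})^{M+N'}c_3\ldots c_i$ satisfy $\overleftarrow{p}\prec_L(\cdot)$ for infinitely many $N'$ of both parities. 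The standing hypothesis that $c_j^{*}c_{j+1}\ldots c_{n+2}(c_3\ldots c_{n+2})^{M}c_3\ldots c_i$ is admissible for every $j$ and every $M$ guarantees that all of these long arcs genuinely occur, so the first family of inequalities is a constraint over the full range of $j$.

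Next I would reconstruct $L$ digit by digit. Each comparison $\overleftarrow{p}\prec_L(\cdot)$ is translated, through the definition of $\prec_L$, into an equation for $l_k$ at the position $k$ of first disagreement, the relevant quantity being the parity of $\#_1$ of the common suffix. Since $\#_1(c_3\ldots c_{n+2})$ is odd, this parity alternates as $N'$ increases by one; requiring the short arcs to lie above $A(\overleftarrow{p})$ for $N'$ of \emph{both} parities then forces the digits of $L$ on a long stretch to equal a prescribed string, built from repetitions of $c_3\ldots c_{n+2}$ with a forced flip at the $c_{n+1}$-position (the penultimate letter of the block, $c_{n+2}=1$ being the wrap-around letter where $\overleftarrow{p}$ and the short arc first differ). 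This is exactly the computation of Lemma~\ref{lem:nottype2}, only with the Type~3 sign pattern, and it pins down a block of $L$ analogous to the word $10(c_3\ldots c_{n+2})^{N-1}c_3\ldots c_nc_{n+1}^{*}$ produced there, now with $c_{n+1}$ occupying the flipped position.

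Finally I would reach the contradiction via Lemma~\ref{lem:odd}. With $c_{n+1}=1$ we have $c_{n+1}^{*}=0$, so the reconstructed word ends in $c_3\ldots c_n 0$; a parity count of its ones, using that $\#_1(c_3\ldots c_{n+2})$ is odd and $c_{n+2}=1$, shows it to be irreducibly non-admissible by Lemma~\ref{lem:odd}. Since every subword of an admissible $L$ must itself be admissible, this contradicts $L\subset X'$, so no such $L$ exists. I expect the main obstacle to be the parity bookkeeping in the reconstruction: the first-difference position advances by $n$ each time $N'$ increases, so one must track $\#_1$ of the growing common prefix modulo $2$ simultaneously over both parities of $N'$ and verify that the digits of $L$ forced by the short-arc inequalities, together with those forced by the long-arc inequalities, assemble consistently into one word. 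Isolating precisely where $c_{n+1}=1$ enters — as opposed to $c_{n+2}=1$ and the oddness of $\#_1(c_3\ldots c_{n+2})$, which only govern the alternation and the wrap-around — is the delicate point, since it is exactly this hypothesis that converts the forced flip into the $0$ that Lemma~\ref{lem:odd} detects.
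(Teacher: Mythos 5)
Your overall strategy coincides with the paper's: assume some $L$ realizes $p$ as a Type 3 folding point, translate the inequalities of Lemma~\ref{lem:type3sy} into a digit-by-digit reconstruction of $L$ exactly as in the proof of Lemma~\ref{lem:nottype2} (with the standing admissibility hypothesis guaranteeing that the full family of long-arc constraints is present), and then exhibit a non-admissible subword of the forced $L$. Up to that point the proposal is sound.

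The gap is in the final step: you misidentify both the forced word and the mechanism that makes it forbidden. Because $\#_1(c_3\ldots c_{n+2})$ is odd, the parity driving the reconstruction changes in \emph{every} period, so the flips do not occur only at a single transition window (as in the Type 2 computation you are imitating) but recur periodically and at \emph{two} consecutive positions: the reconstruction forces $L=(c_3\ldots c_nc_{n+1}^{*}c_{n+2}^{*})^{\infty}l_k\ldots l_1=(c_3\ldots c_n00)^{\infty}l_k\ldots l_1$, using $c_{n+1}=c_{n+2}=1$. The contradiction is then elementary: the subword $00c_3\ldots c_n$ begins with $0^{\kappa+1}$ (since $c_3\ldots c_{\kappa+1}=0^{\kappa-1}$) and is therefore not admissible; Lemma~\ref{lem:odd} is neither needed nor applicable, as this word is not of the form $c_2\ldots c_m^{*}$. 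Your proposed obstruction --- a word ending in $c_3\ldots c_nc_{n+1}^{*}=c_3\ldots c_n0$ with a single flip, detected by a parity count --- does not produce a contradiction: such words are in general admissible. For instance, for $\nu=10(0111)^{\infty}$ (which satisfies every hypothesis of the lemma, cf.\ Example~\ref{ex:13}) the word $(0111)^{N}010$ is admissible for every $N\in\N$, so nothing goes wrong at that end of the block. The hypothesis $c_{n+1}=1$ enters precisely by turning the flipped pair $c_{n+1}^{*}c_{n+2}^{*}$ into $00$; when $c_{n+1}=0$ the pair is $10$ and $10c_3\ldots c_n=10^{\kappa}1\ldots$ is the beginning of $\nu$, hence admissible --- which is exactly why Lemma~\ref{lem:type3} is able to produce Type 3 points in that complementary case.
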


\begin{proof}
	Take a folding point $p\in X'$ with the symbolic description
	$$\bar{p}=(c_{3}\ldots c_{n+2})^{\infty}c_{3}\ldots c_{i}.c_{i+1}\ldots c_{n+2}(c_{3}\ldots c_{n+2})^{\infty}$$ for some $i\in \{3,\ldots n+2\}$
	and assume that $A(\ovl p)$ is not at the top or bottom of any cylinder in $X'$. 
	Since
	$c_j^*c_{j+1}\ldots c_{n+2}(c_3\ldots c_{n+2})^Mc_3
	\ldots c_i$
	is admissible for every $j\in\{3, \ldots n+1\}$ and all $M\in\N$, the same calculations as in the proof of Lemma~\ref{lem:nottype2} imply that the only $L$ which satisfies all the conditions from Lemma~\ref{lem:type3sy} is
	$$L=(c_3\ldots c_n00)^{\infty}l_k\ldots l_1,$$
	for some $l_k\ldots l_1$.  However, the word $00c_3\ldots c_n$ is not admissible, a contradiction.
\end{proof}

\begin{example}[No Type 3 folding point]\label{ex:13}
	Note that $\nu=10(0^{\alpha}1^{\beta})^{\infty}$ for $\beta\geq 2$ satisfies the assumptions of Lemma~\ref{lem:notType3}. Thus no folding point from the corresponding $X'$ can be embedded as Type 3 folding point using $\mathcal{E}$-embeddings (except maybe Brucks-Diamond). Note that this example also satisfies the assumptions of Lemma~\ref{lem:nottype2}, so no folding point can be $\mathcal{E}$-embedded as Type 2 either. Thus in these cases a point from $X'$ is accessible if and only if it is on the top or the bottom of some cylinder. So there are $m\in\N$ simple dense canals in $\mathcal{E}$-embeddings of such $X'$, where $m$ is the number of fully accessible arc-components.
\end{example}

The following lemma gives sufficient symbolic conditions on a preperiodic $\nu$ such that every folding point can be $\mathcal{E}$-embedded as accessible folding point of Type 3.

\begin{lemma}\label{lem:type3}
	Let $\nu=10(c_3\ldots c_{n+2})^{\infty}$ be such that $\#_1(c_3\ldots c_{n+2})$ is odd and  $c_{n+2}=1$.
	Assume that $c_{n+1}=0$ and the tail $(10c_3\ldots c_n)^{\infty}$ is admissible. For every folding point $p\in X'$ there exists $L$ such that $p$ is of Type 3 in $\phi_L(X')$. 
\end{lemma}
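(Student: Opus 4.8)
The plan is to produce, for the fixed folding point $p$, one explicit sequence $L$ and then verify the symbolic criterion of Lemma~\ref{lem:type3sy}. Write $\bar p=(c_3\ldots c_{n+2})^{\infty}c_3\ldots c_i.c_{i+1}\ldots c_{n+2}(c_3\ldots c_{n+2})^{\infty}$, so that $\ovl p=(c_3\ldots c_{n+2})^{\infty}c_3\ldots c_i$; since the $n$ folding points are cyclically permuted by $\sigma$ it is enough to treat one value of $i$ and to choose the phase of $L$ accordingly. The candidate is $L=(c_3\ldots c_n10)^{\infty}$, i.e. the kneading period $c_3\ldots c_{n+2}=c_3\ldots c_n01$ with its last two letters transposed (this uses $c_{n+1}=0$). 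The first observation is that $L$ is admissible: the block $c_3\ldots c_n10$ is the cyclic shift $\sigma^{2}$ of the block $10c_3\ldots c_n$, so $L$ and $(10c_3\ldots c_n)^{\infty}$ have the same bi-infinite orbit and hence exactly the same set of finite subwords; as $(10c_3\ldots c_n)^{\infty}$ is admissible by hypothesis, $L$ is admissible as well. Moreover $L$ is genuinely periodic with a non-zero block, so $L\neq 0^{\infty}l_m\ldots l_1$, placing us in the embedding regime for $X'$ rather than the Brucks--Diamond one.

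The arithmetic feature driving everything is that $\#_1(c_3\ldots c_{n+2})$ is odd, while $\#_1(c_3\ldots c_n10)=\#_1(c_3\ldots c_n)+1$ is also odd (indeed $\#_1(c_3\ldots c_n)=\#_1(c_3\ldots c_{n+2})-1$ is even). I would feed this into the two families appearing in Lemma~\ref{lem:type3sy}. A long arc $\ldots c_j^{*}c_{j+1}\ldots c_{n+2}(c_3\ldots c_{n+2})^{M+N}c_3\ldots c_i$ first disagrees with $\ovl p$ at the transposed letter $c_j^{*}$, while a short arc $\ldots 0(c_3\ldots c_{n+2})^{M+N'}c_3\ldots c_i$ first disagrees at the leading $0$ sitting opposite $c_{n+2}=1$. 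Incrementing $N$ (respectively $N'$) shifts that first-difference position one full period of $L$ to the left and simultaneously inserts one kneading block into the shared suffix; since both the $L$-period and the kneading block carry an odd number of $1$'s, the two parity changes cancel in the defining inequality of $\Ls$. Hence each long arc stays on a fixed side of $A(\ovl p)$ for all $N$, and each short arc stays on a fixed side for all $N'$ of both parities — precisely the ``same side for both parities'' that separates Type 3 from the alternating behaviour of Type 2. Reading off the phase of $L$ that the long arcs lie below $A(\ovl p)$ and the short arcs above it (or vice versa), one of the two symmetric versions of Lemma~\ref{lem:type3sy} applies and certifies that $p$ is a Type 3 folding point.

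The main obstacle is this last step: checking that \emph{all} long arcs, over every admissible $j\in\{3,\ldots,n+1\}$ and every $N$, land on the side of $A(\ovl p)$ opposite to the short arcs, uniformly (if they fell on the same side, $\ovl p$ would be an extremum and $p$ merely Type 1). This is a parity computation in the spirit of the proof of Lemma~\ref{lem:nottype2}: for each $j$ one compares the parity of $\#_1$ of the shared suffix against $\#_1$ of the matching initial segment of $L$, exploiting that $L$ agrees with $c_3\ldots c_{n+2}$ except in the final two symbols of each period. The hypotheses are calibrated exactly for this to go through: $c_{n+1}=0$ is what makes the transposed block a shift of $10c_3\ldots c_n$ and, at the same time, avoids the inadmissible pattern $00c_3\ldots c_n$ that blocks the construction in Lemma~\ref{lem:notType3} (where $c_{n+1}=1$), while the admissibility of $(10c_3\ldots c_n)^{\infty}$ is precisely what yields $L\in\Sigma_{adm}$. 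Finally, to cover every folding point I would select, among the $n$ cyclic phases of $L$ (all admissible by the same shift argument), the one that aligns the prefix $c_3\ldots c_i$ correctly, so that the argument goes through for each $i$ in turn.
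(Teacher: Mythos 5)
Your proposal is correct and follows essentially the same route as the paper: the paper also takes $L=(c_3\ldots c_nc^*_{n+1}c^*_{n+2})^{\infty}c_3\ldots c_i=(c_3\ldots c_n10)^{\infty}c_3\ldots c_i$ (your choice up to the phase adjustment you describe), derives its admissibility from that of $(10c_3\ldots c_n)^{\infty}$, and verifies the criterion of Lemma~\ref{lem:type3sy} by the same parity bookkeeping, using that both the kneading block and the $L$-block carry an odd number of $1$'s so that long arcs stay on one side and short arcs on the other for all $N$ and both parities of $N'$.
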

\begin{proof}
	Take a folding point $p\in X'$ with the symbolic description
	$$\bar{p}=(c_{3}\ldots c_{n+2})^{\infty}c_{3}\ldots c_{i}.c_{i+1}\ldots c_{n+2}(c_{3}\ldots c_{n+2})^{\infty}$$ for some $i\in \{3,\ldots n+2\}$. Denote by $\pi_0(A(\ovl p))=:[T^l(c), T^r(c)]$ for some $l,r\in\N$.
	
	Let $L=(c_3\ldots c_nc^*_{n+1}c^*_{n+2})^{\infty}c_3\ldots c_{i}$.  Then
	$$\ldots 0(c_3\ldots c_{n+2})^m c_3\ldots c_{i}\succ_L \ldots 1(c_3\ldots c_{n+2})^m c_3\ldots c_{i},$$
	$$\ldots c^*_jc_{j+1}\ldots c_{n+2}(c_3\ldots c_{n+2})^m c_3\ldots c_{i}\prec_L \ldots c_jc_{j+1}\ldots c_{n+2}(c_3\ldots c_{n+2})^m c_3\ldots c_{i},$$
	for every $m\in\N$, every $j\in\{3, \ldots n+1\}$
	and all admissible $c^*_jc_{j+1}\ldots c_{n+2}(c_3\ldots c_{n+2})^m c_3\\
	\ldots c_{i}$, see Figure~\ref{fig:preper} to visualize the construction. By the assumptions we conclude that $L=(10c_3\ldots c_{n})^{\infty}10c_3\ldots c_i$ is indeed admissible. Since $\#_1(c_3\ldots c_{n+2})$ is odd we get pairs of basic arcs joined at a point which projects with $\pi_{0}$ to $\pi_{0}(p)$, approaching to $A(\ovl p)$ from above from both left and right side of $p$, exactly as in Figure~\ref{fig:preper}.
\end{proof}

\begin{example}[Type 3 folding point]\label{ex:14} Take $\nu=10(01101)^{\infty}$. If we embed $X'$ with respect to admissible $L=(01110)^{\infty},$ then $\bar{p}=(01101)^{\infty}.(01101)^{\infty}$ is an accessible folding point of Type 3, since it satisfies the conditions of Lemma~\ref{lem:type3}. Note that only $\mathcal{U}_L$ can contain the extremum of a cylinder and it corresponds to the circle of prime ends minus a point. The remaining point is the second kind prime end corresponding to the accessible folding point $p$ of Type 3. Specifically, there are no simple dense canals.
\end{example}

\section{Extendability of the shift homeomorphism for $\mathcal{E}$-embeddings}\label{sec:ext}

Planar embeddings of $X$ equivalent to \black{those constructed in} \cite{Br1} ($L=1^{\infty}$) and \cite{BrDi} ($L=0^{\infty}1$) make $\mathcal{R}$, $\mathcal{C}$ and $\mathcal{C}$ respectively fully accessible as can be deduced from Proposition~\ref{prop:fully} and Remark~\ref{rem:Cisolated}. Additionally it can be deduced from Proposition~\ref{prop:Cacc} that only remaining accessible points of embeddings of $X$ (if existent) need to be folding points. \black{We} denote the two special embeddings from now onwards by $\phi_{\mathcal{R}}$ and $\phi_{\mathcal{C}}$ respectively (recall that $\phi_L$ denotes the planar embedding determined by the left infinite sequence $L$) and refer to them as \emph{standard embeddings}, for the reasons below. 

\black{Barge and Martin show in \cite{BM} that every $X$ (actually every interval inverse limit with a single bonding map) can be embedded in the plane as an attractor of a planar homeomorphism which is conjugate to $\sigma$ on $X$. For unimodal inverse limits $X$, there are two ways to preform that construction, in an orientation-preserving or orientation-reversing way. Those embeddings are equivalent to $\phi_{\mathcal{C}}$ and $\phi_{\mathcal{R}}$, respectively.} Specially,  for $\phi_{\mathcal{C}}(X)$ and $\phi_{\mathcal{R}}(X)$, the homeomorphism $\sigma$ is extendable to $\R^2$ \black{(by extendable to $\R^2$ we always mean extendable to a planar homeomorphism)}.  Bruin directly showed in \cite{Br1} that the shift homeomorphism can be extended to the plane for embeddings $\phi_{\mathcal{R}}$. Now we show that except for the two mentioned standard embeddings, $\sigma$ is not extendable for any $\mathcal{E}$-embedding of $X'$.

Note that if $\sigma:\phi_L(X)\to \phi_L(X)$ is extendable to $\R^2$, then $\sigma|_{\phi_L(X')}: \phi_L(X')\to \phi_L(X')$ is also extendable to $\R^2$.

 \black{Let us again note that when it is clear from the context that we refer to the basic arc $A(\ovl{s})$ we often abbreviate notation and write only $\ovl{s}$.}

The following theorem \black{partially} answers the question whether for non-standard $\mathcal{E}$-embeddings the shift homeomorphism is extendable to the whole plane which was posed by Boyland, de Carvalho and Hall in \cite{3G}.

\begin{theorem}\label{thm:shuffle}
	If $X'$ is embedded in the plane with respect to $L$, where $A(L)\not\subset \mathcal{C}, \mathcal{R}$, then the shift homeomorphism $\sigma\colon \phi_L(X')\to \phi_L(X')$ cannot be extended to a homeomorphism of the plane.
\end{theorem}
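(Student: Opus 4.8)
The plan is to prove non-extendability by exhibiting a structural feature of the $\mathcal{E}$-embedding that any planar extension of $\sigma$ would have to respect, but which is incompatible with the accessibility structure established earlier. The natural invariant to exploit is the set of accessible points: if $h\colon\R^2\to\R^2$ is a planar homeomorphism extending $\sigma$, then $h$ maps the complement $\R^2\setminus\phi_L(X')$ to itself, and hence carries accessible points of $\phi_L(X')$ bijectively onto accessible points. Since $\sigma$ permutes arc-components (Remark~\ref{rem:symbolicAC}) and folding points (Remark~\ref{rem:fp}), the accessible set must be invariant under $\sigma$ itself. The key point is that in the standard embeddings the accessible arc-component ($\mathcal{R}$ or $\mathcal{C}$) is $\sigma$-invariant, whereas for a non-standard $L$ the fully accessible arc-component $\mathcal{U}_L$ is \emph{not} $\sigma$-invariant.

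First I would make precise that $A(L)$ being neither in $\mathcal{C}$ nor in $\mathcal{R}$ means $L\neq 0^\infty l_n\ldots l_1$ and $L\neq 1^\infty l_n\ldots l_1$ for any $n$; in particular $L$ is not eventually constant. Next I would recall from Proposition~\ref{prop:fully} that $\mathcal{U}_L$ is fully accessible (treating the spiral-point case separately, or noting that the top basic arc of a non-standard embedding can be handled directly via Lemma~\ref{lem:cyl} since $A(L)$ is the global top of every cylinder it meets). The central computation is to understand how $\sigma$ acts on tops and bottoms of cylinders. Applying $\sigma$ shifts itineraries, and I would show that $\sigma(\mathcal{U}_L)$ is a \emph{different} arc-component: its tail is that of $\sigma(L)=\ldots l_3 l_2$, which, because $L$ is not eventually constant, is genuinely distinct from the tail of $L$ in the sense of Remark~\ref{rem:symbolicAC}. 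More importantly, I would argue that $\sigma(\mathcal{U}_L)$ need not be fully accessible, and conversely that $\mathcal{U}_L$ is the \emph{unique} arc-component that is the top of \emph{all} sufficiently long cylinders containing it — a property singling it out among all arc-components and which $\sigma$ destroys.

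The cleanest route is to derive a contradiction from counting or from a local accessibility obstruction. I would argue as follows: a planar extension $h$ of $\sigma$ preserves the cyclic order of accessible points along the circle of prime ends, and it sends the prime-end structure of $\phi_L(X')$ to itself homeomorphically. Using Lemma~\ref{lem:cyl} and Proposition~\ref{prop:wiggles}, the arc-component $\mathcal{U}_L$ is distinguished as the one that realizes the global supremum $\psi_L$ over \emph{every} cylinder. Under $\sigma$, the image $\mathcal{U}_{\sigma(L)}$ fails this maximality: there exists a finite word $a_n\ldots a_1$ for which $L_{a_n\ldots a_1}$ has tail $L$ rather than $\sigma(L)$, so $\sigma(\mathcal{U}_L)$ is \emph{not} the top of that cylinder and hence (by Proposition~\ref{prop:wiggles}) need not be fully accessible. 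Thus $h$ would carry a fully accessible arc-component to one that is only partially accessible, contradicting that $h$ is a homeomorphism of the plane preserving $\R^2\setminus\phi_L(X')$ and therefore accessibility.

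The main obstacle I anticipate is making the asymmetry between $\mathcal{U}_L$ and $\sigma(\mathcal{U}_L)$ genuinely rigorous rather than heuristic: I must rule out the possibility that $\sigma(\mathcal{U}_L)$ happens to coincide with $\mathcal{U}_L$ (which occurs exactly when $L$ is $\sigma$-periodic, i.e.\ eventually periodic with the shifted tail equal to the original — precisely the behavior of $\mathcal{R}$ and $\mathcal{C}$) and, more delicately, that it happens to be fully accessible for some other reason. The hypothesis $A(L)\not\subset\mathcal{R},\mathcal{C}$ is exactly what excludes the $\sigma$-invariant cases, so I expect the argument to hinge on showing that any $L$ with $\sigma(\mathcal{U}_L)=\mathcal{U}_L$ forces $L$ to be eventually $0^\infty$ or $1^\infty$. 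The remaining care is to handle the spiral-point case and the degenerate-$A(L)$ case, where full accessibility of $\mathcal{U}_L$ must be argued through the folding-point analysis of Section~\ref{sec:FP} rather than Proposition~\ref{prop:fully} directly.
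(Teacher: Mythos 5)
Your overall strategy --- to use the fact that a planar homeomorphism extending $\sigma$ must preserve the set of accessible points, and then to show that $\mathcal{U}_L$ is fully accessible while $\sigma(\mathcal{U}_L)$ is not --- contains a genuine gap at its central step, and the gap cannot be repaired within this framework. The claim that $\sigma(\mathcal{U}_L)$ fails to be fully accessible is simply false in general for non-standard $L$. The paper's own Example~\ref{ex:8} gives $\nu=(101)^{\infty}$ and $L=(01)^{\infty}$, for which $A(L)\not\subset\mathcal{C},\mathcal{R}$ and yet both $\mathcal{U}_L$ and $\mathcal{U}_S$ with $S=(10)^{\infty}$ are fully accessible; since appending one symbol to $L$ produces a sequence with the same tail as $S$ (in the position-wise sense of Remark~\ref{rem:symbolicAC}), one has $\sigma(\mathcal{U}_L)=\mathcal{U}_S$, which is fully accessible because it sits at the \emph{bottom} of every cylinder it meets. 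The same happens for every embedding of Lemma~\ref{lem:horror}. So the set of fully accessible arc-components can be $\sigma$-invariant even when no individual component is, and the accessibility invariant you propose gives no contradiction. Your fallback observation --- that $\mathcal{U}_L$ is the unique component realizing the \emph{top} of every cylinder --- does not help either, because ``being a top rather than a bottom'' is a feature of the chosen vertical coordinate $\psi_L$, not a property preserved by an arbitrary orientation-unspecified planar homeomorphism; Proposition~\ref{prop:wiggles} only tells you that a neighbour of a non-extremal arc contains an accessible folding point, not that the image component loses accessibility.

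The paper's proof works at a completely different, local level: it exhibits three basic arcs $\ovl{x_i},\ovl{y_i},\ovl{w_i}$ (all projecting onto $[T^2(c),T(c)]$ and differing only beyond position $n_i$, where $l_{n_i+3}l_{n_i+2}=01$) such that $\ovl{x_i}$ lies vertically \emph{between} the other two, while after one application of $\sigma$ the image $\ovl{x_i}1$ becomes \emph{extremal} among the three images. The two outer arcs together with two short vertical segments $A_i,B_i$ (with $\mathrm{diam}\,A_i,\mathrm{diam}\,B_i\to0$) bound a disk containing the interior of the middle arc, and any planar extension of $\sigma$ would have to stretch $\sigma(A_i)$ or $\sigma(B_i)$ to length at least $|T(c)-T^2(c)|>0$, contradicting continuity. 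If you want to salvage your line of reasoning you would need a strictly finer prime-end invariant (e.g.\ the cyclic order or rotation data on the circle of prime ends) rather than the mere set of accessible points; as written, the decisive step of your argument is contradicted by examples inside the very class of embeddings the theorem covers.
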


\begin{proof}
	Let $\nu=c_1c_2\ldots$ be a kneading sequence and $A(L)\not\subset \mathcal{C}, \mathcal{R}$ and assume by contradiction that $\sigma\colon \phi_{L}(X')\to \phi_L(X')$ is extendable to $\mathbb{R}^2$. Let $(n_i)_{i\in\N}$ be an increasing sequence in $\N$ such that $l_{n_i+3}l_{n_i+2}=01$. Since $A(L)\not\subset\mathcal{C}, \mathcal{R}$, the sequence $(n_i)_{i\in\N}$ is indeed well defined.
	For $i\in\N$ define admissible tails
	$$\ovl {x_i}=1^{\infty}1011^{n_i},$$
	$$\ovl {y_i}=1^{\infty}0111^{n_i},$$
	$$\ovl {w_i}=1^{\infty}1101^{n_i}.$$
	Note that $\ovl{x_i}$ is between $\ovl{y_i}$ and $\ovl{w_i}$ \black{in the ordering $\preceq_L$} and $\ovl{x_i}1$ is the largest or the smallest \black{(again in $\preceq_L$)} among the admissible sequences $\ovl{x_i}1$, $\ovl{y_i}1$ and $\ovl{w_i}1$ because of the chosen $l_{n_i+3}l_{n_i+2}=01$.\\
	For $i$ large enough, note that $\pi_0(\ovl{x_i}1)=\f T^2(c), T(c)]$ so $A(\ovl{x_i}1)$ is a horizontal arc in the plane of length $|T(c)-T^2(c)|=:\delta>0$. Note also that $\pi_0(\ovl{x_i})=\pi_0(\ovl{y_i})=\pi_0(\ovl{w_i})=\f T^2(c),T(c)]$ for $i$ large enough. Let $\ovl{x_i}'=\pi^{-1}_0(\f c, T(c)])\cap\ovl{x_i}$, $\ovl{y_i}'=\pi^{-1}_0(\f c, T(c)])\cap\ovl{y_i}$ and $\ovl{w_i}'=\pi^{-1}_0(\f c, T(c)])\cap\ovl{w_i}$, see Figure~\ref{fig:shuffle}, left picture. Denote by $A_i\subset\R^2$ ($B_i\subset\R^2$) the vertical segment which joins the left (right) endpoints of $\ovl{y_i}'$ and $\ovl{w_i}'$. Note that $\diam(A_i), \diam(B_i)\to 0$ as $i\to\infty$. Also $D=A_i\cup\ovl{y_i}'\cup B_i\cup\ovl{w_i}'$ separates the plane, denote the bounded component of $\R^2\setminus D$ by $U\subset\R^2$. Note that $\Int\ovl{x_i}'\subset U$.\\
	Now note that $\sigma(\ovl{x_i}')=\ovl{x_i}1$ and similarly for $\ovl{y_i}', \ovl{w_i}'$. Since $\ovl{x_i}1$ is the smallest or the largest among $\ovl{x_i}1, \ovl{y_i}1, \ovl{w_i}1$ and $\sigma$ is extendable, at least one $\sigma({A_i})$ or $\sigma({B_i})$ has length greater than $\delta$, see Figure~\ref{fig:shuffle}. This contradicts the continuity of $\sigma$.
\end{proof}

\begin{figure}[!ht]
	\centering
	\begin{tikzpicture}[scale=1.5]
	\draw (0,0.5)--(2,0.5);
	\draw (0,1)--(2,1);
	\draw (0,1.5)--(2,1.5);
	\node at (-0.2,0.5) {\small $\ovl {w_i}$};
	\node at (-0.2,1) {\small $\ovl {x_i}$};
	\node at (-0.2,1.5) {\small $\ovl {y_i}$};
	\draw[->] (1,0.4)--(1,0.1);
	\node at (0.8,0.25) {\small $\pi_0$};
	\draw (0,0)--(2,0);
	\draw (0,-0.05)--(0,0.05);
	\node at (0,-0.2) {\scriptsize $T^2(c)$};
	\draw (0.7,-0.05)--(0.7,0.05);
	\node at (0.7,-0.2) {\scriptsize $c$};
	\draw (2,-0.05)--(2,0.05);
	\node at (2,-0.2) {\scriptsize $T(c)$};
	\draw[very thick] (0.7,0.5)--(0.7,1.5);
	\node at (0.52,0.75) {\small $A_i$};
	\draw[very thick] (2,0.5)--(2,1.5);
	\node at (2.2,0.75) {\small $B_i$};
	\path[fill=black,opacity=0.1]
	(0.7,0.5)--(0.7,1.5)--(2,1.5)--(2,0.5)--cycle;
	\node at (1.8,1.3) {\scriptsize $U$};
	\node at (1.3,0.6) {\tiny $\ovl{w_i}'$};
	\node at (1.3,1.1) {\tiny $\ovl{x_i}'$};
	\node at (1.3,1.6) {\tiny $\ovl{y_i}'$};
	
	\draw[->] (2.5,1)--(3.5,1);
	\node at (3,1.1) {\small $\sigma$};
	
	\draw (4,0.5)--(6,0.5);
	\draw (4,1)--(6,1);
	\draw (4,1.5)--(6,1.5);
	\node at (4.5,0.3) {\scriptsize $\ovl{w_i}1=\sigma(\ovl{w_i}')$};
	\node at (4.5,1.1) {\scriptsize $\ovl{y_i}1=\sigma(\ovl{y_i}')$};
	\node at (4.5,1.7) {\scriptsize $\ovl{x_i}1=\sigma(\ovl{x_i}')$};
	\draw[->] (5.7,0.4)--(5.7,0.1);
	\node at (5.5,0.25) {\small $\pi_0$};
	\draw (4,0)--(6.1,0);
	\draw (4,-0.05)--(4,0.05);
	\node at (4,-0.2) {\scriptsize $T^2(c)$};
	\draw (4.7,-0.05)--(4.7,0.05);
	\node at (4.7,-0.2) {\scriptsize $c$};
	\draw (6.1,-0.05)--(6.1,0.05);
	\node at (6.1,-0.2) {\scriptsize $T(c)$};
	\draw[very thick, domain=90:270] plot ({4+0.25*cos(\x)}, {0.75+0.25*sin(\x)});
	\node at (4.1,0.75) {\scriptsize $\sigma(A_i)$};
	\draw[very thick, domain=270:450] plot ({6+0.05*cos(\x)}, {1.05+0.05*sin(\x)});
	\draw[very thick] (6,1.1)--(4,1.4);
	\draw[very thick, domain=90:270] plot ({4+0.1*cos(\x)}, {1.5+0.1*sin(\x)});
	\draw[very thick] (4,1.6)--(6.1,1.6)--(6.1,0.5)--(6,0.5);
	\node at (5.5,1.3) {\scriptsize $\sigma(B_i)$};
	\path[fill=black,opacity=0.1]
	(4,0.5)-- plot[domain=90:270] ({4+0.25*cos(\x)}, {0.75+0.25*sin(\x)}) -- (4,1)--(6,1)-- plot[domain=270:450] ({6+0.05*cos(\x)}, {1.05+0.05*sin(\x)})--(6,1.1)--(4,1.4)--plot[domain=90:270] ({4+0.1*cos(\x)}, {1.5+0.1*sin(\x)})--(4,1.6)--(6.1,1.6)--(6.1,0.5)--(6,0.5)--cycle;
	\node at (5.8,0.65) {\scriptsize $\sigma(U)$};
	\end{tikzpicture}
	\caption{Shuffling of basic arcs from the proof of Theorem~\ref{thm:shuffle}.}
	\label{fig:shuffle}
\end{figure}
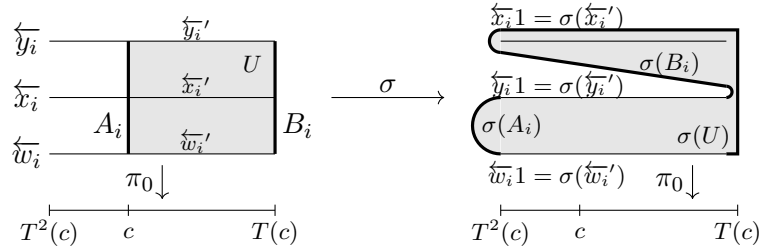

\section{$\mathcal{E}$-embeddings of $X'$ with more than one fully accessible arc-components}\label{sec:fullyacc}

In this section we study $\mathcal{E}$-embeddings of an arbitrary $X'$ that allow at least two fully accessible dense arc-components. 

\begin{lemma}\label{lem:horror}
	Let $\nu=10^{\kappa}1\ldots$ and embed $X'$ with respect to $L=(0^{\kappa}1)^{\infty}$. The smallest left-infinite tail with respect to $\prec_L$ is $A(S)=A(S_0)=A((10^{\kappa})^{\infty})\not\subset\mathcal{U}_L.$ Moreover, both $\mathcal{U}_L$ and $\mathcal{U}_S$ are fully accessible and dense in $X'$.
\end{lemma}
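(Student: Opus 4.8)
The plan is to first pin down $S$ and prove its minimality, and then read off the accessibility and density assertions from the general machinery of Sections~\ref{sec:intro} and~\ref{sec:accarcs}.

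First I would record the two index patterns: for $L=(0^{\kappa}1)^{\infty}$ one has $l_j=1$ iff $j\equiv1\pmod{\kappa+1}$, while the candidate $S=(10^{\kappa})^{\infty}$ has $S_j=1$ iff $j\equiv0\pmod{\kappa+1}$; in particular $S_1=0$. I would check admissibility of $S$ quickly (its longest run of $0$'s has length exactly $\kappa$, and at the first place a reversed prefix of $S$ could disagree with $\nu=10^{\kappa}1\ldots$ the parity forces $(10^{\kappa})^{\infty}\preceq\nu$, so every finite subword is admissible). Next, using the weight $\psi_L$ from Section~\ref{sec:embed}, the $i=1$ summand equals $-\tfrac13$ whenever $s_1=0$ and $+\tfrac13$ whenever $s_1=1$; since $\sum_{i\ge2}3^{-i}=\tfrac16<\tfrac13$, the cylinder $[0]$ lies strictly below $[1]$. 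Hence the global $\prec_L$-minimum is attained in $[0]$, so it coincides with $S_0$ once I show $S$ is that minimum.

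The computational core is the minimality proof. To show $S\preceq_L\ovl s$ for every admissible $\ovl s$, let $k$ be the least index with $s_k\neq S_k$ and compare via \eqref{eq:L}. If $S_k=1$ (so $k\equiv0$) and $s_k=0$, then positions $k-1,\dots,k-\kappa$ carry the $\kappa$ zeros preceding that $1$ in $S$ and agree with $\ovl s$, so $s_k\ldots s_{k-\kappa}=0^{\kappa+1}$, which is not admissible; this case is impossible. If $S_k=0$ and $s_k=1$, I compute $D_k:=\#_1(S_{k-1}\ldots S_1)-\#_1(l_{k-1}\ldots l_1)$ straight from the patterns: for $k\equiv1\pmod{\kappa+1}$ one gets $D_k=0$ together with $l_k=1=s_k$, and for $k\equiv r$ with $2\le r\le\kappa$ one gets $D_k=-1$ together with $l_k=0=S_k$. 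In both subcases \eqref{eq:L} yields $S\prec_L\ovl s$. Thus $S$ is the strict global minimum and $S=S_0=(10^{\kappa})^{\infty}$. Because $L$ and $S$ are distinct sequences of the same period $\kappa+1$, they disagree at infinitely many indices, so they have different tails; since a short computation with Lemma~\ref{lem:first} shows $A(L)$ and $A(S)$ are non-degenerate (hence not spiral points), Remark~\ref{rem:symbolicAC} gives $A(S)\not\subset\mathcal{U}_L$. For $\mathcal{U}_L$: as $A(L)$ is not a spiral point, Proposition~\ref{prop:fully} makes $\mathcal{U}_L$ fully accessible (indeed every basic arc with the tail of $L$ is an extremum of a cylinder). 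For $\mathcal{U}_S$: being the global minimum, $A(S)$ is the bottom of every cylinder containing it, hence fully accessible by Lemma~\ref{lem:cyl}; provided $\mathcal{U}_S$ carries no folding point, Corollary~\ref{cor:fully} upgrades this to full accessibility of $\mathcal{U}_S$. Finally, $X'$ is indecomposable for $s\in(\sqrt2,2]$, so every composant is dense \cite{Na}, and since $\mathcal{U}_L,\mathcal{U}_S$ coincide with composants they are dense.

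The main obstacle is the folding-point analysis for $\mathcal{U}_S$, and in particular that the shift $\sigma$ is \emph{not} available to transport accessibility from $\mathcal{U}_L$ (this is precisely the content of Theorem~\ref{thm:shuffle}), so $\mathcal{U}_S$ must be treated on its own. By Proposition~\ref{prop:foldpts} a folding point of $\mathcal{U}_S$ would have left-tail eventually $(10^{\kappa})^{\infty}$ with all projections in $\omega(c)$, which forces the period-$(\kappa+1)$ orbit coded by $S$ to lie in $\omega(c)$; I expect this to occur exactly when $c$ is periodic, i.e. $\nu=(10^{\kappa})^{\infty}$. In that (long-branched) case $\tau_L(\ovl S)=\infty$, the arc $A(S)$ degenerates to the endpoint $\overline S$, and $\mathcal{U}_S$ is a ray whose only folding point is $\overline S$; since $\overline S$ lies on the global-minimum basic arc it is not capped (Remark~\ref{rem:notcapped}), hence accessible, and full accessibility of the ray then follows by running Proposition~\ref{prop:wiggles} outward from $A(S)$. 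Cleanly excluding folding points in the non-periodic case and dispatching this periodic boundary case is where the real care is needed; the admissibility and parity verifications flagged above are otherwise routine.
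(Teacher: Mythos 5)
Your computation of $S$ and its minimality, the admissibility check, and the treatment of $\mathcal{U}_L$ via Proposition~\ref{prop:fully} are all fine and match the paper. The genuine gap is in how you get full accessibility of $\mathcal{U}_S$. You only establish that $A(S)$ is an extremum of cylinders and then invoke Corollary~\ref{cor:fully}, which requires that $\mathcal{U}_S$ contain \emph{no} folding point; your fallback claim that a folding point can occur in $\mathcal{U}_S$ only when $\nu=(10^{\kappa})^{\infty}$ is false. By Proposition~\ref{prop:foldpts}, the $\sigma$-periodic point with itinerary $(10^{\kappa})^{\infty}.(0^{\kappa}1)^{\infty}$, which lies in $A(S)$ itself, is a folding point as soon as the $T$-periodic orbit coded by $(0^{\kappa}1)^{\infty}$ is contained in $\omega(c)$ --- which happens for many kneading sequences of the form $10^{\kappa}1\ldots$ (for instance whenever the critical orbit is dense in the core, in which case \emph{every} point is a folding point). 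In all such cases Corollary~\ref{cor:fully} is unavailable and your argument yields nothing beyond accessibility of $A(S)$ and whatever Proposition~\ref{prop:wiggles} reaches before the first folding point. The paper avoids this entirely: it shows that \emph{every} basic arc $\ovl x=\ldots 10^{\kappa}10^{\kappa}x_k\ldots x_1$ of $\mathcal{U}_S$ (with $(\kappa+1)\mid k$) is itself the top or bottom of the cylinder $[10^{\kappa}x_k\ldots x_1]$, by the same $0^{\kappa+1}$-forcing that computes $S_0$; then Lemma~\ref{lem:cyl} applies arc by arc and no folding-point hypothesis is needed. This is the missing idea.

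A second, smaller gap is the density claim. You deduce it from density of composants of an indecomposable continuum, but arc-components coincide with composants only when the critical orbit is finite (Proposition 3 of \cite{BB}, as the paper notes), whereas the lemma is stated --- and later used --- for arbitrary $\nu=10^{\kappa}1\ldots$. For infinite critical orbit an arc-component may be a proper subset of its composant, so density does not follow for free. The paper instead constructs, for any prescribed admissible word $x_n\ldots x_1$, an admissible left-infinite sequence with the tail of $L$ (resp.\ $S$) ending in $x_n\ldots x_1$, via a short case analysis on the beginning $10^{\kappa}10^{\kappa_2}10^{\kappa_3}10^{\kappa_4}1\ldots$ of $\nu$; some such direct symbolic argument is required here.
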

\begin{proof}
	First, let us comment that $L=(0^{\kappa}1)^{\infty}$ is admissible. Note that there exists \black{a natural number} $0\leq \kappa_2<\kappa$ such that $\nu=10^{\kappa}10^{\kappa_2}1\ldots$, so the word $10^{\kappa}10^{\kappa}$ is indeed admissible.
	
	It is straightforward to calculate $S$, infinitely many changes occur because $0^{\kappa+1}$ is not admissible, \ie symbol $0$ alters $L$, see Definition~\ref{def:dependence}. 
	
	To prove that $\mathcal{U}_L$ and $\mathcal{U}_S$ are fully accessible, it is enough to show that every basic arc from $\mathcal{U}_L\cup\mathcal{U}_S$ is at the top or the bottom of some cylinder.\\
	Proposition~\ref{prop:fully} shows that $\mathcal{U}_L$ is fully accessible.
	Assume that $A(\overleftarrow{x})\subset\mathcal{U}_S$ and take $k\in\N$ such that $x_{k+i}= s_{k+i}$ for every $i\in\N$ and such that $\kappa +1$ divides $k$, where $S=\ldots s_2s_1$. Then $\overleftarrow{x}=\ldots 10^{\kappa}10^{\kappa}x_k\ldots x_1$. Note that if $\#_1(10^{\kappa}x_k\ldots x_1)$ and $\#_1(l_{k+\kappa+1}\ldots l_1)$ have the same parity, then $S_{10^{\kappa}x_k\ldots x_1}=\overleftarrow{x}$ and $L_{10^{\kappa}x_k\ldots x_1}=\overleftarrow{x}$ in the other case.
	
	To show that $\mathcal{U}_L$ and $\mathcal{U}_S$ are dense, fix a point $x\in X'$ with backward itinerary $\ovl x=\ldots x_2x_1$ and fix $n\in\N$.
	 Denote $\nu=10^{\kappa}10^{\kappa_2}10^{\kappa_3}10^{\kappa_4}1\ldots$, where $0\leq\kappa_2<\kappa$, $0\leq\kappa_3, \kappa_4\leq\kappa$.\\
	  If $\kappa_3>0$, then there exists $\gamma \geq 0$ so that  $A((0^{\kappa}1)^{\infty}0^{\kappa_2}1^{\gamma}x_n\ldots x_1)\subset \mathcal{U}_L$ is admissible.\\
	   Assume that $\kappa_3=0$. If $\kappa_4<\kappa$, then there exists $\gamma'\geq 0$ so that $A((0^{\kappa}1)^{\infty}0^{\kappa_2}110^{\kappa_4+1}1^{\gamma'}x_n\\
	   \ldots x_1)\subset \mathcal{U}_L$ is admissible. If $\kappa_4=\kappa$, then $\nu=10^{\kappa}10^{\kappa_2}110^{\kappa}10^{\kappa_{2}}0\ldots$. Therefore, there exist an appropriate $\gamma''\geq 0$ so that  $A((0^{\kappa}1)^{\infty}0^{\kappa_2}110^{\kappa}10^{\kappa_2}11^{\gamma''}x_n\ldots x_1)\subset \mathcal{U}_L$ is admissible. A proof for points from $\mathcal{U}_S$ is analogous. Therefore, there are points from both $\mathcal{U}_L$ and $\mathcal{U}_S$ which are arbitrary close to any $x\in X'$ and thus $\mathcal{U}_L$ and $\mathcal{U}_S$ are dense in $X'$.
\end{proof}

\begin{theorem}
	For every $X'$ there exists a planar embedding with \black{at least} two non-degenerate fully accessible dense arc-components. 
\end{theorem}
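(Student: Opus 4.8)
The plan is to read off the two arc-components directly from Lemma~\ref{lem:horror}, and then to treat separately the single exceptional space to which that lemma does not apply. First I would recall that for every tent map $T_s$ with $s\in(\sqrt{2},2]$ other than the full tent map $T_2$, the kneading sequence has the form $\nu=10^{\kappa}1\ldots$ with $\kappa\in\N$ finite; this is exactly Definition~\ref{def:kappa} together with the remark following it (using $c_1=1$ and $c_2=0$). For such an $X'$ I would simply embed it with respect to $L=(0^{\kappa}1)^{\infty}$ and invoke Lemma~\ref{lem:horror}: it guarantees that $L$ is admissible, that the bottom tail is $S=(10^{\kappa})^{\infty}$, and that both $\mathcal{U}_L$ and $\mathcal{U}_S$ are fully accessible and dense in $X'$.

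It then remains only to check that $\mathcal{U}_L$ and $\mathcal{U}_S$ are two genuinely distinct, non-degenerate arc-components. Distinctness is immediate: writing out $L$ and $S$ shows that their symbols $1$ occur at positions congruent to $1$ and to $0$ modulo $\kappa+1$ respectively, so the two periodic sequences disagree at infinitely many indices and hence have different tails; by Remark~\ref{rem:symbolicAC} they lie in different arc-components (and Lemma~\ref{lem:horror} already records $A(S)\not\subset\mathcal{U}_L$). Non-degeneracy is likewise automatic, since each of $\mathcal{U}_L,\mathcal{U}_S$ is dense in the non-degenerate continuum $X'$, hence infinite, hence not a single point. This completes the argument whenever $X'$ is not the Knaster continuum, and in this range every step is routine once Lemma~\ref{lem:horror} is in hand.

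The main obstacle is the Knaster continuum, i.e.\ the case $T=T_2$, $\nu=10^{\infty}$. Here $\kappa$ is not finite, so Lemma~\ref{lem:horror} is unavailable; worse, Remark~\ref{rem:new} shows that this cannot be repaired inside the class $\mathcal{E}$, because in any $\mathcal{E}$-embedding of the Knaster continuum only basic arcs of $\mathcal{U}_L$ are extrema of cylinders, so at most one non-degenerate arc-component is fully accessible. Thus the Knaster case forces us to step outside the symbolic machinery of the paper. The plan here is to produce the required embedding by hand, or (more cheaply) to appeal to the characterization of the possible accessible sets of embeddings of Knaster continua obtained by D\c{e}bski \& Tymchatyn in \cite{DeTy}, realizing the union of two disjoint dense composants as the accessible set. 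I expect this to be the only delicate part of the proof, precisely because it is the one place where the $\mathcal{E}$-embeddings are provably insufficient and a genuinely different construction is needed.
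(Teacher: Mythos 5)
Your argument is correct and follows the paper's own proof exactly: the paper likewise embeds $X'$ with respect to $L=(0^{\kappa}1)^{\infty}$ and invokes Lemma~\ref{lem:horror} to obtain the two fully accessible dense arc-components $\mathcal{U}_L$ and $\mathcal{U}_S$ (distinct because $L$ and $S$ have different tails, non-degenerate because they are dense). Your additional observation about the Knaster continuum ($\nu=10^{\infty}$, where $\kappa$ is undefined and Remark~\ref{rem:new} rules out a second fully accessible arc-component within the class $\mathcal{E}$) is a genuine refinement rather than a gap on your side: the paper's one-line proof silently assumes $\nu=10^{\kappa}1\ldots$ with $\kappa$ finite and does not address this case, so if the theorem is read as covering the Knaster continuum an appeal to the characterization of D\c{e}bski--Tymchatyn \cite{DeTy} (or a direct non-$\mathcal{E}$ construction), as you propose, is indeed required.
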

\begin{proof}
	Let $\nu=10^{\kappa}1\ldots$ and construct $\phi_L(X')$ with respect to $L=\ldots 0^{\kappa}10^{\kappa}10^{\kappa}1$. Using Lemma~\ref{lem:horror} we conclude that $\mathcal{U}_S$ and $\mathcal{U}_L$ are fully accessible and dense and the claim follows.
\end{proof}

In a special case when the orbit of $c$ is finite and only $\mathcal{U}_L$ and $\mathcal{U}_S$ are fully accessible we obtain the following corollary.

\begin{corollary}
	\black{Assume the orbit of the critical point is finite and $X'$ is embedded as in Lemma~\ref{lem:horror}. Moreover, assume that the set of accessible points consists of only  $\mathcal{U}_L$ and $\mathcal{U}_S$. Then there are exactly two simple dense canals.}
\end{corollary}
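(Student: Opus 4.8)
The plan is to read the prime end structure off Corollary~\ref{cor:Iliadis} together with Iliadis' theorem \cite{Ili}, whose hypotheses hold in our situation: since the slope lies in $(\sqrt2,2]$ the core $X'$ is an indecomposable plane non-separating continuum; since the critical orbit is finite, every proper subcontinuum of $X'$ is an arc, and by Remark~\ref{rem:fp} the finitely many folding points lie in pairwise distinct arc-components, which for finite critical orbit coincide with composants, so each composant contains at most one folding point. Consequently the circle of prime ends of $\phi_L(X')$ decomposes into open intervals — one for each composant accessible in more than one point — together with their endpoints, each of which is a prime end of the second kind (an accessible folding point) or of the third kind (a simple dense canal).

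First I would pin down the accessible composants. By Lemma~\ref{lem:horror}, $\mathcal{U}_L$ and $\mathcal{U}_S$ are distinct (as $A(S)\not\subset\mathcal{U}_L$), fully accessible, and dense, and the standing hypothesis is that they exhaust the accessible set. I would then check that each of them is a \emph{line}: by the classification of non-degenerate arc-components it suffices to show neither contains an endpoint of $X'$. If $c$ is strictly preperiodic it is not recurrent, so $X'$ has no endpoints at all. If $c$ is periodic, the endpoints of $X'$ are exactly its folding points, whose backward itineraries are cyclic rotations of the periodic word of $\nu$ by Proposition~\ref{prop:foldpts}; since $\nu=10^{\kappa}10^{\kappa_2}1\ldots$ with $\kappa_2<\kappa$, this word has two $1$-gaps of different lengths and so is not a rotation of $0^{\kappa}1$, whence no folding point carries the tail $(0^{\kappa}1)^{\infty}$ of $L$ or the tail of $S$; by Remark~\ref{rem:symbolicAC} neither $\mathcal{U}_L$ nor $\mathcal{U}_S$ meets a folding point and both are endpoint-free lines. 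Thus $\mathcal{U}_L$ and $\mathcal{U}_S$ are the only composants accessible in more than one point, and Iliadis' theorem yields exactly two open intervals on the circle of prime ends, whose complement consists of exactly two endpoint prime ends.

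It remains to show that both endpoint prime ends are of the third kind. A second kind prime end would have as principal set a single accessible folding point $p$, which by hypothesis lies in $\mathcal{U}_L\cup\mathcal{U}_S$. In the periodic case this is already impossible, since $\mathcal{U}_L$ and $\mathcal{U}_S$ carry no folding point. In the strictly preperiodic case $p$ lies in one of the two fully accessible composants, so it is a Type~1 folding point (Remark~\ref{rem:types}) sitting in the interior of the corresponding line (indeed $p\in\mathrm{Int}(A(\overline p))$ by Remark~\ref{rem:preper1FP}); it is therefore accessed by a prime end in the \emph{interior} of that open interval, which by Iliadis' theorem satisfies $I(e)\subsetneq X'$, and Proposition~\ref{prop:fourth} then forces $\Pi(e)=I(e)=\{p\}$, a first kind prime end rather than a boundary one — equivalently, a Type~2 or Type~3 (boundary-accessible) folding point would render its composant only partially accessible, contradicting that the accessible set is the union of the two fully accessible lines. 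Hence no endpoint prime end is of the second kind, so both are of the third kind, i.e.\ simple dense canals, and there are exactly two of them. The delicate step is precisely this last one in the preperiodic case: one must exclude that an interior Type~1 folding point of $\mathcal{U}_L$ or $\mathcal{U}_S$ doubles as the principal point of a boundary prime end, which is exactly where the absence of fourth kind prime ends (Proposition~\ref{prop:fourth}) and the strict inequality $I(e)\subsetneq X'$ along open intervals are needed.
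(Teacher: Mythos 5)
Your argument reaches the right conclusion, but it takes a genuinely different route from the paper, and the difference is instructive. You read the hypothesis ``the set of accessible points consists of only $\mathcal{U}_L$ and $\mathcal{U}_S$'' literally, which lets you dismiss Type~2 and Type~3 accessible folding points for free: any such point would make some composant only partially accessible, or accessible in a single point, contradicting the hypothesis. The paper instead spends essentially its entire proof \emph{establishing} that no folding point outside the two lines is accessible: in the periodic case via Corollary~\ref{cor:endpoints_tau} (endpoints are capped, hence inaccessible), in the strictly preperiodic case with $T^3(c)$ not periodic via Proposition~\ref{prop:restriction}, and in the remaining case $\nu=10(c_3\ldots c_{n+2})^\infty$ via a bespoke argument: periodicity of $L=(0^\kappa 1)^\infty$ makes $\sigma^{\kappa+1}$ extendable, which forces a putative Type~3 folding point to have period divisible by $\kappa+1$, and a parity computation against Lemma~\ref{lem:type3sy} and Remark~\ref{rem:T3noteven} then rules it out. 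What the paper's version buys is applicability: in practice one can only verify symbolically that $\mathcal{U}_L$ and $\mathcal{U}_S$ are the only arc-components containing extrema of cylinders, and the whole difficulty of the corollary is that this does not by itself exclude accessible Type~3 folding points. Your proof front-loads exactly that difficulty into the hypothesis, so it proves a formally weaker (though literally stated) result with almost no work beyond Corollary~\ref{cor:Iliadis} and Proposition~\ref{prop:fourth}.

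One step of yours is not actually closed. In the strictly preperiodic case you allow a folding point $p$ to lie in $\mathcal{U}_L$ or $\mathcal{U}_S$ as a Type~1 point, and you must then exclude that $p$ is the principal point of one of the two \emph{boundary} prime ends (which would make that prime end second kind rather than a simple dense canal). Your two justifications do not do this: Proposition~\ref{prop:fourth} applied to the \emph{interior} prime end at $p$ only shows that that prime end is of the first kind, and the ``equivalently'' clause about Type~2/3 points is inapplicable precisely because $p$ is Type~1 --- nothing in Remark~\ref{rem:types} says that the principal point of a boundary prime end must be a Type~2 or Type~3 point rather than a Type~1 point seen a second time. What is needed is the geometric observation that a boundary prime end is represented by crosscuts running down the channels between adjacent cylinder extrema, and since under the hypothesis both shores of every such channel are dense lines, these crosscuts accumulate on all of $X'$, so $\Pi(P)=X'$. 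To be fair, the paper leans on the same unspoken identification (second-kind boundary prime ends correspond to Type~2/3 accessible folding points) when it invokes Corollary~\ref{cor:Iliadis}, but in the periodic case it avoids the issue entirely by showing no folding point is accessible at all, whereas your argument runs straight into it.
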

\begin{proof}
	Take the embedding constructed in Lemma~\ref{lem:horror}. \black{So, $X'$ with kneading sequence $\nu=10^{\kappa}1\ldots$ is embedded with respect to $L=(0^{\kappa}1)^{\infty}$.} Note that $\mathcal{U}_L$ and $\mathcal{U}_S$ do not contain endpoints for any chosen $\nu=10^{\kappa}1\ldots$ (since the kneading sequence $\nu=(10^{\kappa})^{\infty}$ does not appear as a kneading sequence in the tent map family) and are thus \black{dense} lines.  
	
	If $\nu$ is periodic, the endpoints of $X'$ are not accessible by Corollary~\ref{cor:endpoints_tau}. That in combination with Proposition~\ref{prop:fourth} gives two simple dense canals. If $\nu$ is preperiodic and $T^3(c)$ is not periodic, the conclusion again follows analogously as above. \black{What remains is to argue that when $T^3(c)$ is periodic,} Type 3 folding points do not exist for a chosen $L$. Since $L$ is periodic of period $\kappa+1$, it follows that $\sigma^{\kappa+1}:\phi_L(X')\to \sigma^{\kappa+1}(\phi_L(X'))$ is extendable to the whole plane.\\
	Assume that the point $p\in X'$ is a Type 3 folding point. Thus $\sigma^{\kappa+1}(p)$ is also Type 3 folding point. For $\nu=10(c_3\ldots c_{n+2})^{\infty}$, the itineraries of folding points are periodic of period $n\geq\kappa$. \black{Combining last two facts it follows that} $(\kappa+1)|n$. If $\kappa+1=n$, since $c_{n+2}=1$ it holds  that $c_3\ldots c_{n+2}=0^{\kappa-1}11$, which is even, a contradiction with Remark~\ref{rem:T3noteven}. From the circle of prime ends we get that there can be at most two Type 3 accessible folding points and thus $n=2(\kappa+1)$. Since $\#_1(c_3\ldots c_{n+2})$ is odd, it follows that $\ldots 0P^{2k+1}\succ_L\ldots 1P^{2k+1}$ and $\ldots 0P^{2k}\prec_L\ldots 1P^{2k}$ for all $k\in\N$, where $P=c_3\ldots c_{n+2}$. That is a contradiction with Lemma~\ref{lem:type3sy}, \black{and thus it follows that there are no accessible Type 3 folding points in these embeddings.}
\end{proof}

The following proposition shows that for $L$ as in Lemma~\ref{lem:horror} and $\nu$ of specific form there exist $\mathcal{E}$-embeddings of $X'$ that permit more than two fully accessible arc-components dense in $X'$. Specifically we improve the upper bound on the number of fully accessible non-degenerate arc-components \black{in the non-standard $\mathcal{E}$-embeddings} from three to four (compare to Example~\ref{ex:9}).

\begin{proposition}
Assume $\nu$ is of the form $\nu=10^{\kappa}10^{\kappa-1}110\ldots$ with $\kappa>1$. If $L=(0^{\kappa}1)^{\infty}$, then $\phi_L(X')$ has \black{at least} four fully-accessible dense arc-components.
\end{proposition}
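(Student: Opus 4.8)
The plan is to realise four pairwise distinct, dense, fully accessible arc-components as arc-components of basic arcs that are \emph{extrema of cylinders}, so that full accessibility is supplied by Lemma~\ref{lem:cyl} together with Corollary~\ref{cor:fully}. Two of the four are handed to us directly by Lemma~\ref{lem:horror}: for $\nu=10^{\kappa}1\ldots$ and $L=(0^{\kappa}1)^{\infty}$ the arc-components $\mathcal{U}_L$ (tail $(0^{\kappa}1)^{\infty}$) and $\mathcal{U}_S$ (tail $(10^{\kappa})^{\infty}$) are already fully accessible and dense, and they are distinct since their tails differ in infinitely many entries (Remark~\ref{rem:symbolicAC}). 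Everything therefore reduces to producing two further extremal tails that differ from $(0^{\kappa}1)^{\infty}$, from $(10^{\kappa})^{\infty}$, and from one another.

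The extra input is the block $0^{\kappa-1}11$ forced by the hypothesis $\nu=10^{\kappa}10^{\kappa-1}110\ldots$. The first thing I would record is that a left-infinite sequence supported on the block $0^{\kappa-1}11$ is admissible precisely because its factors beginning with $1$ compare below $\nu$ and its factors beginning with $0$ compare above $\sigma(\nu)$ in the parity-lexicographical order; the role of $\nu$ carrying $0^{\kappa-1}11$ is exactly that such double-$1$ patterns pass the admissibility test, while the longest admissible run of $0$'s is still $\kappa$. Using this I would compute, through the greedy maximisation/minimisation defining $L_{w}$ and $S_{w}$, the top and bottom of two cylinders $w$ built on the block $0^{\kappa-1}11$; the parity pattern of $L=(0^{\kappa}1)^{\infty}$, together with the ban on $0^{\kappa+1}$ and on words exceeding $\nu$, should pin these extrema down to two periodic tails, each containing the factor $11$. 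Since these tails contain $11$ while $(0^{\kappa}1)^{\infty}$ and $(10^{\kappa})^{\infty}$ carry only isolated $1$'s, they lie in new arc-components; and here $\kappa>1$ is essential, since it is precisely what prevents $0^{\kappa-1}11$ from degenerating to $1^{\infty}$ (the case $\kappa=1$) and what keeps the two new tails apart.

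It then remains to upgrade each of these extremal basic arcs to a fully accessible \emph{dense} arc-component. Full accessibility of the entire arc-component follows from Corollary~\ref{cor:fully} once I verify that the two new components are lines containing no folding points: by Proposition~\ref{prop:foldpts} a folding point projects into $\omega(c)$ in every coordinate, whereas the two periodic tails are periodic orbits of $T$ that avoid $\omega(c)$, so no folding point occurs on them. Density of all four components is obtained exactly as in the proof of Lemma~\ref{lem:horror}: given any $\ovl x$ and any $n$, one prepends a suitable finite bridge word followed by the relevant periodic tail and checks admissibility, so that each prescribed tail has representatives arbitrarily close to $x$. Distinctness of the four is again the infinitely-many-differences criterion of Remark~\ref{rem:symbolicAC}, and nonemptiness of the cylinders in use is guaranteed by Lemma~\ref{lem:cylindersnonempty}.

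The main obstacle is the middle step: rigorously identifying the tops/bottoms of the two chosen cylinders and proving they are the claimed $11$-carrying periodic tails for every $\kappa>1$. This is delicate because the greedy extremum repeatedly meets two competing constraints — the prohibition of $0^{\kappa+1}$, which pushes toward more $1$'s, and the upper bound against $\nu$ together with the running parity coming from $L$, which forces $0$'s back in — so one must show that these balance out to a clean period rather than letting the extremal tail drift into $\mathcal{U}_L$, into $\mathcal{U}_S$, or into the all-ones arc-component $\mathcal{R}$. The folding-point exclusion in the case of an infinite critical orbit is a secondary technical point, again settled through Proposition~\ref{prop:foldpts}.
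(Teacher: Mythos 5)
Your overall skeleton matches the paper's: both arguments start from Lemma~\ref{lem:horror} to obtain $\mathcal{U}_L$ and $\mathcal{U}_S$, and then look for two further extremal tails. But the heart of the argument --- actually identifying those two new tails --- is left undone in your proposal. You describe it yourself as ``the main obstacle'' and only sketch that a greedy computation on cylinders built on the block $0^{\kappa-1}11$ should yield two periodic tails containing $11$. The paper carries this out concretely: for $\kappa$ even it computes $L_{1^{\kappa+1}}=(1110^{\kappa-1}10^{\kappa-1})^{\infty}11^{\kappa+1}$ and $S_{01^{\kappa+1}}=(010^{\kappa-1}1110^{\kappa-2})^{\infty}01^{\kappa+1}$ (with the roles of $L$ and $S$ swapped for $\kappa$ odd), and then verifies that \emph{every} basic arc in each of these two arc-components is again an extremum of a cylinder, which gives full accessibility directly from Lemma~\ref{lem:cyl}. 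Without the explicit tails, neither the distinctness of the four components nor their accessibility can actually be checked, so the proof is incomplete at its central step.

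There is also a step that would fail as stated: your route to full accessibility goes through Corollary~\ref{cor:fully} and rests on the claim that the two new components contain no folding points because ``the two periodic tails are periodic orbits of $T$ that avoid $\omega(c)$.'' The hypothesis only prescribes the prefix $\nu=10^{\kappa}10^{\kappa-1}110\ldots$; the critical orbit may be dense in the core, in which case $\omega(c)$ is the whole core and, by Proposition~\ref{prop:foldpts}, every point of $X'$ is a folding point. Corollary~\ref{cor:fully} is then inapplicable and this part of your argument collapses. The paper's device of showing each basic arc of the new components to be a top or bottom of a cylinder sidesteps folding points entirely, and is the argument you need here. Your density step (bridge words prepended to the periodic tail, as in Lemma~\ref{lem:horror}) is fine and is essentially what the paper does.
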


\begin{proof}
 Note that for $L=(0^{\kappa}1)^{\infty}$ and chosen $\nu$ it holds that $S=(10^{\kappa})^{\infty}$ and note that for $\kappa$ even we have $L_{1^{\kappa+1}}=(1110^{\kappa-1}10^{\kappa-1})^{\infty}
 11^{\kappa+1}$ and $S_{ 01^{\kappa+1}}=(010^{\kappa-1}1110^{\kappa-2})^{\infty}01^{\kappa+1}$. For $\kappa$ odd we get $S_{1^{\kappa+1}}=(1110^{\kappa-1}10^{\kappa-1})^{\infty}
 11^{\kappa+1}$ and $L_{ 01^{\kappa+1}}=(010^{\kappa-1}1110^{\kappa-2})^{\infty}01^{\kappa+1}$. Thus we get at least four different accessible left infinite tails. For the rest of the proof we assume without the loss of generality that $\kappa$ is even.\\
 To see that $\mathcal{U}_{L_{1^{\kappa+1}}}$ is fully accessible take $\ovl x=\ldots x_2x_1\subset\mathcal{U}_{L_{1^{\kappa+1}}}$ and $n\in\N$ such that $\ldots x_{n+2}x_{n+1}=(1110^{\kappa-1}10^{\kappa-1})^{\infty}$. Note that then $\ovl x$ is either the largest or the smallest arc in the cylinder $[1110^{\kappa-1}10^{\kappa-1}x_n\ldots x_1]$, depending on the parity of $x_n\ldots x_1$. Similarly we show that $\mathcal{U}_{S_{01^{\kappa+1}}}$ is fully accessible.\\
 To see that $\mathcal{U}_{L_{1^{\kappa+1}}}$ is dense in $X'$, fix a point $x\in X'$ with backward itinerary $\ovl x=\ldots x_2x_1$ and fix $n\in\N$. Note that $0^{\kappa}\not\subset {L_{1^{\kappa+1}}}$ and therefore there exists $\gamma\in\N$ so that $(1110^{\kappa-1}10^{\kappa-1})^{\infty}1^{\gamma}x_n\ldots x_1
 \subset \mathcal{U}_{L_{1^{\kappa+1}}}$ is admissible. We analogously prove that $\mathcal{U}_{S_{01^{\kappa+1}}}$ is dense in $X'$. \black{Combining these facts with Lemma~\ref{lem:horror} we conclude the proof.}
\end{proof}

The characterization of fully accessible arc-components of $\mathcal{E}$-embeddings of $X'$ (excluding the standard embeddings, see Section~\ref{sec:Br} and Section~\ref{sec:BD}) is still outstanding. 

{\bf Question:}
Do there exist more than four fully accessible dense arc-components in non-standard (Section~\ref{sec:Br} and Section~\ref{sec:BD}) $\mathcal{E}$-embeddings of $X'$? Specifically, \black{what is the answer to the previous question if} $c$ is periodic? 

We lack the symbolic techniques to make a general construction that would answer the preceding question. Later in the paper we will see that for every $n\in\N$ there exists $X'$ such that the Brucks-Diamond embedding of $X'$ has $n$ fully-accessible dense arc-components.

\section{Bruin's embeddings $\Br(X')$}\label{sec:Br}
In this section we study the core $X'$ as a subset of the plane by Bruin's embedding constructed in \cite{Br1}, \ie for $L=1^{\infty}$. Recall that we denote these embeddings by $\phi_{\mathcal{R}}(X')$. If the slope $s=2$ and thus $X'=X$, \black{then the set of accessible points is exactly $\mathcal{R}$ and endpoint $\overline 0$ of $\mathcal{C}$, recall Remark~\ref{rem:new}. Specially, there are no simple dense canals.}

From now onwards we restrict to cases when $X\neq X'$ (\ie $s\neq 2$). Bruin showed in \cite{Br1} that $\sigma:\phi_{\mathcal{R}}(X')\to \phi_{\mathcal{R}}(X')$ is extendable to the plane and the extension is an orientation reversing planar homeomorphism.

\begin{theorem}\label{thm:henks}
	In embeddings $\Br(X')$ the arc-component $\mathcal{R}$ is fully accessible and no other point from $\Br(X')$ is accessible. There exists one simple dense canal for every $\Br(X')$.
\end{theorem}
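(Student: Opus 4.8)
The plan is to prove the three assertions of the theorem separately: (i) $\mathcal{R}$ is fully accessible, (ii) no point outside $\mathcal{R}$ is accessible, and (iii) the prime end circle carries exactly one simple dense canal.

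For (i) I would first identify the top basic arc. Since $L=1^\infty$ and $\nu=10\ldots$ (so $c_1=1$, $c_2=0$), the definitions give $\tau_L(1^\infty)=2$ and $\tau_R(1^\infty)=1$, whence Lemma~\ref{lem:first} yields $\pi_0(A(1^\infty))=[T^2(c),T(c)]$, the whole core interval. Thus $A(1^\infty)$ is non-degenerate and is the largest basic arc, so in particular it is not a spiral point, and Proposition~\ref{prop:fully} gives that $\mathcal{U}_L=\mathcal{R}$ is fully accessible.

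The heart of (ii) is the combinatorial claim that for $L=1^\infty$ \emph{both} extrema $L_w$ and $S_w$ of every finite cylinder $[w]$ (with $w=a_n\ldots a_1$) have tail $1^\infty$, hence lie in $\mathcal{R}$. Here I would use the explicit height function, which for $L=1^\infty$ simplifies (using $\#_1(l_i\ldots l_1)=i$) to $\psi_L(\ovl s)=\tfrac12+\sum_{i\ge 1}(-1)^{\#_0(s_i\ldots s_1)}3^{-i}$, where $\#_0$ counts zeros; one checks $\psi_L(1^\infty)=1$ and $\psi_L(1^\infty 0)=0$. Because $3^{-i}>\sum_{j>i}3^{-j}$, extremising $\psi_L$ over $[w]$ is a greedy, most-significant-term-first procedure: from index $n+1$ on one chooses each symbol to force the prefix $\#_0$ into the parity producing the desired sign, and once that parity is reached it is preserved by taking all further symbols equal to $1$. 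Since $1\cdot(\text{admissible word})$ is always admissible (the mechanism of Lemma~\ref{lem:cylindersnonempty}), the symbol $1$ can always be prepended, so the greedy extremiser is realised cofinitely by $1$'s, i.e. with tail $1^\infty$. Therefore $L_w,S_w\in\mathcal{R}$ for all $w$, and by Lemma~\ref{lem:cyl} together with the observation following it, any point possessing a Cantor-set-of-arcs neighbourhood that is not a top or bottom of a cylinder is inaccessible; such points not in $\mathcal{R}$ are thus inaccessible.

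The remaining, and hardest, step is to rule out accessible folding points $p\notin\mathcal{R}$; this is where I expect the real work to lie. The plan is a sandwiching argument using (ii). For $w$ a suffix of $\ovl p$ of length $n$, the extrema $L_w,S_w\in\mathcal{R}$ converge to $A(\ovl p)$ from above and below in $\Ls$ as $n\to\infty$. When $p$ is not an endpoint, $\tau_L(\ovl p),\tau_R(\ovl p)<\infty$, and since $L_w,S_w$ share arbitrarily long suffixes with $\ovl p$ their $\tau_L,\tau_R$ eventually equal those of $\ovl p$; hence $A(L_w)$ and $A(S_w)$ are long arcs with the same projection as $A(\ovl p)$, covering $\pi_0(p)$, so they squeeze $p$ between $\mathcal{R}$-arcs at arbitrarily close heights and block every access path. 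For endpoints ($\tau_L=\infty$ or $\tau_R=\infty$, cf.\ Proposition~\ref{prop:endpt}), where $A(\ovl p)$ may degenerate, Theorem~\ref{thm:capped} already shows $p$ is capped by $\mathcal{R}$-arcs; to upgrade this to inaccessibility I would choose the cylinder length $n=m_i+1$ just past the indices $(m_i)$ of the complete sequence of $p$ (Definition~\ref{def:complete}), so that $\tau_R(L_w)=\tau_R(S_w)=m_i$ and the corresponding extremal arcs project onto an interval with right endpoint $T^{m_i}(c)\ge\pi_0(p)=\inf_j T^{m_j}(c)$, again covering $\pi_0(p)$ from both sides while accumulating on $p$. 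Making this length control and the degenerate-arc geometry precise is the main obstacle. Combining the two cases, no folding point outside $\mathcal{R}$ is accessible.

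Finally, for (iii): $\Br(X')$ is an indecomposable, plane non-separating continuum, and by (i)–(ii) its unique arc-component accessible in more than one point is the dense line $\mathcal{R}$. By Iliadis' theorem the circle of prime ends decomposes into the open intervals corresponding to such composants together with their boundary points; with a single interval $J$ (for $\mathcal{R}$) one necessarily has $J=S^1\setminus\{P\}$, leaving exactly one boundary prime end $P$, whose impression is all of $X'$. Since no folding point outside $\mathcal{R}$ is accessible, $P$ cannot be of the second kind, and the squeezing of the previous paragraph (in the finite-orbit case Proposition~\ref{prop:fourth}, via Corollary~\ref{cor:Iliadis}) excludes the fourth kind; hence $\Pi(P)=I(P)=X'$, so $P$ is a simple dense canal and it is the only one.
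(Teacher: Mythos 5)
Your parts (i) and the cylinder computation in (ii) match the paper: the paper's proof also shows explicitly that for $L=1^{\infty}$ every cylinder extremum is of the form $1^{\infty}a_n\ldots a_1$ or $1^{\infty}01^ka_n\ldots a_1$ (with $k$ minimal making $01^ka_n\ldots a_1$ admissible, existence of which forces $\nu\neq 101^{\infty}$), so all extrema lie in $\mathcal{R}$. The genuine divergence, and the genuine gap, is in how you exclude accessible folding points outside $\mathcal{R}$. The paper does \emph{not} attempt a direct combinatorial squeeze; it uses Bruin's result that $\sigma$ extends to a planar homeomorphism for $\phi_{\mathcal{R}}$. Extendability first shows that any accessible non-degenerate arc lies in $\mathcal{R}$ (shift it until it contains a cylinder extremum), so the prime end circle is one open interval plus a single remaining prime end $P$; the induced map on prime ends fixes $P$, so if $P$ were of the second kind its principal point would be a $\sigma$-fixed point of $X'$, necessarily $\rho$ with itinerary $1^{\infty}.1^{\infty}$ --- but $\rho\in A(L)$ is already a first-kind prime end, a contradiction. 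Finally $\Pi(P)=X'$ because $\mathcal{R}$ is dense and bounds the canal, which simultaneously rules out the fourth kind and gives the simple dense canal.

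Your sandwiching argument does not close as stated. The claim that $\tau_L,\tau_R$ of the extrema $L_w,S_w$ ``eventually equal those of $\ovl p$'' (and, in the endpoint case, that $\tau_R(L_w)=\tau_R(S_w)=m_i$) is not automatic: the prefix $1^{\infty}01^{k}$ glued onto $e_{m_i+1}\ldots e_1$ can create a \emph{new, longer} match with $\nu$ (e.g.\ when $\kappa=1$, so $\nu=10(11)^n0\ldots$), which shortens $\pi_0(A(L_w))$ so that it no longer covers $\pi_0(p)$; then the sandwiching arcs cap $p$ without blocking access, exactly the situation of Figure~\ref{fig:endpoint_ngbhd}. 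This is precisely why the paper's own Corollary~\ref{cor:endpoints_tau} needs the long-branchedness hypothesis, whereas Theorem~\ref{thm:henks} is claimed for \emph{every} $X'$ with $s\in(\sqrt2,2)$, including recurrent, non-long-branched critical orbits where basic arcs near an endpoint are arbitrarily short. For the same reason your appeal to Proposition~\ref{prop:fourth}/Corollary~\ref{cor:Iliadis} to exclude fourth-kind prime ends only covers the finite-orbit case; in general you need the density of $\mathcal{R}$ to force $\Pi(P)=X'$ directly. To repair the argument you would either have to supply the missing length control on the sandwiching arcs in the non-long-branched case, or import the extendability of $\sigma$ as the paper does.
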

\begin{proof}
		For embeddings given by Bruin in \cite{Br1} it holds that $L=1^{\infty}$ and thus $\mathcal{U}_L=\mathcal{R}$.\\
		We will explicitly calculate the top and bottom of an admissible cylinder $[a_n\ldots a_1]$ for $n\in\N$.

		If $\#_1(a_n\ldots a_1)$ equals (does not equal) the parity of natural number $n$, then $L_{a_n\ldots a_1}=1^{\infty}a_n\ldots a_1$ ($S_{a_n\ldots a_1}=1^{\infty}a_n\ldots a_1$), since $1^{\infty}a_n\ldots a_1$ is always admissible by Lemma~\ref{lem:cylindersnonempty}. Also, $S_{a_n\ldots a_1}=1^{\infty}01^ka_n\ldots a_1$ ($L_{a_n\ldots a_1}=1^{\infty}01^ka_n\ldots a_1$), where $k\in\N_0$ is the smallest nonnegative integer such that $01^ka_n\ldots a_1$ is admissible.\\
		 Assume by contradiction that such $k$ does not exists. Then $01^ia_n\ldots a_1\prec c_2c_3\ldots\black{c_{n+i+2}}$ for every $i\in\N_0$. Since the word $01^i$ is always admissible, it follows that $c_2c_3\ldots\black{c_{i+2}}=01^i$ for every $i\in\N_0$, \ie $\nu=101^{\infty}$ and the unimodal interval map which corresponds to this kneading sequence $\nu$ is renormalizable, a contradiction \black{with $T$ being a tent map with slope $s\in (\sqrt{2},2]$}.
		 
		 Note that every $1^{\infty}a_n\ldots a_1$ is realized as an extremum of a cylinder, namely $1^{\infty}a_n\ldots 
		 a_1\\
		 =L_{a_n\ldots a_1}$ if $\#_1(a_n\ldots a_1)$ equals the parity of $n$ and $1^{\infty}a_n\ldots a_1=S_{a_n\ldots a_1}$ if $\#_1(a_n\ldots a_1)$ and $n$ are of different parity.
		
		 Note that if there was an accessible non-degenerate arc $Q\subset \Br(X')$ which is not the top or the bottom of any cylinder, then, since $\sigma$ is extendable, also every shift of $Q$ is accessible. But $\sigma$ \black{extends} arcs \black{in $X'$ (in the arc-length metric on $X'$)}, so there exists $i\in\N$ such that $\sigma^i(Q)$ contains a basic arc which is an extremum of a cylinder and thus $\sigma^i(Q)$ is a subset of $ \Br(\mathcal{R})$. Therefore, also $Q\subset \Br(\mathcal{R})$. We conclude that $\Br(\mathcal{R})$ corresponds to the circle of prime ends minus a point. The remaining prime end $P$ is either of the second, third, or fourth kind.\\
		 Assume first by contradiction that $P$ is of the second kind, \ie it corresponds to an accessible folding point. Since $\sigma:\phi_{\mathcal{R}}(X')\to \phi_{\mathcal{R}}(X')$ is extendable to the plane, it follows that $P$ needs to correspond to accessible point $\rho\black{=(\ldots,r,r,r),}$ \black{where $r$ is the non-zero fixed point of $T$} (since $\bar{\rho}=\ldots 11.11\ldots$ is the only $\sigma$-invariant itinerary of a point in $X'$). However, $A(1^{\infty})$ is the top or the bottom of a cylinder, so $\rho$ corresponds to a first kind prime end on the circle of prime ends, a contradiction.\\
		  Therefore, the remaining point $P$ on the circle of prime ends is either of the third or the fourth kind. Since $\mathcal{R}$ is dense in $X'$ (see Proposition 1 from \cite{BB}) and $\Br(\mathcal{R})$ bounds the canal in $\Br(X')$ it follows that $\Pi(P)=\Br(X')$ and thus $I(P)=\Pi(P)=\Br(X')$. Thus there exists one simple dense canal for every $\Br(X')$. 
\end{proof}

\section{Brucks-Diamond embeddings $\BD(X')$}\label{sec:BD}
In this section we study the core $X'$ as the subset of the plane by the Brucks-Diamond embedding $\BD$ constructed in \cite{BrDi}, \ie for $L=0^{\infty}1$. If the slope $s=2$, \ie $X=X'$ is the Knaster continuum, it follows from Corollary~\ref{cor:withC} and Remark~\ref{rem:endptKnaster} that $\mathcal{U}_L=\mathcal{C}$ is fully accessible and that no other point from $\BD(X')$ is accessible (observe the circle of prime ends). Specifically, there is no simple dense canal.

 Thus we restrict to cases when $X\neq X'$ (\ie $s\neq 2$).  Embeddings $\phi_{\mathcal{C}}(X')$ can be viewed as global attractors of orientation preserving planar homeomorphism \black{$h:\R^2\to \R^2$ so that $h|_{\phi_{\mathcal{C}(X')}}=\sigma$} and was described by Barge and Martin in \cite{BM}.
 \black{In other words}, $\sigma\colon \phi_{\mathcal{C}}(X')\to \phi_{\mathcal{C}}(X')$ can be extended to a planar homeomorphism \black{$h$}. For $\phi_{\mathcal{C}}(X)$ the set of accessible points is $\mathcal{C}$ and it forms an infinite canal which is dense in the core. However, if $\mathcal{C}$ is stripped off, the set of accessible points and the prime ends of $\phi_{\mathcal{C}}(X')$ become very interesting. Recently Boyland, de Carvalho and Hall gave in \cite{3G} a complete characterization of prime ends for embeddings $\varphi_{\mathcal{C}}$ of unimodal inverse limits satisfying certain regularity conditions which hold also for tent map inverse limits with indecomposable cores. In this section we obtain an analogous characterization of accessible points as in \cite{3G} using symbolic computations. What this sections adds to the results from \cite{3G} is the characterization of types of accessible folding points, specially in the irrational height case (see the definitions below). By knowing the exact symbolic description of points in $X'$ we can determine whether they are folding points or not, and if they are, whether they are endpoints of $X'$. The classification of accessible sets differentiates (as in \cite{3G}) according to the \emph{height} of the kneading sequence which we introduce shortly in this section (for more details see \cite{Ha}). Throughout this section the order $\prec_L$ corresponds with the standard parity-lexicographical order $\prec$.

We denote by $L'\black{\in\{0,1\}^{\infty}}$ the left infinite itinerary which is the largest admissible sequence in the embedding  
$X'$ for $L=0^{\infty}1$ (as in \cite{BrDi}) after $\mathcal{C}$ is removed. Therefore we need to find which basic arc of $X'$ is the closest to the basic arc $A(0^{\infty}1)$. This was calculated in \cite{BdCH}, \black{see Lemma~\ref{lem:BdCHL'}}.

\begin{definition}\label{def:height}
	Let $q\in (0, \frac{1}{2})$. For $i\in\N$ define
	$$\kappa_i(q)=
	\begin{cases}
	\lfloor \frac{1}{q}\rfloor -1, & \text{ if } i=1,\\
	\lfloor \frac{i}{q}\rfloor - \lfloor\frac{i-1}{q}\rfloor - 2, & \text{ if }  i\geq 2.
	\end{cases}
	$$
	If $q$ is irrational, we say that the kneading sequence $$\nu=10^{\kappa_1(q)}110^{\kappa_2(q)}110^{\kappa_3(q)}11\ldots$$ has \emph{height} $q$ or that it is of \emph{irrational type}.
	If $q=\frac{m}{n}$, where $m$ and $n$ are relatively prime, we define 
	$$c_q=10^{\kappa_1(q)}110^{\kappa_2(q)}11\ldots 110^{\kappa_m(q)}1,$$
	$$w_q=10^{\kappa_1(q)}110^{\kappa_2(q)}11\ldots 110^{\kappa_m(q)-1}.$$
	By $\hat a$ we denote the reverse of a word $a$, so $\hat w_q=0^{\kappa_m(q)-1}110^{\kappa_{m-1}(q)}11\ldots 110^{\kappa_1(q)}1$.
	We say that a kneading sequence has \emph{rational height} $q$ if $(w_q1)^{\infty}\preceq\nu\preceq 10(\hat w_q1)^{\infty}$. Denote by
	$\lhe(q):=(w_q1)^{\infty}$, $\rhe(q):=10(\hat w_q1)^{\infty}$. If $\lhe(q)\prec \nu\prec \rhe(q)$ we say that $\nu$ is of \emph{rational interior type}, and \emph{rational endpoint type} otherwise. Every kneading sequence that appears in the tent map family is either of rational endpoint, rational interior or irrational type, see Lemma 8 and Lemma 9 in \cite{BdCH} (for further information see also \cite{Ha}).
\end{definition}

\begin{remark}
	The values of $\kappa_i(q)$ can be obtained in the following way (see Lemma 2.5 in \cite{Ha} for details). Draw the graph $\Gamma_{\zeta}$ of the function $\zeta\colon\R\to\R$, $\zeta(\black{t})=q\black{t}$. Then $\kappa_i(q)=N_i-2$, where $N_i$ is the number of intersections of the graph $\Gamma_{\zeta}$ with vertical lines $\black{t}=N$, $N\in\N_0$ in the segment $\f i-1, i]$, see Figure~\ref{fig:line}. Note that it automatically follows that the word $\kappa_1(q)\kappa_2(q)\ldots\kappa_m(q)$ is a palindrome and thus $c_q$ is a palindrome. Furthermore, for every $i\in\N$ either $\ki=\kone$ or $\ki=\kone-1$.
\end{remark}

\begin{remark}\label{rem:palind}
	Assume $q=m/n$ is rational with $m$ and $n$ being relatively prime. Take $k\in\{1, \ldots, n-1\}$ such that $\lceil kq\rceil - kq$ obtains the smallest value; such $k$ is unique, since $m$ and $n$ are relatively prime. Denote by $K=\lceil kq\rceil$ and note that for every $i\in\{1, \ldots, k\}$ the line that joins $(0, 0)$ with $(k, K)$ intersects a vertical line in $\f i-1, i]$ if and only if $q\black{t}$ intersects a vertical line in $\f i-1, i]$. Thus $\kappa_1(q)\ldots\kappa_K(q)$ is a palindrome; it is the longest palindrome among $\kappa_1(q)\ldots \kappa_i(q)$ for $i<m$. By studying the line which joins $(k, K)$ with $(n, m)$ we conclude that $\kappa_{K+1}(q)\ldots\kappa_{m-1}(q)(\kappa_m(q)-1)$ is also a palindrome, see Figure~\ref{fig:line}. Thus for every rational $q$ there exist palindromes $Y, Z$ such that $c_q=Y1Z01$.
\end{remark}

\begin{remark}\label{rem:pal}
	Note that $\{\kappa_i(q)\}_{i\geq 1}$ is a Sturmian sequence for irrational $q$ and thus there exist infinitely many palindromic prefixes of increasing length (see \eg \cite{deL}, Theorem 5) which are of even parity.
	This can also be concluded by studying the rational approximations of $q$. Namely, if $k\in\N$ is such that $\lceil iq\rceil-iq$ achieves its minimum in $i=k$ for all $i\in\{1, \ldots, k\}$, then the word $\kappa_1(q)\ldots\kappa_k(q)$ is a palindrome. Note that $10^{\kappa_1(q)}11\ldots 110^{\kappa_k(q)}1$ is also a palindrome and it is an even word. By choosing better rational approximations of $q$ from above, we see that $k$ can be taken arbitrary large, and thus the beginning of $c_q$ consists of arbitrary long even palindromes.
\end{remark}

\begin{figure}[!ht]
	\centering
	\begin{tikzpicture}[scale=0.5]
	\draw[step=1,black] (0,0) grid (20,9);
	\draw (0,0) -- (20,9);
	\node at (-0.3,0.5) {\scriptsize $3$};
	\node at (-0.3,1.5) {\scriptsize $2$};
	\node at (-0.3,2.5) {\scriptsize $2$};
	\node at (-0.3,3.5) {\scriptsize $2$};
	\node at (-0.3,4.5) {\scriptsize $3$};
	\node at (-0.3,5.5) {\scriptsize $2$};
	\node at (-0.3,6.5) {\scriptsize $2$};
	\node at (-0.3,7.5) {\scriptsize $2$};
	\node at (-0.3,8.5) {\scriptsize $3$};
	\draw[ultra thick] (0,5)--(11,5)--(11,0);
	\node at (11,-0.5) {\scriptsize $k$};
	\node at (20.5,9.5) {\scriptsize $(20,9)$};
	\node at (-0.5,-0.5) {\scriptsize $(0,0)$};
	\end{tikzpicture}
	\caption{Calculating $\kappa_i(q)$ by counting the intersections of the line $q\black{t}$ with vertical lines over integers. The picture shows the values $N_i$ for $q=\frac{9}{20}$. It follows that $c_q=101111111101111111101=(101111111101)1(111111)01=Y1Z01$. The decomposition into palindromes $Y, Z$ follows since $\lceil \frac{9}{20}k\rceil - \frac{9}{20}k$ obtains its minimum for $k=11=\lfloor\frac{5}{q}\rfloor$ (bold line in the figure).}
	\label{fig:line}
\end{figure}
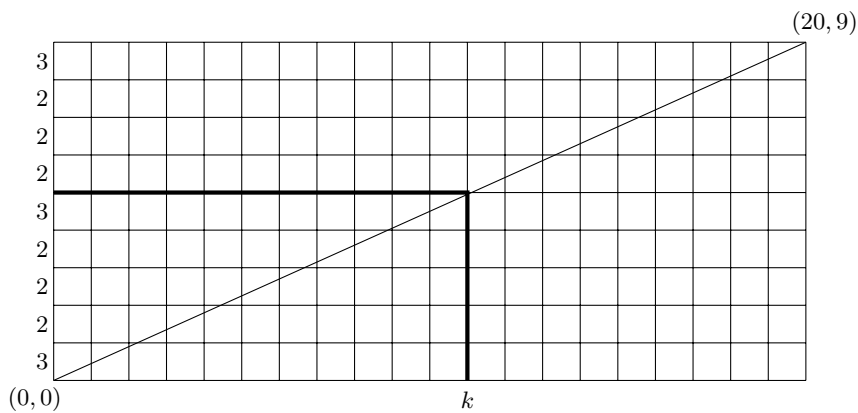

\begin{lemma}\label{lem:lhe/rhe}
	Let $q=\frac{m}{n}$. Then there exists $N\in\N$ such that $\sigma^N(\rhe(q))=\lhe(q)$.
\end{lemma}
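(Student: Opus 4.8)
The plan is to reduce the statement to a purely combinatorial fact about finite words, namely that $w_q1$ and $\hat w_q1$ are \emph{conjugate} (cyclic rotations of one another). First I would record two elementary observations. Both $w_q1$ and $\hat w_q1$ have the same length, since $\hat w_q$ is the reversal of $w_q$; and $\rhe(q)=10(\hat w_q1)^\infty$, so deleting the initial block $10$ gives $\sigma^2(\rhe(q))=(\hat w_q1)^\infty$, while $\lhe(q)=(w_q1)^\infty$. Because shifting a purely periodic sequence by $k$ positions rotates its period cyclically by $k$, it suffices to prove that the periods $w_q1$ and $\hat w_q1$ are conjugate: if $\hat w_q1=ps$ and $w_q1=sp$ with $|p|=k$, then $\sigma^{k}((\hat w_q1)^\infty)=(w_q1)^\infty=\lhe(q)$, and hence $N:=k+2$ satisfies $\sigma^{N}(\rhe(q))=\lhe(q)$.

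The conjugacy itself I would extract from the palindromic structure of Remark~\ref{rem:palind}. By that remark $c_q=Y1Z01$ for some palindromes $Y,Z$, and since $c_q=w_q01$ by Definition~\ref{def:height}, this forces $w_q=Y1Z$, so that $w_q1=Y\,(1Z1)$. Here $Y$ is a palindrome and $1Z1$ is a palindrome (the latter because $Z$ is), so $w_q1$ is a product of two palindromes. For any word $x=PQ$ with $P,Q$ palindromes one has $\hat x=\hat Q\hat P=QP$, which is precisely the cyclic rotation of $x=PQ$ moving the prefix $P$ to the end; thus $x$ is conjugate to its reversal $\hat x$. Applying this to $x=w_q1$ gives $w_q1\sim\widehat{w_q1}$. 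Finally $\widehat{w_q1}=1\hat w_q$, and $1\hat w_q$ is the rotation of $\hat w_q1$ obtained by moving the trailing letter $1$ to the front, so $1\hat w_q\sim\hat w_q1$. Since conjugacy is an equivalence relation on words of fixed length, transitivity yields $w_q1\sim\hat w_q1$, exactly what the reduction above needs.

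Tracking the two rotations, $\hat w_q1$ is the left rotation of $w_q1$ through $|Y|+1$ positions, so $(w_q1)^\infty=\sigma^{(n-|Y|-1)\bmod n}((\hat w_q1)^\infty)$ with $n=|w_q1|$, giving the explicit value $N=2+\big((n-|Y|-1)\bmod n\big)$. The bookkeeping I must be careful with is the interplay of $c_q$, $w_q$ and $\hat w_q$: I should verify $c_q=w_q01=10\hat w_q$ (the second equality being the palindromicity of $c_q$ noted after Definition~\ref{def:height}) and that the decomposition $c_q=Y1Z01$ of Remark~\ref{rem:palind} is compatible with stripping the trailing $01$ to read off $w_q=Y1Z$. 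These are the only points at which the arithmetic of the $\kappa_i(q)$ enters, and they are supplied directly by Remark~\ref{rem:palind}; everything else is formal word combinatorics. The step I expect to be most delicate is confirming that $w_q1$ genuinely factors as a product of two palindromes in every rational case, including the degenerate ones where $Y$ or $Z$ is empty (in which case $w_q1$ is itself a single palindrome and $\widehat{w_q1}=w_q1$), since this factorization is the hinge of the whole argument; once it is in place the conjugacy, and therefore the existence of $N$, follows immediately.
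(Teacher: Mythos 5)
Your proof is correct and follows essentially the same route as the paper: both arguments hinge on the decomposition $c_q=Y1Z01$ into palindromes from Remark~\ref{rem:palind}, which gives $w_q=Y1Z$ and $\hat w_q=Z1Y$, so that $(w_q1)^\infty$ and $(\hat w_q1)^\infty$ are cyclic shifts of one another. Your packaging via conjugacy of words and the explicit value of $N$ merely spells out what the paper leaves implicit.
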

\begin{proof}
	Recall that $\lhe(q)=(w_q1)^{\infty}$, $\rhe(q)=10(\hat w_q1)^{\infty}$, where $c_q=w_q01$. By Remark~\ref{rem:palind}, there exist palindromes $Y, Z$ such that $c_q=Y1Z01$, so $w_q=Y1Z$. It follows that $\lhe(q)=(Y1Z1)^{\infty}$ and $\rhe(q)=10(Z1Y1)^{\infty}$ which finishes the proof.
\end{proof}

\begin{remark}
	The height of a kneading sequence is the rotation number of the natural mapping on the circle of prime ends. We will only need symbolic representation of the height of a kneading sequence here; for a more detailed study of height see \cite{Ha}.
\end{remark} 

\black{We also remark that the notation $\ovl{x}$ is used in this section for substantially different purpose than it was used in the rest of the file. Namely: 
}
\begin{definition}
	Given an infinite sequence $\overrightarrow x=x_1x_2x_3\ldots$, we denote in this section its reverse by $\ovl x=\ldots x_3x_2x_1$.
\end{definition}

\begin{lemma}[\cite{BdCH}, Lemma 13]\label{lem:BdCHL'}
	Let $X'$ be embedded with $\phi_{\mathcal{C}}$. Denote by $L'$ the largest admissible basic arc in $X'$ and by $\nu$ the kneading sequence corresponding to $X'$. Then,
	$$L'=
	\begin{cases}
	\ovl{\rhe(q)}, & \text{ if $\lhe\black{(q)}\prec \nu\preceq\rhe(q)$,}\\
	\ovl{\nu}, & \text{ if $q$ is irrational or $\nu=\lhe(q)$}.
	\end{cases}$$
\end{lemma}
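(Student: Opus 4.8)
The plan is to recast the statement as a combinatorial extremal problem about parity-lexicographically maximal one-sided words, and then solve it by a greedy construction read off against the height data $\kappa_i(q)$. Recall that for the Brucks--Diamond case $L=0^{\infty}1$ the order $\prec_L$ coincides with the parity-lexicographical order $\prec$; unwinding equation~(\ref{eq:L}) with $l_1=1$, $l_k=0$ for $k\ge 2$ gives the cleaner statement that, under the reversal notation of this section, $\ovl s\preceq_L\ovl t$ if and only if $\ovr s:=s_1s_2s_3\ldots\preceq\ovr t:=t_1t_2t_3\ldots$. Hence $L'$ is determined by the rule: $\ovr{L'}$ is the $\prec$-largest right-infinite word $\ovr s$ such that $\ovl s$ is admissible and $A(\ovl s)\not\subset\mathcal{C}$, the latter meaning exactly that $\ovr s$ is not eventually $0^{\infty}$. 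First I would record the admissibility criterion in usable form: since a prefix of an admissible word is admissible, $\ovl s$ is admissible if and only if every initial window $s_n\ldots s_1$ is, and by the definition of admissibility this collapses to the single two-sided constraint
$$c_2\ldots c_{m+1}\preceq s_ms_{m-1}\ldots s_1\preceq c_1\ldots c_m\qquad(\star)$$
for every $m\ge 1$. Existence of the maximizer follows from compactness, exactly as for tops of cylinders.

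Next I would exhibit the candidate and verify it is admissible and lies in $X'$. In the irrational (or $\nu=\lhe(q)$) case the candidate is $\ovr{L'}=\nu$; here the key input is Remark~\ref{rem:pal}, which supplies arbitrarily long even palindromic prefixes $c_1\ldots c_{n_j}$ of $\nu$. For such $n_j$ the reversed window $c_{n_j}\ldots c_1$ equals $c_1\ldots c_{n_j}$, so the upper bound in $(\star)$ is met with equality at infinitely many scales, and the Sturmian structure of $\{\kappa_i(q)\}$ propagates the estimate to all intermediate $m$; this simultaneously shows $\ovl\nu$ is admissible and not eventually $0^{\infty}$, hence genuinely in $X'$. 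In the rational interior or right-endpoint case $\lhe(q)\prec\nu\preceq\rhe(q)$ the candidate is $\ovr{L'}=\rhe(q)=10(\hat w_q1)^{\infty}$; here I would use the palindrome decomposition $c_q=Y1Z01$ from Remark~\ref{rem:palind} to rewrite $\rhe(q)=10(Z1Y1)^{\infty}$ and check $(\star)$ block by block, and then invoke Lemma~\ref{lem:lhe/rhe} to see that $\ovl{\rhe(q)}$ and $\ovl{\lhe(q)}$ share a tail. This last point is what makes the two endpoint formulas consistent: at the right endpoint $\nu=\rhe(q)$ the two descriptions literally agree, and at the left endpoint $\nu=\lhe(q)$ the formula $\ovl\nu$ names the same arc-component from the other side.

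Maximality I would prove by contradiction. Suppose $\ovl t\in X'$ is admissible with $\ovr t\succ\ovr{L'}$, and let $k$ be the first index of disagreement. The parity-lexicographical rule fixes the forced value $t_k$ (larger than $(L')_k$ when $\#_1$ of the common prefix is even, smaller when odd), and then I would examine the length-$k$ window $t_kt_{k-1}\ldots t_1$ occurring in $(\star)$. The claim is that overtaking $\nu$ (respectively $\rhe(q)$) in the forward direction forces this reversed window to exceed $c_1\ldots c_k$, contradicting the upper bound in $(\star)$ — the only escape being tails that are eventually $0^{\infty}$, which put $A(\ovl t)$ into $\mathcal{C}$ and are excluded. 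Converting "forward excess'' into "reversed-window excess'' is precisely where the palindromicity of the prefixes of $\nu$ (Remark~\ref{rem:pal}) and of $c_q$ (Remark~\ref{rem:palind}) is spent, together with careful bookkeeping of the parity of $\#_1$, which flips the direction of the relevant inequality at each step.

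The hard part will be exactly this last conversion, and with it the pinning down of the case boundary. One must show not merely that the candidate is admissible and maximal, but that the maximum jumps from $\ovl\nu$ to $\ovl{\rhe(q)}$ precisely as $\nu$ passes $\lhe(q)$: in the irrational case the infinitely many palindromic prefixes of $\nu$ make $\nu$ itself the barrier, whereas for rational $q$ that supply of palindromes is only finite, which is exactly what opens the extra room up to $\rhe(q)$. Because the greedy choice repeatedly reverses orientation with the running parity of $\#_1$, the inequalities $(\star)$ cannot be controlled by any single window estimate; the uniform control over all $m$ requires the full palindrome/Sturmian description of $\{\kappa_i(q)\}$. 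Finally, in the irrational case where the upper bound in $(\star)$ is tight only in the limit along palindromic scales, I expect the subtlest point to be confirming that the supremum over $X'$ is actually realized by $\ovl\nu$ and is not swept up into the excluded ray $\mathcal{C}$.
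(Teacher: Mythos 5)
First, a point of comparison: the paper does not prove this lemma at all --- it is imported verbatim from \cite{BdCH} (Lemma 13 there), and the surrounding combinatorial facts your argument would need (the reversibility of admissibility in Remark~\ref{rem:adm}, the characterization of admissible backward itineraries via $\lhe(q)$ and $\rhe(q)$ in Lemma~\ref{lem:BdCH}) are likewise cited rather than proved. So there is no in-paper proof to measure your attempt against; it can only be judged on its own terms.

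On those terms, your setup is correct and well chosen: for $L=0^{\infty}1$ the order $\prec_L$ on backward itineraries does coincide with the parity-lexicographical order on their reversals, admissibility of $\ovl s$ does reduce to the family of window inequalities $(\star)$, and $A(\ovl s)\subset\mathcal{C}$ is correctly identified with $\ovr s$ being eventually $0^{\infty}$. The difficulty is that everything after that is a plan rather than a proof. The entire content of the lemma is the step you yourself flag as ``the hard part'': showing that a first forward disagreement $\ovr t\succ\ovr{L'}$ forces some reversed window $t_m\ldots t_1$ to exceed $c_1\ldots c_m$, uniformly over the scales at which the running parity of $\#_1$ flips the direction of the inequality, and showing that the maximizer jumps from $\ovl{\nu}$ to $\ovl{\rhe(q)}$ exactly as $\nu$ passes $\lhe(q)$. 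Phrases such as ``the Sturmian structure propagates the estimate to all intermediate $m$'' and ``check $(\star)$ block by block'' are placeholders for precisely the palindrome bookkeeping that constitutes the cited proof. In particular, in the rational interior case Remark~\ref{rem:adm} does not apply, and admissibility of $\ovl{\rhe(q)}$ (a sequence whose forward reading strictly exceeds $\nu$) is governed by the two-sided criterion of Lemma~\ref{lem:BdCH}; your outline never actually verifies this, nor does it explain why the finiteness of the supply of palindromic prefixes for rational $q$ opens up exactly the room between $\ovl{\nu}$ and $\ovl{\rhe(q)}$ and no more. As it stands the proposal establishes the reduction and names the right candidates, but the maximality and the location of the case boundary --- which is where the theorem lives --- are asserted, not proved.
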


\subsection{Irrational height case}
Assume that $q$ is irrational and note that the map $T$ is then long-branched (since the kneading map is bounded, see \black{\eg}\cite{Br3}). Therefore, every proper subcontinuum is a point or an arc (see Proposition 3 in \cite{BB}) and consequently, every composant is an arc-component and thus either a line or a ray (\black{furthermore} every composant of $X'$ is dense in $X'$ so an arc cannot be a composant of $X'$). We will show that the basic arc $A(L')$ (which is fully accessible) contains an endpoint of $X'$. Furthermore, we will prove that the basic arc adjacent to $A(L')$ is not an extremum of a cylinder, and thus contains a folding point which is not an endpoint. \black{Moreover}, the ray $\mathcal{U}_{L'}$ is partially accessible; only a compact arc $Q\subset \mathcal{U}_{L'}$ is fully accessible and $\mathcal{U}_{L'}\setminus Q$ is not accessible. Since $\sigma$ is extendable, also $\sigma^i(Q)$ is accessible for every $i\in\Z$. Later in this subsection we show that no other non-degenerate arc except of $\sigma^i(Q)$ for every $i\in\Z$ is fully accessible. From the circle of prime ends we then see that there is still a Cantor set of points remaining to be associated to either accessible points or infinite canals of $\phi_{\mathcal{C}}(X')$. We prove that the remaining points on the circle of prime ends correspond to accessible endpoints of $\BD(X')$ and are thus second kind prime ends. Moreover, we prove that every endpoint from $\BD(X')$ is accessible. This is an extension of Theorem 4.46 from \cite{3G}. In this subsection the usage of variables $m$ and $n$ should not be confused with the values in the fraction $q=\frac{m}{n}$ which will be used in the rational height case later in the paper.

\begin{lemma}
	If $\nu$ is of irrational type, then $\tau_R(L')=\infty$ and $A(L')$ is non-degenerate.
\end{lemma}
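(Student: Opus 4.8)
The plan is to translate everything into a statement about palindromic prefixes of $\nu$ and then read off the two claims directly from Lemma~\ref{lem:first}. By Lemma~\ref{lem:BdCHL'}, since $q$ is irrational we have $L'=\ovl{\nu}$, so (recall that in this section $\ovl{\cdot}$ denotes the reversed word) the left infinite sequence $L'=\ldots l_3l_2l_1$ satisfies $l_i=c_i$ for every $i\geq 1$. Consequently the condition $l_{n-1}\ldots l_1=c_1\ldots c_{n-1}$ occurring in the definitions of $\tau_R(L')$, $\tau_L(L')$ and in Lemma~\ref{lem:first} becomes $c_{n-1}\ldots c_1=c_1\ldots c_{n-1}$, i.e. it holds exactly when the prefix $c_1\ldots c_{n-1}$ of $\nu$ is a palindrome. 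Thus $\tau_R(L')$ (resp. $\tau_L(L')$) is the supremum of $|w|+1$ over palindromic prefixes $w$ of $\nu$ with $\#_1(w)$ even (resp. odd), while $\max\pi_0(A(L'))=\inf\{T^{|w|+1}(c): w \text{ even palindromic prefix}\}$ and $\min\pi_0(A(L'))=\sup\{T^{|w|+1}(c): w \text{ odd palindromic prefix}\}$.

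For $\tau_R(L')=\infty$ I would invoke Remark~\ref{rem:pal}: since $q$ is irrational the sequence $\{\kappa_i(q)\}_{i\geq 1}$ is Sturmian, so $\nu=10^{\kappa_1(q)}110^{\kappa_2(q)}11\ldots$ admits arbitrarily long prefixes of the form $P_k:=10^{\kappa_1(q)}11\ldots 110^{\kappa_k(q)}1$ which are palindromes, and each such $P_k$ is an even word, namely $\#_1(P_k)=2k$. Hence there are arbitrarily large $n=|P_k|+1$ satisfying the even-palindrome condition, which forces $\tau_R(L')=\infty$.

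For non-degeneracy I would first pin down $\min\pi_0(A(L'))$. Every occurrence of $1$ in $\nu$ is either the leading symbol or one of the two symbols of a block $11$, and a palindromic prefix must end in $1$. It cannot end at the \emph{second} $1$ of a block: such a prefix begins with $10\ldots$ (since $\kappa_1(q)=\lfloor 1/q\rfloor-1\geq 1$, as $q<\tfrac12$) but ends with $\ldots 11$, contradicting the palindrome property. So every palindromic prefix of length $>1$ terminates at the first $1$ of a block, hence equals some $P_k$, and is therefore even. The only odd palindromic prefix is the single letter $1$, giving $\tau_L(L')=2$ and $\min\pi_0(A(L'))=T^2(c)$, the left endpoint of the core.

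Finally I would bound $\max\pi_0(A(L'))$ away from $T^2(c)$. Each even palindromic prefix $P_k$ ends at the first $1$ of a block, so the next symbol of $\nu$ is the second $1$ of that block; that is, $c_{|P_k|+1}=1$, whence $T^{|P_k|+1}(c)\in(c,1]$ and in particular $T^{|P_k|+1}(c)>c$. Therefore $\max\pi_0(A(L'))=\inf_k T^{|P_k|+1}(c)\geq c$, while $\min\pi_0(A(L'))=T^2(c)=s-s^2/2<\tfrac12=c$ for $s\in(\sqrt2,2]$; hence $\pi_0(A(L'))$ is a nondegenerate interval and $A(L')$ is non-degenerate. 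The only genuinely delicate steps are the combinatorial claim that no palindromic prefix terminates inside a block $11$ (handled by $\kappa_1(q)\geq 1$) and the control of the infimum in Lemma~\ref{lem:first}; the latter is exactly why I push the estimate to the uniform bound $T^{|P_k|+1}(c)>c$ rather than merely to $T^{|P_k|+1}(c)\neq T^2(c)$.
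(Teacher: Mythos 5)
Your proposal is correct. The first half ($\tau_R(L')=\infty$) is essentially the paper's argument: $L'=\ovl{\nu}$ by Lemma~\ref{lem:BdCHL'}, the condition in the definition of $\tau_R$ becomes the existence of arbitrarily long even palindromic prefixes of $\nu$, and Remark~\ref{rem:pal} supplies these. For non-degeneracy, however, you take a genuinely different route. The paper disposes of it in one line: for irrational height the kneading map is bounded, so $T$ is long-branched, and long-branchedness forces \emph{every} basic arc of $X'$ to be non-degenerate. You instead compute the endpoints of $\pi_0(A(L'))$ directly from Lemma~\ref{lem:first}: since $\kappa_1(q)\ge 1$, no palindromic prefix of $\nu$ can end at the second $1$ of a block $11$, so the only odd palindromic prefix is $c_1=1$, giving $\tau_L(L')=2$ and $\min\pi_0(A(L'))=T^2(c)<c$; and every even palindromic prefix is followed by the symbol $1$, so $\max\pi_0(A(L'))=\inf\{T^{n}(c):\dots\}\ge c$. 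Both arguments are sound. Yours is longer but self-contained and yields the extra quantitative information that $\pi_0(A(L'))\supseteq[T^2(c),c]$; the paper's is shorter, reuses a standard fact, and applies uniformly to all basic arcs rather than just $A(L')$. One small point worth making explicit if you write this up: Lemma~\ref{lem:first} gives $\pi_0(A(\ovl s))=[T^{\tau_L}(c),T^{\tau_R}(c)]$ only when both $\tau$'s are finite, so (as you correctly do) you must work with the $\inf/\sup$ formulation of the extrema here, since $\tau_R(L')=\infty$.
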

\begin{proof}
If $\nu$ is of irrational type, then the bonding map $T$ is long-branched, so every basic arc in $X'$ is non-degenerate. To prove the first claim, first note that by Lemma~\ref{lem:BdCHL'} it holds that $L'=\ovl{\nu}$. Remark~\ref{rem:pal} implies that there exist infinitely many even palindromes of increasing length at the beginning of $\nu$. Thus there exists a strictly increasing sequence $(m_i)_{i\in\N}$ such that $l'_{m_i}\ldots l'_{1}=c_1\ldots c_{m_i}$ and $\#_1(c_1\ldots c_{m_i})$ is even for every $i$. Thus it follows that $\tau_{R}(L')=\infty$. 
\end{proof}

The following remark follows from Remark 15 in \cite{BdCH} and the fact that we restrict our study only on the tent map family.

\begin{remark}\label{rem:adm}
	 If $\nu$ is of irrational or rational endpoint type, it holds that $\ovl{t}\in\{0,1\}^{\infty}$ is admissible (\ie every subword of $\ovl{t}$ is admissible) if and only if $\ovr{t}$ is admissible (\ie every subword of $\ovr{t}$ is admissible).
\end{remark}

\begin{lemma}\label{lem:horror2}
Let $\nu$ be either of irrational or rational endpoint type and $X'$ embedded with $\BD$. Then every extremum of a cylinder of $\BD(X')$ belongs to $\sigma^i(L')$ for some $i\in\black{\N_0}$. 
\end{lemma}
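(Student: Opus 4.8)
The plan is to compute the top $L_{a_n\ldots a_1}$ and the bottom $S_{a_n\ldots a_1}$ of an arbitrary admissible cylinder explicitly, exactly as was done for Bruin's embedding in the proof of Theorem~\ref{thm:henks}, and then read off that each of them lies in the $\sigma$-orbit of $A(L')$. Since in this section $\prec_L$ is the parity-lexicographic order $\prec$ and, by Lemma~\ref{lem:BdCHL'}, $L'=\ovl\nu$, a basic arc $A(\ovl s)$ belongs to $\sigma^i(A(L'))$ for some $i$ precisely when the backward itinerary $\ovl s$ agrees, off a finite right-hand block, with a shift of $\ovl\nu$; equivalently, when its reversal $\ovr s$ is eventually a shift of $\nu$. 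Thus the whole statement reduces to the claim: \emph{the largest and the smallest admissible right-infinite sequences beginning with a prescribed admissible word $a_1\ldots a_n$ are each eventually equal to a shift of $\nu$.} Remark~\ref{rem:adm} is what licenses passing freely between $\ovl s$ and $\ovr s$ here, so that the ordinary (forward) kneading calculus applies to the backward itineraries.

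To establish the reduced claim I would work with the two boundary sequences of the admissible subshift: the largest admissible sequence is $\nu$ itself, and the smallest is $\sigma\nu$, the itinerary of the left endpoint $T^2(c)$ of the core. Because $T$ is long-branched, every admissible word occurs in $\nu$ and (by Remark~\ref{rem:adm}) the admissible language is reversal closed; I would use this to run a greedy construction symbol by symbol. For the top one maximises the continuation of $a_1\ldots a_n$: the binding admissibility constraint comes from the longest suffix of the word built so far that is a prefix of $\nu$, and choosing the parity-maximal admissible symbol at each step forces this matching suffix to lengthen indefinitely, so that from some index on the sequence coincides with $\nu$; hence there is $k\ge 0$ with $\sigma^k(\ovr{L_{a_n\ldots a_1}})=\nu$. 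Dually, minimising the continuation makes the sequence hug the lower boundary $\sigma\nu$, so that some shift of $\ovr{S_{a_n\ldots a_1}}$ equals $\sigma\nu$, which is again a shift of $\nu$.

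Translating back, $\ovr{L_{a_n\ldots a_1}}$ and $\ovr{S_{a_n\ldots a_1}}$ are eventually shifts of $\nu$, so the corresponding backward itineraries agree off a finite block with a shift of $\ovl\nu=L'$, which is exactly the condition for $A(L_{a_n\ldots a_1})$ and $A(S_{a_n\ldots a_1})$ to lie in $\sigma^i(A(L'))$ for a suitable $i$ (the top synchronising with $\nu$ gives a nonnegative power, and the one-step gap between $\nu$ and $\sigma\nu$ must be absorbed by the index bookkeeping for the bottom). The main obstacle is precisely this synchronisation step: making the greedy argument rigorous requires careful parity bookkeeping — the parity of $\#_1(a_1\ldots a_n)$ decides whether at each stage one is driven towards $c_{t+1}$ from above or below — together with a clean proof that an extremal continuation cannot remain strictly between the two boundary sequences under every shift. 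It is also here that the hypotheses enter decisively: for rational endpoint type $\nu$ is periodic, so all its shifts are mutually synchronising and the index tracking collapses, whereas for irrational type one must use the minimality of the subshift (every admissible word is a factor of $\nu$) to guarantee that the extremal continuations lock onto $\nu$ rather than drift off through aperiodic admissible tails.
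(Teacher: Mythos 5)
Your reduction is the right one, and the target form of the answer (each extremum has backward itinerary agreeing off a finite block with $\ovl\nu$, with Remark~\ref{rem:adm} licensing the passage between $\ovl s$ and $\ovr s$) is exactly what the paper establishes. But there is a genuine gap precisely where you flag it: the synchronisation step \emph{is} the lemma, and the mechanism you offer for it does not work. It is not true that the parity-maximal admissible choice ``forces the matching suffix to lengthen indefinitely'': whether the constraint coming from a matching prefix $c_1\ldots c_j$ of $\nu$ forces the next symbol to equal $c_{j+1}$ depends on the parity of $\#_1(c_1\ldots c_j)$ relative to the parity governing the global order, and when these disagree the extremal choice breaks the match and the suffix resets. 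The paper's explicit answers show this: with $k$ the smallest index for which $a_{k+1}\ldots a_n=c_2\ldots c_{n-k+1}$, the extremum is $L'a_{k-1}\ldots a_1$, $L'a_{n-1}\ldots a_1$ or $L'a_{n}\ldots a_1$ according to the parities of $\#_1(a_{k-1}\ldots a_1)$ and $\#_1(a_n\ldots a_k)$ --- in the last case the continuation restarts $\nu$ from scratch immediately after $a_n$ rather than prolonging any matching suffix. A second concrete error is the claim that every admissible word occurs in $\nu$ (your ``minimality'' input for the irrational case): when all $\kappa_i(q)\geq 1$ the blocks of ones in $\nu$ have length exactly two, so $111$ is never a factor of $\nu$, yet $1^m$ is admissible for every $m$ because the itinerary of the nonzero fixed point of $T$ lies in the core. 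So neither of the two tools you propose for forcing the lock-on is available.

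The paper avoids the greedy process entirely. It writes the candidate extremum down in one shot --- $L'=\ovl\nu$ (resp.\ $\ovl{\rhe(q)}$) prepended to the truncation $a_{k-1}\ldots a_1$, $a_{n-1}\ldots a_1$ or $a_n\ldots a_1$ selected by the longest suffix of $a_1\ldots a_n$ that is a prefix of $\sigma(\nu)$ together with the relevant parities --- and then proves two things: (i) the concatenation is admissible, by a contradiction argument that uses Remark~\ref{rem:adm} to locate a forbidden subword either inside $a_1\ldots a_n$ or across the junction, plus the structural fact that in the irrational and rational-endpoint cases blocks of ones in $\nu$ have even length; and (ii) once admissible, the concatenation is automatically extremal, because $L'$ is the maximal admissible left-infinite sequence and the truncation point is chosen so that the parity of the retained block makes maximising the whole sequence equivalent to maximising the continuation. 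To salvage your approach you would need a synchronisation lemma of exactly this strength, at which point you have reproduced the paper's case analysis; as written, the proposal asserts the conclusion rather than proving it.
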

\begin{proof}
	Take an admissible finite word $a_n\ldots a_1\in\{0,1\}^{n}$ and pick the smallest $k\in\{0, \ldots, n-1\}$ such that $a_{n}\ldots a_{k+1}=c_{n-k+1}\ldots c_2$. If there is no such $k$ we set $k=n$. 
	
	Assume first that $k>1$ and note that $a_k=1$, \black{otherwise either $0^{\kappa+1}\subset a_n\ldots a_1$ or $k$ is not the smallest such number.}\\
	 Assume that $\#_1(a_{k-1}\ldots a_1)$ is even and let us calculate $L_{a_n\ldots a_1}$. If admissible, the word $L'a_{k-1}\ldots a_1$ is the largest in the cylinder $[a_n\ldots a_1]$. Assume that $L'a_{k-1}\ldots a_1$ is not admissible. By Remark~\ref{rem:adm}, since both $L'$ and $a_{k-1}\ldots a_1$ are admissible, there exists $i\in\{1,\ldots,k-1\}$ such that $a_i\ldots a_{k-1} l'_1\ldots l'_j$ is not admissible for some $j\geq 1$. 
	If $j\leq n-k+1$, then $a_i\ldots a_{k-1} l'_1\ldots l'_j$ is a subword of $a_1\ldots a_n$ which is not admissible, a contradiction. Assume that $j>n-k+1$. In this case the word $a_i\ldots a_{k-1} l'_1\ldots l'_j\nsubseteq a_1\ldots a_n$ is not admissible, but then $a_i\ldots a_{n}=c_2\ldots c_{2+n-i}$ which is a contradiction with $k$ being the smallest such that $a_n\ldots a_{k+1}=c_{n-k}\ldots c_2$.
	 If $\#_1(a_{k-1}\ldots a_1)$ is odd we obtain that $S_{a_n\ldots a_1}=L'a_{k-1}\ldots a_1$ using analogous arguments as above.\\
	Now assume that $\#_1(a_{k-1}\ldots a_1)$ is odd and we calculate $L_{a_n\ldots a_1}$. Say that $\#_1(a_n\ldots a_k)$ is odd. Therefore, since we want to calculate the largest basic arc in the cylinder $[a_n\ldots a_1]$, we need to set $L_{a_n\ldots a_1}=\ldots1a_n\ldots a_1$, and note that $1a_n\ldots a_1$ is always admissible by Lemma~\ref{lem:cylindersnonempty}. Then, knowing that $\#_1(a_n\ldots a_k)$ is odd it follows from the special structure of $\nu$ in the irrational height case that the kneading sequence starts as $a_k\ldots a_n11$ or $a_k\ldots a_n0$ and thus the word $a_k\ldots a_n10$ is admissible. It follows that $L'a_n\ldots a_1$ is admissible and equals to $L_{a_n\ldots a_1}$. If $\#_1(a_n\ldots a_k)$ is even, it follows from the structure of $\nu$ (blocks of ones in $\nu$ are of even length) that $a_n=1$ and $a_k\ldots a_n$ ends in odd number of ones. The word $a_k\ldots a_n0^{\kone}$ is thus admissible and therefore $L_{a_n\ldots a_1}=L'a_{n-1}\ldots a_1$. Calculations for $S_{a_n\ldots a_1}$ when $\#_1(a_{k-1}\ldots a_1)$ is even follow analogously.
	
	Now say that $k=1$. Then $L_{a_n\ldots a_1}=L'$. 
	We conclude as in the preceding paragraph that if $\#_1(a_n\ldots a_1)$ is even, then $S_{a_n\ldots a_1}=L'a_{n-1}\ldots a_1$ and if $\#_1(a_n\ldots a_1)$ is odd, then $S_{a_n\ldots a_1}=L'a_{n}\ldots a_1$.
	
	If $k=0$, then $a_1\ldots a_n=c_2\ldots c_{n+1}$. So $S_{a_n\ldots a_1}=S=\ldots c_4c_3c_2$. To calculate $L_{a_n\ldots a_1}$, let $k'$ be the smallest natural number such that $a_{n}\ldots a_{k'}=c_{n-k'+1}\ldots c_1$. If $k'$ does not exist, set $k'=n+1$. From the structure of $\nu$ (blocks of ones in $\nu$ are of even length) it follows that $\#_1(a_{k'-1}\ldots a_1)$ is odd. The rest of the proof for this case follows the same as in the case for $k>1$.
\end{proof}
\begin{lemma}\label{lem:irrpartially}
	Assume $\nu$ is of irrational type and $X'$ embedded with $\BD$. Then the only basic arc from $\mathcal{U}_{L'}$ which is an extremum of a cylinder is $A(L')$. 
\end{lemma}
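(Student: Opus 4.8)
The plan is to deduce the statement quickly from Lemma~\ref{lem:horror2}, combined with the observation that for $\nu$ of irrational type the shift does \emph{not} preserve the tail of $L'$, so every nontrivial forward shift of $A(L')$ escapes $\mathcal{U}_{L'}$.

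First I would record the structural facts available in the irrational case. Here $T$ is long-branched, so every basic arc of $X'$ is non-degenerate; by Proposition~\ref{prop:spiral} this forces $X'$ to contain no spiral points, and hence by Remark~\ref{rem:symbolicAC} each arc-component of $X'$ is determined precisely by its tail. In particular, a basic arc $A(\ovl s)$ lies in $\mathcal{U}_{L'}$ if and only if $\ovl s$ and $L'$ have the same tail. Recall also that $L'=\ovl\nu$ by Lemma~\ref{lem:BdCHL'}, so $l'_j=c_j$ for all $j$.

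The main step: let $A(\ovl s)\subset\mathcal{U}_{L'}$ be an extremum of some cylinder. By Lemma~\ref{lem:horror2}, $A(\ovl s)\subset\sigma^i(A(L'))$ for some $i\in\N_0$. For $i\ge 1$ the arcs comprising $\sigma^i(A(L'))$ all have backward itineraries of the form $\ovl t=\ldots c_2c_1a_1\ldots a_i$ (the first $i$ forward symbols appended on the right); they differ only in the $i$ rightmost positions, so they share a common tail, and it suffices to compare that tail with $L'$. Indexing from the right, for every $m>i$ one has $t_m=c_{m-i}$ while $l'_m=c_m$. Thus $\ovl t$ and $L'$ share a tail only if $c_{m-i}=c_m$ for all sufficiently large $m$, i.e. only if $\nu$ is eventually periodic with period $i$. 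But for irrational height the sequence $\{\kappa_j(q)\}_j$ is Sturmian (Remark~\ref{rem:pal}) and therefore aperiodic, so $\nu$ is not eventually periodic — a contradiction. Hence $i=0$, which gives $\ovl s=L'$ and $A(\ovl s)=A(L')$. Since $A(L')$ is the largest basic arc of $X'$ it is genuinely an extremum (it is the top of $X'$, in particular of the cylinder $[1]$, as $l'_1=c_1=1$), so the asserted exception is exactly $A(L')$.

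The only genuinely delicate point is the tail bookkeeping in the displayed comparison: one must track carefully that appending symbols on the right (a forward shift) corresponds to the index shift $t_m=c_{m-i}$, and that ``same tail'' here forces \emph{exact} eventual periodicity of $\nu$ with period $i$, rather than mere recurrence of $c$. Once the non-eventual-periodicity of irrational-type kneading sequences is invoked through Remark~\ref{rem:pal}, the conclusion is immediate; all remaining ingredients (long-branchedness, absence of spiral points, tail-classification of arc-components) are quoted directly from the cited results.
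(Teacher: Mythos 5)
Your argument is correct and takes essentially the same route as the paper: both reduce to Lemma~\ref{lem:horror2} (every extremum of a cylinder lies in $\sigma^i(L')$ for some $i\in\N_0$) and then rule out $i\geq 1$ by the non-(pre)periodicity of $\nu$ in the irrational case. The paper extracts a bit more detail from the proof of Lemma~\ref{lem:horror2} to identify $S_{a_n\ldots a_1}$ explicitly, but your index bookkeeping $t_m=c_{m-i}$ versus $l'_m=c_m$, together with the aperiodicity of the Sturmian sequence $(\kappa_j(q))_j$, achieves the same conclusion.
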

\begin{proof}
	Let $a_n\ldots a_1$ be an admissible word for some $n\in\N$.
	 If $n=1$, note that $L_1=L'\subset \mathcal{U}_{L'}$ and $L_0, S_0, S_1\not\subset \mathcal{U}_{L'}$, since $\nu$ is not (pre)periodic \black{(note that $L'=\ovl{\nu}$ and observe the structure of $\nu$)}.\\
	 Now assume that $n\geq 2$. Since $\nu$ is not (pre)periodic, the proof of Lemma~\ref{lem:horror2} gives that if $L_{a_n\ldots a_1}$ or $S_{a_n\ldots a_1}$ are contained in $\mathcal{U}_{L'}$, then $a_1\ldots a_n=c_1\ldots c_n$ (since otherwise $L_{a_n\ldots a_1}$ or $S_{a_n\ldots a_1}$ would be contained in $\sigma^{i}(\mathcal{U}_{L'})$ for some $i\in\Z\setminus \{0\}$). But then, following the proof of Lemma~\ref{lem:horror2} it holds that $L_{a_n\ldots a_1}=L'$ and $S_{a_n\ldots a_1}=L'a_n\ldots a_1$ or $S_{a_n\ldots a_1}=L'a_{n-1}\ldots a_1$, depending on the parity of $\#_1(a_n\ldots a_1)$. Since $L'a_n\ldots a_1\in \sigma^n(L')$ and $L'a_{n-1}\ldots a_1\in \sigma^{n-1}(L')$ the only extremum of a cylinder in $\mathcal{U}_{L'}$ is $A(L')$.
\end{proof}

\begin{remark}
	It follows from Lemma~\ref{lem:irrpartially} that when $\nu$ has irrational height, then $\mathcal{U}_{L'}$ is partially accessible. More precisely, from Proposition~\ref{prop:wiggles} it follows that $\ovl{l(L')}=\ldots 110^{\kthree}110^{\ktwo}110^{\kone-1}11$ contains a folding point $p$ and $A(L')\cup[a, p]$ is fully accessible, where $a$ denotes the left endpoint of $\ovl{l(L')}$. It follows from Corollary~\ref{cor:class} that no other point from $\mathcal{U}_{L'}$ (which is a ray) is accessible. Since $\sigma:\BD(X')\to \BD(X')$ is extendable to the plane, also $\sigma^i(A(L')\cup[a, p])$ is accessible for every $i\in\Z$. Moreover, those are the only accessible non-degenerate arcs, since $\sigma$ is extendable  \black{to a planar homeomorphism and furthermore extends every arc in $X'$} (see the discussion in the proof of Theorem~\ref{thm:henks}). In the lemmas to follow we prove that the remaining Cantor set of points on the circle of prime ends correspond to the endpoints of $\BD(X')$, and that all endpoints of $\BD(X')$ are accessible when $\nu$ is of irrational type.
\end{remark}

	The following lemma follows directly from the fact that $(\kappa_i(q))_{i\in\N}$ is Sturmian, but we prove it here for the sake of completeness. Say that $q\in(0, \frac{1}{2})$ is irrational. Denote by $\kappa=\kappa_1(q)$, so $\kappa_i(q)\in\{\kappa, \kappa-1\}$ for every $i\in\N$.

\begin{lemma}\label{lem:sturm}
	Let $q\in(0, \frac{1}{2})$ be irrational \black{with corresponding $(\kappa_i(q))_{i\in\N}$}. There exists $J\in\N$ such that if $\kappa_i(q)\kappa_{i+1}(q)
	\ldots\kappa_{i+N}(q)\kappa_{i+N+1}(q)=\kappa(\kappa-1)^{N}\kappa$, then $N\in\{J, J+1\}$.
\end{lemma}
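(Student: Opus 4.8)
The plan is to recognise $(\kappa_i(q))_i$ as the difference sequence of a Beatty sequence and to reduce the statement to the elementary two-gap property of such sequences. Write $\alpha=1/q$ and $\beta=\{\alpha\}=\alpha-\lfloor\alpha\rfloor$; since $q$ is irrational so is $\beta$, and $\beta\in(0,1)$. For $i\ge 2$ the definition gives $\kappa_i(q)=\lfloor i\alpha\rfloor-\lfloor(i-1)\alpha\rfloor-2$, and because $(i-1)\lfloor\alpha\rfloor\in\Z$ we have $\{(i-1)\alpha\}=\{(i-1)\beta\}$. Splitting $i\alpha=(i-1)\alpha+\alpha$ into integer and fractional parts yields the wrap-around identity
\[
\lfloor i\alpha\rfloor-\lfloor(i-1)\alpha\rfloor=
\begin{cases}
\lfloor\alpha\rfloor+1, & \{(i-1)\beta\}\ge 1-\beta,\\
\lfloor\alpha\rfloor, & \text{otherwise,}
\end{cases}
\]
so that, using $\kappa=\lfloor\alpha\rfloor-1$, one gets $\kappa_i(q)=\kappa$ exactly when $\{(i-1)\beta\}\ge 1-\beta$, and $\kappa_i(q)=\kappa-1$ otherwise. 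Equivalently $\kappa_i(q)=\kappa$ iff $b_{i-1}=1$, where $b_j:=\lfloor(j+1)\beta\rfloor-\lfloor j\beta\rfloor\in\{0,1\}$ is the mechanical (Beatty) sequence of slope $\beta$, for which $b_j=1$ iff $\{j\beta\}\ge 1-\beta$.

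Next I would locate the occurrences of the symbol $\kappa$, i.e. the $1$'s of $b$. Telescoping gives $\sum_{l=0}^{j}b_l=\lfloor(j+1)\beta\rfloor$, so the index $p_k$ of the $k$-th $1$ of $b$ is the unique $j$ with $\lfloor(j+1)\beta\rfloor=k$ and $\lfloor j\beta\rfloor=k-1$; hence $p_k+1$ is the least integer $m$ with $m\beta\ge k$, that is $p_k=\lceil k/\beta\rceil-1$. Because $1/\beta>1$ is irrational, $\lceil k/\beta\rceil=\lfloor k/\beta\rfloor+1$ for every $k$, so the gaps satisfy
\[
p_{k+1}-p_k=\lceil(k+1)/\beta\rceil-\lceil k/\beta\rceil=\lfloor(k+1)/\beta\rfloor-\lfloor k/\beta\rfloor\in\{\lfloor 1/\beta\rfloor,\lfloor 1/\beta\rfloor+1\}.
\]
This is the only genuinely Sturmian input, and it becomes elementary once $p_k$ is written in closed form.

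Finally I would translate back. A block $\kappa_i(q)\ldots\kappa_{i+N+1}(q)=\kappa(\kappa-1)^N\kappa$ means that $i$ and $i+N+1$ are two consecutive indices carrying the symbol $\kappa$, i.e. two consecutive $1$'s of $b$ at $j=i-1$ and $j=i+N$, whence $N+1=p_{k+1}-p_k$ for the appropriate $k$. By the displayed formula $N+1\in\{\lfloor 1/\beta\rfloor,\lfloor 1/\beta\rfloor+1\}$, so $N\in\{J,J+1\}$ with $J:=\lfloor 1/\beta\rfloor-1$. The one point needing care is the boundary index $i=1$: there $\kappa_1(q)=\kappa$ by definition while $b_0=0$, so the correspondence fails at $j=0$. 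However the first $1$ of $b$ sits at $p_1=\lceil 1/\beta\rceil-1=\lfloor 1/\beta\rfloor$, so a block beginning at $i=1$ has $N+1=p_1=\lfloor 1/\beta\rfloor$ and again $N=J$, consistent with the claim. The main obstacle is therefore not a deep fact but the careful bookkeeping of the index shift $j=i-1$ together with this exceptional initial gap, ensuring it still lands in the two-element set $\{J,J+1\}$.
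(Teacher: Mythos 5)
Your proof is correct, but it follows a genuinely different route from the paper's. The paper argues geometrically: it reads $\kappa_i(q)$ off the number of intersections of the graph of $t\mapsto qt$ with vertical integer lines, and compares the configuration following any occurrence of the symbol $\kappa$ at index $i\ge 2$ with the initial configuration by translating the graph vertically by a small $\delta$, splitting into the cases $\delta\in(0,g(H_{J+2})]$ and $\delta\in(g(H_{J+2}),g(H_1))$ for $g(k)=\lceil kq\rceil-kq$. You instead pass to $\beta=\{1/q\}$, identify the occurrences of $\kappa$ (for $i\ge 2$) with the $1$'s of the Beatty difference sequence $b_j=\lfloor(j+1)\beta\rfloor-\lfloor j\beta\rfloor$, and locate those $1$'s in closed form as $p_k=\lceil k/\beta\rceil-1$, after which the two-gap property is a one-line floor identity; your bookkeeping of the shift $j=i-1$ and of the exceptional initial symbol ($\kappa_1(q)=\kappa$ while $b_0=0$) is careful and lands the $i=1$ block in the same set. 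What your approach buys is an explicit value $J=\lfloor 1/\{1/q\}\rfloor-1$ and a purely arithmetic argument; it also exposes an edge case the paper's proof passes over: when $\{1/q\}>1/2$ one has $\kappa_2(q)=\kappa$, so the paper's $J$ (the length of the initial run of $(\kappa-1)$'s) does not exist as an element of $\N$, whereas your formula returns $J=0$ and correctly predicts that adjacent $\kappa$'s (blocks with $N=0$) occur. What the paper's approach buys is that the same geometric translation picture is reused verbatim in the proof of Lemma~\ref{lem:notexistant}, so the machinery is amortized over the section.
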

\begin{proof}
	Let $J\in\N$ be such that $\kappa_2(q)= \ldots= \kappa_{J+1}(q)=\kappa-1$ and $\kappa_{J+2}(q)=\kappa$ \black{(such $J$ indeed exists since $q$ is irrational)}. So there exists a sequence of $J$ consecutive $(\kappa-1)$s. Denote by $H_n=\lfloor\frac nq\rfloor$ for $n\in\N$ and note that the function $g\colon\N\to\R$ given by $g(k)=\lceil kq\rceil-kq$ achieves its minimum on $\f 0, H_{J+2}]$ in $H_{J+2}$ (since $J+2$ is minimal index $a>1$ for which $\kappa_{a}=\kappa$). If we translate the graph of function $\zeta(t)=qt$ by $+\delta$  where $\delta\in(0,g(H_{J+2})]$, then the sequence of consecutive number of intersections with vertical lines over integers begins again with $(\kappa+2)(\kappa+1)^{J}(\kappa+2)$. Since $g$ restricted to $[0, H_{J+2})$ achieves its minimum in $H_1$, if $\delta\in(g(H_{J+2}), g(H_1))$, the sequence corresponding to the number of times the graph of $\zeta+\delta$ intersects vertical lines over integers begins with $(\kappa+2)(\kappa+1)^{J+1}(\kappa+2)$, see Figure~\ref{fig:transl}. Fix $i\geq 2$ such that $\kappa_i(q)=\kappa$. Note that then $g(H_{i-1}+1)<g(H_{1})$ since otherwise $qH_{i-1}>i-1$ which is a contradiction. So the graph of $\zeta$ on $[H_{i-1}+1,\infty)$ can be obtained from the graph of $\zeta$ on $[0, \infty)$ by translating it by $+\delta$ for $\delta\in(0,g(H_1))$ which finishes the proof.  
\end{proof}

\begin{figure}[!ht]
	\centering
	\begin{tikzpicture}[scale=0.5]
	\draw[step=1,black] (0,0) grid (20,9);
	\draw (0,0) -- (20,9-1/30);
	\node at (-0.3,0.5) {\scriptsize $3$};
	\node at (-0.3,1.5) {\scriptsize $2$};
	\node at (-0.3,2.5) {\scriptsize $2$};
	\node at (-0.3,3.5) {\scriptsize $2$};
	\node at (-0.3,4.5) {\scriptsize $3$};
	\node at (20.3,0.5) {\scriptsize $3$};
	\node at (20.3,1.5) {\scriptsize $2$};
	\node at (20.3,2.5) {\scriptsize $2$};
	\node at (20.3,3.5) {\scriptsize $2$};
	\node at (20.3,4.5) {\scriptsize $2$};
	\node at (20.3,5.5) {\scriptsize $3$};
	\draw[ultra thick] (0,5)--(11,5)--(11,0);
	\draw[ultra thick] (0,4)--(8,4)--(8,0);
	\draw[ultra thick] (1,0)--(1,0.44833);
	\node at (1.3,0.25) {\scriptsize $q$};
	\draw[dashed] (0,0.09)--(20,9-1/30+0.09);
	\node at (11,-0.5) {\tiny $H_{J+2}$};
	\node at (8,-0.5) {\tiny $H_{J+1}$};
	\node at (2,-0.5) {\tiny $H_{1}$};
	\node at (-0.5,-0.5) {\scriptsize $(0,0)$};
	\end{tikzpicture}
	\caption{The graph of $qt$ for $q\approx 0.4483\ldots$ with the number of intersections with vertical integer lines on the left. Dashed line represents the graph of $qt$ translated by $\delta\in(g(H_{J+2}), g(H_1))$. On the right we count the intersections of the translated graph with vertical integer lines.}
	\label{fig:transl}
\end{figure}
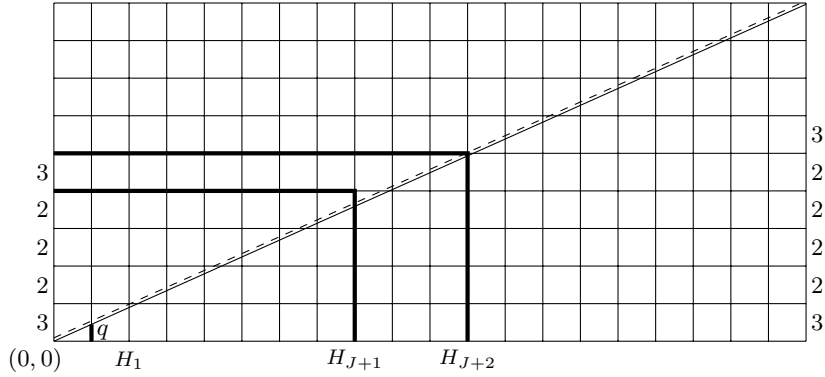

\begin{lemma}\label{lem:notexistant}
	Let $q\in(0, \frac{1}{2})$ be irrational \black{with corresponding $(\kappa_i(q))_{i\in\N}$} and $i, N\in\N$  such that $\kappa_{i+1}(q)\ldots\kappa_{i+N}(q)=\kappa_1(q)\ldots\kappa_N(q)$ and $\kappa_{i+N+1}(q)\neq\kappa_{N+1}(q)$. Then $\kappa_1(q)\ldots\kappa_{N+1}(q)$ is a palindrome. Moreover, $\kappa_{i+N+2}(q)=\kappa_1(q)$. If $K\in\N$ is such that $\kappa_{i+N+2}(q)\ldots\kappa_{i+N+K+1}(q)=\kappa_1(q)\ldots\kappa_{K}(q)$ and $\kappa_{i+N+K+2}(q)\neq\kappa_{K+1}(q)$, then $\kappa_{K+1}(q)\ldots\kappa_1(q)\kappa_{i+N+1}(q)\ldots\kappa_{i+1}(q)
	=\kappa_1(q)\ldots\kappa_{K+N+1}(q)$.
\end{lemma}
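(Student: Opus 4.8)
The plan is to work entirely inside the geometric picture used in Remark~\ref{rem:palind} and Lemma~\ref{lem:sturm}, where $\kappa_i(q)+2$ counts the lattice points $t\in\N_0$ whose image under $\zeta(t)=qt$ lies in the $i$-th unit band of the $y$-axis. Writing $\theta_j:=\{j/q\}$ for the fractional part of $j/q$, a direct computation with floors shows that for $j\geq 2$ the block equality $\kappa_{i+1}(q)\ldots\kappa_{i+j}(q)=\kappa_1(q)\ldots\kappa_j(q)$ is equivalent to $\theta_i+\theta_m<1$ for all $m\le j$, while the first discrepancy $\kappa_{i+N+1}(q)\neq\kappa_{N+1}(q)$ says $\theta_i+\theta_{N+1}\geq 1$; that is, $N+1$ is the smallest index $m$ with $\theta_m\geq 1-\theta_i$. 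First I would record this dictionary carefully, paying attention to the exceptional formula for $\kappa_1(q)$ (which includes the origin) so that the translation remains valid at the left boundary band.

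For the palindrome statement I would argue that the minimality just extracted forces $N+1$ to be a best one-sided approximation length, so that the finite configuration of lattice points governing the bands $1,\ldots,N+1$ is centrally symmetric. Concretely, this is the $180^\circ$ symmetry of the segment of $\Gamma_\zeta$ already exploited in Remark~\ref{rem:palind} and Figure~\ref{fig:line}: reflecting the lattice points through the centre of that configuration interchanges band $m$ with band $N+2-m$ and preserves the per-band counts, whence $\kappa_m(q)=\kappa_{N+2-m}(q)$ for all $m$ and $\kappa_1(q)\ldots\kappa_{N+1}(q)$ is a palindrome. Equivalently, in the language of Sturmian combinatorics, the hypothesis says $\kappa_1(q)\ldots\kappa_N(q)$ is the unique right-special factor of its length, its reversal is a prefix of the associated characteristic word, and adjoining the following letter produces a palindromic prefix of $(\kappa_i(q))$; I would use whichever phrasing keeps the floor bookkeeping shortest. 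The assertion $\kappa_{i+N+2}(q)=\kappa_1(q)$ then follows from the same minimality: since $N+1$ is the first index beating the threshold $1-\theta_i$, the relevant inequality ``resets'', and the condition governing $\kappa_{i+N+2}(q)$ coincides with the one governing $\kappa_1(q)$.

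For the last assertion I would iterate the construction. The second hypothesis provides a match of length $K$ beginning at position $i+N+2$ with first discrepancy at $K+1$, so by the first two parts $\kappa_1(q)\ldots\kappa_{K+1}(q)$ is again a palindrome and, after reflection, $\theta_{i+N+1}$ plays the role dual to $\theta_{N+1}$. Reflecting the two centrally symmetric blocks through their common centre and concatenating then reproduces a genuine prefix of $(\kappa_i(q))$, which is exactly the reversal–closure identity $\kappa_{K+1}(q)\ldots\kappa_1(q)\,\kappa_{i+N+1}(q)\ldots\kappa_{i+1}(q)=\kappa_1(q)\ldots\kappa_{K+N+1}(q)$, up to the evident one-letter boundary adjustment coming from the special value $\kappa_1(q)$. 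I expect the genuine difficulty to be concentrated in this final step: keeping the reflection, the two distinct centres, and that boundary off-by-one mutually consistent. Parts one and two are in essence the symmetry observation of Remark~\ref{rem:palind} applied once, whereas the propagation here requires showing that the palindromic prefixes produced at successive stages nest correctly, for which I would set up an induction on the approximation order of $q$ mirroring the translation argument in the proof of Lemma~\ref{lem:sturm}.
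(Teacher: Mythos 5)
Your overall strategy --- reading $\kappa_i(q)$ off the lattice-point picture of Remark~\ref{rem:palind} and Lemma~\ref{lem:sturm} and exploiting the symmetry of the relevant configuration --- is the same as the paper's; the paper realizes that symmetry as a vertical translation of $\Gamma_\zeta$ by the minimum of $f(t)=tq-\lfloor tq\rfloor$ over the relevant abscissas, where you propose a reflection through the centre, and these are two faces of the same argument. However, your dictionary is reversed. Writing $H_j=\lfloor j/q\rfloor$ and $\theta_j=\{j/q\}$, one has $\lfloor (i+1)/q\rfloor=H_i+H_1+\lfloor\theta_i+\theta_1\rfloor$, hence $\kappa_{i+1}(q)=H_1-2+\lfloor\theta_i+\theta_1\rfloor$ while $\kappa_1(q)=H_1-1$; so the match at the first position forces $\theta_i+\theta_1>1$, inductively the block equality up to $N$ forces $\theta_i+\theta_m>1$ for all $m\le N$, and the discrepancy gives $\theta_i+\theta_{N+1}<1$. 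Thus $\theta_{N+1}$ is a strict record \emph{minimum} of $(\theta_m)$, the opposite of what you wrote; since the palindromic prefixes of $(\kappa_i(q))$ sit exactly at these record minima, carrying your inequalities through would have you reflecting the wrong configurations. (With the corrected dictionary your ``reset'' step for $\kappa_{i+N+2}(q)=\kappa_1(q)$ does become a one-liner: $\theta_{i+N+1}=\theta_i+\theta_{N+1}$ because the sum is below $1$, and then $\theta_{i+N+1}+\theta_1\ge\theta_i+\theta_1>1$.)

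The more serious issue is the third assertion, which you correctly identify as the crux but do not prove: ``reflect the two blocks through their common centre and concatenate'' leaves open precisely why the two reflections are compatible across the seam at position $i+N+1$ (where the letter is $\kappa_1(q)-1$ rather than $\kappa_1(q)$), and the ``induction on the approximation order of $q$'' you invoke is neither needed nor set up. The paper's route avoids the gluing altogether: the two pairs of hypotheses force the global minimum of $f$ on the whole interval $[H_i,H_{i+N+K+2}+1]$ to be attained at its right endpoint (the first pair gives this on $[H_i,H_{i+N+1}+1]$, the second on $[H_{i+N+1},H_{i+N+K+2}+1]$, and the new minimum beats the old one), after which a single translation of $\Gamma_\zeta$ over the whole interval reads off the concatenated reversal as a prefix in one stroke. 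As written, your proposal establishes the first two claims in outline but not the last one.
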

\begin{proof}
	For $i\in\N$ denote by $H_i=\lfloor\frac{i}{q}\rfloor$ and let $f\colon\N\to\R$ be given by $f(t)=tq-\lfloor tq\rfloor$. Note that the graph of $\zeta(t)=qt$ restricted to $\f H_i+1, \infty)$ is a translation of the graph of $\zeta$ on $\f 0, \infty)$ by some $\delta>0$ (see \eg Figure~\ref{fig:transl}). The conditions $\kappa_{i+1}(q)\ldots\kappa_{i+N}(q)=\kappa_1(q)\ldots\kappa_N(q)$ and $\kappa_{i+N+1}(q)\neq\kappa_{N+1}(q)$ imply that the global minimum of $f$ on $[H_i, H_{i+N+1}+1]$ is $H_{i+N+1}+1$. So the graph of $\zeta-f(H_{i+N+1}+1)$ on $[H_i, H_{i+N+1}+1]$ intersects vertical lines over integers the same number of times as $\zeta$ except for the point $(H_{i+N+1}+1, i+N+1)$. We conclude that $(\kappa_{i+N+1}(q)+1)\kappa_{i+N}(q)\ldots\kappa_{i+1}(q)=\kappa_1(q)\ldots\kappa_{N+1}(q)$ which concludes the first part of the proof. To see that  $\kappa_{i+N+2}(q)=\kappa_1(q)$ use Lemma~\ref{lem:sturm}.\\
	For the last part of the proof assume that $K\in\N$ is such that $\kappa_{i+N+2}(q)\ldots\kappa_{i+N+K+1}(q)\\=\kappa_1(q)\ldots\kappa_{K}(q)$ and $\kappa_{i+N+K+2}(q)\neq\kappa_{K+1}(q)$. That implies that the global minimum of $f$ on $[H_i, H_{i+N+K+2}+1]$ is $H_{i+N+K+2}+1$. Again by translating the graph of $\zeta$ on $[H_i, H_{i+N+K+2}+1]$ by $-f(H_{i+N+K+2}+1)$ we conclude the second part of the proof, see Figure~\ref{fig:transl2}. 
\end{proof}

\begin{figure}[!ht]
	\centering
	\begin{tikzpicture}[scale=0.5, yscale=1]
	\draw[step=1,black] (0,0) grid (25,11);
	\draw (0,0) -- (25,25*0.443);
	\node at (-0.3,0.5) {\scriptsize $3$};
	\node at (-0.3,1.5) {\scriptsize $2$};
	\node at (-0.3,2.5) {\scriptsize $2$};
	\node at (-1.9,3.5) {\scriptsize $\kappa_{i+1}+2=3$};
	\node at (-0.3,4.5) {\scriptsize $2$};
	\node at (-0.3,5.5) {\scriptsize $2$};
	\node at (-0.3,6.5) {\scriptsize $2$};
	
	\node at (25.3,10.5) {\scriptsize $3$};
	\node at (25.3,9.5) {\scriptsize $2$};
	\node at (25.3,8.5) {\scriptsize $2$};
	\node at (25.3,7.5) {\scriptsize $3$};
	\node at (25.3,6.5) {\scriptsize $2$};
	\node at (25.3,5.5) {\scriptsize $2$};
	\node at (25.3,4.5) {\scriptsize $2$};
	\node at (25.3,3.5) {\scriptsize $3$};
	
	\node at (-2.35,7.5) {\scriptsize $\kappa_{i+N+2}+2=3$};
	\node at (-0.3,8.5) {\scriptsize $2$};
	\node at (-2.75,9.5) {\scriptsize $\kappa_{i+N+K+1}+2=2$};
	\node at (-0.3,10.5) {\scriptsize $2$};
	\draw[ultra thick] (0,7)--(15,7)--(15,0);
	\draw[ultra thick] (0,3)--(6,3)--(6,0);
	\draw[ultra thick] (0,11)--(24,11)--(24,0);
	\draw[dashed] (7,7*0.443-25*0.443+11)--(25,11);
	\node at (6,-0.5) {\tiny $H_{i}$};
	\node at (15,-0.5) {\tiny $H_{i+N+1}$};
	\node at (24,-0.5) {\tiny $H_{i+N+K+2}$};
	\node at (-0.5,-0.5) {\scriptsize $(0,0)$};
	\end{tikzpicture}
	\caption{Graphic representation of the proof of Lemma~\ref{lem:notexistant} for $q\approx 0.443\ldots$. Dashed line represents the graph of $\zeta(t)=qt$ on $\f H_i+1, H_{i+N+K+2}+1]$ translated by $-f(H_{i+N+K+2}+1)$. On the right side of the grid we count intersections of the dashed line with vertical integer lines.}
	\label{fig:transl2}
\end{figure}
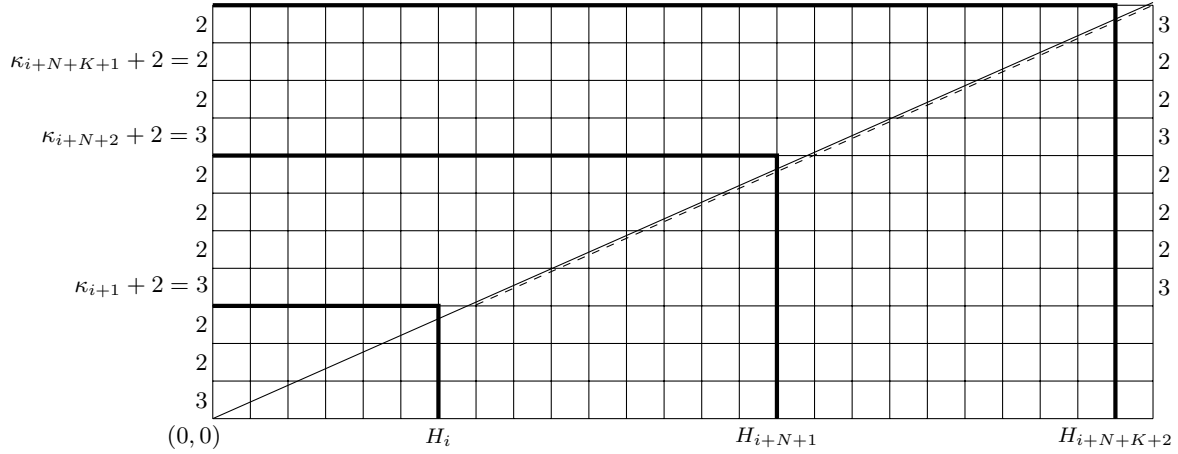

\begin{lemma}
	If $\nu$ is of irrational type or $\nu=lhe(q)$, then every endpoint of $\BD(X')$ is accessible.
\end{lemma}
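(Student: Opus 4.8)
The plan is to prove accessibility by showing that every endpoint is \emph{not capped} and then invoking Remark~\ref{rem:notcapped}, which turns uncappedness into accessibility by a horizontal segment. First I would put $e$ into a convenient normal form. Since $\sigma\colon\BD(X')\to\BD(X')$ extends to a planar homeomorphism, accessibility is preserved under $\sigma$ and $\sigma^{-1}$, so after replacing $e$ by a suitable shift I may assume by Proposition~\ref{prop:endpt} that $e$ has a single backward itinerary $\ovl e$ with $\tau_R(\ovl e)=\infty$; the case $\tau_L(\ovl e)=\infty$ is symmetric, replacing right-caps and right-accessing segments by their left counterparts. I would record here that $L'=\ovl\nu$ by Lemma~\ref{lem:BdCHL'} (which is exactly the distinguished feature of the irrational type and of $\nu=\lhe(q)$), and that $T$ is long-branched in both cases (bounded kneading map when $q$ is irrational; periodicity of $\nu=\lhe(q)=(w_q1)^\infty$ in the rational case).

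Next I would translate ``capped from the right'' into a combinatorial statement. By Lemma~\ref{lem:first}, two basic arcs are joined on the right precisely when their backward itineraries agree in every coordinate except one index $k$, which equals $\tau_R$ of both, so that the coordinates below $k$ spell $c_1\ldots c_{k-1}$ with $\#_1(c_1\ldots c_{k-1})$ even. Assuming $e$ were capped from the right, there would then be pairs $\ovl{y}^i,\ovl{w}^i$ differing only at a position $k_i$, with $\ovl{y}^i\prec\ovl e\prec\ovl{w}^i$ (or the reverse) and $\ovl{y}^i,\ovl{w}^i\to\ovl e$. Agreement with $\ovl e$ on the lowest $m_i$ coordinates, with $m_i\to\infty$, forces $k_i>m_i\to\infty$; reading off the forced block then gives $e_{m_i}\ldots e_1=c_{k_i-m_i}\ldots c_{k_i-1}$. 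Choosing $m_i+1$ in the (infinite) defining set of $\tau_R(\ovl e)=\infty$ simultaneously gives $e_{m_i}\ldots e_1=c_1\ldots c_{m_i}$ with $\#_1(c_1\ldots c_{m_i})$ even. Hence the length-$m_i$ prefix of $\nu$ recurs inside $\nu$, ending at position $k_i-1$, while the whole factor $c_1\ldots c_{k_i-1}$ must have an even number of ones.

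The heart of the argument, and the step I expect to be the main obstacle, is to show that such a recurrence is incompatible with the block structure of $\nu$. In the irrational case $\nu=10^{\kappa_1(q)}110^{\kappa_2(q)}11\ldots$ has $(\kappa_i(q))_{i}$ Sturmian, and Lemma~\ref{lem:sturm} together with Lemma~\ref{lem:notexistant} pin down exactly where and how a long prefix of $\nu$ may recur, thereby forcing the symbol $c_{k_i}$ immediately after the recurrence and, with it, the parity of $\#_1(c_1\ldots c_{k_i-1})$. I would show this forced parity is odd (equivalently, that $k_i$ cannot be the common value of $\tau_R$ for an admissible straddling pair), contradicting the even-parity cap condition. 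In the periodic case $\nu=\lhe(q)=(w_q1)^\infty$ one argues identically, replacing the Sturmian lemmas by the palindromic decomposition $c_q=Y1Z01$ of Remark~\ref{rem:palind}, which governs the recurrences of prefixes of a periodic $\nu$. Once the contradiction is reached, $e$ is not capped from the right, so by Remark~\ref{rem:notcapped} it is accessible by a horizontal segment; combined with the symmetric $\tau_L(\ovl e)=\infty$ case this establishes that every endpoint of $\BD(X')$ is accessible.
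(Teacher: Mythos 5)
Your skeleton matches the paper's: reduce to $\tau_R(\ovl e)=\infty$, show $e$ is not capped from the right, and invoke Remark~\ref{rem:notcapped}. But the step you yourself flag as the heart of the argument does not work as described, and it is exactly where the paper's proof does all of its work. First, a structural point: a pair $\ovl y^i,\ovl w^i$ joined on the right at index $k_i$ agrees everywhere except at $k_i$, so if its first disagreement with $\ovl e$ occurred at some index $j<k_i$ both members would carry the same symbol there and hence lie on the same side of $\ovl e$ in $\prec_L$; a straddling pair must therefore agree with $\ovl e$ on all of the first $k_i-1$ coordinates. This collapses your ``recurrence of the prefix ending at position $k_i-1$'' to the trivial occurrence, and it makes the parity you propose to contradict, namely $\#_1(c_1\ldots c_{k_i-1})=\#_1(e_{k_i-1}\ldots e_1)$, even by definition: evenness is simultaneously the condition for the pair to be joined on the right and the condition for $k_i-1$ to belong to the complete sequence of $e$. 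There is no parity contradiction to be had at the joining index.

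The actual obstruction lies elsewhere. Writing the pair as $\ldots 0\,e_{m_i}\ldots e_1$ and $\ldots 1\,e_{m_i}\ldots e_1$ with $m_i=k_i-1$, one of the two disagrees with $\ovl e$ at index $k_i$ and its side of $\ovl e$ is then determined by parity; the other agrees with $\ovl e$ through index $k_i$ and must end up strictly on the opposite side, so it has to disagree with $\ovl e$ at some index $>k_i$ while remaining admissible. The paper's proof shows, by an induction that repeatedly applies Lemma~\ref{lem:notexistant} (the palindromic prefixes of the Sturmian sequence $(\kappa_i(q))$), that at every such index the only admissible choice of symbol is $e$'s own symbol, because each initial block of $\ovl e$ reproduces a prefix of $\nu$ and any deviation creates a forbidden word. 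Hence the would-be upper sequence coincides with $\ovl e$ and no straddling pair exists. This infinite, symbol-by-symbol forcing is the content of the lemma; your proposal cites Lemmas~\ref{lem:sturm} and~\ref{lem:notexistant} but aims them at determining $c_{k_i}$ and a parity below $k_i$, which settles nothing. The same gap persists in your treatment of the case $\nu=\lhe(q)$.
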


\begin{proof}
	Let $e\in X'$ be an endpoint and let $\ovl{e}$ denote the left infinite symbolic description of $e$.\\
	Assume that $\tau_{R}(\ovl{e})=\infty$ and thus there exists a strictly increasing sequence $(m_i)_{i\in\N}$ such that $c_1\ldots c_{m_i}=e_{m_i}\ldots e_1$ and $\#_1(e_{m_i}\ldots e_1)$ is even. Assume $(m_i)_{i\in\N}$ is the complete sequence for $e$ (see Definition~\ref{def:complete}).\\
	Assume that for infinitely many $i\in\N$ there exist admissible left infinite itineraries $\ovl{x}^{O(i)}\prec_{L}\ovl{e}\prec_{L} x^{I(i)}$ so that $\ovl{x}^{O(i)}, \ovl{x}^{I(i)}\to \ovl{e}$ as $i\to \infty$, $\ovl{x}^{O(i)}, \ovl{x}^{I(i)}$ differ only at the index $m_i+1$ and equal $c_1\ldots c_{m_i}$ on the first $m_i$ places (if we are able to construct such $\ovl{x}^{O(i)}, \ovl{x}^{I(i)}$ the arcs will cap the endpoint $e$ which would thus be inaccessible - compare with the proof of Theorem~\ref{thm:capped}). So, $\ovl{x}^{O(i)}$ and $\ovl{x}^{I(i)}$ are of the form:
	$$\ovl{x}^{I(i)}=\ldots110^{\kone}110^{\ktwo}11\ldots110^{\kappa_j(q)}1.$$
	$$\hspace{15pt}\ovl{e}=\ldots110^{\kone}110^{\ktwo}11\ldots110^{\kappa_j(q)}1.$$
	$$\ovl{x}^{O(i)}=\ldots010^{\kone}110^{\ktwo}11\ldots110^{\kappa_j(q)}1.$$
	Note first that $0e_{m_i}\ldots e_1$ is indeed admissible.
	Since $\#_1(e_{m_i}\ldots e_{1})$ is even it holds that $\ovl{x}^{O(i)}\prec_{L}\ovl{e}$ for every $i\in\N$. Thus we need to find $\ovl{x}^{I(i)}\succ_{L}\ovl{e}$ in order to cap $e$.\\
	Denote by $J\in\N$ the smallest natural number such that 
	$$\ovl e=\ldots 110^{\kappa_J(q)-1}110^{\kappa_{J-1}(q)}11\ldots 110^{\kappa_2(q)}110^{\kappa_1(q)}110^{\kappa_2(q)}11\ldots 110^{\kappa_j(q)}1.$$
	By Lemma~\ref{lem:notexistant} it follows that $\kappa_J(q)\ldots\kappa_2(q)\kappa_1(q)$ is a palindrome and thus $10^{\kappa_J(q)}11\\
	0^{\kappa_{J-1}(q)}11\ldots 110^{\kappa_2(q)}11$ equals the beginning of $\nu$.
	
	We want to find $\ovl x^{I(i)}\succ\ovl e$. Note that none of $00^{\kappa_2(q)}110^{\kappa_1(q)}, \ldots, 00^{\kappa_{J-1}(q)}11\ldots 110^{\kappa_1(q)}$ are admissible. If we set 
	$$\ovl x^{I(i)}=\ldots 00^{\kappa_J(q)-1}110^{\kappa_{J-1}(q)}11\ldots 110^{\kappa_2(q)}110^{\kappa_1(q)}11\ldots 110^{\kappa_j(q)}1,$$
	then also
	$$\ovl x^{O(i)}=\ldots 00^{\kappa_J(q)-1}110^{\kappa_{J-1}(q)}11\ldots 110^{\kappa_2(q)}010^{\kappa_1(q)}11\ldots 110^{\kappa_j(q)}1.$$
	But since $100^{\kappa_J(q)-1}110^{\kappa_{J-1}(q)}11\ldots 110^{\kappa_2(q)}1$ equals the beginning of $\nu$, the word $00^{\kappa_J(q)-1}11\\
	0^{\kappa_{J-1}(q)}11
	\ldots 110^{\kappa_2(q)}0$ is not admissible, a contradiction.\\
	Thus we \black{have no other option but} to set 
	$$\ovl x^{I(i)}=\ldots 110^{\kappa_J(q)-1}110^{\kappa_{J-1}(q)}11\ldots 110^{\kappa_2(q)}110^{\kappa_1(q)}11\ldots 110^{\kappa_j(q)}1.$$
	By Lemma~\ref{lem:sturm} it follows that 
	$$\ovl e=\ldots 110^{\kappa_1(q)}110^{\kappa_J(q)-1}110^{\kappa_{J-1}(q)}11\ldots 110^{\kappa_2(q)}110^{\kappa_1(q)}110^{\kappa_2(q)}11\ldots 110^{\kappa_j(q)}1.$$
	Now take the smallest $K\in\N$ such that 
	$$\ovl e=\ldots 110^{\kappa_{K+1}(q)-1}110^{\kappa_K(q)}11\ldots 110^{\kappa_1(q)}110^{\kappa_J(q)-1}1$$
	$$	
	10^{\kappa_{J-1}(q)}11\ldots 110^{\kappa_2(q)}110^{\kappa_1(q)}110^{\kappa_2(q)}11\ldots 110^{\kappa_j(q)}1.$$
	By Lemma~\ref{lem:notexistant} it follows that $10^{\kappa_{K+1}(q)}11\ldots110^{\kappa_1(q)}110^{\kappa_{J}(q)-1}110^{\kappa_{J-1}(q)}11\ldots110^{\kappa_2(q)}11$ is the beginning of $\nu$. Thus we analogously argue that
	 $$\ovl x^{I(i)}=\ldots 110^{\kappa_{K+1}(q)-1}110^{\kappa_K(q)}11\ldots 110^{\kappa_1(q)}110^{\kappa_J(q)-1}1
	 $$
	 $$
	 10^{\kappa_{J-1}(q)}11\ldots 110^{\kappa_2(q)}110^{\kappa_1(q)}110^{\kappa_2(q)}11\ldots 110^{\kappa_j(q)}1,$$
	 which agrees with $\ovl{e}$.
	 Continuing inductively we conclude that $\ovl x^{I(i)}=\ovl e$. Thus $e$ is not capped \black{and by Remark~\ref{rem:notcapped} is accessible}.
\end{proof}

\begin{remark}
	\black{In this section we expand} the definition of Type 3 folding point introduced in the preperiodic orbit case. A point $p$ will be called a Type 3 folding point, if it is not an endpoint, it is accessible, and there is an arc $p\in Q\subset\mathcal{U}_p$ such that $Q\setminus\{p\}$ is not accessible, see Figure~\ref{fig:preper}.
\end{remark}

\begin{lemma}
If $\nu$ is of irrational type or rational endpoint type and $X'$ is embedded with $\BD$, then there are no Type 3 folding points.
\end{lemma}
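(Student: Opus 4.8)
The plan is to split the proof according to the two regimes in the hypothesis and to dispatch the rational endpoint type by a short parity computation, reserving the genuine work for the irrational type. Throughout I will use that $\sigma\colon\BD(X')\to\BD(X')$ extends to an (orientation preserving) planar homeomorphism, so that accessibility is invariant under $\sigma$, and that by Lemma~\ref{lem:horror2} every extremum of a cylinder lies in $\bigcup_{i\ge 0}\sigma^i(L')$. I also read Type 3 in the sense of Figure~\ref{fig:preper}: $p$ is accessible, is not an endpoint, and \emph{both} local components of $\mathcal{U}_p\setminus\{p\}$ are inaccessible (this is precisely what distinguishes it from Type 2, where exactly one local side is accessible).

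First I would dispose of the rational endpoint type. By Lemma~\ref{lem:BdCHL'} and Definition~\ref{def:height} such a $\nu$ equals either $\lhe(q)=(w_q1)^\infty$ or $\rhe(q)=10(\hat w_q1)^\infty$. If $\nu=\lhe(q)$ then $c$ is periodic, so by Remark~\ref{rem:fp} every folding point is an endpoint and hence cannot be of Type 3. If $\nu=\rhe(q)$ then $\nu=10(c_3\ldots c_{n+2})^\infty$ with $c_3\ldots c_{n+2}=\hat w_q1$, so $c_{n+2}=1$; and since $\#_1$ is reversal invariant, $\#_1(\hat w_q1)=\#_1(w_q)+1=\#_1(c_q)$. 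Counting the ones in $c_q=10^{\kappa_1(q)}11\ldots110^{\kappa_m(q)}1$ gives one leading $1$, one trailing $1$, and $m-1$ interior blocks $11$, i.e. $\#_1(c_q)=2m$, which is even. Hence Remark~\ref{rem:T3noteven} applies directly and $X'$ admits no Type 3 folding point in any $\mathcal{E}$-embedding, in particular in $\BD$.

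The substance is the irrational type, and here I would invoke the characterization of the accessible set assembled in Lemma~\ref{lem:irrpartially}, the remark following it, and the endpoint-accessibility lemma: the only accessible non-degenerate arcs are the translates $\sigma^i(Q_0)$ of the maximal accessible arc $Q_0=A(L')\cup[a,p_0]\subset\mathcal{U}_{L'}$, and the remaining accessible points are exactly the (accessible) endpoints of $\BD(X')$. Suppose $p$ were a Type 3 folding point. Then $p$ is accessible and not an endpoint, so $p$ lies on some $\sigma^i(Q_0)$. If $p$ is interior to $\sigma^i(Q_0)$, then $\sigma^i(Q_0)$ provides an accessible sub-arc of $\mathcal{U}_p$ on both sides of $p$, contradicting that both local sides of $p$ are inaccessible. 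Otherwise $p$ is a boundary point of $\sigma^i(Q_0)$; since $Q_0$ runs from the endpoint $e\in A(L')$ (with $\tau_R(L')=\infty$) to the folding point $p_0\in A(\ovl{l(L')})$, this forces $p=\sigma^i(e)$ or $p=\sigma^i(p_0)$. The former is an endpoint, excluded. For the latter, $p_0$ is a Type 2 folding point (the $Q_0$-side is accessible, the rest of the ray $\mathcal{U}_{L'}$ is not), and since $\sigma$ is a planar homeomorphism preserving accessibility, $\sigma^i(p_0)$ is again Type 2 — one of its local sides is accessible — so it is not Type 3. In every case we reach a contradiction.

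The hard part, already carried out in the preceding lemmas rather than here, is the irrational-type input: pinning down the accessible set precisely, which rests on the Sturmian combinatorics of $(\kappa_i(q))_{i}$ (Lemmas~\ref{lem:sturm} and~\ref{lem:notexistant}) together with the extendability of $\sigma$. Granting that, the present statement is a clean bookkeeping of the three possible positions of $p$ relative to the arcs $\sigma^i(Q_0)$. The only point requiring care in the write-up is the consistent reading of the Type 3 condition as ``$p$ interior with both local sides inaccessible,'' so that the Type 2 points $\sigma^i(p_0)$ are correctly excluded.
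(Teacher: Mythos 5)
Your treatment of the rational endpoint type is fine: the parity count $\#_1(\hat w_q1)=\#_1(c_q)=2m$ is correct, so Remark~\ref{rem:T3noteven} applies to $\nu=\rhe(q)$, and the case $\nu=\lhe(q)$ is immediate from Remark~\ref{rem:fp} since periodic $c$ forces all folding points to be endpoints. That is a legitimate (and for this sub-case shorter) route than the one the paper takes.

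The irrational case, however, is circular. Your key step is: ``$p$ is accessible and not an endpoint, so $p$ lies on some $\sigma^i(Q_0)$.'' The only inputs available at this stage are (i) the remark after Lemma~\ref{lem:irrpartially}, which identifies the accessible \emph{non-degenerate arcs} as the $\sigma^i(Q_0)$, and (ii) the lemma that all endpoints are accessible. Neither rules out a point that is accessible \emph{only at a single point} and is not an endpoint --- but that is exactly what a Type 3 folding point is (Figure~\ref{fig:preper}: an isolated accessible point, capped on both sides, lying on no accessible arc). The implication ``every accessible point either lies on an accessible arc or is an endpoint'' is precisely what the no-Type-3 lemma (together with the subsequent no-canal lemma) is needed to establish, so it cannot be assumed here. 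The paper instead gives a direct symbolic argument that never invokes the accessible-set characterization: if $p\in A(\ovl x)$ is a folding point and some suffix $x_{k+1}\ldots x_1=c_n\ldots c_{n+k}$ contains a $0$, then any two occurrences $c_n\ldots c_{n+k}=c_m\ldots c_{m+k}$ of that word in $\nu$ satisfy that $\#_1(c_1\ldots c_{n+k})$ and $\#_1(c_1\ldots c_{m+k})$ have the same parity (the occurrence is preceded in $\nu$ by a block $1^i$ whose length fixes the parity of the prefix), so all the relevant neighbouring basic arcs accumulate on $A(\ovl x)$ from one side only and $p$ cannot be Type 3; the residual case $\ovl x=1^{\infty}$ is handled by exhibiting explicit long basic arcs on both sides of $A(\ovl x)$. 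To repair your proof you would need an argument of this kind, i.e.\ a direct exclusion of isolated accessible non-endpoints; the structural facts you cite do not supply it.
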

\begin{proof}
	Assume by contradiction that there is a basic arc $\ovl x=\ldots x_2x_1$ and an accessible folding point $p\in A(\ovl x)$ of Type 3. Since $p$ is a folding point by Proposition~\ref{prop:foldpts} there exist blocks of symbols of $\nu$ of increasing length in $\ovl{x}$. 
	
	We claim that if $c_{n}\ldots c_{n+k}=c_{m}\ldots c_{m+k}$ for some $m,n\in\N$ and there exists $i\in\{0,\ldots, k\}$ such that $c_{n+i}=0$, then \black{the parities of $\#_1(c_1\ldots c_{n+k})$ and $\#_1 (c_1\ldots c_{m+k})$ are the same} (then all the wiggles will accumulate on $A(\ovl{x})$ from exactly one side of $p$ as in Figure~\ref{fig:type2}).
	Indeed, take the largest such index $i$. Then it follows that $c_n\ldots c_{n+i-1}=1^{i}$. If $i$ is even (odd) it holds that $\#_1(c_1\ldots c_{n-1})$ is odd (even), which proves the claim.\\
	Therefore, if for $\ovl{x}=\ldots x_2x_1$ there exists $i\in\{0,\ldots k\}$ such that $c_{n+i}=0$ and $x_{\black{k+1}}\ldots x_1=c_n\ldots c_{n+k}$ it follows that $A(\ovl{x})$ contains no Type 3 folding point.
	
	Now assume that $\ovl{x}=1^{\infty}$. If $\kone>1$, then $\ldots 1101^{\alpha}\succ_L \ovl{x}\succ_L \ldots 1101^{\alpha+1}$ for every odd $\alpha\in\N$ and both $\ldots 1101^{\alpha}$ and $\ldots 1101^{\alpha+1}$ project to $[T^2(c),T(c)]$, which is again a contradiction with $p$ being a Type 3 folding point.\\
	If $\kone=1$, then $\nu=101^{\beta}0\ldots$ for some even $\beta\in\N$. Then, basic arcs with symbolic description $1^{\infty}01^{\gamma}$ for every $\gamma>\beta$ project to $[T^2(c),T(c)]$ and we get an analogous conclusion as in the preceding paragraph.   
\end{proof}

\begin{lemma}
	If $\nu$ is of irrational type, then there exist no third and \black{no} fourth kind prime ends corresponding to $\BD(X')$.
\end{lemma}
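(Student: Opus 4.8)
The plan is to verify that $\BD(X')$ meets the hypotheses of Proposition~\ref{prop:fourth} and Corollary~\ref{cor:Iliadis}, and then to account for \emph{every} prime end on the circle of prime ends, showing that each one that is not interior to an open interval is of the second kind. The fourth kind will then be immediate and the third kind will be excluded by this accounting.

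First I would dispose of the fourth kind. Since $\nu$ is of irrational type, $T$ is long-branched, so every proper subcontinuum of $X'$ is a point or an arc and every composant is an arc-component which is a dense line or ray. Combined with $X'$ being indecomposable and non-separating, and with the fact that each composant carries at most one folding point (the long-branched analogue of Remark~\ref{rem:preper1FP}), Proposition~\ref{prop:fourth} applies directly and yields that $\Pi(P)$ is degenerate or equal to $\BD(X')$ for every prime end $P$; in particular there are no fourth kind prime ends. These same hypotheses let me invoke Corollary~\ref{cor:Iliadis}: the circle of prime ends partitions into open intervals, each corresponding to a composant accessible in more than one point, together with their endpoints, each endpoint being either a second kind prime end (an accessible folding point) or a third kind prime end (a simple dense canal). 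The task thus reduces to showing that none of these endpoints is of the third kind, i.e. that every non-interval prime end corresponds to an accessible folding point.

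Next I would read off the open intervals from the accessibility structure already established in this subsection. The only accessible non-degenerate arcs are $\sigma^i(Q)$, $i\in\Z$, where $Q=A(L')\cup[a,p]$, the point $a$ is the left endpoint of $A(\ovl{l(L')})$, and $p$ is the folding point in $A(\ovl{l(L')})$. By Corollary~\ref{cor:class} the accessible points of any composant form a connected closed set, so a composant is accessible in more than one point exactly when it contains one of the arcs $\sigma^i(Q)$; hence the open intervals are precisely those associated with $\sigma^i(\mathcal{U}_{L'})$. The two endpoints of the accessible arc $\sigma^i(Q)$ are the images $\sigma^i(e)$ and $\sigma^i(p)$, where $e$ is the endpoint of $X'$ contained in $A(L')$ and $p$ is the folding point above; both $e$ and $p$ are folding points and both are accessible, so the two boundary prime ends of each open interval are of the second kind. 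Finally, the remaining Cantor set of prime ends was shown earlier to correspond to accessible endpoints of $\BD(X')$, which are folding points and therefore again give second kind prime ends. Consequently every non-interval prime end is of the second kind, so there are no third kind prime ends, and no fourth kind prime ends either.

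The main obstacle I expect is the boundary bookkeeping: confirming that the complement of the union of the open intervals on the circle of prime ends is exhausted by the endpoints $\sigma^i(e),\sigma^i(p)$ of the accessible arcs together with the Cantor set of accessible endpoints, leaving no ``stray'' prime end that could host a simple dense canal. This rests squarely on the prior facts that $\sigma^i(Q)$ are the only accessible non-degenerate arcs and that all endpoints of $\BD(X')$ are accessible. A secondary point that must be pinned down for the very applicability of Proposition~\ref{prop:fourth} and Corollary~\ref{cor:Iliadis} is that each composant contains at most one folding point in the irrational-height case; I would verify this from the long-branchedness of $T$ together with the characterization of folding points in Proposition~\ref{prop:foldpts}, since $\mathcal{U}_{L'}$ and its shifts are the only composants whose accessible set is a nondegenerate arc and each such arc meets the folding-point set in exactly its two endpoints.
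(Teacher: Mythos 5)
Your approach has two genuine gaps. First, both Proposition~\ref{prop:fourth} and Corollary~\ref{cor:Iliadis} require that every composant of the continuum contain \emph{at most one} folding point, and this hypothesis fails in the irrational height case: the composant $\mathcal{U}_{L'}$ contains the endpoint $e$ of $X'$ sitting at the right end of $A(L')$ (and endpoints are folding points) \emph{and} the distinct folding point $p$ in the adjacent basic arc $A(\ovl{l(L')})$. You in fact concede this when you say that the accessible arc $Q$ ``meets the folding-point set in exactly its two endpoints,'' which directly contradicts the at-most-one-folding-point condition you propose to verify. So neither the exclusion of fourth kind prime ends nor the dichotomy ``boundary prime ends are second or third kind'' can be obtained here by citing those two results.

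Second, even granting an interval decomposition of the circle of prime ends, your exclusion of the third kind does not close. From ``all endpoints of $\BD(X')$ are accessible'' you cannot conclude that \emph{every} prime end in the residual Cantor set corresponds to an accessible folding point: accessible points produce first or second kind prime ends, but nothing a priori prevents additional residual prime ends from being simple dense canals, and ruling those out is exactly the content of the lemma --- so your accounting is circular at the decisive step. The paper instead argues directly with crosscuts: any infinite canal in $\BD(X')$ must be realized by vertical crosscuts between adjacent $n$-cylinders $A\succ_L B$, and a symbolic computation in the spirit of Lemma~\ref{lem:horror2} shows that $S_A$ and $L_B$ always have the same tail, i.e.\ both lie in $\sigma^i(\mathcal{U}_{L'})$ for some $i$, whose accessible part is a compact arc of finite length; hence the crosscuts cannot accumulate on a non-degenerate continuum, so $\Pi(P)$ is degenerate for every prime end $P$, which kills the third and fourth kinds simultaneously. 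Some version of that computation is indispensable and is absent from your proposal.
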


\begin{proof}
	Since the embedding $\BD(X')$ is realized as an alignment of basic arcs along vertically embedded Cantor set connected with semi-circles, we can study crosscuts which are vertical segments in the plane joining two adjacent cylinders, see Figure~\ref{fig:fourth}. Note that every infinite canal is  realized by such vertical crosscuts. Take two $n$-cylinders $A=\f a_n\ldots a_1]$ and $B=\f b_n\ldots b_1]$ for some $n\in\N$, such that $A\succ_L B$ and $A$ and $B$ are \emph{adjacent} $n$-cylinders, \ie there is no $n$-cylinder $D$ such that $A\succ_L D\succ_L B$. We will show that $S_A$ and $L_B$ have the same tail, \ie they both belong to $\sigma^i(L')$ for some $i\in\Z$. Since the accessible subsets of $\sigma^i(L')$ are arcs of finite length, it follows immediately that there cannot exist infinite canals for $\BD(X')$.
	
	Take $A$ and $B$ as above and let $m\in\{0, \ldots, n-1\}$ be the smallest nonnegative number such that $a_{m+1}\neq b_{m+1}$.\\
	First assume that $\#_1(a_m\ldots a_1)$ is odd. Then, $S_A=S_{0a_m\ldots a_1}$ and $L_B=L_{1a_m\ldots a_1}$, since $A\succ_L B$ are adjacent. Let $k\in \{1,\ldots, m-1\}$ be the smallest number such that $c_2\ldots c_{m-k+2}=a_{k+1}\ldots a_m1$, (compare with the proof of Lemma~\ref{lem:horror2}). Assume first that such $k$ indeed exists. Since also $c_2\ldots c^{*}_{m-k+2}\subset S_A$ is admissible, it follows that $\#_1(a_{k+1}\ldots a_m)$ is odd. Thus, $\#_1(a_k\ldots a_1)$ is even and since $a_k=1$ it holds that $\#_1(a_{k-1}\ldots a_1)$ is odd. As in the proof of Lemma~\ref{lem:horror2}, we conclude that $L_{1a_m\ldots a_1}=L'1a_{m}\ldots a_1$. \black{If $k$ does not exist we again have that $L_{1a_m\ldots a_1}=L'1a_{m}\ldots a_1$.} Note that $k=m$ is not possible. Furthermore, since $\#_1(a_k\ldots a_m)$ is odd, it follows from the specific form of $\nu$ that $S_{0a_m\ldots a_1}=L'0a_m\ldots a_1$,  which is always admissible.
	Therefore, $S_A$ and $L_B$ have the same left infinite tail.\\
	Now assume that $\#_1(a_m\ldots a_1)$ is even. Then $S_A=S_{1a_m\ldots a_1}$ and $L_B=L_{0a_m\ldots a_1}$, since $A\succ_L B$ are adjacent. Let $k\in \{1,\ldots, m-1\}$ again be the smallest number such that $c_2\ldots c_{m-k+1}=a_{k+1}\ldots a_m1$. By analogous arguments as in the preceding paragraph we obtain that $\#_1(a_{k-1}\ldots a_1)$ is even and thus as in the proof of Lemma~\ref{lem:horror2}, we conclude that $S_{1a_m\ldots a_1}=L'1a_{m}\ldots a_1$.  Furthermore, $L_{0a_m\ldots a_1}=L'0a_m\ldots a_1$ which is always admissible. Again, $S_A$ and $L_B$ have the same left infinite tail. Therefore, it holds that all the canals are finite, \ie there exist no third and fourth kind prime ends corresponding to $\BD(X')$.
\end{proof}

The following theorem follows directly from the preceding eight lemmas.

\begin{theorem}
	If $\nu$ is of irrational type and $X'$ is embedded with $\BD$, then there are countably infinitely many partially accessible rays of $\BD(X')$; these are the arc-components which are symbolically described by a tail which is a shift of $\ovl\nu$. Each of them contains an endpoint of $\BD(X')$ and accessible set is a compact arc which contains that endpoint. Furthermore, there exist uncountably many accessible arc-components which are accessible in a single point which is an endpoint of $\BD(X')$. All (uncountably many) endpoints of $\BD(X')$ are accessible. 
\end{theorem}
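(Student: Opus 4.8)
The plan is to assemble the theorem from the eight lemmas proved in this subsection; since each accessibility statement has already been isolated, the remaining work is organisation and bookkeeping rather than new computation. First I would fix the backbone arc-component. By Lemma~\ref{lem:BdCHL'} the top itinerary is $L'=\ovl\nu$, and by the lemma giving $\tau_R(L')=\infty$ together with Proposition~\ref{prop:endpt} the basic arc $A(L')$ carries an endpoint $e_0$ of $\BD(X')$ at its right end; since an endpoint of $X'$ cannot lie in the interior of an arc, $\mathcal{U}_{L'}$ is a ray with $e_0$ as its endpoint. Lemma~\ref{lem:cyl} makes $A(L')$ fully accessible, and Lemma~\ref{lem:irrpartially} shows that $A(L')$ is the only extremum of a cylinder inside $\mathcal{U}_{L'}$; Proposition~\ref{prop:wiggles} applied to the neighbour $\ovl{l(L')}$ then produces a folding point $p_0$, and the compact arc $Q_0:=A(L')\cup[a,p_0]$ is exactly the accessible part of the ray, the rest being inaccessible by Corollary~\ref{cor:class}.

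Next I would propagate this picture under the shift. Because $\nu$ has irrational height, $\BD$ is (via the Barge--Martin construction) the attractor of an orientation-preserving planar homeomorphism extending $\sigma$, so $\sigma$ and all its powers are plane homeomorphisms; hence each $\sigma^i(\mathcal{U}_{L'})$ is again a ray, partially accessible with accessible set the compact arc $\sigma^i(Q_0)$ containing the endpoint $\sigma^i(e_0)$. Since $\ovl\nu$ is not eventually periodic, the tails $\sigma^i(L')$ are pairwise distinct, so by Remark~\ref{rem:symbolicAC} these rays lie in distinct arc-components and there are countably infinitely many of them, indexed exactly by the shifts of $\ovl\nu$. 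This establishes the first two assertions of the theorem.

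For the remaining assertions I would classify every other accessible point. Lemma~\ref{lem:horror2} says that every extremum of a cylinder belongs to some $\sigma^i(L')$, so outside $\bigcup_i\sigma^i(\mathcal{U}_{L'})$ no basic arc is an extremum, and hence (recall from the discussion after Lemma~\ref{lem:cyl} that a non-folding point with a Cantor-set-of-arcs neighbourhood which is not an extremum of a cylinder is inaccessible) no non-folding point there is accessible. An arc-component not among the $\sigma^i(\mathcal{U}_{L'})$ contains at most one folding point, the no-Type-3 lemma forbids an accessible folding point that is not an endpoint, and the lemma asserting that every endpoint is accessible shows that when such a component carries an endpoint $e$ it is accessible; by Corollary~\ref{cor:class} its accessible set is then the single point $e$. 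Combined with the standard fact that the irrational-height core carries uncountably many endpoints (recurrent, non-periodic critical orbit, cf.\ \cite{BaMa}), this yields uncountably many arc-components accessible at exactly one endpoint and the accessibility of all endpoints. Finally the lemma ruling out third- and fourth-kind prime ends (equivalently, the absence of infinite canals) confirms that no partially accessible ray conceals a canal, so the two families above exhaust the accessible set.

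The genuinely hard work is already done inside the lemmas: the capping argument showing every endpoint is accessible rests on the delicate Sturmian/palindrome combinatorics of Lemma~\ref{lem:sturm} and Lemma~\ref{lem:notexistant}. In the synthesis itself the only subtle points I expect to watch are (i) verifying that the two families are disjoint and jointly exhaustive---in particular that an endpoint either lies on one of the countably many extremal rays or is a single-point-accessible endpoint, but never both---and (ii) confirming that the accessible portion of each ray is a genuine compact arc rather than a half-open one, which is precisely where Corollary~\ref{cor:class} together with the absence of Type 3 folding points and of canals are needed.
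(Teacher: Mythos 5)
Your proposal is correct and follows essentially the same route as the paper, which simply observes that the theorem ``follows directly from the preceding eight lemmas''; your synthesis assembles exactly those lemmas (the $\tau_R(L')=\infty$ lemma, Lemma~\ref{lem:irrpartially}, Lemma~\ref{lem:horror2}, the endpoint-accessibility lemma, the no-Type-3 lemma, and the no-canals lemma) together with Proposition~\ref{prop:wiggles}, Corollary~\ref{cor:class} and extendability of $\sigma$, in the same way the surrounding remarks do. The only additional care you take --- checking disjointness/exhaustiveness of the two families and compactness of the accessible arcs --- is consistent with the paper's intent and introduces no gap.
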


\subsection{Rational endpoint case} Let $q=\frac{m}{n}$.
In this subsection we study $\BD(X')$ when $\nu$ is either $\rhe(q)$ or $\lhe(q)$. We provide a symbolic proof of Theorem 4.66 from \cite{3G}.

When $\nu=\lhe(q)=(w_q1)^{\infty}$ it follows that $L'=\ovl{\lhe(q)}$. In Remark~\ref{rem:palind} we argued that there exist palindromes $Y, Z$ such that $\lhe(q)=(Y1Z1)^{\infty}$, thus $\ovl{\lhe(q)}=(1Z1Y)^{\infty}$. Note that both $Y$ and $Z$ are even, from which we conclude that $\tau_R(L')=\infty$. Thus the right endpoint of $A(L')$ is also an endpoint of $X'$ and since there are no other folding points on $\mathcal{U}_{L'}$ except of this endpoint, the ray $\mathcal{U}_{L'}$ is a fully accessible. Since $\sigma$ is extendable to the plane it follows that $\sigma^{i}(\mathcal{U}_{L'})$ are fully accessible for every $i\in\{0,1,\ldots, n-1\}$ (where $n$ is the period of $\lhe(q)$). Lemma~\ref{lem:horror2} assures that the union of $n$ rays is indeed the complete set of accessible points of $\BD(X')$ for $\nu=\lhe(q)$. Thus the circle of prime ends decomposes into $n$ half-open intervals, where the endpoints represent the endpoints of $X'$. Summarizing, we have the following theorem:

\begin{theorem}
	If $\nu=lhe(q)$ for some $q=\frac{m}{n}$, where $m$ and $n$ are relatively prime, then in $\BD(X')$ there exist $n$ fully accessible rays which are symbolically described  by a tail which is a shift of $\ovl{\nu}$ and no other point from $\BD(X')$ is accessible. Specifically, there exist no infinite canals in $\BD(X')$.  
\end{theorem}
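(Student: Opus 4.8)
The statement collects the facts assembled in the preceding discussion, and the plan is to split it into three assertions: that $n$ sets $\mathcal{U}_{\sigma^i(L')}$ are fully accessible rays, that nothing else is accessible, and that there are no infinite canals. First I would fix $L'$ and locate an endpoint. By Lemma~\ref{lem:BdCHL'} we have $L'=\ovl\nu$, and Remark~\ref{rem:palind} lets me write $\nu=\lhe(q)=(Y1Z1)^\infty$ with $Y,Z$ palindromes of even parity, so that $L'=(1Z1Y)^\infty$. The words $Y(1Z1Y)^k$ are then even palindromic prefixes of $\nu$ of increasing length, which forces $\tau_R(L')=\infty$; since $c$ is periodic $T$ is long-branched, hence $A(L')$ is non-degenerate and, by Proposition~\ref{prop:endpt}, its right endpoint $e$ is an endpoint of $X'$.

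Next I would prove that the ray $\mathcal{U}_{L'}$ is fully accessible. Here $\prec_L$ is the ordinary parity-lexicographic order and $L'$ is its largest admissible sequence, so for any basic arc $\ovl x\in\mathcal{U}_{L'}$, written $\ovl x=L'x_m\ldots x_1$, the sequence $L'x_m\ldots x_1$ is admissible and therefore equals the top $L_{x_m\ldots x_1}$ of the cylinder $[x_m\ldots x_1]$. By Lemma~\ref{lem:cyl} every such basic arc is fully accessible; as these cover $\mathcal{U}_{L'}$ and the gluing points are shared endpoints of fully accessible basic arcs, the whole ray is fully accessible. (Equivalently, $e$ is accessible because it lies on an extremum, any point farther along the ray lies on a fully accessible basic arc, and Proposition~\ref{prop:access} then makes every arc $[e,y]\subset\mathcal{U}_{L'}$ fully accessible.) Because $\sigma$ extends to a planar homeomorphism of $\BD(X)$, and hence of $\BD(X')$, the images $\sigma^i(\mathcal{U}_{L'})=\mathcal{U}_{\sigma^i(L')}$ are fully accessible; since the period of $\nu$ is exactly $n$, these are $n$ distinct rays whose tails are the shifts of $\ovl\nu$.

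Finally I would establish completeness and the canal statement. By Lemma~\ref{lem:horror2} every extremum of a cylinder lies in some $\mathcal{U}_{\sigma^i(L')}$, and a point that is neither a folding point nor an extremum has a neighbourhood homeomorphic to a Cantor set of arcs and so is not accessible. Since $\nu$ is periodic all folding points are endpoints (Remark~\ref{rem:fp}); the $n$ endpoints are permuted by $\sigma$, hence are exactly the points $\sigma^i(e)$, each already lying on and accessible in one of the $n$ rays. Thus the accessible set is precisely $\bigcup_{i=0}^{n-1}\mathcal{U}_{\sigma^i(L')}$. To exclude infinite canals I would invoke Proposition~\ref{prop:fourth}: $T$ long-branched makes every proper subcontinuum an arc, and each composant (which equals an arc-component) carries at most one folding point, so $\Pi(P)$ is degenerate or equal to $X'$ for every prime end $P$; this rules out any prime end with $\Pi(P)$ non-degenerate and different from $X'$, i.e. any infinite canal. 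Together with the accounting above, the circle of prime ends then splits into $n$ half-open intervals whose closed endpoints are the second kind prime ends of the $\sigma^i(e)$.

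The step I expect to be the main obstacle is completeness: one must be sure that Lemma~\ref{lem:horror2} captures all cylinder extrema and that every folding point — here every endpoint of $X'$ — already sits inside one of the $n$ rays, so that no stray accessible arc (which could bound an extra canal) can hide outside $\bigcup_{i}\mathcal{U}_{\sigma^i(L')}$. Verifying the hypothesis of Proposition~\ref{prop:fourth} that each composant contains at most one folding point is the delicate bookkeeping underlying this reduction.
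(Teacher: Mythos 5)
Your proof is correct and follows essentially the same route as the paper's: Lemma~\ref{lem:BdCHL'} and Remark~\ref{rem:palind} give $\tau_R(L')=\infty$ and an endpoint on $A(L')$, cylinder extrema give full accessibility of $\mathcal{U}_{L'}$, extendability of $\sigma$ yields the remaining $n-1$ rays, and Lemma~\ref{lem:horror2} gives completeness of the accessible set. The only quibble is that $L'x_m\ldots x_1$ may be the bottom $S_{x_m\ldots x_1}$ rather than the top, depending on the parity of $\#_1(x_m\ldots x_1)$, but either way Lemma~\ref{lem:cyl} applies; your explicit accounting of the folding points as the $n$ endpoints $\sigma^i(e)$ lying on the rays is a welcome elaboration of the paper's terser argument.
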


When $\nu=\rhe(q)$ it holds by Lemma~\ref{lem:BdCHL'} that $L'=\ovl{\rhe(q)}=(1Y1Z)^{\infty}01$. Since $Y$ starts with $1$ it holds that there exists a folding point $p\in\mathcal{U}_{L'}$ on a basic arc with itinerary $\ovl{l(L')}=(1Y1Z)^{\infty}11$. Since $\rhe(q)$ is strictly preperiodic it follows that left tail of $\ovl{l(L')}$ always differs from \black{a positive shift of} $\ovl{\rhe(q)}$, so Lemma~\ref{lem:horror2} implies that $\ovl{l(L')}$ is not an extremum of any cylinder. Proposition~\ref{prop:wiggles} implies that $p$ is Type 2 folding point and consequently $\mathcal{U}_{L'}$ is partially accessible. Moreover, since $\mathcal{U}_{L'}$ contains no other folding points we conclude that one component of $\mathcal{U}_{L'}\setminus \{p\}$ is fully accessible and the other component of $\mathcal{U}_{L'}\setminus \{p\}$ is not accessible. Since $\sigma$ is extendable, $\sigma^i(\mathcal{U}_{L'})$ are also partially accessible. Lemma~\ref{lem:horror2} implies that the circle of prime ends decomposes into $n$ half-open intervals and their endpoints are representing the accessible folding points of Type 2. Thus we obtain the following theorem:

\begin{theorem}
	If $\nu=rhe(q)$ for some $q=\frac{m}{n}$, where $m$ and $n$ are relatively prime, then in $\BD(X')$ there exist $n$ partially accessible lines which are symbolically described  by a tail which is a shift of $\ovl{\nu}$ and no other point from $\BD(X')$ is accessible. Specifically, there exist no infinite canals in $\BD(X')$.  
\end{theorem}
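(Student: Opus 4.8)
The plan is to locate the top basic arc $A(L')$ of $\BD(X')$, identify the folding point sitting on its left neighbour, classify that folding point as Type 2, and then transport the picture by the (extendable) shift to enumerate all accessible points. First I would pin down $L'$: since $\nu=\rhe(q)$ satisfies $\lhe(q)\prec\nu\preceq\rhe(q)$, Lemma~\ref{lem:BdCHL'} gives $L'=\ovl{\rhe(q)}$, and using the palindromic decomposition $c_q=Y1Z01$ from Remark~\ref{rem:palind} together with $\rhe(q)=10(\hat w_q1)^{\infty}=10(Z1Y1)^{\infty}$ I would rewrite $L'=(1Y1Z)^{\infty}01$. A short computation of $\tau_L(L')=2$ and $\tau_R(L')=1$ shows $A(L')$ is the full-length, non-degenerate basic arc with $\pi_0(A(L'))=[T^2(c),T(c)]$, and is fully accessible as the largest basic arc. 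Because $\nu=\rhe(q)$ is strictly preperiodic, Remark~\ref{rem:fp} tells us its folding points are not endpoints and Remark~\ref{rem:preper1FP} that each composant carries at most one folding point; hence $\mathcal{U}_{L'}$ is a \emph{line} rather than a ray, which is exactly the feature distinguishing this case from the $\lhe(q)$ case.

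Next I would examine the left neighbour $\ovl{l(L')}=(1Y1Z)^{\infty}11$, which shares the left endpoint of $A(L')$. The central step is to verify that $\ovl{l(L')}$ is \emph{not} an extremum of any cylinder: by Lemma~\ref{lem:horror2} every extremum lies on $\sigma^{i}(L')$, so it suffices to check that the tail of $\ovl{l(L')}$ agrees with no positive shift of $\ovl{\rhe(q)}$, which follows from the strict preperiodicity of $\rhe(q)$. Proposition~\ref{prop:wiggles} then forces an accessible folding point $p\in A(\ovl{l(L')})$, and since its left neighbour $l(\ovl{p})=L'$ is an extremum while $\ovl{p}=\ovl{l(L')}$ is not, $p$ is accessible of Type 2 in the sense of Remark~\ref{rem:types} and Definition~\ref{def:FPtypes}. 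Corollary~\ref{cor:class} now gives that $\mathcal{U}_{L'}$ is partially accessible: one component of $\mathcal{U}_{L'}\setminus\{p\}$ is fully accessible and the other is not accessible.

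Finally I would propagate and count. As $\sigma$ extends to a planar homeomorphism on $\BD(X')$, each $\sigma^{i}(\mathcal{U}_{L'})$ is again a partially accessible line; since $|w_q1|=n$, the sequence $\ovl\nu$ has exactly $n$ distinct shifts, giving precisely $n$ such lines. Lemma~\ref{lem:horror2} confines every cylinder extremum to one of the $\sigma^{i}(L')$, and long-branchedness (forced by rationality of $q$) guarantees that every point which is neither an extremum nor a folding point has a Cantor-set-of-arcs neighbourhood and is therefore inaccessible, while the only accessible folding points are the Type 2 points $\sigma^{i}(p)$. Thus the accessible set is exactly the union of the accessible rays of the $n$ lines. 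For the prime-end statement I would apply Corollary~\ref{cor:Iliadis}: the circle of prime ends splits into $n$ half-open intervals (the accessible rays, each accessible in infinitely many points) together with $n$ boundary second-kind prime ends (the $\sigma^{i}(p)$); Proposition~\ref{prop:fourth} rules out fourth-kind prime ends, and since every boundary prime end is a second-kind accessible folding point, no third-kind prime end occurs either — in particular no prime end has a non-degenerate proper principal set, so there are no infinite canals.

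The hard part will be the middle paragraph: establishing rigorously that the left neighbour $\ovl{l(L')}$ is not a cylinder extremum. This is precisely where the special combinatorics of $\rhe(q)$ must be exploited, comparing the reversed periodic tail against all shifts of $\ovl{\rhe(q)}$ via the palindrome structure of Remark~\ref{rem:palind} and the strict preperiodicity. Once this is secured, Proposition~\ref{prop:wiggles}, Corollary~\ref{cor:class} and the extremum classification of Lemma~\ref{lem:horror2} reduce everything else to bookkeeping that closely parallels the already-treated $\lhe(q)$ and irrational cases.
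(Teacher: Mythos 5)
Your proposal is correct and follows essentially the same route as the paper: identify $L'=\ovl{\rhe(q)}=(1Y1Z)^{\infty}01$ via Lemma~\ref{lem:BdCHL'}, observe that the left neighbour $\ovl{l(L')}=(1Y1Z)^{\infty}11$ is not an extremum of any cylinder because of strict preperiodicity together with Lemma~\ref{lem:horror2}, obtain a Type 2 accessible folding point from Proposition~\ref{prop:wiggles}, and propagate by the extendable shift to get exactly $n$ partially accessible lines with no infinite canals. The extra details you flag (the $\tau_L$, $\tau_R$ computation, long-branchedness, Corollary~\ref{cor:Iliadis}) are consistent with, and only slightly more explicit than, the paper's argument.
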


\subsection{Rational interior case}
Assume $q=\frac{m}{n}$, where $m$ and $n$ are relatively prime. We will show that in the rational interior case there exist $n$ fully accessible arc-components which are dense lines in $X'$. We show that folding points which are not lying in the extrema of cylinders are not accessible, so the remaining $n$ points on the circle of prime ends are simple dense canals. That is an analogue of Theorem 4.64 from \cite{3G} for tent inverse limits.

\begin{lemma}[Theorem 16 in \cite{BdCH}]\label{lem:BdCH}
	Suppose that $\nu$ is of rational interior type for $q=m/n$, where $m$ and $n$ are relatively prime. Then a sequence $\ovl{t}\in\{0,1\}^{\infty}$ which does not belong to $\mathcal{C}$ is admissible if and only if
	\begin{itemize}
		\item[(a)] $\sigma^{i}(\ovl{t})\preceq \rhe(q)$ for all $i\in\N$,
		\item[(b)] $\sigma^{i}(\ovl{t})\preceq \lhe(q)$ for all $i\in\N$ for which $\sigma^{i}(\ovr{t})\succ \sigma^{n+1}(\nu)$.
	\end{itemize}
\end{lemma}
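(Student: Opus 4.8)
The plan is to reduce the statement to the word-admissibility criterion of Section~\ref{sec:prelim} and then to exploit the palindromic structure of rational heights. Recall that $\ovl t=\ldots t_2t_1$ is admissible precisely when every finite subword is admissible, i.e. when for each window and each of its suffixes one has the two-sided bound $c_2c_3\ldots\preceq(\text{suffix})\preceq c_1c_2\ldots$ in the parity-lexicographical order. The upper bounds, organized over all positions, become a family of comparisons between the forward orbit of $\ovr t$ and $\nu$ together with its shifts; this is why a quantity such as $\sigma^{n+1}(\nu)$ naturally enters. First I would record the structural facts available for interior $q=m/n$: by Remark~\ref{rem:palind} there are palindromes $Y,Z$ with $c_q=Y1Z01$, so that $\lhe(q)=(Y1Z1)^{\infty}$ and $\rhe(q)=10(Z1Y1)^{\infty}$; by Lemma~\ref{lem:lhe/rhe} one has $\sigma^{N}(\rhe(q))=\lhe(q)$ for a suitable $N$; and by Definition~\ref{def:height}, $\lhe(q)\prec\nu\prec\rhe(q)$. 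These identities are exactly what let one pass between the backward reading that governs $\sigma^i(\ovl t)$ and the forward reading that governs $\sigma^i(\ovr t)$.

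For necessity I would assume $\ovl t$ admissible and derive the two conditions separately. Condition (a) is the coarse bound: since every admissible sequence of height $q$ must stay $\preceq\rhe(q)$, the largest height-$q$ sequence, any $i$ with $\sigma^i(\ovl t)\succ\rhe(q)$ would, through the word criterion, force a suffix strictly above the corresponding prefix of $\nu$ and hence a non-admissible window. Condition (b) is the refinement special to the interior case: precisely when the forward orbit satisfies $\sigma^{i}(\ovr t)\succ\sigma^{n+1}(\nu)$, the word has entered the regime past the critical threshold, and there I would use $\sigma^{N}(\rhe(q))=\lhe(q)$ together with the palindromicity of $Y$ and $Z$ to convert a hypothetical excess of $\sigma^i(\ovl t)$ above $\lhe(q)$ into a forward window strictly above $\nu$, contradicting admissibility. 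For sufficiency I would run the converse: assuming (a) and (b), I check each finite window against the definition, splitting according to whether its forward continuation lands in the $\succ\sigma^{n+1}(\nu)$ regime (apply the $\lhe(q)$ bound of (b)) or not (apply the $\rhe(q)$ bound of (a)), while the lower bound $c_2c_3\ldots\preceq(\text{suffix})$ follows automatically from $\lhe(q)\prec\nu$.

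The main obstacle I expect is the threshold condition in (b) and the genuine asymmetry between the two bounds. Unlike the irrational and rational endpoint cases, where admissibility is reversal-symmetric (Remark~\ref{rem:adm}), here the backward and forward readings disagree, so one cannot simply transport the forward criterion to $\ovl t$ and a single bound by $\rhe(q)$ is not enough. The delicate point is to prove that the set of indices $i$ with $\sigma^{i}(\ovr t)\succ\sigma^{n+1}(\nu)$ is exactly the set on which the tighter bound $\lhe(q)$ is forced, and that off this set the looser bound $\rhe(q)$ already suffices; this matching requires careful parity-lexicographical bookkeeping with the decomposition $c_q=Y1Z01$ and the shift identity of Lemma~\ref{lem:lhe/rhe}. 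A fully detailed verification is the content of \cite{BdCH}, Theorem~16, and the scheme above is how I would reconstruct it.
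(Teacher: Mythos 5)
The paper offers no proof of Lemma~\ref{lem:BdCH} at all: it is imported verbatim as Theorem~16 of \cite{BdCH}, and the authors use it as a black box in Lemmas~\ref{lem:horrorgram} and~\ref{lem:nlines}. So there is no in-paper argument to compare your proposal against; the only honest benchmark is whether your sketch would itself constitute a proof.

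It would not, as it stands. Your outline correctly identifies the overall shape of the argument (reduce to the finite-word criterion, use the palindromic decomposition $c_q=Y1Z01$ and the shift relation between $\rhe(q)$ and $\lhe(q)$, and treat the two regimes separated by the threshold $\sigma^{n+1}(\nu)$), and you correctly flag that the interior case lacks the reversal symmetry of Remark~\ref{rem:adm}, which is precisely why the statement has the two-tier form. But the entire content of the lemma lives in the step you describe only in the conditional mood: proving that the set of indices $i$ with $\sigma^{i}(\ovr t)\succ\sigma^{n+1}(\nu)$ is \emph{exactly} the set on which the bound by $\lhe(q)$ is both necessary and sufficient, and that off this set the bound by $\rhe(q)$ alone already forbids every non-admissible window. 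No parity-lexicographic computation is actually carried out for either direction, and your closing sentence explicitly defers the verification to \cite{BdCH}. In addition, the claim that the lower bound $c_2c_3\ldots\preceq(\text{suffix})$ ``follows automatically from $\lhe(q)\prec\nu$'' is unjustified as written; the lower admissibility bound is a separate constraint (it is what confines the itinerary to the core) and is not a formal consequence of where $\nu$ sits between $\lhe(q)$ and $\rhe(q)$. In short: your proposal is a reasonable reading guide to \cite{BdCH}, Theorem~16, but it is a plan for a proof rather than a proof, and the one genuinely delicate step is named but not executed.
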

\black{
	\begin{remark}
	In the rest of this section, for simplicity, we introduce the following notation. Let $w\in\{0,1\}^{\infty}$ be an infinite sequence. We denote by $w[j]$ the first $j\in\N$ coordinates of $w$.
\end{remark}}

\begin{lemma}\label{lem:horrorgram}
	Say that $q=m/n$, where $m$ and $n$ are relatively prime.
	If $\lhe(q)\prec\nu\prec\rhe(q)$, then all the extrema of cylinders of $\BD(X')$ have tails in $\sigma^i(L')$ for some $i\in\Z$.
\end{lemma}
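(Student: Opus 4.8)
The plan is to mimic the case analysis of Lemma~\ref{lem:horror2}, but to verify admissibility with the criterion of Lemma~\ref{lem:BdCH} rather than with Remark~\ref{rem:adm}, which is unavailable in the rational interior case. By Lemma~\ref{lem:BdCHL'} we have $L'=\ovl{\rhe(q)}$, so $L'$ has an eventually periodic left tail of period $n$, and the $n$ sequences $\sigma^i(L')$, $0\le i\le n-1$, are exactly the admissible left tails that are shifts of this periodic tail. Fixing an admissible finite word $a_n\ldots a_1$, I would pick the smallest $k\in\{0,\ldots,n-1\}$ with $a_n\ldots a_{k+1}=c_{n-k+1}\ldots c_2$ (and set $k=n$ if there is none), exactly as in Lemma~\ref{lem:horror2}, and run through the same subcases ($k=0$, $k=1$, $k>1$, and the two parities of $\#_1(a_{k-1}\ldots a_1)$). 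In each subcase the candidate top or bottom is $L'$ followed by a suffix of $a_n\ldots a_1$ (either $L'a_{k-1}\ldots a_1$ or $L'a_n\ldots a_1$, with a leading $0$ or $1$ forced by the required parity). Such a candidate agrees with $L'$ on all but its last few coordinates, so its tail is the tail of $L'$ shifted by the length of the appended suffix; hence it lies in $\sigma^i(L')$ for the corresponding $i$, which is what the lemma claims.

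For each candidate two things must be checked. That the candidate is the genuine extremum is purely order-theoretic and transfers verbatim from Lemma~\ref{lem:horror2}: it rests only on the parity-lexicographic order together with $L'$ being the largest admissible left-infinite sequence of $X'$ (Lemma~\ref{lem:BdCHL'}), so any strictly larger (smaller) admissible competitor with the same ending $a_n\ldots a_1$ would contradict this maximality or the minimality of $k$. The substantive point, special to the interior case, is admissibility of the candidate $L'a_j\ldots a_1$. Every subword lying wholly inside $L'$, or wholly inside $a_n\ldots a_1$, is already admissible, so by Lemma~\ref{lem:BdCH} I only need to control the suffixes of the associated right-infinite sequence that straddle the junction between $a_n\ldots a_1$ and $L'$, verifying condition (a), that each is $\preceq\rhe(q)$, and condition (b), that each which exceeds $\sigma^{n+1}(\nu)$ is moreover $\preceq\lhe(q)$.

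The hard part will be condition (b). In the irrational and rational endpoint cases Remark~\ref{rem:adm} let one pass freely between a word and its reverse, so (b) never surfaced; but here $\lhe(q)\prec\nu\prec\rhe(q)$ holds strictly, so a straddling suffix may genuinely exceed $\sigma^{n+1}(\nu)$, and one must then bound it by $\lhe(q)$. I would establish this using the palindromic decomposition $c_q=Y1Z01$ of Remark~\ref{rem:palind}, together with the defining inequalities $\lhe(q)=(w_q1)^{\infty}\preceq\nu\preceq 10(\hat w_q1)^{\infty}=\rhe(q)$: a straddling suffix that surpasses $\sigma^{n+1}(\nu)$ is forced, by the minimality of $k$ and the self-similar block structure of $L'=\ovl{\rhe(q)}$, to open with a copy of the period block of $\lhe(q)$, and is therefore $\preceq\lhe(q)$. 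This reduction of the junction suffixes to initial blocks of $\lhe(q)$ is the one step that truly separates the rational interior case from Lemma~\ref{lem:horror2}; once it is in place, the case-by-case admissibility verifications close exactly as there, and the conclusion that every cylinder extremum lies in some $\sigma^i(L')$ follows.
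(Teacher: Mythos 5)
There is a genuine gap. Your proposal assumes that every extremum of a cylinder is of the form $L'$ followed by a suffix of the given word, i.e.\ $\ovl{\rhe(q)}a_j\ldots a_1$, and that its extremality ``rests only on $L'$ being the largest admissible left-infinite sequence.'' In the rational interior case this is false: condition (b) of Lemma~\ref{lem:BdCH} says that whenever the finite suffix satisfies $b_j\ldots b_1\succ\sigma^{n+1}(\nu)[j]$, any admissible left-infinite continuation must be $\preceq\ovl{\lhe(q)}$. Since $\ovl{\lhe(q)}\prec\ovl{\rhe(q)}=L'$ (they are distinct admissible sequences and $L'$ is the maximum), the candidate $L'b_j\ldots b_1$ is then \emph{not admissible at all}, and the top of such a cylinder is $\ovl{\lhe(q)}b_j\ldots b_1$ instead. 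Your ``hard part'' misdiagnoses the role of condition (b): it is not merely an extra admissibility check on the junction suffixes of $L'a_j\ldots a_1$ that can be discharged by showing those suffixes open with the period block of $\lhe(q)$; it dictates \emph{which} of the two periodic tails, $\ovl{\lhe(q)}$ or $\ovl{\rhe(q)}$, must be prepended, and this depends on comparing the suffix with $\sigma^{n+1}(\nu)$. The paper's proof is organized entirely around this dichotomy: it introduces two indices (one detecting where the word begins to look like $\lhe(q)$ while exceeding $\sigma^{n+1}(\nu)$, one detecting where it begins to look like $\rhe(q)$) and branches accordingly, producing extrema of both forms $\ovl{\lhe(q)}a_{k}\ldots a_1$ and $\ovl{\rhe(q)}a_{k'}\ldots a_1$.

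The conclusion of the lemma is rescued only by the additional fact that $\ovl{\lhe(q)}=(1Z1Y)^{\infty}$ and $\ovl{\rhe(q)}=(1Y1Z)^{\infty}01$ share a common tail up to shift (Remark~\ref{rem:palind} and Lemma~\ref{lem:lhe/rhe}), so that the $\ovl{\lhe(q)}$-prefixed extrema still lie in $\sigma^i(L')$. Your write-up never invokes this, because under your assumption it appears unnecessary. A second, smaller issue: the single index $k$ transplanted from Lemma~\ref{lem:horror2} (matching against $c_{n-k+1}\ldots c_2$) does not record the comparison with $\sigma^{n+1}(\nu)$, so even after repairing the dichotomy you would need to redefine the cut points as the paper does. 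As written, the argument would fail on any cylinder whose defining word exceeds $\sigma^{n+1}(\nu)$.
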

\begin{proof} Fix an arbitrary admissible word $b_j\ldots b_1\in\{0,1\}^{j}$ for some $j\in\N$.
	
	We will calculate the top/bottom of the cylinder $[b_j\ldots b_1]$.
	Assume that $b_j\ldots b_1\succ\sigma^{n+1}(\nu)\black{[j]}$ and $\#_1(b_j\ldots b_1)$ is even (odd). We first show that if $\ovl{\lhe(q)}b_j\ldots b_1$ is admissible, then it equals $L_{b_j\ldots b_1} (S_{b_j\ldots b_1})$. Assume by contradiction that there exists an admissible $\ldots x_2x_1b_j\ldots b_1\succ\ovl{\lhe(q)}b_j\ldots b_1$ ($\ldots x_2x_1b_j\ldots b_1\prec\ovl{\lhe(q)}b_j\ldots b_1$). Then $\ldots x_2x_1\succ\ovl{\lhe(q)}$ ($\ldots x_2x_1\succ\ovl{\lhe(q)}$). But that combined with $b_j\ldots b_1\succ\sigma^{n+1}(\nu)\black{[j]}$ gives by (b) from Lemma~\ref{lem:BdCH} that $\ldots x_2x_1b_j\ldots b_1\succ\ovl{\lhe(q)}b_j\ldots b_1$ is not admissible, a contradiction. Similarly, we show that if $b_j\ldots b_1\preceq\sigma^{n+1}(\nu)\black{[j]}$, $\#_1(b_j\ldots b_1)$ is even (odd) and $\ovl{\rhe(q)}b_j\ldots b_1$ is admissible, then it equals $L_{b_j\ldots b_1} (S_{b_j\ldots b_1})$.
	
	In the next two paragraphs we prove that 
	the sequences of the form $\ovl{\rhe(q)}b_j\ldots b_1$ and $\ovl{\lhe(q)}b_j\ldots b_1$ in special case to which we restrict later in the proof satisfy conditions $(a)$ and $(b)$ from Lemma~\ref{lem:BdCH} and are thus admissible.
	
	If $b_{i+1}\ldots b_j$ does not equal the beginning of $\rhe(q)$ for any $i\in\{0, \ldots, j-1\}$, then the sequences $\ovl{\rhe(q)}b_j\ldots b_1$ and $\ovl{\lhe(q)}b_j\ldots b_1$ satisfy $(a)$ from Lemma~\ref{lem:BdCH}. Assume there is an index $i\in\{0, \ldots j-1\}$ such that $b_{i+1}\ldots b_j$ is the beginning of $\rhe(q)$ and take the smallest such $i\in\{0, \ldots, j-1\}$. Assume $\#_1(b_{i+1}\ldots b_j)$ is odd (later in the proof we need only this special case). If $b_{\alpha+1}\ldots b_j$ is also the beginning of $\rhe(q)$ for some $\alpha\in\{0, \ldots, j-1\}$, where $\alpha\geq i$, then $\#_1(b_{\alpha+1}\ldots b_j)$ is also odd. Note that $b_{\alpha+1}\ldots b_j10\prec \rhe(q)\black{[j-\alpha+1]}$ for every such $\alpha$. Thus $\ovl{\rhe(q)}b_{j}\ldots b_1$ and $\ovl{\lhe(q)}b_j\ldots b_1$ satisfy condition $(a)$ from Lemma~\ref{lem:BdCH}.

	If for every $i\in\{1, \ldots, j\}$ either $b_{i}\ldots b_1\preceq\sigma^{n+1}(\nu)\black{[i]}$ or $b_{i+1}\ldots b_j$ is not the beginning of $\lhe(q)$, then $\ovl{\rhe(q)}b_j\ldots b_1$ and $\ovl{\lhe(q)}b_j\ldots b_1$ satisfy $(b)$ from Lemma~\ref{lem:BdCH}. Assume there is $i<j$ such that $b_i\ldots b_1\succ\sigma^{n+1}(\nu)\black{[i]}$ and $b_{i+1}\ldots b_j$ is the beginning of $\lhe(q)$ and take the smallest such index $i$. If $\#_1(b_{i+1}\ldots b_j)$ is odd (as in the paragraph above, later in the proof we need only this special case) and there is $\beta\in\{i, \ldots, j-1\}$ such that $b_{\beta+1}\ldots b_j$ is also the beginning of $\lhe(q)$, then $\#_1(b_{\beta+1}\ldots b_j)$ is also odd and thus $b_{\beta+1}\ldots b_j10\prec \lhe(q)\black{[j-\beta+1]}$ for every such $\beta$. We conclude that $\ovl{\rhe(q)}b_j\ldots b_1$ and $\ovl{\lhe(q)}b_j\ldots b_1$ satisfy condition $(b)$ from Lemma~\ref{lem:BdCH}.
	
	Recall that $L'=\ovl{\rhe(q)}=(1w_q)^{\infty}01$. 
	
	Fix an admissible word $a_N\ldots a_1\in \{0,1\}^{N}$ for some $N\in\N$. Let $k\in\{1, \ldots, N\}$ be, if existent, the smallest index such that $a_k\ldots a_1\succ\sigma^{n+1}(\nu)\black{[k]}$ and $a_{k+1}\ldots a_N$ is the beginning of $\lhe(q)$. We set $k=N$ when $a_N\ldots a_1\succ\sigma^{n+1}(\nu)\black{[N]}$ (then $a_{k+1}\ldots a_N=\emptyset$ is the beginning of $\lhe(q)$). Let $k'\in\{0, 1, \ldots, N-1\}$ be the smallest index such that $a_{k'+1}\ldots a_N$ equals the beginning of $\rhe(q)$. Note that if $a_i=1$ for some $i\in\{1, \ldots, N\}$, then such $k'$ exists. If $a_N\ldots a_1=0^N$, then $L_{a_N\ldots a_1}=\ovl{\rhe(q)}0^N$ and $S_{a_N\ldots a_1}=S=(1w_q)^{\infty}0$.

	If $a_i=1$ for some $i\in\{1, \ldots, N\}$, the diagram in Figure~\ref{fig:horrorgram} provides an algorithm to calculate $L_{a_N\ldots a_1}$ ($S_{a_N\ldots a_1}$).
	
	   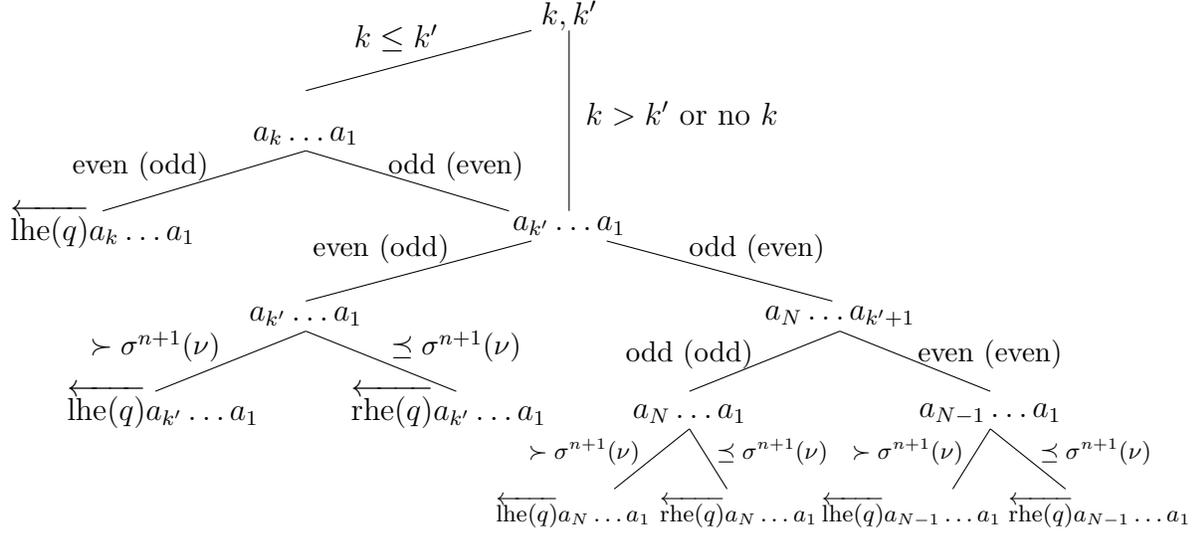
\begin{figure}
	   	\begin{center}
	   		\begin{tikzpicture}
	   		\node at (0,-0.2) {$k,k'$};
	   		\draw (-0.5,-0.4)--(-3.5,-1.2);
	   		\node at (-2.3,-0.5) {$k\leq k'$};
	   		\draw (0,-0.4)--(0,-2.8);
	   		\node at (1.5,-1.5) {$k>k'$ or no $k$};
	   		\node at (-3.5,-1.8) {$a_k\ldots a_1$};
	   		\draw (-3.5,-2)--(-0.8,-2.8);
	   		\node at (0,-3) {$a_{k'}\ldots a_1$};
	   		\node at (-1.5,-2.2) {\small{odd (even)}};
	   		\draw (-3.5,-2)--(-6.2,-2.8);
	   		\node at (-6.2,-3) {$\ovl{\lhe(q)}a_{k}\ldots a_1$};
	   		\node at (-5.7,-2.2) {\small{even (odd)}};
	   		\draw (-0.5,-3.2)--(-3.5,-4);
	   		\node at (-3.5,-4.2) {$a_{k'}\ldots a_1$};
	   		\node at (-2.5,-3.3) {\small{even (odd)}};
	   		\draw (0.5,-3.2)--(3.5,-4);
	   		\node at (2.5,-3.3) {\small{odd (even)}};
	   		\node at (3.6,-4.2) {$a_{N}\ldots a_{k'+1}$};
	   		\draw (-3.5,-4.4)--(-5.5,-5.2);
	   		\node at (-5.5,-4.6) {\small{$\succ\sigma^{n+1}(\nu)$}};
	   		\node at (-5.4,-5.4) {$\ovl{\lhe(q)}a_{k'}\ldots a_1$};
	   		\draw (-3.5,-4.4)--(-1.5,-5.2);
	   		\node at (-1.6,-5.4) {$\ovl{\rhe(q)}a_{k'}\ldots a_1$};
	   		\node at (-1.5,-4.6) {\small{$\preceq\sigma^{n+1}(\nu)$}};
	   		\draw (3.6,-4.4)--(1.6,-5.2);
	   		\node at (1.6,-5.5) {$a_{N}\ldots a_1$};
	   		\node at (1.6,-4.7) {\small{odd (odd)}};
	   		\draw (3.6,-4.4)--(5.6,-5.2);
	   		\node at (5.6,-5.5) {$a_{N-1}\ldots a_1$};
	   		\node at (5.6,-4.7) {\small{even (even)}};
	   		\draw (5.6,-5.7)--(5.1,-6.5);
	   		\node at (4.5,-6) {\scriptsize{$\succ\sigma^{n+1}(\nu)$}};
	   		\node at (7,-6.8) {\scriptsize{ $\ovl{\rhe(q)}a_{N-1}\ldots a_1$}};
	   		\draw (5.6,-5.7)--(6.6,-6.5);
	   		\node at (7,-6) {\scriptsize{$\preceq\sigma^{n+1}(\nu)$}};
	   		\node at (4.5,-6.8) {\scriptsize{ $\ovl{\lhe(q)}a_{N-1}\ldots a_1$}};
	   		\draw (1.6,-5.7)--(0.6,-6.5);
	   		\node at (0.2,-6) {\scriptsize{$\succ\sigma^{n+1}(\nu)$}};
	   		\node at (0,-6.8) {\scriptsize{ $\ovl{\lhe(q)}a_{N}\ldots a_1$}};
	   		\draw (1.6,-5.7)--(2.1,-6.5);
	   		\node at (2.7,-6) {\scriptsize{$\preceq\sigma^{n+1}(\nu)$}};
	   		\node at (2.2,-6.8) {\scriptsize{ $\ovl{\rhe(q)}a_{N}\ldots a_1$}};
	   		\end{tikzpicture}
	   	\end{center}
	   	\caption{Calculating the $L_{a_N\ldots a_1}$ and $S_{a_N\ldots a_1}$ in the rational interior case. The graph should be read as follows: if we want to calculate $L_{a_N\ldots a_1}$ we read the terms outside of the brackets and to calculate $S_{a_N\ldots a_1}$ we read the terms inside the brackets. Say we want to calculate $L_{a_N\ldots a_1}$ ($S_{a_N\ldots a_1}$). We first calculate $k$ and $k'$ and compare them. Say $k>k'$ or $k$ does not exist. We move down the right branch. Next we calculate the parity of $a_{k'}\ldots a_1$. Say it is even (odd), then we move down the left branch. If $a_{k'}\ldots a_1\succ\sigma^{n+1}(\nu)\black{[k']}$ then $L_{a_N\ldots a_1}=\protect\ovl{\lhe}a_{k'}\ldots a_1$ ($S_{a_N\ldots a_1}=\protect\ovl{\lhe}a_{k'}\ldots a_1$) and if $a_{k'}\ldots a_1\preceq\sigma^{n+1}(\nu)\black{[k']}$ then $L_{a_N\ldots a_1}=\protect\ovl{\rhe}a_{k'}\ldots a_1$ ($S_{a_N\ldots a_1}=\protect\ovl{\rhe}a_{k'}\ldots a_1$). \black{For the sake of presentation we did not include \black{$[\bullet]$} \black{in the figure.}}}
	   	\label{fig:horrorgram}
	   \end{figure}
	   
	   To see that the defined sequences are indeed $L_{a_N\ldots a_1}$ ($S_{a_N\ldots a_1}$) we use the first part of the proof. For example, take the case where the algorithm gives $\ovl{\lhe}a_N\ldots a_1$. Since $a_N\ldots a_1\succ\sigma^{n+1}(\nu)\black{[N]}$ and $\#_1(a_N\ldots a_1=a_N\ldots a_{k'+1}a_{k'}\ldots a_1)$ is even (odd), if $\ovl{\lhe}a_N\\\ldots a_1$ is admissible then it equals $L_{a_N\ldots a_1}$ ($S_{a_N\ldots a_1}$). To see that it satisfies $(a)$, note that $\#_1(a_{N}\ldots a_{k'+1})$ is odd by assumption. To see that is satisfies $(b)$, assume first that there exists $k$ and $k\leq k'$. Then $\#_1(a_{k+1}\ldots a_{k'})$ is even and thus $\#_1(a_N\ldots a_{k+1})$ is odd. If $k$ does not exists, we are done. If $k>k'$, then since $a_{k'+1}\ldots a_N$ is the beginning of $\rhe(q)$ and $a_{k+1}\ldots a_N$ is the beginning of $\lhe(q)$ it follows that $\#_1(a_{k'+1}\ldots a_{k})$ is even and thus $\#_1(a_{k+1}\ldots a_N)$ is of the same parity as $\#_1(a_{k'+1}\ldots a_N)$, which is odd. That finishes the proof in this case. Other cases follow using analogous computations. Note that if $\#_1(a_N\ldots a_{k'+1})$ is even, then since $a_{k'+1}\ldots a_N$ is the beginning of $\rhe(q)$ it follows that $a_N=1$ and thus $\#_1(a_{N-1}\ldots a_{k'+1})$ is odd (this is needed in the proof of the two cases in the right branch of Figure~\ref{fig:horrorgram}).    
\end{proof}

\begin{lemma}\label{lem:nlines}
	Say that $q=m/n$, where $m$ and $n$ are relatively prime.
	If $\lhe(q)\prec\nu\prec\rhe(q)$, then every admissible itinerary in $\sigma^i(\mathcal{U}_{L'})$ is realized as an extremum of a cylinder of $\BD(X')$.
\end{lemma}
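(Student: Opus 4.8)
The plan is to show that each basic arc lying in $\sigma^i(\mathcal{U}_{L'})$ is the top or the bottom of a suitable finite cylinder, doing the real work once (for $i=0$) and propagating to all $i$ by the dynamics. First I would treat $\mathcal{U}_{L'}$ itself. By Lemma~\ref{lem:BdCHL'} we have $L'=\ovl{\rhe(q)}$, and since $\rhe(q)$ is periodic (Remark~\ref{rem:palind}, Lemma~\ref{lem:lhe/rhe}) the backward tail of $L'$ is purely periodic. Hence any $\ovl x\in\mathcal{U}_{L'}$ can be written as $\ovl x=\ovl{\rhe(q)}\,x_N\ldots x_1$ for infinitely many $N$, namely those at which the periodic tail of $\ovl x$ is phase-aligned with that of $\ovl{\rhe(q)}$. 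Fixing such an $N$, I would run the explicit computation of $L_{x_N\ldots x_1}$ and $S_{x_N\ldots x_1}$ from the proof of Lemma~\ref{lem:horrorgram} (the algorithm of Figure~\ref{fig:horrorgram}) on the word $x_N\ldots x_1$ and check that its output is exactly $\ovl x$: because the suffix $x_N\ldots x_1$ begins like $\rhe(q)$ the index $k'$ of that algorithm is $0$, and the admissibility of $\ovl{\rhe(q)}x_N\ldots x_1$ (which is just $\ovl x$, hence admissible) places us in the branch producing $\ovl{\rhe(q)}x_N\ldots x_1$.

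A cleaner, equivalent route, which I would probably present, uses the maximality of $L'$. By Lemma~\ref{lem:BdCHL'}, $\ovl{\rhe(q)}$ is the largest admissible left-infinite tail. For any $\ovl y\in[x_N\ldots x_1]$ write $\ovl y=\ovl z\,x_N\ldots x_1$; since $\ovl x$ and $\ovl y$ share the suffix $x_N\ldots x_1$, the $\prec_L$-comparison of $\ovl x$ and $\ovl y$ coincides, up to the fixed parity of $\#_1(x_N\ldots x_1)$, with the comparison of the tails $\ovl z$ and $\ovl{\rhe(q)}$. As $\ovl z\preceq\ovl{\rhe(q)}$ always holds, $\ovl x$ is forced to be the $\prec_L$-maximum of the cylinder when $\#_1(x_N\ldots x_1)$ is even and the minimum when it is odd; this is the same parity bookkeeping already used in Proposition~\ref{prop:fully}. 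Either way $\ovl x$ is an extremum, so every basic arc of $\mathcal{U}_{L'}$ is realized as an extremum of a cylinder. The analogous statement for the $\ovl{\lhe(q)}$-phase, i.e. for $\sigma^{N_0}(\mathcal{U}_{L'})$ where $\sigma^{N_0}\rhe(q)=\lhe(q)$ by Lemma~\ref{lem:lhe/rhe}, follows in the same way, using that $\ovl{\lhe(q)}$ is the largest admissible tail subject to condition (b) of Lemma~\ref{lem:BdCH}.

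Finally I would pass to arbitrary $i$ by equivariance. Since $\nu$ is of rational interior type, $T$ is long-branched and every proper subcontinuum is an arc; moreover the lines $\sigma^i(\mathcal{U}_{L'})$ contain no folding points (by Proposition~\ref{prop:foldpts} the folding points sit in the other composants), so every point of such a line has a neighbourhood homeomorphic to a Cantor set of arcs. On such a line a point is accessible if and only if it is the top or the bottom of some cylinder, by the observation following Lemma~\ref{lem:cyl} together with Proposition~\ref{prop:wiggles} and Corollary~\ref{cor:fully}. Now $\sigma\colon\BD(X')\to\BD(X')$ extends to a planar homeomorphism (Barge--Martin), which carries accessible points to accessible points; hence it carries cylinder extrema of one folding-point-free line to cylinder extrema of its image. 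Applying $\sigma^i$ to the fact already established for $\mathcal{U}_{L'}$ yields the same property for every $\sigma^i(\mathcal{U}_{L'})$, which is the assertion.

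The main obstacle is the direct symbolic step for $\mathcal{U}_{L'}$: one must verify that prepending $\ovl{\rhe(q)}$ to the chosen suffix really yields an admissible, extremal sequence, i.e. that the global maximality of $L'$ survives the constraint imposed by fixing $x_N\ldots x_1$ and by the admissibility conditions (a),(b) of Lemma~\ref{lem:BdCH}. Keeping careful track of the parity of $\#_1(x_N\ldots x_1)$ (which interchanges top and bottom) and of the phase alignment that permits the decomposition $\ovl x=\ovl{\rhe(q)}x_N\ldots x_1$ is where the care lies; everything else is bookkeeping already carried out in Lemma~\ref{lem:horrorgram}.
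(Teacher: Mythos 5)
There is a genuine gap at the very first step, and it undermines both routes you offer. You assert that every $\ovl x\in\mathcal{U}_{L'}$ can be written as $\ovl x=\ovl{\rhe(q)}\,x_N\ldots x_1$ for infinitely many $N$. This is false: $\ovl{\rhe(q)}=(1w_q)^{\infty}01$ is strictly preperiodic, so such a decomposition forces the two symbols of $\ovl x$ immediately to the left of $x_N\ldots x_1$ to be $0,1$ (reading positions $N+2,N+1$), whereas membership in $\mathcal{U}_{L'}$ constrains only the eventual tail of $\ovl x$ and says nothing about those two symbols. Worse, writing $\rhe(q)=t_1t_2t_3\ldots$, at any phase-aligned position inside the periodic part of $\ovl x$ one reads $t_{n+2}t_{n+1}=11$ rather than $t_2t_1=01$ (this is precisely why $\rhe(q)$ is strictly preperiodic), so the decomposition fails for all $N$ beyond the preperiod of $\ovl x$ and, for a generic basic arc of $\mathcal{U}_{L'}$, for every $N$. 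Consequently your ``cleaner route'' via maximality of $L'$ only proves that the special basic arcs of the form $\ovl{\rhe(q)}u$ are cylinder extrema; it does not reach the general element of $\mathcal{U}_{L'}$, which is what the lemma asserts. The equivariance step cannot repair this: $\sigma^{-j}$ contracts arcs, so pushing accessibility of $A(\ovl{\rhe(q)}u)$ backwards yields only proper subarcs of the preimage basic arcs, and the $\sigma$-orbit of the arcs you do control does not exhaust $\bigcup_i\sigma^i(\mathcal{U}_{L'})$.

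The paper's proof instead decomposes every $\ovl x$, in every $\sigma^i(\mathcal{U}_{L'})$ simultaneously, as $\ovl{\lhe(q)}x_K\ldots x_1$ --- always possible because $\lhe(q)=(w_q1)^{\infty}$ is purely periodic --- but then maximality of $L'$ no longer suffices: $\ovl{\lhe(q)}$ is not the largest admissible tail, and the extremum of the cylinder $[x_K\ldots x_1]$ is typically $\ovl{\rhe(q)}x_K\ldots x_1$, a different basic arc. The real content of the lemma is the choice of a longer cylinder $[(y_n\ldots y_1)^{J+1}x_K\ldots x_1]$ with $J$ minimal such that $(y_n\ldots y_1)^J\succ\sigma^{n+1}(\nu)$; this triggers condition (b) of Lemma~\ref{lem:BdCH} --- the admissibility interaction between left tail and fixed suffix that is specific to the rational interior case and invisible in Remark~\ref{rem:adm} --- and forces the algorithm of Lemma~\ref{lem:horrorgram} to output exactly $\ovl x$. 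Your argument never engages with this interaction: you use only the admissibility of $\ovl x$ itself, which settles matters only in the special case $\ovl x=\ovl{\rhe(q)}x_N\ldots x_1$ discussed above, and your appeal to ``$\ovl{\lhe(q)}$ is the largest tail subject to (b)'' for one particular shift would likewise require the case analysis of Lemma~\ref{lem:horrorgram} to rule out admissible competitors of the form $\ovl{\rhe(q)}x_N\ldots x_1$.
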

\begin{proof}
	Assume that $\ovl x=\ldots x_{2}x_1$ is an admissible tail and that there exists $K\in\N_0$ such that $\ldots x_{K+2}x_{K+1}=\ovl{\lhe(q)}$ and take $K$ the smallest index with that property. Denote by $\lhe(q)=(w_q1)^{\infty}=(y_1\ldots y_n)^{\infty}$ and note that $\rhe(q)=10(\hat{w_q}1)^{\infty}$ and thus $\sigma^{n+1}(\rhe(q))=(1\hat{w_q})^{\infty}=(y_n\ldots y_1)^{\infty}$. Since $\rhe(q)\succ\nu$ and they agree on the first $n+1$ places (which equal $c_q$ and which is a word of even parity, for details see \eg \cite{3G}), it follows that $\sigma^{n+1}(\rhe(q))\succ\sigma^{n+1}(\nu)$. Let $J\in\N$ be the smallest natural number such that $(y_n\ldots y_1)^J\succ\sigma^{n+1}(\nu)\black{[nJ]}$. We study the cylinder $Y=\f y_n\ldots y_1(y_n\ldots y_1)^Jx_K\ldots x_1]$. Note that $x_i\ldots x_K(y_1\ldots y_n)^{J+1}$ does not agree with the beginning of $\lhe(q)$ for any $i\in\{1, \ldots, K\}$. Also $y_i\ldots y_n(y_1\ldots y_n)^j$ does not agree with the beginning of $\lhe(q)$ for any $i\in\{2, \ldots, n\}$ and any $j\in\N$.
	Denote by $a_{N}\ldots a_1=y_n\ldots y_1(y_n\ldots y_1)^Jx_K\ldots x_1$. Let $k\in\{1, \ldots, N\}$ be, if existent, the smallest index such that $a_k\ldots a_1\succ\sigma^{n+1}(\nu)\black{[k]}$ and $a_{k+1}\ldots a_N$ is the beginning of $\lhe(q)$ (compare with the definition of $k$ in the proof of Lemma~\ref{lem:horrorgram}). By the choice of $J$ it follows that $k$ indeed exists and $k\in\{K+Mn: M\in\{0, \ldots, J\}\}$. So, if for any $i\in\{0, \ldots, K-1\}$ the word  $x_{i+1}\ldots x_K(y_1\ldots y_n)^{J+1}$ does not equal the beginning of $\rhe(q)$, then Lemma~\ref{lem:horrorgram} implies that $\ovl x=L_Y$ or $\ovl x=S_Y$, depending on the parity of $\#(x_K\ldots x_1)$.\\
	 If there is $\alpha\in\{0, \ldots, K-1\}$ such that the word  $x_{\alpha+1}\ldots x_K(y_1\ldots y_n)^{J+1}$ equals the beginning of $\rhe(q)$, then $x_{\alpha}\ldots x_1\preceq\sigma^{n+1}(\nu)\black{[\alpha]}$ (otherwise $Y$ does not satisfy $(b)$ from Lemma~\ref{lem:BdCH} and is thus not admissible). Lemma~\ref{lem:horrorgram} implies that $\ovl{\rhe(q)}x_{\alpha}\ldots x_1$ equals $L_Y$ or $S_Y$, depending on the parity of $\#(x_{\alpha}\ldots x_1)$. Since the tails of $\rhe(q)$ and $\lhe(q)$ are shifts of one another and $J\geq 1$ it follows that $\ovl x=\ovl{\rhe(q)}x_{\alpha}\ldots x_1$, which concludes the proof.
\end{proof}

\begin{theorem}
	Say that $q=m/n$, where $m$ and $n$ are relatively prime.
	If $\lhe(q)\prec\nu\prec\rhe(q)$, then in $\BD(X')$ there exist $n$ fully accessible arc-components which are dense lines in $X'$ and $n$ simple dense canals. Moreover, a point from $\BD(X')$ is accessible if and only if it belongs to one of these $n$ lines.
\end{theorem}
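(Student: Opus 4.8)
The plan is to identify the fully accessible basic arcs, organize them into exactly $n$ dense lines, and then read off the prime-end structure. First I would combine Lemma~\ref{lem:horrorgram} and Lemma~\ref{lem:nlines}: the former shows every extremum of a cylinder has a tail lying in $\sigma^i(L')$ for some $i\in\Z$, and the latter shows every admissible itinerary of $\sigma^i(\mathcal{U}_{L'})$ is realized as such an extremum. Hence the basic arcs that are tops or bottoms of cylinders are exactly those in $\bigcup_{i}\sigma^i(\mathcal{U}_{L'})$, and by Lemma~\ref{lem:cyl} each of them is fully accessible. Since $L'=\ovl{\rhe(q)}=(1w_q)^{\infty}01$ has periodic tail $(1w_q)^{\infty}$, and $\gcd(m,n)=1$ together with the palindrome decomposition $w_q=Y1Z$ from Remark~\ref{rem:palind} forces the word $1w_q$ to have primitive period $n$, the $n$ cyclic rotations of $1w_q$ are pairwise distinct; by Remark~\ref{rem:symbolicAC} this yields exactly $n$ distinct arc-components $\mathcal{U}_{L'},\sigma(\mathcal{U}_{L'}),\ldots,\sigma^{n-1}(\mathcal{U}_{L'})$.

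Next I would show each $\sigma^i(\mathcal{U}_{L'})$ is a fully accessible dense line. Full accessibility is immediate: any subarc of the arc-component has both endpoints in basic arcs which are cylinder extrema, hence accessible, so Proposition~\ref{prop:access} makes the subarc fully accessible. That the arc-component is a bi-infinite line with no spiral point follows from the tail $(1w_q)^{\infty}$: because $\nu\prec\rhe(q)$ strictly, one checks that $\tau_L$ and $\tau_R$ remain finite along the component, so by Proposition~\ref{prop:spiral} it contains no endpoint of $X'$ and is therefore a line rather than a ray. Density I would obtain exactly as in Lemma~\ref{lem:horror}: given any $x\in X'$ and any $N\in\N$, the admissibility criterion of Lemma~\ref{lem:BdCH} lets me splice the periodic tail $(1w_q)^{\infty}$ onto the finite word $x_N\ldots x_1$ through a short admissible bridge, producing a point of $\mathcal{U}_{L'}$ arbitrarily close to $x$.

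The crucial step is to prove that no point off these $n$ lines is accessible. A point whose basic arc is not a cylinder extremum and which has a neighbourhood of a Cantor set of arcs is inaccessible by the standard observation (Figure~\ref{fig:access2}), so the only candidates are folding points. For non-degenerate accessible arcs I would argue as in the proof of Theorem~\ref{thm:henks}: since $\sigma\colon\BD(X')\to\BD(X')$ extends to the plane and $\sigma$ extends every arc in $X'$, any accessible non-degenerate arc $Q$ would have all $\sigma^i(Q)$ accessible and of growing length, so some $\sigma^i(Q)$ contains a full basic arc; that basic arc is then fully accessible, hence a cylinder extremum lying on a line, forcing $Q$ onto a line as well. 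The remaining possibility is an isolated accessible folding point (a Type 3 point), which the arc-expanding argument cannot reach. Ruling these out is the main obstacle: I would mirror the ``no Type 3 folding points'' lemma of the irrational subsection, showing that the chain of strict inequalities a Type 3 point imposes on $L$ is incompatible with the admissibility conditions $(a)$ and $(b)$ of Lemma~\ref{lem:BdCH} for a rational interior $\nu$.

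Finally, for the prime ends I would invoke Iliadis' theorem: each of the $n$ fully accessible dense lines, being accessible in more than one point, corresponds to an open interval of the circle of prime ends, and these $n$ intervals are permuted cyclically by the extended $\sigma$ with rotation number $q=\tfrac{m}{n}$, leaving exactly $n$ boundary prime ends permuted among themselves. Since no point off the lines is accessible, none of these boundary prime ends is of the second kind; and since each bounding line is dense in $X'$, the argument from Theorem~\ref{thm:henks} shows that the principal set of each boundary prime end equals $X'$, so it is of the third kind with impression $X'$. Thus the $n$ boundary prime ends are $n$ simple dense canals, and a point of $\BD(X')$ is accessible if and only if it lies on one of the $n$ lines.
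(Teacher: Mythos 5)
Your architecture matches the paper's almost step for step: Lemma~\ref{lem:horrorgram} plus Lemma~\ref{lem:nlines} to identify the accessible basic arcs with $\bigcup_i\sigma^i(\mathcal{U}_{L'})$, Proposition~\ref{prop:access} for full accessibility, the bridge-splicing density argument via Lemma~\ref{lem:BdCH}, the $\sigma$-extendability argument from Theorem~\ref{thm:henks} to dispose of non-degenerate accessible arcs off the lines, and Iliadis plus density of the shores to identify the $n$ leftover prime ends as simple dense canals. All of that is sound.

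The genuine gap is the step you yourself flag as ``the main obstacle'': ruling out an isolated accessible folding point off the $n$ lines. You propose to ``mirror the no-Type-3 lemma of the irrational subsection,'' but that lemma's proof leans on structural features of irrational-type and rational-endpoint-type kneading sequences (blocks of ones of even length, the Sturmian form of $\nu$) which simply fail for a general rational interior $\nu$; moreover, in the rational interior case $\nu$ need not be preperiodic, so the candidate folding points are not the finitely many points of Section~\ref{sec:FP} but a potentially uncountable set, and the Type~2/Type~3 machinery does not apply off the shelf. The paper closes this gap differently and in two stages: first, since $\sigma$ extends to the plane and only $n$ prime ends remain, an accessible folding point $x$ would have its $\sigma$-orbit accessible among those $n$ prime ends, forcing $\ovl x$ to be periodic of period $j$ dividing $n$; since no kneading sequence of period dividing $n$ lies strictly between $\lhe(q)$ and $\rhe(q)$, one gets $\tau_L(\ovl x),\tau_R(\ovl x)<\infty$ and $x$ must be a Type~3 point on a non-degenerate basic arc. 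Second, a ``long basic arcs'' argument in the spirit of Proposition~\ref{prop:restriction} shows such an $x$ would force $A(\ovl x)$ to be a cylinder extremum, with the single residual case reducing to $T^3(c)$ periodic of period dividing $n$, which is again excluded from the open interval $(\lhe(q),\rhe(q))$. Without the periodicity reduction your plan has no handle on which folding points to test, and without the long-arcs argument the admissibility conditions $(a)$ and $(b)$ of Lemma~\ref{lem:BdCH} alone do not produce the contradiction. Note also that your final assertion that the boundary prime ends are not of the second kind rests entirely on this missing step, so the prime-end conclusion is conditional on it.
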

\begin{proof}
	Lemma~\ref{lem:horrorgram} shows that all the extrema of cylinders have tails in $\sigma^i(L')$ for some $i\in\Z$ and Lemma~\ref{lem:nlines} shows that every admissible itinerary in $\sigma^i(\mathcal{U}_{L'})$ is realized as an extremum of a cylinder. Since $L'$ is preperiodic of preperiod $n$, we obtain $n$ fully accessible lines in $\BD(X')$. Since $\sigma$ is extendable, no other non-degenerate arc can be accessible. Thus the circle of prime ends can be decomposed into $n$ open intervals and their $n$ endpoints. We claim that the endpoints correspond to simple dense canals.\\
	Assume by contradiction that a folding point $x\in \BD(X')$ is accessible. Then its every shift $\sigma^j(x)$ needs to be accessible for some natural number $j$ which divides $n$ (denoted from now onwards by $j|n$). We conclude that the tail corresponding to the point $x$ must be periodic of period $j|n$, \ie $\sigma^j(x)=x$. Note that there are no periodic kneading sequences $\nu$  of period $j|n$ for $\lhe(q)\prec\nu\prec\rhe(q)$ since $\lhe(q)$, $\rhe(q)$ and $\nu$ agree on the first $n-1$ places. Thus the basic arc $\black{A(\ovl x)}$ has $\tau_L(\ovl{x}), \tau_R(\ovl{x})$ finite. Specially, the basic arc $\black{A(\ovl x)}$ contains no endpoint of $X'$ and $x$ is the only accessible point in $\black{A(\ovl x)}$ and it thus needs to be Type 3 folding point. Write $\ovl x=\ldots x_3x_2x_1$. Since $x$ is a folding point and not an endpoint, there exist arbitrarily large $M, k_i\in\N$ such that $x_M\ldots x_1=c_{k_i+1}\ldots c_{k_i+M}$ and $x_{M+1}\neq c_{k_i}$. Now we proceed similarly as in Proposition~\ref{prop:restriction}. Fix a cylinder around $\ovl x$ and assume that all long basic arcs in that cylinder lie below (above) $\black{A(\ovl x)}$. Here long basic arcs $\ovl y$ are such that $\pi_0(x)\in\Int(\pi_0(\ovl y))$.\\
	 Specially, for $M$ large enough and when $c_{k_i}c_{k_i+1}\ldots c_{k_i+M}\neq c_2\ldots c_{M+2}$, the basic arcs $1^{\infty}c_{k_i}c_{k_i+1}\ldots c_{k_i+M}$ are long (if $M>\tau_L(x),\tau_R(x)$ then $\pi_0(1^{\infty}c_{k_i}c_{k_i+1}\ldots c_{k_i+M})=[T^{\tau_L(x)}, T^{\tau_R(x)}]$). Basic arcs in the chosen cylinder which do not project to $[T^{\tau_L(x)}, T^{\tau_R(x)}]$ are of the form $\black{A(\ldots\frac{0}{1}c_1c_2\ldots c_{k_i}c_{k_i+1}\ldots c_{k_i+M})}$, \black{where $\frac{0}{1}$ stands for either $0$ or $1$ on this entry}. Since $c_{k_i}\neq x_{M+1}$, it follows that those arcs are on the same side of $\ovl x$ as long arcs $1^{\infty}c_{k_i}c_{k_i+1}\ldots c_{k_i+M}$. Since we assumed that all long basic arcs lie on the same side of $\ovl x$ it follows that $\ovl x$ is an extremum of a cylinder, a contradiction.\\
	  The remaining case is when $c_{k_i}\ldots c_{k_i+M}=c_2\ldots c_{M+2}$ for all (but finitely many) $i\in\N$. That is, whenever $x_{M}\ldots x_1$ appears in the kneading sequence, then $x_{M}\ldots x_1=c_3\ldots c_{M+2}$ and $x_{M+1}\neq c_2=0$. However, $\ovl x$ is periodic of period $j|n$ and $x$ is a folding point, from which we conclude that $T^3(c)$ is periodic of period $j|n$ and $\ovl x=(c_3\ldots c_{n+2})^{\infty}$. Note that the only kneading sequence $\lhe(q)\prec\nu\prec\rhe(q)$ for which $T^3(c)$ is periodic of period $j|n$ is $10(\hat w_q0)^{\infty}$ which is actually periodic of period $n$. But there are no periodic kneading sequences $\nu$ of period $n$ such that $\lhe(q)\prec\nu\prec\rhe(q)$, a contradiction. Thus no folding point $x\in \BD(X')$ is accessible.
	
	We need to show that the $n$ accessible lines $\mathcal{U}^{i}\subset X'$ for $i\in\{0,\ldots, n-1\}$ are indeed dense in $X'$. It follows from Lemma~\ref{lem:horrorgram} that the symbolic code of $\mathcal{U}^{i}$ is eventually $\sigma^{i}(\ovl{\lhe(q)})$ for $i\in\{0,\ldots, n-1\}$. Let $a\in X'$ be a point with the backward itinerary $\ovl a=\ldots a_2a_1$. Note that for every natural number $\beta$, every $i\in\{0,\ldots, n-1\}$ and large enough natural number $\gamma$ the left infinite sequences $\sigma^{i}(\ovl{\lhe(q)})1^{\gamma}a_{\beta}\ldots a_1$ are admissible since they satisfy conditions $(a)$ and $(b)$ from Lemma~\ref{lem:BdCH}. Thus, letting $\beta\to \infty$ we get a sequence of basic arcs from $\mathcal{U}^i$ converging to $A(\ovl a)$ such that their $\pi_0$-th projections contain $\pi_0(a)$. 
	
	Therefore $n$ prime ends $P_1,\ldots, P_n$ on the circle of prime ends are either of the third or the fourth kind. Since the shores of the canal are lines which are dense in both directions it follows that $\Pi(P_i)=I(P_i)=\BD(X')$ for every $i\in\{1,\ldots,n\}$. Therefore, there are $n$ simple dense canals for every $\BD(X')$.
\end{proof}
\black{\section*{Acknowledgments} We would like to thank the referee for constructive comments which improved the exposition of this paper. We would also like to thank Henk Bruin for asking interesting questions and his constant encouragement.}

\end{document}